\UseRawInputEncoding
\documentclass[12pt]{article}
\usepackage[centertags]{amsmath}
\usepackage{amsfonts}
\usepackage{amssymb}
\usepackage{latexsym}
\usepackage{amsthm}
\usepackage{newlfont}
\usepackage{graphicx}
\usepackage{listings}
\usepackage{booktabs}
\usepackage{abstract}
\usepackage{enumerate}
\usepackage{xcolor}
\RequirePackage{srcltx}
\lstset{numbers=none,language=MATLAB}
\setcounter{page}{1}
\date{}

\bibliographystyle{amsplain}

\newlength{\defbaselineskip}
\setlength{\defbaselineskip}{\baselineskip}
\newcommand{\setlinespacing}[1]%
           {\setlength{\baselineskip}{#1 \defbaselineskip}}

\newcommand{\actaqed}{\hfill $\actabox$}
{\medskip\noindent \textit{Proof of #1. }}%
{\actaqed \medskip}

\def\cA{{\mathcal A}}

\def\cC{{\mathcal C}}
\def\cD{{\mathcal D}}
\def\cE{{\mathcal E}}

\def\cH{{\mathcal H}}

\def\cM{{\mathcal M}}

\def\cR{{\mathcal R}}

\def\cT{{\mathcal T}}

\def\cV{{\mathcal V}}

\def\cX{{\mathcal X}}

\def\bbC{{\mathbb C}}

\def\bbN{{\mathbb N}}

\def\bbR{{\mathbb R}}

\def\bbT{{\mathbb T}}

\def\bbZ{{\mathbb Z}}

\def\bF{{\mathbf F}}

\def\bH{{\mathbf H}}

\def\bN{{\mathbf N}}

\def\bR{{\mathbf R}}

\def\bW{{\mathbf W}}

\def\ba{\mathbf a}
\def\bb{\mathbf b}
\def\bc{\mathbf c}

\def\bk{\mathbf k}

\def\bp{\mathbf p}
\def\bq{\mathbf q}
\def\br{\mathbf r}

\def\btt{\mathbf t}

\def\bx{\mathbf x}
\def\by{\mathbf y}
\def\bz{\mathbf z}
 
 \def \<{\langle}
\def\>{\rangle}
\def \La{\Lambda}

\def \e{\varepsilon}

\def \de{\delta}

\def \ff{\varphi}
\def\al{\alpha}
\def\bt{\beta}

\def\ga{\gamma}
\def\la{\lambda}


\def \conv{\operatorname{conv}}


\def \sp{\operatorname{span}}

\def \sign{\operatorname{sign}}

\def\bt{\beta}

\newtheorem{Theorem}{Theorem}[section]
\newtheorem{Lemma}{Lemma}[section]
\newtheorem{Definition}{Definition}[section]
\newtheorem{Proposition}{Proposition}[section]
\newtheorem{Remark}{Remark}[section]

\newtheorem{Corollary}{Corollary}[section]
\numberwithin{equation}{section}

\newcommand{\be}{\begin{equation}}
\newcommand{\ee}{\end{equation}}

 \begin{document}

\title{Brief introduction in greedy approximation}

\author{  V. Temlyakov}

\newcommand{\Addresses}{{
  \bigskip
  \footnotesize


 \medskip
  V.N. Temlyakov, \textsc{University of South Carolina, USA,\\ Steklov Mathematical Institute of Russian Academy of Sciences, Russia;\\ Lomonosov Moscow State University, Russia; \\ Moscow Center of Fundamental and Applied Mathematics, Russia.\\  
  \\
E-mail:} \texttt{temlyakovv@gmail.com}

}}
\maketitle

\begin{abstract}{Sparse approximation is important in many applications because of concise form of an approximant and 
good accuracy guarantees. The theory of compressed sensing, which proved to be very useful in the image processing and data sciences, is based on the concept of sparsity. A fundamental issue of sparse approximation is the problem of construction of efficient algorithms, which provide good approximation. It turns out that greedy algorithms with respect to dictionaries are very good from this point of view. They are simple in implementation and there are well developed theoretical guarantees of their efficiency. This survey/tutorial paper contains brief description of different kinds of greedy algorithms and results on their convergence and rate of convergence. Also, Chapter IV gives some typical proofs of convergence and rate of convergence results for important greedy algorithms and Chapter V gives some open problems.	}
\end{abstract}

\newpage

 \medskip
 {\bf \Large Chapter I : Greedy approximation with respect to arbitrary dictionaries.}
  \medskip

\section{Introduction}
\label{In}

Sparse and greedy approximation theory is important in applications. This theory is actively developing for about 40 years. In the beginning researchers were interested in approximation in Hilbert spaces (see, for instance,  \cite{FS}, \cite{H}, \cite{J1}, \cite{J2}, \cite{B}, \cite{DT1}, \cite{DMA}, \cite{BCDD}; for detailed history see \cite{VTbook}). Later, this theory was extended to the case of real Banach spaces (see, for instance, \cite{DGDS}, \cite{VT80}, \cite{GK}, \cite{VT165}; for detailed history see \cite{VTbook} and \cite{VTbookMA}) and to the case of convex optimization (see, for instance, \cite{Cl}, \cite{FNW}, \cite{Ja2}, \cite{SSZ}, \cite{TRD}, \cite{Z}, \cite{DT}, \cite{GP}, \cite{DeTe}). The reader can find some open problems in the book \cite{VTbook}. 

We study greedy approximation with respect to arbitrary dictionaries in Banach spaces. 
It is known that in many numerical problems users are satisfied with a Hilbert space setting and do not consider a more general setting in a Banach space. There are known arguments (see \cite{VTbook}, p. xiii) that justify interest in Banach spaces. The first argument is an {\it a priori} argument that the spaces $L_p$ are very natural and should be studied along with the $L_2$ space. The second argument is an {\it a posteriori} argument. 
The study of greedy approximation in Banach spaces has discovered that the  characteristic of a Banach space $X$ that governs the behavior of greedy approximation is the {\it modulus of smoothness} $\rho(u)$ of $X$. It is known that the spaces $L_p$, $2\le p<\infty$ have modulo of smoothness of the same order: $u^2$. Thus, many results that are known for the Hilbert space $L_2$ and proved using some special structure of a Hilbert space can be generalized to Banach spaces $L_p$, $2\le p<\infty$. The new proofs use only the geometry of the unit sphere of the space expressed in the form $\rho(u) \le \gamma u^2$ (see \cite{VTbook} and \cite{VTbookMA}). The third and the most important argument is that the study of approximation problems in Banach spaces is a step in the direction of studying the convex optimization problems.
 It is pointed out in \cite{Z} that in many engineering applications researchers are interested in 
an approximate solution of an optimization problem  as a linear combination of a few elements from a given system $\cD$ of elements. There is an increasing interest in building such sparse approximate solutions using different greedy-type algorithms (see, for instance, \cite{VT140}, \cite{DT} and references therein).
 We refer the reader to the papers \cite{Ja2} and \cite{BRB} for concise surveys of some results on greedy algorithms from the point of view of convex optimization and signal processing.   At first glance the settings of 
approximation and optimization problems are very different. In the approximation problem an element   is given and our task is to find a sparse approximation of it. In optimization theory an energy function (loss function)  is given and we should find an approximate sparse solution to the minimization problem. It turns out that the same technique can be used for solving both problems (see, for instance, \cite{VT140} and \cite{DT}).
 For some more specific arguments in favor of studying greedy approximation in Banach spaces see \cite{ST}.
 
 {\bf Some basic concepts of Banach spaces.} We list here some concepts, which we use in this paper. For a nonzero element $h\in X$ we let $F_h$ denote a norming (peak) functional for $h$: 
$$
\|F_h\| =1,\qquad F_h(h) =\|h\|.
$$
The existence of such a functional is guaranteed by Hahn-Banach theorem. We denote the dual to $X$ space by $X^*$.

We consider here approximation in uniformly smooth Banach spaces. For a Banach space $X$ we define the modulus of smoothness
$$
\rho(u) :=\rho(u,X) := \sup_{\|x\|=\|y\|=1}\left(\frac{1}{2}(\|x+uy\|+\|x-uy\|)-1\right).
$$
The uniformly smooth Banach space is the one with the property
$$
\lim_{u\to 0}\rho(u)/u =0.
$$
It is easy to see that for any Banach space $X$ its modulus of smoothness $\rho(u)$ is an even convex function satisfying the inequalities
$$
\max(0,u-1)\le \rho(u)\le u,\quad u\in (0,\infty).
$$
It is well known (see, for instance, \cite{DGDS})  that in the case $X=L_p$, 
 $1\le p < \infty$ we have
\be\label{Lprho}
 \rho(u,L_p) \le \begin{cases} u^p/p & \text{if}\quad 1\le p\le 2 ,\\
(p-1)u^2/2 & \text{if}\quad 2\le p<\infty. \end{cases} 
\ee

 Let $X$ be a Banach space (real or complex) with norm $\|\cdot\|$. We say that a set of elements (functions) $\cD$ from $X$ is a dictionary if each $g\in \cD$ has norm bounded by one ($\|g\|\le1$) and the closure of $\sp \cD$ is $X$. We introduce a new norm, associated with a dictionary $\cD$, in the dual space $X^*$.
 
 \begin{Definition}\label{Dnorm} For $F\in X^*$ define
 $$
 \|F\|_\cD:=\sup_{g\in\cD}|F(g)|.
 $$
 \end{Definition}

 It is convenient for us to consider along with the dictionary $\cD$ its symmetrization. In the case of real Banach spaces we 
denote 
$$
\cD^{\pm} := \{\pm g \, : \, g\in \cD\}.
$$
In the case of complex Banach spaces we denote 
$$
\cD^{\circ} := \{e^{i\theta} g \, : \, g\in \cD,\quad \theta \in [0,2\pi)\}.
$$
In the above notation $\cD^{\circ}$ symbol $\circ$ stands for the unit circle. 

Following standard notations denote 
$$
A_1^o(\cD) := \left\{f \in X\,:\, f =\sum_{j=1}^\infty a_j g_j,\quad \sum_{j=1}^\infty |a_j| \le 1, \quad g_j \in \cD,\quad j=1,2,\dots \right\}
$$
and by $A_1(\cD)$ denote the closure (in $X$) of $A_1^o(\cD)$. Then it is clear that $A_1(\cD)$ is the closure (in $X$) of the convex hull of $\cD^\pm$ in the real case and of $\cD^{\circ}$ in the complex case. Also, denote 
by $\conv(\cD)$ the closure (in $X$) of the convex hull of $\cD$.  

Note, that clearly in the real case $\|F\|_\cD = \sup_{\phi\in\cD^\pm}F(\phi)$ and in the complex case $\|F\|_\cD = \sup_{\phi\in\cD^{\circ}}Re(F(\phi))$.

  In this paper we study  greedy algorithms with respect to $\cD$. 
  We begin with a general description of a greedy algorithm. For brevity we write GA for greedy algorithm.

{\bf Generic GA.} GA is an iterative process. Each iteration of it consists of two steps -- 
a greedy step and an approximation step. GA works in a Banach space $X$ with respect to a given 
system (dictionary) $\cD$ of elements. For a given element $f\in X$ it builds a sequence of elements $\ff_1,\ff_2,\dots$ of 
the system $\cD$, a sequence of approximations $G_1,G_2,\dots$, and a sequence of residuals $f_k:= f-G_k$, $k=1,2,\dots$. For convenience we set $G_0=0$ and $f_0=f$. Suppose we have performed $m-1$ iterations of GA. 
Then, at the $m$th iteration we apply the greedy step of our specific GA. The greedy step provides a new element 
$\ff_m\in \cD$ to be used for approximation. After that, using $\ff_m$ and the information from the previous $m-1$ iterations, we apply the approximation step of our specific GA and obtain the approximation $G_m$. 
Specifying the greedy steps and the approximation steps, we obtain different greedy algorithms. 
Below, we present some of the most basic examples of greedy and approximation steps. 

{\bf Weakness sequence.} This concept is useful in the definition of some greedy algorithms. Let $\tau := \{t_k\}_{k=1}^\infty$ be a given sequence of numbers $t_k\in [0,1]$, $k=1,2,\dots$. We call such a sequence the {\it weakness sequence}. In the case $\tau = \{t\}$, i.e. $t_k=t$, $k=1,2\dots$, we write $t$ instead of $\tau$ in the notation.

{\bf Relaxation sequence.} Let $\br := \{r_k\}_{k=1}^\infty$ be a given sequence of numbers $r_k\in [0,1)$, $k=1,2,\dots$. We call such a sequence the {\it relaxation sequence}. 

We use $C$, $C'$ and $c$, $c'$ to denote various positive constants. Their arguments indicate the parameters, which they may depend on. Normally, these constants do not depend on a function $f$ and running parameters $m$, $v$, $u$. We use the following symbols for brevity. For two nonnegative sequences $a=\{a_n\}_{n=1}^\infty$ and $b=\{b_n\}_{n=1}^\infty$ the relation $a_n\ll b_n$ means that there is  a number $C(a,b)$ such that for all $n$ we have $a_n\le C(a,b)b_n$. Relation $a_n\gg b_n$ means that 
 $b_n\ll a_n$ and $a_n\asymp b_n$ means that $a_n\ll b_n$ and $a_n \gg b_n$. 
 For a real number $x$ denote $[x]$ the integer part of $x$, $\lceil x\rceil$ -- the smallest integer, which is 
 greater than or equal to $x$.

 \subsection{Greedy steps}
 \label{gs}
 
 The main step of a greedy algorithm is its greedy step. At a greedy step after $m-1$ iterations of the algorithm we choose a new element $\ff_m \in \cD$ to be used at the $m$ iteration of the algorithm. We discuss three different types of greedy steps. 
 
 {\bf $X$-greedy step.} It is the most straight forward greedy step. We define the following mapping (we assume existence of such an element)
 \be\label{gs1}
 \ff(f):=\ff^X(f) := XGS(f) := \text{arg}\min_{\phi\in\cD} \left(\inf _\lambda\|f-\lambda \phi\|_X\right).
 \ee
 
 {\bf Dual greedy step.} This step is based on the concept of norming functional (we assume existence of such an element)
  \be\label{gs2}
 \ff(f):=\ff^D(f) := DGS(f) := \text{arg}\max_{\phi\in\cD} |F_f(\phi)|.
 \ee
 
 {\bf Weak dual greedy step with parameter $t\in [0,1]$.} Denote by 
 \be\label{gs3}
 \ff(f) := \ff^{W,t}(f) := W_tGS(f)
 \ee
 any element from $\cD$, which satisfies the inequality
 \be\label{gs4}
 |F_f(\ff(f))| \ge t\sup_{\phi \in \cD} |F_f(\phi)|.
 \ee 
 
 {\bf Thresholding step with parameter $\de$.} Denote by 
 \be\label{gs5}
 \ff(f) := \ff^{T,\de}(f) := T_\de GS(f)
 \ee
 any element from $\cD$, which satisfies the inequality
 \be\label{gs6}
 |F_f(\ff(f))| \ge \de.
 \ee 
 
 \subsection{Approximation steps}
 \label{as}
 
 At the second step (after the greedy step) at each iteration of a greedy algorithm we perform an approximation step. 
 We list the most important approximation steps. For a linear finite dimensional subspace $Y$ of $X$ denote 
 by $P_Y$ the Chebyshev projection mapping (it may not be unique):
 \be\label{as1}
 P_Y(f) := \text{arg}\min_{y\in Y} \|f-y\|_X.
 \ee
 
 {\bf Best one dimensional approximation step.}  In the case $Y=\{\la \ff, \la \in \bbR $ or $\la \in \bbC\}$ is the one dimensional subspace spanned by an element $\ff$ we write $P_\ff$ instead of $P_Y$. It is the best one dimensional approximation of $f$ 
 with respect to $\ff$.
 
 {\bf Free relaxation step.} For a given element $f$, its approximation $g$, and an element $\ff$ define
  \be\label{as2}
 FR(f,g,\ff) := (1-w^*)g +\la^* \ff,  
 \ee
 where
  \be\label{as3}
 \|f-((1-w^*)g +\la^* \ff)\|_X = \inf_{w,\la} \|f-((1-w)g +\la \ff)\|_X.
 \ee
 
 {\bf Fixed relaxation step.} Let $r\in [0,1)$ be a relaxation parameter. For a given element $f$, its approximation $g$, and an element $\ff$ define
  \be\label{as4}
 R_r(f,g,\ff) := (1-r)g +\la^* \ff,   
 \ee
 where
  \be\label{as5}
 \|f-((1-r)g +\la^* \ff)\|_X = \inf_{\la} \|f-((1-r)g +\la \ff))\|_X.
 \ee
 Clearly, $R_r(f,g,\ff) = (1-r)g + P_\ff(f-(1-r)g)$. 
 
 \subsection{Greedy algorithms}
 \label{ga}
 
 We now list some of the most basic greedy algorithms. 
 
 {\bf $X$-Greedy Algorithm (XGA).} This algorithm uses the $X$-greedy step (\ref{gs1}) and the best one dimensional approximation step. 
 
 {\bf Dual Greedy Algorithm (DGA).} This algorithm uses the dual greedy step (\ref{gs2}) and the best one dimensional approximation step. 
 
 {\bf Weak Dual Greedy Algorithm  (WDGA).} Let $\tau := \{t_k\}_{k=1}^\infty$ be a given sequence of numbers $t_k\in [0,1]$, $k=1,2,\dots$, which we call the {\it weakness sequence}. At the $m$th iteration ($m=1,2,\dots$) this algorithm uses the weak dual greedy step with parameter $t_m$ (see (\ref{gs3} and (\ref{gs4})) and the best one dimensional approximation step. For the reader's convenience we give the definition of this 
 important algorithm in detail. Suppose that after $m-1$ iterations we have built $f_{m-1}$. Then we look for an element 
 $\ff_m \in \cD$ satisfying the inequality (in the case $t_m=1$ we assume existence)
 \be\label{ga1}
 |F_{f_{m-1}}(\ff_m)| \ge t_m \sup_{\phi \in \cD} |F_{f_{m-1}}(\phi)|.
 \ee
We now define $\la_m$ as 
\be\label{ga2}
\|f_{m-1} -\la_m\ff_m\|_X = \min_{\la} \|f_{m-1} -\la\ff_m\|_X.
\ee
Finally, let
\be\label{ga3}
f_m := f_{m-1} -\la_m\ff_m.
\ee

Sometimes it is convenient for us to include the weakness sequence $\tau$ in the notation of WDGA. In such a case we write WDGA($\tau$).
Clearly, the DGA is a special case of the WDGA($\tau$), when $\tau = \{1\}$, i.e. $t_k=1$ for all $k$. 

The above defined algorithms XGA, DGA, and WDGA($\tau$) are natural and simple greedy algorithms. However,
it turns out that they have some drawbacks. First, these algorithms are difficult for studying. In many cases we do not 
have results on their convergence and on the right order of convergence on classes $A_1(\cD)$. Second, it is known that 
even in the case of $X$ being a Hilbert space in some cases these algorithms do not provide the optimal (in the sense of order) rate of decay of the error of best $m$-term approximation. These facts motivated researchers to study other greedy algorithms, which might be somewhat more complex than the above ones but are better in the sense of convergence and 
rate of convergence. We give definitions of some of them. 

 {\bf Weak Chebyshev Greedy Algorithm (WCGA).} Let $\tau$ be a weakness sequence. At the $m$th iteration ($m=1,2,\dots$) this algorithm uses the weak dual greedy step with parameter $t_m$ (see (\ref{gs3} and (\ref{gs4})) and the Chebyshev projection mapping (see (\ref{as1})). For the reader's convenience we give the definition of this 
 important algorithm in detail. Suppose that after $m-1$ iterations we have built $f_{m-1}$ and the elements $\ff_1,\ff_2,\dots, \ff_{m-1}$. Then we look for an element 
 $\ff_m \in \cD$ satisfying the inequality (\ref{ga1}) (in the case $t_m=1$ we assume existence).
  We now define $Y_m := \sp(\ff_1,\ff_2,\dots,  \ff_{m})$ and 
\be\label{ga4}
G_m := P_{Y_m}(f).
\ee
Finally, let
\be\label{ga5}
f_m := f-G_m.
\ee

{\bf  Weak Greedy Algorithm with Free Relaxation (WGAFR).} Let $\tau$ be a weakness sequence. At the $m$th iteration ($m=1,2,\dots$) this algorithm uses the weak dual greedy step with $f_{m-1}$ and parameter $t_m$ (see (\ref{gs3}) and (\ref{gs4})) and the free relaxation step (see (\ref{as2}) and (\ref{as3}) with $f_{m-1}$, $G_{m-1}$, and $\ff_m$.

Sometimes it is convenient for us to include the weakness sequence $\tau$ in the notation of WGAFR. In such a case we write WGAFR($\tau$).
Algorithms WGAFR, WCGA and the WDGA use the same greedy step (\ref{ga1}). This means that all of them fall in the category of dual greedy algorithms. The approximation step of the WGAFR is based on the idea of relaxation -- building the approximant  as a linear combination of the previous approximant $G_{m-1}$ and the new element $\ff_m$. Certainly, it makes the WGAFR more complex than the WDGA but much simpler than the WCGA.  

Let us discuss $X$-greedy type algorithms with free relaxation. The following one is known in the literature (see, for instance, \cite{VTbook}, p.379). 

 {\bf  $X$-Greedy Algorithm with Free Relaxation  (XGAFR). The first version.} 
  We define $f_0   :=f$ and $G_0  := 0$. Then for each $m\ge 1$ we have the following inductive definition.

 (1)  $\varphi_m\in\cD$ and $ \lambda_m $, $w_m$ are such that (we assume existence of such $\varphi_m$)
$$
\|f-((1-w_m)G_{m-1} + \la_m\varphi_m)\| = \inf_{g\in\cD, \la,w}\|f-((1-w)G_{m-1} + \la g)\|
$$
and 
$$
G_m:=   (1-w_m)G_{m-1} + \la_m\varphi_m.
$$

(2) Let
$$
f_m   := f-G_m.
$$

Sometimes we also use the notation XGAFR1 for the above algorithm in order to indicate that it is one of the 
modifications of the $X$-Greedy Algorithm with Free Relaxation. 
Here is a modification of the above greedy algorithm, which we denote by XGAFR2. It seems like this algorithm has not been defined and studied in the literature. 

{\bf  $X$-Greedy Algorithm with Free Relaxation  (XGAFR2). The second version.} This algorithm uses the $X$-greedy step (\ref{gs1}) and the free relaxation approximation step (see (\ref{as2}) and (\ref{as3}) with $f_{m-1}$, $G_{m-1}$, and $\ff_m$. 

Note that the XGAFR2 is an analog of the WGAFR -- they both use the same free relaxation approximation step. 
There is no dual greedy analogs of the XGAFR1. If we combine the ideas of the XGAFR1 and the WCGA then we 
obtain the algorithm, which provides at the $m$th iteration the best $m$-term approximant of $f$ under assumption that it exists. Namely,
$$
\sigma_m(f,\cD)_X =  \inf_{\phi_k\in\cD, \la_k, k=1,2,\dots,m}\|f- (\la_1 \phi_1+\cdots+\la_m\phi_m)\|_X.
$$

We now proceed to the greedy algorithms with fixed relaxation. The following algorithm is known and studied (see, for instance, \cite{VTbook}, p.354).

{\bf Greedy Algorithm with Weakness $\tau$ and Relaxation $\br$ (GAWR($\tau,\br$).} Let $\tau$ be a weakness sequence and $\br$ be a relaxation sequence. This algorithm uses the dual greedy step (\ref{gs2}) and the fixed relaxation approximation step (see (\ref{as4}) and (\ref{as5}). 

It is clear that in the case $\br = \mathbf 0$ the GAWR($\tau,\mathbf 0$) coincides with the WDGA($\tau$). 

{\bf $X$-Greedy Algorithm with Relaxation $\br$ (XGAR($\br$).} Let $\br$ be a relaxation sequence. This algorithm uses the  $X$-greedy step (\ref{gs1}) and the fixed relaxation approximation step (see (\ref{as4}) and (\ref{as5}) with $f_{m-1}$, $G_{m-1}$, and $\ff_m$. 

It is clear that in the case $\br = \mathbf 0$ the XGAR($\mathbf 0$) coincides with the XGA.

\section{Convergence of greedy algorithms}
\label{con}

We now formulate some known results on convergence of the above defined algorithms in real Banach spaces. We will use the following convenient terminology.

{\bf Convergence.} We say that an algorithm GA converges in a Banach space $X$ if for any dictionary $\cD$ and each $f\in X$ we have for any realization of the algorithm GA the following property 
$$
\lim_{m\to \infty} \|f_m\|_X =0.
$$

 We begin with the most studied algorithms WCGA and WGAFR. We  proceed to a theorem on convergence of WCGA and WGAFR. In the formulation of this theorem we need a special sequence, which is defined for a given modulus of smoothness $\rho(u)$ and a given weakness sequence $\tau = \{t_k\}_{k=1}^\infty$.
\begin{Definition}\label{xi} Let $\rho(u)$ be an even convex function on $(-\infty,\infty)$ with the property: $\rho(2) \ge 1$ and
$$
\lim_{u\to 0}\rho(u)/u =0.
$$
For any $\tau = \{t_k\}_{k=1}^\infty$, $0<t_k\le 1$, and $0<\theta\le 1/2$
 we define $\xi_m := \xi_m(\rho,\tau,\theta)$ as a number $u$ satisfying the equation
\be\label{2.1}
\rho(u) = \theta t_m u.  
\ee
\end{Definition}
\begin{Remark}\label{xim} Assumptions on $\rho(u)$ imply that the function
$$
s(u) := \rho(u)/u, \quad u\neq 0,\quad s(0) =0,
$$
is a continuous increasing function on $[0,\infty)$   with $s(2)\ge 1/2$. Thus (\ref{2.1}) has a unique solution $\xi_m=s^{-1}(\theta t_m)$ such that $0<\xi_m\le 2$.  
\end{Remark}
The following theorem (see, for instance, \cite{VTbook}, p.341 and p.352) gives a sufficient condition for convergence of  WCGA and WGAFR. 
\begin{Theorem}\label{conT1} Let $X$ be a uniformly smooth Banach space with modulus of smoothness $\rho(u)$. Assume that a sequence $\tau :=\{t_k\}_{k=1}^\infty$ satisfies the condition: for any $\theta >0$ we have
$$
\sum_{m=1}^\infty t_m \xi_m(\rho,\tau,\theta) =\infty.
$$
 Then, both the WCGA($\tau$) and the WGAFR($\tau$) converge in $X$. 
 \end{Theorem}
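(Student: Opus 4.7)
The plan is to derive a per-iteration decay inequality for $\|f_m\|$ controlled by the dual quantity $A_m:=\|F_{f_{m-1}}\|_\cD$, and then to show that $A_m$ stays bounded away from zero as long as $\|f_m\|$ does, which forces $\|f_m\|\to 0$. First I would establish the standard smoothness inequality: for $\|x\|=\|y\|=1$, combining the definition of $\rho$ with $\|x-uy\|\ge F_x(x-uy)=1-uF_x(y)$ gives $\|x+uy\|\le 1+uF_x(y)+2\rho(u)$, and by homogeneity, for arbitrary nonzero $x,y$,
$$\|x+\lambda y\|\le\|x\|+\lambda F_x(y)+2\|x\|\rho(\lambda\|y\|/\|x\|).$$
Apply this with $x=f_{m-1}$, $y=-\phi_m$, where after the weak greedy step (adjusting the sign of $\phi_m$ via $\cD^\pm$ in the real case) one has $F_{f_{m-1}}(\phi_m)\ge t_m A_m$. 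For WCGA, $\|f_m\|\le\|f_{m-1}-\lambda\phi_m\|$ because $\phi_m\in Y_m$; for WGAFR the same bound follows by setting $w=0$ in the free-relaxation minimization. Using $\|\phi_m\|\le 1$ and substituting $\lambda=u\|f_{m-1}\|$ gives $\|f_m\|\le\|f_{m-1}\|(1-u t_m A_m+2\rho(u))$, and choosing $u=\xi_m:=\xi_m(\rho,\tau,\theta)$ (so that $\rho(u)=\theta t_m u$) specializes this to
$$\|f_m\|\le\|f_{m-1}\|\bigl(1-t_m\xi_m(A_m-2\theta)\bigr).$$

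Both algorithms produce non-increasing $\|f_m\|$ (WCGA: $Y_{m-1}\subset Y_m$; WGAFR: $(w,\lambda)=(0,0)$ is admissible), so $\|f_m\|\downarrow a$ for some $a\ge 0$; suppose for contradiction that $a>0$. The crucial step is a uniform lower bound on $A_m$. I would choose the norming functional $F_{f_{m-1}}$ so that $F_{f_{m-1}}(G_{m-1})=0$: for WCGA this is the duality characterization of the Chebyshev projection onto $Y_{m-1}$; for WGAFR it follows from the first-order optimality of $(w_{m-1},\lambda_{m-1})$ in the free-relaxation minimization, which yields a norming functional of $f_{m-1}$ vanishing on $\operatorname{span}(G_{m-2},\phi_{m-1})\ni G_{m-1}$. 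Then $F_{f_{m-1}}(f)=\|f_{m-1}\|\ge a$. Since $\overline{\operatorname{span}\,\cD}=X$, I fix once and for all a finite combination $h=\sum_{j=1}^N c_j g_j$, $g_j\in\cD$, with $\|f-h\|<a/2$, and set $C:=\sum_j|c_j|$; then
$$a\le F_{f_{m-1}}(f)\le F_{f_{m-1}}(h)+\|f-h\|\le C\,A_m+a/2,$$
so $A_m\ge a/(2C)$ uniformly in $m$.

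Setting $\theta:=a/(8C)$ gives $A_m-2\theta\ge 2\theta$, so the decay recursion becomes $\|f_m\|\le\|f_{m-1}\|(1-2\theta t_m\xi_m(\rho,\tau,\theta))$; iterating and using $1-x\le e^{-x}$,
$$\|f_m\|\le\|f\|\exp\Bigl(-2\theta\sum_{k=1}^m t_k\xi_k(\rho,\tau,\theta)\Bigr),$$
which tends to $0$ by the divergence hypothesis applied to this particular $\theta$, contradicting $\|f_m\|\ge a>0$. The main obstacle I anticipate is justifying, uniformly for both WCGA and WGAFR, the choice of a norming functional $F_{f_{m-1}}$ that both realizes the weak greedy step and annihilates $G_{m-1}$: this requires the duality characterization of the best approximation in a uniformly smooth Banach space for WCGA and an extraction of orthogonality from the two-parameter free-relaxation minimization for WGAFR, and it is the place where a careful verification is needed before the density-based lower bound on $A_m$ can be combined with the decay inequality to close the argument.
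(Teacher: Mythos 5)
Your proof is correct and follows essentially the same route as the paper's: the modulus-of-smoothness inequality (Lemma~\ref{LL0}) gives the per-iteration recursion in $\|f_m\|$ controlled by $\|F_{f_{m-1}}\|_\cD$, and assuming $\|f_m\|\downarrow a>0$ one uses biorthogonality plus the density of $\operatorname{span}\cD$ to bound $\|F_{f_{m-1}}\|_\cD$ from below, then picks $\theta$ accordingly and lets the divergence hypothesis force a contradiction. The concern you flag at the end is not actually an obstacle: in a uniformly smooth space the norming functional $F_{f_{m-1}}$ is unique (so nothing is being chosen), and $F_{f_{m-1}}(G_{m-1})=0$ follows for the WCGA directly from the duality characterization of Chebyshev projection (Lemma~\ref{LL1}), while for the WGAFR the free-relaxation step is literally the Chebyshev projection of $f$ onto $\operatorname{span}(G_{m-2},\varphi_{m-1})\ni G_{m-1}$, so the same lemma applies --- this biorthogonality is precisely the axiom the paper isolates in its WBGA framework (Theorem~\ref{UT1}).
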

\begin{Corollary}\label{conC1} Let a Banach space $X$ have modulus of smoothness $\rho(u)$ of power type $1<q\le 2$, that is, $\rho(u) \le \gamma u^q$. Assume that 
\be\label{2.2}
\sum_{m=1}^\infty t_m^p =\infty, \quad p=\frac{q}{q-1}.  
\ee
Then, both the WCGA($\tau$) and the WGAFR($\tau$) converge in $X$. 
\end{Corollary}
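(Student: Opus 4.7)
The plan is to deduce this corollary directly from Theorem~\ref{conT1} by verifying its hypothesis, namely that $\sum_m t_m \xi_m(\rho,\tau,\theta) = \infty$ for every $\theta>0$. Under the power-type bound $\rho(u)\le \gamma u^q$, I expect to extract an explicit lower bound for $\xi_m$ in terms of $t_m$, and then check that the resulting tail behaves like $t_m^p$ with $p=q/(q-1)$, so that assumption (\ref{2.2}) takes over.

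First I would fix $\theta\in(0,1/2]$ and look at the defining equation $\rho(\xi_m) = \theta t_m \xi_m$ from Definition~\ref{xi}. By Remark~\ref{xim} we know $\xi_m\in(0,2]$, so division by $\xi_m$ is legitimate. Substituting the hypothesis $\rho(\xi_m)\le \gamma \xi_m^q$ into the left-hand side yields $\gamma \xi_m^q \ge \theta t_m \xi_m$, i.e.
\be\label{xibd}
\xi_m \ge \left(\frac{\theta t_m}{\gamma}\right)^{1/(q-1)}.
\ee
Multiplying (\ref{xibd}) by $t_m$ and using $1 + 1/(q-1) = q/(q-1) = p$ gives
$$
t_m \xi_m \ge \left(\frac{\theta}{\gamma}\right)^{1/(q-1)} t_m^{p}.
$$
Summing in $m$ and invoking the hypothesis $\sum_m t_m^p = \infty$ produces $\sum_{m=1}^\infty t_m\xi_m(\rho,\tau,\theta) = \infty$ for every $\theta>0$, which is exactly the condition required by Theorem~\ref{conT1}. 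Thus both the WCGA($\tau$) and the WGAFR($\tau$) converge in $X$.

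There is essentially no obstacle in this argument; it is a one-line application of Theorem~\ref{conT1}. The only point requiring attention is to ensure the manipulation producing (\ref{xibd}) is justified, which is handled by the positivity of $\xi_m$ from Remark~\ref{xim}. Note also that the assumption $\rho(2)\ge 1$ needed in Definition~\ref{xi} is automatic here, since the general inequality $\rho(u)\ge \max(0,u-1)$ recorded in the introduction gives $\rho(2)\ge 1$, so Definition~\ref{xi} and Theorem~\ref{conT1} apply without further ado.
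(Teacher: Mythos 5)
Your derivation is correct and is exactly the standard reduction of Corollary~\ref{conC1} to Theorem~\ref{conT1}: from $\gamma\xi_m^q \ge \rho(\xi_m)=\theta t_m\xi_m$ one gets $\xi_m\ge(\theta t_m/\gamma)^{1/(q-1)}$, hence $t_m\xi_m\gg t_m^p$, and the divergence hypothesis follows from (\ref{2.2}). Your side remark that $\rho(2)\ge 1$ is automatic (from $\rho(u)\ge\max(0,u-1)$) is also the right observation to ensure Definition~\ref{xi} applies.
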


\begin{Remark}\label{conR2} It is known that the condition (\ref{2.2}) is sharp in the class of Banach spaces with $\rho(u,X) \le \gamma u^q$ (see, for instance, \cite{VTbook}, p.346).
\end{Remark}

Here is a known result on convergence of the GAWR($t,\br$) (see, for instance, \cite{VTbook}, p.355). 

\begin{Theorem}\label{conT2} Let a sequence $\mathbf{r}$ satisfy the conditions
$$
\sum_{k=1}^\infty r_k =\infty,\quad r_k\to 0\quad\text{as}\quad k\to\infty.
$$
Then the GAWR($t,\mathbf{r}$)  converges in any uniformly smooth Banach space.  
\end{Theorem}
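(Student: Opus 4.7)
The plan is to argue by contradiction, combining the modulus-of-smoothness inequality for the greedy step with the hypotheses $r_m\to 0$ and $\sum r_m=\infty$. First, I would establish a priori bounds: setting $h_m:=f-(1-r_m)G_{m-1}=(1-r_m)f_{m-1}+r_m f$, the choice $\lambda=0$ is admissible in the minimization defining $\lambda_m$, so $\|f_m\|\le\|h_m\|\le(1-r_m)\|f_{m-1}\|+r_m\|f\|$, and by induction $\|f_m\|\le\|f\|$ for every $m$, whence $\|G_m\|\le 2\|f\|$.

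Second, I would derive the main recursive inequality. From the identity $h_m-(1-r_m)\mu\varphi_m=(1-r_m)(f_{m-1}-\mu\varphi_m)+r_m f$ and the triangle inequality,
$$\|f_m\|\le (1-r_m)\|f_{m-1}-\mu\varphi_m\|+r_m\|f\|.$$
The first term is controlled by the standard smoothness estimate
$$\|x-\mu g\|\le\|x\|-\mu F_x(g)+2\|x\|\rho(\mu/\|x\|),\qquad \|g\|\le 1,\ \mu\ge 0,\ F_x(g)\ge 0,$$
which is an immediate consequence of the definition of $\rho$ together with the norming property of $F_x$. Choosing the sign of $\mu$ so that $\mu F_{f_{m-1}}(\varphi_m)\ge 0$, invoking the weak greedy bound $|F_{f_{m-1}}(\varphi_m)|\ge t\|F_{f_{m-1}}\|_\cD$, and substituting $\mu=u\|f_{m-1}\|$ yields, for every $u>0$,
$$\|f_m\|\le (1-r_m)\|f_{m-1}\|\bigl[1-tu\|F_{f_{m-1}}\|_\cD+2\rho(u)\bigr]+r_m\|f\|.$$

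Third, I would close the argument by contradiction: suppose $\alpha:=\limsup_m\|f_m\|>0$. The key technical step — and the main obstacle — is to show that this forces $\|F_{f_{m-1}}\|_\cD\ge\epsilon$ along some subsequence for a fixed $\epsilon>0$. Granted this, because $\rho(u)/u\to 0$ one can pick $u_0>0$ with $2\rho(u_0)\le tu_0\epsilon/2$; along the subsequence the recursive inequality reads $\|f_m\|\le (1-r_m)(1-c)\|f_{m-1}\|+r_m\|f\|$ with $c:=tu_0\epsilon/2>0$. Iterating this estimate with the trivial bound $\|f_m\|\le (1-r_m)\|f_{m-1}\|+r_m\|f\|$ on the complementary indices, and exploiting $r_m\to 0$ and $\sum r_m=\infty$ so that $\prod(1-cr_k)\to 0$ absorbs the additive slack terms $r_m\|f\|$, a $\limsup$ argument yields $\alpha\le(1-c)\alpha$, a contradiction.

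The principal difficulty lies in promoting a lower bound on $\|f_m\|$ to a lower bound on $\|F_{f_{m-1}}\|_\cD$ along a subsequence; this is not a consequence of density of $\cD$ by itself but requires the uniform smoothness of $X$ (equivalently, the uniform convexity of $X^*$, which delivers uniform continuity of the duality map) together with the fundamental character of $\cD$. The standard maneuver is to observe that $\|F_{f_m}\|_\cD\to 0$ would force $F_{f_m}\to 0$ weak-$*$ on $\sp(\cD)$, and then to combine this with the explicit representation $f_m=f-G_m$, the uniform bound $\|G_m\|\le 2\|f\|$, and the identity $F_{f_m}(f_m)=\|f_m\|\ge\alpha$ to extract the required positive lower bound.
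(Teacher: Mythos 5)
Your Steps 1 and 2 are correct and match the structure of the standard argument: the a priori bound $\|f_m\|\le\|f\|$, and the recursive inequality
$$\|f_m\|\le (1-r_m)\|f_{m-1}\|\bigl(1-tu\|F_{f_{m-1}}\|_\cD+2\rho(u)\bigr)+r_m\|f\|,\qquad u>0,$$
are exactly the right starting point. The gap is in Step 3, and the ``standard maneuver'' you sketch does not close it.

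The difficulty you correctly identify is that a lower bound on $\|f_m\|$ must be promoted to a lower bound on $\|F_{f_{m-1}}\|_\cD$. For the WCGA or WGAFR that step is easy because of \emph{biorthogonality}: $F_{f_m}(G_m)=0$, so $\|f_m\|=F_{f_m}(f)\le \|f-p\|+\|F_{f_m}\|_\cD\sum_j|c_j|$ for any finite combination $p=\sum_j c_j g_j$, giving the bound immediately. The GAWR does \emph{not} have this property: the relaxation step only optimizes $\lambda$, so Lemma \ref{LL1} yields $F_{f_m}(\ff_m)=0$, but $F_{f_m}(G_{m-1})$ (hence $F_{f_m}(G_m)=(1-r_m)F_{f_m}(G_{m-1})$) need not vanish. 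Writing $\|f_m\|=F_{f_m}(f)-F_{f_m}(G_m)$ and invoking the bound $\|G_m\|\le 2\|f\|$, as you propose, does not help: $F_{f_m}\to 0$ weak-$*$ together with $\{G_m\}$ bounded does \emph{not} give $F_{f_m}(G_m)\to 0$, since $\{G_m\}$ is not norm-compact; and bounding $|F_{f_m}(G_m)|$ by $\|F_{f_m}\|_\cD$ times the $\ell_1$-coefficient norm of $G_m$ fails because that $\ell_1$-norm grows (from the recursion $A_m\le (1-r_m)A_{m-1}+|\lambda_m|$ with $r_m\to 0$ it can diverge). So the lower bound on $\|F_{f_{m-1}}\|_\cD$ is not established by your route, and it is not a soft weak-$*$/density fact.

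The missing ingredient that \emph{does} work is precisely the orthogonality $F_{f_m}(\ff_m)=0$: substituting $f_m=(1-r_m)f_{m-1}+r_m f-\lambda_m\ff_m$ into $F_{f_m}(f_m)=\|f_m\|$ gives the identity
$$\|f_m\|=(1-r_m)F_{f_m}(f_{m-1})+r_m F_{f_m}(f),$$
and since $F_{f_m}(f_{m-1})\le\|f_{m-1}\|$ and $F_{f_m}(f)\le\|f-p\|+\|F_{f_m}\|_\cD\sum_j|c_j|$, one obtains a lower bound on $\|F_{f_m}\|_\cD$ in terms of the one-step quantity $\bigl(\|f_m\|-(1-r_m)\|f_{m-1}\|\bigr)/r_m$ rather than the uncontrolled $\|G_m\|_{A_1}$. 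This, combined with $\sum r_m=\infty$ and $r_m\to 0$, is what makes the argument go. A second, smaller, issue in your Step 3: even granting $\|F_{f_{m-1}}\|_\cD\ge\epsilon$ only along a subsequence, your iteration does not yield a contradiction as stated, because on the complementary indices the trivial bound $\|f_m\|\le(1-r_m)\|f_{m-1}\|+r_m\|f\|$ together with $\sum r_m=\infty$ permits $\|f_m\|$ to creep back up toward $\|f\|$, cancelling the gains made along the good subsequence. The argument therefore requires the lower bound on the full tail, or a more careful accounting of ``good'' and ``bad'' steps via the identity above; this is genuinely more than the sketch you gave.
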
 

Theorems \ref{conT1} and \ref{conT2} in the important special case $\tau =\{t\}$ provide convergence of those algorithms 
in any uniformly smooth Banach space. We do not have results of that kind for the simplest greedy algorithms WDGA and XGA. There is known result about convergence of the WDGA under an extra assumption on a Banach space (see \cite{GK} and \cite{VTbook}, p.376). 

\begin{Definition}\label{conD1} {\bf (Property $\Gamma$).} A uniformly smooth Banach space has property $\Gamma$ if there is a constant $\beta>0$ such that, for any $x,y\in X$ satisfying $F_x(y)=0$, we have
$$
\|x+y\|\ge\|x\|+\beta F_{x+y}(y).
$$
\end{Definition}
Property $\Gamma$ in the above form was introduced in \cite{GK}. This condition (formulated somewhat differently) was considered previously in the context of greedy approximation in \cite{Li1}.
\begin{Theorem}[{\cite{GK}}]\label{conT3}  Let $X$ be a uniformly smooth Banach space with property $\Gamma$. Then the WDGA($t$) with $t\in(0,1]$, converges in $X$.
\end{Theorem}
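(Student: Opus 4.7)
The plan is to show $\|f_m\| \to 0$ by combining the per-step decrease provided by property $\Gamma$ with the density of $\sp\cD$ in $X$ and the smoothness of the norm. Since $\la = 0$ is admissible in (\ref{ga2}), $\{\|f_m\|\}$ is non-increasing and converges to some $a \ge 0$; the task is to rule out $a > 0$. In a uniformly smooth space the norm is Fréchet differentiable away from the origin, so the one-dimensional optimality in (\ref{ga2}) gives $F_{f_m}(\ff_m) = 0$ whenever $f_m \neq 0$. Property $\Gamma$ applied with $x = f_m$ and $y = \la_m \ff_m$, which satisfies $F_x(y) = 0$, then yields
$$\|f_{m-1}\| = \|f_m + \la_m \ff_m\| \ge \|f_m\| + \beta \la_m F_{f_{m-1}}(\ff_m).$$
The one-sided derivative of $\la \mapsto \|f_{m-1} - \la \ff_m\|$ at $\la = 0$ equals $-F_{f_{m-1}}(\ff_m)$, so $\la_m F_{f_{m-1}}(\ff_m) \ge 0$, and telescoping gives
$$\sum_{m=1}^\infty \la_m F_{f_{m-1}}(\ff_m) \le \beta^{-1}\|f\|.$$

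The next step, using only uniform smoothness, is to show $|F_{f_{m-1}}(\ff_m)| \to 0$, whence $\|f_{m-1}\|_\cD \to 0$ by the greedy step (\ref{ga1}). Starting from the standard Lindenstrauss-type inequality
$$\|f_{m-1} - \la \ff_m\| \le \|f_{m-1}\| - \la F_{f_{m-1}}(\ff_m) + 2\|f_{m-1}\|\,\rho\bigl(\la/\|f_{m-1}\|\bigr),$$
and choosing $\la$ of appropriate sign with $|\la|/\|f_{m-1}\| = s^{-1}(F/4)$ for $F := |F_{f_{m-1}}(\ff_m)|$ and $s(u) := \rho(u)/u$, one obtains
$$\|f_{m-1}\| - \|f_m\| \ge \tfrac12\|f_{m-1}\|\,F\,s^{-1}(F/4).$$
Under the assumption $a > 0$ we have $\|f_{m-1}\| \ge a$, and since $\sum(\|f_{m-1}\| - \|f_m\|) < \infty$ we deduce $F \to 0$, i.e., $\|f_{m-1}\|_\cD \to 0$.

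To close the argument I would derive a contradiction from $a > 0$ using this decay of $\|f_{m-1}\|_\cD$. Since $\|F_{f_{m-1}}\| = 1$ and $F_{f_{m-1}}(g) \to 0$ for every $g \in \cD$, density of $\sp\cD$ in $X$ together with the uniform bound propagates this to $F_{f_{m-1}}(h) \to 0$ for every $h \in X$; in particular $F_{f_{m-1}}(f) \to 0$. The identity
$$\|f_{m-1}\| = F_{f_{m-1}}(f_{m-1}) = F_{f_{m-1}}(f) - \sum_{k=1}^{m-1} \la_k F_{f_{m-1}}(\ff_k)$$
then forces $\sum_{k=1}^{m-1} \la_k F_{f_{m-1}}(\ff_k) \to -a$, and the desired contradiction will follow by showing this sum instead tends to $0$. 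I expect this final step to be the main obstacle: controlling the sum uniformly in $m$ requires a tail estimate that I would attack by combining the summability from the first paragraph, the reflexivity of $X$ (automatic from uniform smoothness by Milman--Pettis) to extract a weakly convergent subsequence of $\{f_m\}$, and a further appeal to property $\Gamma$ along the iterates to identify the weak limit as $0$.
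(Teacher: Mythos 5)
Your preliminary reductions are sound: monotonicity of $\|f_m\|$, the orthogonality $F_{f_m}(\ff_m)=0$ from the best one-dimensional approximation step, the application of property $\Gamma$ giving $\|f_{m-1}\|-\|f_m\|\ge\beta\la_m F_{f_{m-1}}(\ff_m)$ with $\la_m F_{f_{m-1}}(\ff_m)\ge 0$ and hence the summability $\sum_m\la_m F_{f_{m-1}}(\ff_m)<\infty$, the uniform-smoothness computation that forces $|F_{f_{m-1}}(\ff_m)|\to 0$ (hence $\|F_{f_{m-1}}\|_\cD\to 0$) under the hypothesis $a:=\lim\|f_m\|>0$, and the density argument giving $F_{f_{m-1}}(f)\to 0$. (Small notational slip: you write $\|f_{m-1}\|_\cD$ for what should be $\|F_{f_{m-1}}\|_\cD$.) Note also that the paper itself does not reproduce a proof of this theorem; it cites \cite{GK}, so there is no internal proof to compare against, and the remarks below concern your proposal on its own.

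The gap you flag at the end is genuine, and the route you sketch to close it is not the right tool. The quantity you must control, $F_{f_{m-1}}(G_{m-1})$, is a \emph{diagonal} pairing in which both the functional and its argument move with $m$. Extracting a weakly convergent subsequence $f_{m_j}\rightharpoonup g$ (equivalently $G_{m_j}\rightharpoonup f-g$) controls $F(G_{m_j})$ only for a \emph{fixed} $F$, while $F_{f_{m-1}}\to 0$ weak-$*$ (which you do have, since $\|F_{f_{m-1}}\|_\cD\to 0$ and $\sp\cD$ is dense) controls $F_{f_{m-1}}(g)$ only for a \emph{fixed} $g$. Neither statement gives information about the diagonal. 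To pass to the limit in $F_{f_{m_j-1}}(G_{m_j-1})$ you would need a \emph{norm}-convergent subsequence of $\{G_m\}$, which reflexivity by itself does not provide.

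What does close the argument is a pointwise bound you already have in hand but do not exploit: $|F_{f_{m-1}}(\ff_k)|\le\|F_{f_{m-1}}\|_\cD=:d_m$ for \emph{every} $k$, not just $k=m$. Put $S_m:=\sum_{k\le m}|\la_k|$; each $\la_k\neq 0$ as long as $f_{k-1}\neq 0$, since $\la_k=0$ would force $F_{f_{k-1}}(\ff_k)=0$, which is excluded by the greedy step and density. Then
\[
\Bigl|\sum_{k=1}^{m-1}\la_k F_{f_{m-1}}(\ff_k)\Bigr|\ \le\ d_m\,S_{m-1}.
\]
If $a>0$, your identity forces $d_m S_{m-1}\ge a/2$ for all large $m$. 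Combine this with the summability you derived from $\Gamma$ together with the greedy step $|F_{f_{m-1}}(\ff_m)|\ge t\,d_m$, namely $\sum_m|\la_m|\,d_m\le\|f\|/(\beta t)<\infty$. If $S_m\to\infty$, then for $M>M_0$
\[
\sum_{m=M_0+1}^{M}|\la_m|\, d_m\ \ge\ \frac{a}{2}\sum_{m=M_0+1}^{M}\frac{S_m-S_{m-1}}{S_{m-1}}\ \ge\ \frac{a}{2}\,\ln\frac{S_M}{S_{M_0}}\ \longrightarrow\ \infty,
\]
a contradiction with the summability. If instead $S_m$ stays bounded, then $d_m S_{m-1}\to 0$ because $d_m\to 0$ by your smoothness step, again a contradiction. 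Thus $a=0$. This completes the proof along exactly the lines you set up; the missing closing step is an elementary sequence estimate using the pointwise bound $|F_{f_{m-1}}(\ff_k)|\le d_m$, not a weak-compactness argument.
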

  
\begin{Proposition}[{\cite{GK}}]\label{conP1}  The $L_p$-space with $1<p<\infty$ has property $\Gamma$.
\end{Proposition}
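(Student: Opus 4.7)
\medskip
\noindent\textbf{Proof plan.}
The proof is an explicit computation in $L_p$ based on the closed form of the norming functional: for nonzero $f\in L_p$,
$$
F_f=\|f\|_p^{1-p}\,|f|^{p-2}f\in L_{p'}
$$
(real case; the complex case uses $|f|^{p-2}\bar f$ and is handled by passing to real parts throughout). The hypothesis $F_x(y)=0$ thus becomes the orthogonality relation $\int|x|^{p-2}x\,y\,d\mu=0$, and the task is to produce $\beta=\beta(p)>0$ such that $\|x+y\|_p\ge\|x\|_p+\beta F_{x+y}(y)$.

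I would introduce the auxiliary function $\phi(t):=\|x+ty\|_p$ on $[0,1]$. Uniform smoothness of $L_p$ (for $1<p<\infty$) makes $\phi$ of class $C^1$ with
$$
\phi'(t)=F_{x+ty}(y)=\|x+ty\|_p^{1-p}\int|x+ty|^{p-2}(x+ty)\,y\,d\mu,
$$
and $\phi$ is convex. The orthogonality hypothesis gives $\phi'(0)=0$; convexity then forces $\phi'\ge0$ and nondecreasing, so $\phi$ itself is nondecreasing and in particular $\|x+ty\|_p\le\|x+y\|_p$ for $t\in[0,1]$. Since $\phi(1)-\phi(0)=\int_0^1\phi'(t)\,dt$, Property $\Gamma$ reduces to the integral estimate
$$
\int_0^1\phi'(t)\,dt\ \ge\ \beta\,\phi'(1). \qquad (\star)
$$
In an arbitrary Banach space one only has the reverse bound $\int_0^1\phi'(t)\,dt\le\phi'(1)$ from monotonicity of $\phi'$, so $(\star)$ is precisely where the special geometry of $L_p$ must enter.

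The main step is the ratio bound
$$
\phi'(t)\ \ge\ c_p\,t^{p-1}\,\phi'(1),\qquad 0\le t\le 1,
$$
for a constant $c_p>0$ depending only on $p$; integrating yields $(\star)$ with $\beta=c_p/p$. Writing $\phi'(t)/\phi'(1)=(\|x+y\|_p/\|x+ty\|_p)^{p-1}\cdot N(t)/N(1)$, where $N(t):=\int|x+ty|^{p-2}(x+ty)\,y\,d\mu$, the prefactor is $\ge1$ by the nondecreasingness of $\phi$, so it suffices to show $N(t)\ge c_p t^{p-1} N(1)$. Since $N(0)=0$ by orthogonality, this is a quantitative statement about the growth of $N$ from its zero initial value, and it reduces to a scalar pointwise inequality of the form
$$
\bigl(|a+tb|^{p-2}(a+tb)-|a|^{p-2}a\bigr)\,b\ \ge\ c_p\,t^{p-1}\,\bigl(|a+b|^{p-2}(a+b)-|a|^{p-2}a\bigr)\,b,
$$
valid for real $a,b$ and $t\in[0,1]$; upon integration against $d\mu$, the subtracted $|a|^{p-2}a\,b$ contributions are annihilated by the orthogonality relation, producing the claim on $N$. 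The disjoint-supports model case $\operatorname{supp}x\cap\operatorname{supp}y=\emptyset$ confirms the correct exponent: there $\phi(t)=(\|x\|_p^p+t^p\|y\|_p^p)^{1/p}$ and a direct computation gives $\phi'(t)/\phi'(1)\ge t^{p-1}$, yielding $\beta\ge 1/p$.

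The hard part is the scalar pointwise inequality above. It requires careful case analysis in the signs and relative magnitudes of $a$ and $b$, and the treatments for $p\ge2$ and for $1<p<2$ must bifurcate because of the convexity versus concavity of $s\mapsto|s|^{p-2}s$. Once the pointwise inequality is secured with a uniform constant depending only on $p$, the remaining steps---integration against $d\mu$, division by $\|x+ty\|_p^{p-1}$, and assembly of the final constant $\beta=\beta(p)>0$---are routine.
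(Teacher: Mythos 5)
Your framework — $\phi(t)=\|x+ty\|_p$, convexity, $\phi'(0)=0$, reducing property $\Gamma$ to a lower bound on $\int_0^1\phi'$, and the factorization $\phi'(t)/\phi'(1)=(\|x+y\|_p/\|x+ty\|_p)^{p-1}\,N(t)/N(1)$ with the prefactor $\ge1$ — is all sound. The gap is in the central claim $\phi'(t)\ge c_p\,t^{p-1}\phi'(1)$ and the scalar inequality you propose to establish it: \emph{both are false for $1<p<2$}, not merely harder to prove. Take $\ell_p^2$, $x=(1,\delta)$, $y=(-\delta^{p-1},1)$ (orthogonality holds exactly), and $t=\delta^s$ with $s>1$. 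A short Taylor computation gives $N(t)\sim(p-1)\,\delta^{\,p+s-2}$ and $N(1)\to1$ as $\delta\to0$, so $N(t)/(t^{p-1}N(1))\sim(p-1)\,\delta^{(2-p)(s-1)}\to0$; the prefactor stays bounded, so $\phi'(t)/(t^{p-1}\phi'(1))\to0$ as well. (Concretely: at $p=1.5$, $\delta=10^{-3}$, $t=\delta^2$ the ratio is already below $0.03$.) Thus the exponent $p-1$ cannot work uniformly for $1<p<2$, and the scalar pointwise inequality with that exponent is likewise refuted by $a=1$, $b=-1$, where the left side is $1-(1-t)^{p-1}\sim(p-1)t$, which is $o(t^{p-1})$ as $t\to0$ when $p<2$.

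You write that the disjoint-support case ``confirms the correct exponent,'' but that case only shows the ratio can be \emph{as small as} $t^{p-1}$; it does not show $t^{p-1}$ is a valid uniform lower bound, and the example above shows it is not the worst case for $p<2$. What is true is that the right exponent is $\max(p-1,1)$: for $p\ge2$ the exponent $p-1$ is the one to prove (and the disjoint-support case shows it is sharp), while for $1<p<2$ one should aim for $\phi'(t)\ge c_p\,t\,\phi'(1)$, i.e. a \emph{linear} lower bound. After normalizing $u=a/b$ and scaling out $|b|^p$, this linear version reduces to $\psi(u+t)-\psi(u)\ge c_p\,t\bigl(\psi(u+1)-\psi(u)\bigr)$ with $\psi(s)=|s|^{p-2}s$, which for $1<p<2$ follows from concavity of $\psi$ on $[0,\infty)$ when $u\ge0$ and from a tangent-line estimate (with a constant degrading to $p-1$) when $u+1\le0$; the mixed-sign interval $-1<u<0$ still needs its own case analysis. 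So the plan is salvageable, but only after replacing $t^{p-1}$ by $t$ in the range $1<p<2$, and the ``routine'' steps you defer are not routine until this is done.
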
  

 We now give some known results on the $X$-type greedy algorithms (see, for instance, \cite{VTbook}, pp.378--381. 
 
 \begin{Theorem}\label{conT4} The XGAFR converges in any uniformly smooth Banach space.  
$$
\lim_{m\to \infty} \|f_m\| =0.
$$
\end{Theorem}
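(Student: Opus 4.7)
The plan is to combine monotonicity of $\|f_m\|$ with an optimality-based smoothness estimate that forces $\|F_{f_m}\|_\cD\to 0$, and then bootstrap this via density of $\sp\cD$ to conclude $\|f_m\|\to 0$. First I would observe that choosing $w=0,\,\la=0$ in the joint minimization defining one XGAFR1 step gives $\|f_m\|\le\|f_{m-1}\|$, so $\alpha:=\lim_m\|f_m\|$ exists; I would then assume for contradiction that $\alpha>0$.

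Next I would extract first-order optimality conditions. Because $X$ is uniformly smooth, its norm is Gateaux differentiable on $X\setminus\{0\}$ with derivative $h\mapsto F_x(h)$ at $x$, and $F_x$ is uniquely determined. At the joint minimum $(\ff_m,\la_m,w_m)$, setting the partial derivatives with respect to $w$ and $\la$ equal to zero yields
\[
F_{f_m}(G_{m-1})=0, \qquad F_{f_m}(\ff_m)=0,
\]
and therefore $F_{f_m}(G_m)=0$ as well, mimicking the Chebyshev projection property that normally drives WCGA/WGAFR arguments.

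Now I would exploit the minimality at step $m+1$: taking $w=0$ there gives $\|f_{m+1}\|\le\|f_m-\la\phi\|$ for every $\la\in\bbR$ and every $\phi\in\cD$. Applying the standard smoothness bound $\|x+uy\|\le\|x\|+uF_x(y)+2\|x\|\rho(u\|y\|/\|x\|)$ with $x=f_m$, $y=-\phi$, $u=\la$, choosing the sign of $\la$ appropriately and taking the supremum over $\phi\in\cD$ (which produces $\|F_{f_m}\|_\cD$ by Definition \ref{Dnorm}), I obtain
\[
\|f_{m+1}\|\le\|f_m\|-|\la|\,\|F_{f_m}\|_\cD+2\|f_m\|\rho(|\la|/\|f_m\|).
\]
Since $\|f_m\|\to\alpha$ implies $\|f_m\|-\|f_{m+1}\|\to 0$, passing to $\limsup$ gives $\limsup_m\|F_{f_m}\|_\cD\le 2\alpha\rho(|\la|/\alpha)/|\la|$; letting $|\la|\to 0^+$ and using uniform smoothness $\rho(u)/u\to 0$ forces $\|F_{f_m}\|_\cD\to 0$.

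To finish, I would upgrade $\|F_{f_m}\|_\cD\to 0$ to $F_{f_m}(x)\to 0$ for every $x\in X$ using $\overline{\sp\cD}=X$: given $\delta>0$, approximate a fixed $x$ by some $y=\sum_{j=1}^N a_j g_j\in\sp\cD$ with $\|x-y\|<\delta$, so $|F_{f_m}(x)|\le\|F_{f_m}\|_\cD\sum_j|a_j|+\delta$, which forces $F_{f_m}(x)\to 0$ for every $x$. Applying this with $x=f$ and combining with $F_{f_m}(G_m)=0$ yields
\[
\|f_m\|=F_{f_m}(f_m)=F_{f_m}(f)-F_{f_m}(G_m)=F_{f_m}(f)\to 0,
\]
contradicting $\alpha>0$. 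The step I expect to demand the most care is making the first-order optimality conditions fully rigorous, especially in the edge cases where the norm fails to be differentiable (a finite $m$ with $f_m=0$, in which case the conclusion already holds, or degenerate situations with $G_{m-1}=0$ or $\ff_m=0$ where the corresponding partial derivative condition becomes vacuous but the remaining one still gives what is needed).
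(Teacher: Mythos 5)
Your proof is correct, and it follows essentially the standard route (the one underlying Theorems \ref{conT1} and \ref{UT1} in this paper and the proof in \cite{VTbook} that the paper cites): monotonicity of $\|f_m\|$; biorthogonality $F_{f_m}(G_m)=0$ from the free‑relaxation step; the smoothness inequality applied to the step‑$(m{+}1)$ minimality to force $\|F_{f_m}\|_\cD\to 0$ when $\alpha>0$; and density of $\sp\cD$ together with $\|F_{f_m}\|=1$ to upgrade this to $F_{f_m}(f)\to 0$, contradicting $\|f_m\|=F_{f_m}(f)\to\alpha$. The only stylistic difference is your derivation of the biorthogonality via first‑order optimality in $(w,\la)$; this is exactly the content of Lemma \ref{LL1} applied to the at‑most‑two‑dimensional subspace $\sp(G_{m-1},\ff_m)$ (noting, as in Remark \ref{UR1}, that $\{(1-w)G_{m-1}+\la\ff_m : w,\la\in\bbR\}=\sp(G_{m-1},\ff_m)$), which disposes of the degenerate cases ($G_{m-1}=0$, $\ff_m$ proportional to $G_{m-1}$, etc.) without any differentiability bookkeeping, so you may prefer to quote it directly rather than differentiate.
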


 \begin{Theorem}\label{conT5} Let a sequence $\mathbf{r}:=\{r_k\}_{k=1}^\infty$, $r_k\in[0,1)$, satisfy the conditions
$$
\sum_{k=1}^\infty r_k =\infty,\quad r_k\to 0\quad\text{as}\quad k\to\infty.
$$
Then the   XGAR($\mathbf{r}$) converges in any uniformly smooth Banach space.  
\end{Theorem}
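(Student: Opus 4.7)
The plan is to argue by contradiction, using a one-step decay estimate derived from the $X$-greedy rule together with the hypothesis $\sum_m r_m=\infty$. First, I would establish uniform boundedness. Write $h_m:=f-(1-r_m)G_{m-1}$; since $G_{m-1}=f-f_{m-1}$, one obtains the convex-combination identity $h_m=(1-r_m)f_{m-1}+r_m f$. Because $\lambda_m$ minimizes $\|h_m-\lambda\varphi_m\|$ over $\lambda$, comparing with $\lambda=0$ yields $\|f_m\|\le\|h_m\|\le(1-r_m)\|f_{m-1}\|+r_m\|f\|$, so by induction $\|f_m\|\le\|f\|$ and $\|G_m\|\le 2\|f\|$ for every $m$; moreover the one-sided near-monotonicity $\|f_m\|-\|f_{m-1}\|\le r_m\|f\|\to 0$ holds.

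Second, I would exploit the $X$-greedy step quantitatively. Since $\varphi_m$ minimizes $\inf_\mu\|f_{m-1}-\mu\phi\|$ over $\phi\in\cD$, two triangle inequalities (each costing at most $r_m\|G_{m-1}\|\le 2r_m\|f\|$) give the uniform comparison
\[
\|f_m\|\le\inf_\mu\|h_m-\mu\phi\|+4r_m\|f\|\quad\text{for every }\phi\in\cD.
\]
Applying the standard smoothness bound $\|x-\lambda y\|\le\|x\|-\lambda F_x(y)+2\|x\|\rho(\lambda\|y\|/\|x\|)$ with $x=h_m$ and $\phi$ chosen to nearly attain $\beta_m:=\|F_{h_m}\|_\cD$, then optimizing over $\lambda$ in the spirit of Definition~\ref{xi}, produces the recursive decay
\[
\|f_m\|\le\|h_m\|\bigl(1-c\,s^{-1}(\beta_m/8)\,\beta_m\bigr)+4r_m\|f\|,
\]
where $s(u):=\rho(u)/u$ and $c>0$ is an absolute constant.

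Third, assume for contradiction that $\|f_m\|\not\to 0$. Using the near-monotonicity together with a peak-selection argument, pass to a subsequence $\{m_k\}$ on which $\|f_{m_k}\|\ge\alpha>0$, so that $\|h_{m_k}\|\ge\alpha/2$ eventually. If $\liminf_k\beta_{m_k}>0$, the decay factor at step $m_k$ is bounded away from $1$; iterating the recursion while controlling the additive error $4r_m\|f\|$ via $r_m\to 0$ forces $\|f_m\|\to 0$, a contradiction. Otherwise $\beta_{m_k}\to 0$ on a sub-subsequence where $\|h_{m_k}\|\ge\alpha/2$: reflexivity of $X$ (a consequence of uniform smoothness) yields a weakly convergent sub-subsequence $h_{m_k}\rightharpoonup h^*$; density of $\mathrm{span}(\cD)$ in $X$ promotes $\beta_{m_k}\to 0$ to $F_{h_{m_k}}\to 0$ pointwise on $X$; and this is incompatible with $F_{h_{m_k}}(h_{m_k})=\|h_{m_k}\|\ge\alpha/2$ in view of the continuity of the duality map $f\mapsto F_f$ on bounded subsets of a uniformly smooth space.

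The main technical obstacle is precisely this last step—excluding $\beta_{m_k}\to 0$ while $\|h_{m_k}\|$ stays bounded below. It requires combining reflexivity of $X$, the density of $\cD$, and fine continuity properties of the duality map in a uniformly smooth Banach space; the remaining parts of the argument are standard manipulations of the smoothness inequality together with a telescoping bookkeeping over the recursion.
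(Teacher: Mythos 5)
Your proposal has a genuine gap in the third step, and the way it is structured means the argument never actually invokes the hypothesis $\sum_{k}r_k=\infty$.

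Steps 1 and 2 are sound: writing $h_m=f-(1-r_m)G_{m-1}=(1-r_m)f_{m-1}+r_m f$, using $h_m-f_{m-1}=r_m G_{m-1}$, and combining two triangle inequalities with the $X$-greedy optimality of $\ff_m$ indeed give $\|f_m\|\le\inf_\mu\|h_m-\mu\phi\|+4r_m\|f\|$ for every $\phi\in\cD^\pm$, from which the smoothness lemma produces a one-step decay governed by $\beta_m=\|F_{h_m}\|_\cD$. The difficulty you flag yourself is exactly the right one: nothing prevents $\beta_{m_k}\to 0$ while $\|h_{m_k}\|$ stays bounded below. But the mechanism you propose to rule this out does not work. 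Reflexivity gives a weakly convergent subsequence $h_{m_k}\rightharpoonup h^*$, and in a uniformly smooth space the duality map $x\mapsto F_x$ is norm-to-norm uniformly continuous on bounded sets away from the origin — it is \emph{not} continuous from the weak topology on $X$ to any useful topology on $X^*$. So weak convergence of $h_{m_k}$ gives no control whatsoever on $F_{h_{m_k}}$, and the identity $F_{h_{m_k}}(h_{m_k})=\|h_{m_k}\|\ge\alpha$ is perfectly compatible with $F_{h_{m_k}}(g)\to 0$ for every fixed $g\in X$: there is no contradiction because the argument of the functional moves with $k$. Density of $\operatorname{span}\cD$ does let you upgrade $\beta_{m_k}\to0$ to $F_{h_{m_k}}(g)\to0$ for each fixed $g$, but it does not control $F_{h_{m_k}}(G_{m_k-1})$, since the $\ell_1$-size of the representation of $G_{m_k-1}$ over $\cD$ is not bounded a priori; and controlling that quantity is precisely where the relaxation structure (the telescoping products $\prod(1-r_i)$, which tend to $0$ because $\sum r_k=\infty$) must enter.

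There is also a softer problem in the other branch: the decay estimate at step $m_k$ is only available along the subsequence, while between consecutive $m_k$'s the residual norm can increase by as much as $r_n\|f\|$ per step, and since $\sum r_n=\infty$ these increments need not be summable; ``iterating the recursion'' along a subsequence is therefore not enough without additional bookkeeping.

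The decisive sanity check is this: your argument uses only $r_m\to 0$, never $\sum r_m=\infty$. Setting $r_m\equiv 0$ turns XGAR into the plain XGA, and the identical argument would then ``prove'' that the XGA converges in every uniformly smooth Banach space. That is Open problem~6 in Section~\ref{OP} (and closely tied to Open problems~2--4), so the argument cannot be correct as stated. A correct proof of Theorem~\ref{conT5} must use the divergence $\sum r_k=\infty$ in an essential way (through the decay of the products $\prod_{i>j}(1-r_i)$), and the known proof is obtained along these lines from the corresponding argument for GAWR; see the reference cited alongside the statement.
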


\subsection{Convergence of the WGA in the Hilbert space}
\label{H}

We pointed out above that the problem of convergence of the simplest greedy algorithms WDGA and XGA is not satisfactory studied in the Banach space setting. In the special case of convergence in a Hilbert space the situation is much better. We now present the corresponding results.  
We begin with a discussion of the Pure Greedy Algorithm (PGA), which is the XGA working in a Hilbert space $H$. Then, the greedy step of the PGA is the following:   At the $m$th iteration we look for an element $\ff_m\in\cD$ and a number $\la_m$ satisfying (we assume existence) 
\be\label{1.1}
\|f_{m-1}-\la_m\ff_m\|_H=\inf_{\phi\in\cD,\la}\|f_{m-1}-\la \phi\|_H.  
\ee

We now explain that in the case of a Hilbert space the PGA coincides with the WDGA($\tau$), when $\tau = \{1\}$. 
Note that in the case of a Hilbert space the WDGA($\tau$) is called the Weak Greedy Algorithm with the weakness sequence $\tau$ (WGA($\tau$)). 
Indeed, in a Hilbert space a norming functional $F_f$ acts as follows
$$
F_f(g)=\<f/\|f\|,g\>.
$$
The weak dual greedy step with parameter $t_m$ is equivalent to the following step: We look for an element $\ff_m\in\cD$ such that
$$
|\<f_{m-1},\ff_m\>|\ge t_m\sup_{\phi\in\cD}\|<f_{m-1},\phi\>|,
$$
which means that in the case $t_m=1$ we have
\be\label{1.2}
|\<f_{m-1},\ff_m\>|=\sup_{\phi\in\cD}|\<f_{m-1},\phi\>|. 
\ee
Clearly, (\ref{1.1}) and (\ref{1.2}) give the same $\ff_m$, which is understood in the following way. If $\ff_m$ satisfies (\ref{1.1}) then it satisfies (\ref{1.2}) and vice versa. Thus,
in a Hilbert space both versions (\ref{1.1}) and (\ref{1.2}) result in the same PGA. 
 
 We now formulate some known results about convergence of the WGA($\tau$) in a Hilbert space. 
 
 We proved in \cite{VT82} a criterion on $\tau$ for convergence of the WGA($\tau$). Let us introduce some notation.
We define by $\cV$ the class of sequences $x=\{x_k\}_{k=1}^\infty$, $x_k\ge 0$, $k=1,2,\dots$, with the following property: there exists a sequence $0=q_0<q_1<\dots$ that may depend on $x$ such that
\be\label{2.9}
\sum_{s=1}^\infty \frac{2^s}{\Delta q_s} <\infty  
\ee
and
\be\label{2.10}
\sum_{s=1}^\infty 2^{-s}\sum_{k=1}^{q_s} x_k^2 <\infty,  
\ee
where $\Delta q_s:=q_s-q_{s-1}$.

\begin{Proposition}\label{conP1} The following two conditions are equivalent:
\be\label{2.11}
\tau \notin \cV,  
\ee
\be\label{2.12}
\forall \{a_j\}_{j=1}^\infty \in \ell_2,\quad a_j\ge 0,\quad \liminf_{n\to \infty} a_n\sum_{j=1}^na_j/t_n =0.
\ee
\end{Proposition}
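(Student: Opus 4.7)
The plan is to prove both implications by contraposition. Negating (2.12) asserts the existence of a nonnegative sequence $\{a_j\}\in\ell_2$ and a constant $c>0$ such that $a_nS_n\ge ct_n$ for every sufficiently large $n$, where $S_n:=\sum_{j=1}^na_j$. The proposition thus reduces to the claim that such an $\{a_j\}$ exists if and only if $\tau\in\cV$.

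For the direction \emph{existence of $\{a_j\}$ implies $\tau\in\cV$}: if $S_\infty<\infty$ then $\sum_n t_n^2\le(S_\infty/c)^2\|a\|_2^2<\infty$ and the trivial partition $q_s=4^s$ verifies both (2.9) and (2.10). Otherwise $S_n\uparrow\infty$ and $a_j\to 0$ (since $\{a_j\}\in\ell_2$), so $q_s:=\max\{n:S_n\le 2^{s/2}\}$ is well defined for large $s$ with $S_{q_s}=2^{s/2}-o(1)$. Then $S_{q_s}-S_{q_{s-1}}\gtrsim 2^{s/2}$, and Cauchy--Schwarz on the block gives $B_s:=\sum_{q_{s-1}<n\le q_s}a_n^2\ge(S_{q_s}-S_{q_{s-1}})^2/\Delta q_s\gtrsim 2^s/\Delta q_s$; summing in $s$ yields (2.9). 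For (2.10) I use the pointwise bound $t_k^2\le c^{-2}a_k^2S_k^2\le c^{-2}\cdot 2^r a_k^2$ on each block $(q_{r-1},q_r]$, so $\sum_{k\le q_s}t_k^2\le c^{-2}\sum_{r\le s}2^rB_r$, and exchanging the order of summation $\sum_s 2^{-s}\sum_{r\le s}2^rB_r\lesssim\sum_r B_r=\|a\|_2^2<\infty$.

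For the converse direction, given a partition $\{q_s\}$ realizing (2.9) and (2.10), I construct $\{a_j\}$ inductively, maintaining $L_s:=S_{q_s}\ge 2^{s/2}$. On block $s$ set $a_n:=ct_n/L_{s-1}+\alpha_s$ for $n\in(q_{s-1},q_s]$, where $\alpha_s\ge 0$ is chosen as the smallest value forcing $L_s\ge 2^{s/2}$: explicitly, $\alpha_s=\max\bigl(0,(2^{s/2}-L_{s-1}-c\gamma_s/L_{s-1})/\Delta q_s\bigr)$ with $\gamma_s:=\sum_{q_{s-1}<k\le q_s}t_k$. Since $S_n\ge L_{s-1}$ throughout block $s$, the required inequality $a_nS_n\ge a_nL_{s-1}\ge ct_n$ is immediate. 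The induction hypothesis $L_{s-1}^2\ge 2^{s-1}$ gives $\sum_{\text{block }s}(ct_n/L_{s-1})^2\le 2c^2\cdot 2^{-s}\beta_s$, where $\beta_s:=\sum_{q_{s-1}<k\le q_s}t_k^2$, while the bound $\alpha_s\Delta q_s\le 2^{s/2}$ gives $\alpha_s^2\Delta q_s\le 2^s/\Delta q_s$. Applying $(x+y)^2\le 2x^2+2y^2$ to $a_n^2$ and summing over all blocks controls $\|a\|_2^2$ by $\sum_s 2^{-s}\beta_s+\sum_s 2^s/\Delta q_s$, which is finite by (2.10) (in the equivalent form $\sum_s 2^{-s}\beta_s<\infty$, obtained from Abel rearrangement) and by (2.9); a finite perturbation for small $s$ handles the base case of the induction.

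The main obstacle is the converse direction. A naive ansatz $a_n=\lambda_s t_n$ that meets only the pointwise constraint breaks down when the $\ell_2$-mass of $\tau$ within a block is concentrated on few indices (as the example $t_n=\mathbf{1}_{n=2^k}$ illustrates, forcing the required $\ell_2$-norm of $a$ to diverge), so one must additionally spread a uniform boost $\alpha_s$ across the block to raise $L_s$ even on intervals where $t_k$ itself is small. The two defining hypotheses of $\cV$ then play symmetric, complementary roles: (2.10) controls the constraint contribution to $\|a\|_2^2$ and (2.9) controls the boost contribution.
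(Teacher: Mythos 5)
The paper states this proposition with a citation to \cite{VT82} and does not reproduce the proof, so there is no in-paper argument to compare against; assessed on its own merits, your argument is correct. The reduction to the equivalence ``\,$\tau\in\cV$ iff there is a witness $\{a_j\}\in\ell_2$, $a_j\ge 0$, with $a_nS_n\ge ct_n$ for all large $n$\,'' is the right frame. In the forward direction the level-set partition $q_s=\max\{n:S_n\le 2^{s/2}\}$, together with Cauchy--Schwarz on blocks (for (2.9)) and the summation-order swap (for (2.10)), works cleanly. In the converse direction the two-term ansatz $a_n=ct_n/L_{s-1}+\alpha_s$ is the essential observation; your remark that a pure block rescaling $a_n\propto t_n$ can fail when the $\ell_2$-mass of $\tau$ in a block is concentrated, and that the uniform boost $\alpha_s$ (whose $\ell_2$ cost is paid for by (2.9), while the scaled part is paid for by (2.10)) is what saves the construction, captures the real content of the lemma. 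The only loose ends are presentational: in the forward direction one should first rescale $a\mapsto\lambda a$ with $\lambda$ small enough that $\sup_j a_j<\sqrt 2-1$, which forces the level-set $q_s$ to be strictly increasing from $s=1$ on and thus produce a legitimate partition with $q_0=0$; and in the converse the first block needs an explicit nonzero assignment with finite $\ell_2$ cost before the recursion can take over, which is harmless since the constraint $a_nS_n\ge ct_n$ need only hold eventually. You flag both; they are routine repairs, not gaps in the argument.
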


\begin{Theorem}\label{conT6}The condition $\tau \notin \cV$ is necessary and sufficient for convergence of the  Weak Greedy Algorithm with the weakness sequence $\tau$.  
\end{Theorem}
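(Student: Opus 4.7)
The plan is to prove both directions: sufficiency (if $\tau\notin\cV$ then WGA converges) and necessity (if $\tau\in\cV$ then one can construct a counterexample), leaning heavily on the equivalence in Proposition~\ref{conP1}.

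For sufficiency, I would first record the Hilbert space Pythagoras estimate. Let $b_m := |\langle f_{m-1},\ff_m\rangle|$ and $a_m := \sup_{\phi\in\cD}|\langle f_{m-1},\phi\rangle|$, so $b_m\ge t_m a_m$. Since $\la_m=\langle f_{m-1},\ff_m\rangle/\|\ff_m\|^2$ and $\|\ff_m\|\le 1$,
\be\label{pyth}
\|f_{m-1}\|^2-\|f_m\|^2 \;=\; \frac{b_m^2}{\|\ff_m\|^2} \;\ge\; b_m^2 \;\ge\; t_m^2 a_m^2.
\ee
Thus $\|f_m\|$ is monotone non-increasing with some limit $L\ge 0$, and $\{b_m\},\{t_m a_m\}\in \ell_2$. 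Suppose for contradiction $L>0$. Apply Proposition~\ref{conP1} to the $\ell_2^+$ sequence $\{b_m\}$: it yields a subsequence $n_j\to\infty$ with $b_{n_j}(b_1+\cdots+b_{n_j})/t_{n_j}\to 0$, hence in particular $a_{n_j}(b_1+\cdots+b_{n_j})\to 0$.

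Next I would convert this into a contradiction with $L>0$. Writing $G_m=\sum_{k\le m}\la_k\ff_k$ with $|\la_k|\|\ff_k\|=b_k/\|\ff_k\|\ge b_k$, one still gets $\|G_m\|\le\|f\|+\|f_m\|\le 2\|f\|$, while for any $\phi\in\cD$, $|\langle f_{n-1},\phi\rangle|\le a_n$. Combining this with $\|f_{n-1}\|^2=\langle f_{n-1},f\rangle-\langle f_{n-1},G_{n-1}\rangle$ and expanding $G_{n-1}$ along the dictionary gives $\|f_{n-1}\|^2\le |\langle f_{n-1},f\rangle|+a_{n-1}\sum_{k<n-1}|\la_k|\|\ff_k\|$. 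Choosing a test expansion of $f$ in $\conv(\cD^{\pm})$ (using density of $\sp\,\cD$) and passing through the subsequence $n_j$, both summands can be made arbitrarily small, forcing $L=0$.

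For necessity, given $\tau\in\cV$ with witness partition $0=q_0<q_1<\cdots$ satisfying $\sum 2^s/\Delta q_s<\infty$ and $\sum 2^{-s}\sum_{k\le q_s}t_k^2<\infty$, I would build $H=\ell_2$ with orthonormal basis $\{e_{s,i}\}$ indexed so that block $s$ has $\Delta q_s$ vectors. The dictionary $\cD$ would consist of normalized ``averaged block'' vectors of the form $c_s\sum_i \pm e_{s,i}$ (with signs and normalization $c_s\asymp 1/\sqrt{\Delta q_s}$) together with the standard basis vectors, scaled so $\|g\|\le 1$. The target $f=\sum_s \alpha_s v_s$ with weights $\alpha_s\asymp 2^{-s/2}$ is constructed so that at the $q_{s-1}+j$th iteration the algorithm, because of the weakness $t_{q_{s-1}+j}$, is allowed to pick a ``decoy'' coordinate basis vector instead of the block-average. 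Over $\Delta q_s$ iterations, the decoys chip away at block $s$ by only $\sim 2^s/\Delta q_s$ per block, which is summable in $s$ by hypothesis, so $\|f_{q_s}\|^2$ stays bounded away from $0$, and the $\ell_2$ accounting $\sum 2^{-s}\sum_{k\le q_s}t_k^2<\infty$ guarantees the required weak-greedy inequalities remain satisfiable throughout.

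The main obstacle is the necessity construction: one must simultaneously calibrate the block weights $\alpha_s$, the dictionary normalizations $c_s$, and the choice of ``decoy'' directions so that (i) every selected $\ff_m$ genuinely meets the weak dual inequality $b_m\ge t_m a_m$ for \emph{that particular} $t_m$, (ii) each greedy step eats only a controlled amount of the block mass, and (iii) both summability conditions in the definition of $\cV$ are used essentially. The sufficiency direction is comparatively routine once Proposition~\ref{conP1} is in hand and one is comfortable passing between $b_m$, $a_m$, and $t_m$ in the Pythagorean budget \eqref{pyth}.
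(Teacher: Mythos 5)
The paper does not prove this theorem; it is cited from \cite{VT82}, so I will assess your proposal on its own terms.

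The sufficiency half is essentially the right argument and, I believe, matches the original. A few clean-up remarks: in a Hilbert space one should first reduce to a normalized dictionary $\|\ff_m\|=1$ (otherwise $|\lambda_k|=b_k/\|\ff_k\|^2$ is not bounded by $b_k$ and the estimate for $\langle f_{n-1},G_{n-1}\rangle$ does not close; the reduction is harmless since replacing a subnormalized $g$ by $g/\|g\|$ produces the same residual sequence). Your displayed inequality should read $\|f_{n-1}\|^2\le|\langle f_{n-1},f\rangle|+a_n\sum_{k\le n-1}|\la_k|$ --- the index on $a$ must be $n$, not $n-1$, and the $\|\ff_k\|$ factor is spurious, since $|\langle f_{n-1},\ff_k\rangle|\le a_n$ directly. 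Finally, to dispatch the $|\langle f_{n-1},f\rangle|$ term you also need $a_{n_j}\to 0$, not merely $a_{n_j}\sum_{k\le n_j}b_k\to 0$; this does follow, because the partial sums $\sum_{k\le n_j}b_k$ are bounded away from $0$. All of this is repairable and the structure is sound.

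The necessity half has a genuine gap, which you flag yourself. Two concrete problems with the sketch. First, the arithmetic does not close: if block $s$ carries mass $\alpha_s^2\asymp 2^{-s}$ spread uniformly over $\Delta q_s$ coordinates, then each coordinate decoy $e_{s,i}$ satisfies $|\langle f,e_{s,i}\rangle|^2\asymp 2^{-s}/\Delta q_s$, so $\Delta q_s$ decoy picks deplete on the order of the \emph{entire} $2^{-s}$ block mass, not the claimed $\sim 2^s/\Delta q_s$; there is nothing left for the residual to stay away from zero. Second, the sketch never verifies that $e_{s,i}$ can legally be selected: while working in block $s$, the earlier block averages $v_{s'}$ for $s'<s$ still carry almost all their mass, so $a_k\ge|\langle f_{k-1},v_{s'}\rangle|$ is comparable to $\alpha_{s'}$, which dominates $|\langle f_{k-1},e_{s,i}\rangle|\approx\alpha_s c_s$, and the weak greedy inequality $|\langle f_{k-1},\ff_k\rangle|\ge t_k a_k$ simply fails unless $t_k$ is very small --- a quantitative constraint you would have to enforce and propagate through the construction. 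The two summability conditions defining $\cV$ have to enter in a sharper way than a dimensional guess: $\sum_s 2^{-s}\sum_{k\le q_s}t_k^2<\infty$ should control the cumulative residual loss when each decoy meets the weakness inequality with near-equality (so the per-step squared loss is $\asymp t_k^2 a_k^2$), while $\sum_s 2^s/\Delta q_s<\infty$ must carry separate bookkeeping load. As written, the necessity half is a plan, not a proof.
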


The following theorem gives a criterion of convergence in a special case of monotone weakness sequences $\tau$. Sufficiency was proved in \cite{VT75} and  necessity in \cite{LTe1}. 

\begin{Theorem}\label{conT7} In the class of monotone sequences $\tau = \{t_k\}_{k=1}^\infty$, $1\ge t_1\ge t_2 \ge\dots\ge 0$, the condition 
\be\label{2.13}
\sum_{k=1}^\infty \frac{t_k}{k} = \infty  
\ee
 is necessary and sufficient for convergence of the Weak Greedy Algorithm with the weakness sequence $\tau$.
 \end{Theorem}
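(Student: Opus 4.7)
The theorem splits into the sufficiency of $\sum_k t_k/k=\infty$ for convergence (VT75) and its necessity (LTe1); I would treat the two directions separately. The cleanest route for sufficiency is to invoke the general criterion of Theorem \ref{conT6} and reduce to the equivalence, for monotone $\tau$,
\[
\tau\notin\cV \iff \sum_{k=1}^\infty \frac{t_k}{k}=\infty.
\]
For the necessity direction I would instead construct an explicit counterexample in the Hilbert space $\ell_2$: given monotone $\tau$ with $\sum t_k/k<\infty$, exhibit a dictionary $\cD$, an initial $f\in\ell_2$, and a legal realization of WGA($\tau$) for which $\liminf_m\|f_m\|>0$.

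For the equivalence in the sufficiency direction, assume first that $\tau\in\cV$ with a witnessing partition $0=q_0<q_1<\cdots$ satisfying $\sum_s 2^s/\Delta q_s<\infty$ and $\sum_s 2^{-s}T_{q_s}<\infty$, where $T_n:=\sum_{k\le n}t_k^2$. Using the monotonicity of $\{t_k\}$ I would estimate $T_{q_s}\ge q_s t_{q_s}^2$ and, on each dyadic block, bound $\sum_{q_{s-1}<k\le q_s}t_k/k$ via Cauchy--Schwarz against $1/k^2$ by $t_{q_{s-1}+1}\sqrt{\Delta q_s/q_{s-1}}$; a second Cauchy--Schwarz application then couples the two summability conditions to give $\sum_k t_k/k<\infty$. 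Conversely, if $\sum t_k/k<\infty$, I would construct $\{q_s\}$ explicitly --- for typical monotone profiles the choice $q_s\asymp 2^s s^2$ works --- and verify both conditions defining $\cV$, once again leveraging monotonicity to control $T_{q_s}$ through the weighted tail of $\tau$.

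For the necessity of $\sum t_k/k=\infty$ I would construct the counterexample by taking $\cD$ to be a perturbation of an orthonormal basis, indexed by the steps of the algorithm, and tuned so that at each step the weakness criterion $|\<f_{m-1},\ff_m\>|\ge t_m\sup_{\phi\in\cD}|\<f_{m-1},\phi\>|$ admits a ``bad'' choice of $\ff_m$ whose best one-dimensional fit removes only a carefully controlled amount of energy. The design must ensure that the total energy removed over the whole run is at most a constant multiple of $\sum_k t_k/k$, which is finite by hypothesis, so $\|f_m\|$ cannot drop below a positive constant. The main obstacle in sufficiency is the monotone equivalence $\tau\notin\cV\iff\sum t_k/k=\infty$: both directions use monotonicity essentially to move between the dyadic quantities $t_{q_s}, T_{q_s}$ and the weighted tail $\sum t_k/k$. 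The main obstacle in necessity is the combinatorial design of the dictionary together with a specific WGA realization that is legal under $\tau$ yet wastes only the small budget prescribed by $\sum t_k/k$.
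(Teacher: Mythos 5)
The survey states this theorem without proof, citing \cite{VT75} for sufficiency and \cite{LTe1} for necessity, so there is no in-paper argument to compare against; note also that your reduction of sufficiency to the $\cV$-criterion of Theorem~\ref{conT6} is not what \cite{VT75} does (the $\cV$-criterion of \cite{VT82} postdates both cited proofs). The two-pronged structure you propose is appropriate, but both prongs have genuine gaps.

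In the sufficiency reduction, your block estimate pulls out $t_{q_{s-1}+1}$ by monotonicity, bounds it by $\bigl(q_{s-1}^{-1}\sum_{k\le q_{s-1}}t_k^2\bigr)^{1/2}$, and yields the per-block bound $\bigl(\sum_{k\le q_{s-1}}t_k^2\bigr)^{1/2}(\Delta q_s)^{1/2}/q_{s-1}$. After the coupling Cauchy--Schwarz with weights $2^{\pm s}$ you are left needing $\sum_s 2^s\Delta q_s/q_{s-1}^2<\infty$, and the two defining conditions of $\cV$ do not give this: take $q_s=2^{2^s}$, so that $\sum_s 2^s/\Delta q_s<\infty$ holds easily (and the second condition can be arranged by a suitable monotone $\tau$), yet $\Delta q_s/q_{s-1}^2\asymp 1$ and $\sum_s 2^s\Delta q_s/q_{s-1}^2=\infty$, so your chain does not close. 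The repair is to not extract $t_{q_{s-1}+1}$ at all. Apply Cauchy--Schwarz to the block directly,
$$
\sum_{q_{s-1}<k\le q_s}\frac{t_k}{k}\le\Bigl(\sum_{k\le q_s}t_k^2\Bigr)^{1/2}\Bigl(\frac{1}{q_{s-1}}-\frac{1}{q_s}\Bigr)^{1/2},
$$
keep the telescoping factor, and then a second Cauchy--Schwarz with weights $2^{\pm s}$ together with Abel summation gives
$$
\sum_{s\ge2}2^s\Bigl(\frac{1}{q_{s-1}}-\frac{1}{q_s}\Bigr)=\frac{4}{q_1}+\sum_{s\ge2}\frac{2^s}{q_s}\le\frac{4}{q_1}+\sum_{s\ge1}\frac{2^s}{\Delta q_s}<\infty ,
$$
with the first block $k\le q_1$ contributing at most $1+\ln q_1$. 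This repaired argument uses no monotonicity at all, matching the remark after Theorem~\ref{conT7} that the sufficiency direction does not require monotonicity of $\tau$; your version invokes monotonicity twice and could not recover that stronger conclusion.

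For necessity, you already acknowledge that $q_s\asymp 2^ss^2$ only works ``for typical monotone profiles''; indeed it fails for $t_n\asymp 1/(\ln n\,(\ln\ln n)^2)$, where $\sum_{k\le q_s}t_k^2\asymp 2^s(\ln s)^{-4}$ and $\sum_s 2^{-s}\sum_{k\le q_s}t_k^2$ diverges, so an adaptive choice of $\{q_s\}$ is unavoidable in the converse direction. You therefore sensibly fall back on constructing a direct counterexample, which is the right idea and is what \cite{LTe1} does, but your proposal stops at a description of what the construction should accomplish -- a dictionary near an orthonormal basis and a legal run of WGA$(\tau)$ whose cumulative energy loss is $\lesssim\sum_k t_k/k$. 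The explicit dictionary, the step-by-step verification that the ``bad'' choice $\ff_m$ always satisfies $|\<f_{m-1},\ff_m\>|\ge t_m\sup_{\phi\in\cD}|\<f_{m-1},\phi\>|$, and the bookkeeping that converts $\sum t_k/k<\infty$ into $\inf_m\|f_m\|>0$ are the entire substance of the necessity direction and are absent from the proposal.
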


\begin{Remark} We note that the sufficiency part of Theorem \ref{conT7} (see \cite{VT75}) does not need the monotonicity of $\tau$.
\end{Remark}

\section{Rate of convergence of greedy algorithms}
\label{rc}

In this section we discuss the rate of convergence of some greedy algorithms with respect to a given dictionary $\cD$ for elements from the class $A_1(\cD)$. We will use the following convenient terminology.

{\bf Rate of convergence.} Let $rc$ be a sequence $\{rc(m)\}_{m=1}^\infty$ of nonnegative numbers.    We say that an algorithm GA has rate $rc$ of convergence in a Banach space $X$ if for any dictionary $\cD$ and each $f\in A_1(\cD)$ we have for any realization of the algorithm GA the following bound 
\be\label{rc1}
  \|f_m\|_X \le C(GA)rc(m),\quad m=1,2\dots,
\ee
where $C(GA)$ is a positive constant, which may depend on parameters of the Banach space $X$ and the algorithm GA
but does not depend on $f$, $m$, and $\cD$. 

 As in Section \ref{con} we begin with the most studied algorithms WCGA and WGAFR. We  proceed to a theorem on the rate of convergence of WCGA and WGAFR. For a weakness sequence $\tau$ and a parameter $p\in [2,\infty)$ denote the error sequence $e(\tau,p)$ as follows
 $$
 e(\tau,p)(m) := \left(1+\sum_{k=1}^m t_k^p\right)^{-1/p}.
  $$
  
  The following Theorem \ref{rcT1} is known (see, for instance, \cite{VTbook}, p.342 and p.353).
  
  \begin{Theorem}\label{rcT1} Let $X$ be a uniformly smooth Banach space with  modulus of smoothness $\rho(u) \le \gamma u^q$, $1<q\le 2$. Then, both the WCGA($\tau$) and the WGAFR($\tau$) have rate $e(\tau,p)$ of convergence with $p:= \frac{q}{q-1}$ being the dual to $q$ and the constant $C(GA)$, which may only depend on $q$ and $\gamma$. 
\end{Theorem}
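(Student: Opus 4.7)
\medskip

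\noindent\textbf{Proof plan for Theorem \ref{rcT1}.} The plan is to establish a one-step contraction of the form $\|f_m\|\le \|f_{m-1}\|\bigl(1-c\,t_m^p\|f_{m-1}\|^p\bigr)$ and then iterate it by passing to the sequence $\|f_m\|^{-p}$. The starting point is the key smoothness inequality: from the definition of $\rho(u)$ applied to the unit vectors $f/\|f\|$ and $g/\|g\|$, together with $F_f(f+\lambda g)\le \|f+\lambda g\|$ (so $\|f+\lambda g\|\ge \|f\|+\lambda F_f(g)$), one gets
\[
\|f-\lambda g\|\le \|f\|-\lambda F_f(g)+2\|f\|\,\rho\!\left(\frac{\lambda\|g\|}{\|f\|}\right)
\]
for every $f\ne 0$, $g\in X$ and $\lambda\in\bbR$. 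This is the workhorse estimate for both algorithms.

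Next I would record two structural facts common to WCGA and WGAFR. First, the approximation step satisfies $\|f_m\|\le \|f_{m-1}-\lambda\varphi_m\|$ for every $\lambda$: for WCGA this is immediate from $G_{m-1}+\lambda\varphi_m\in Y_m$ and $G_m=P_{Y_m}(f)$, while for WGAFR it follows by plugging $w=0$ into the free-relaxation minimization (\ref{as2})--(\ref{as3}). Second, for $f\in A_1(\cD)$ one has $|F_{f_{m-1}}(\varphi_m)|\ge t_m\|f_{m-1}\|$. Indeed, the optimality at step $m-1$ forces $F_{f_{m-1}}(G_{m-1})=0$ (for WCGA this is the classical orthogonality of the Chebyshev projector with respect to the norming functional; for WGAFR the two first-order conditions $F_{f_{m-1}}(\varphi_{m-1})=0$ and $F_{f_{m-1}}(G_{m-2})=0$ imply $F_{f_{m-1}}(G_{m-1})=0$ since $G_{m-1}=(1-w^*)G_{m-2}+\lambda^*\varphi_{m-1}$). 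Consequently $\|f_{m-1}\|=F_{f_{m-1}}(f_{m-1})=F_{f_{m-1}}(f)$, and since $f\in A_1(\cD)$ writes as a convex combination of elements of $\cD^\pm$, $|F_{f_{m-1}}(f)|\le \|F_{f_{m-1}}\|_{\cD}\le t_m^{-1}|F_{f_{m-1}}(\varphi_m)|$ by the weak-dual greedy condition (\ref{ga1}).

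Putting these ingredients together with $\rho(u)\le\gamma u^q$ and $\|\varphi_m\|\le 1$, replacing $\varphi_m$ by $\pm\varphi_m$ so that $F_{f_{m-1}}(\varphi_m)\ge 0$, I obtain
\[
\|f_m\|\le \|f_{m-1}\|-\lambda t_m\|f_{m-1}\|+2\gamma\lambda^q\|f_{m-1}\|^{1-q}
\]
for every $\lambda\ge 0$. Minimizing the right-hand side in $\lambda$ is a one-variable calculation; taking $\lambda=(t_m/(2q\gamma))^{1/(q-1)}\|f_{m-1}\|^{q/(q-1)}$ and using $p=q/(q-1)$, $1+p=pq-q+1$, I arrive at
\[
\|f_m\|\le \|f_{m-1}\|\bigl(1-c(q,\gamma)\,t_m^{p}\,\|f_{m-1}\|^{p}\bigr),
\]
with $c(q,\gamma)=(2q\gamma)^{-1/(q-1)}/p$.

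To finish, I would set $a_m:=\|f_m\|$ and use the elementary inequality $(1-x)^{-p}\ge 1+px$ for $x\in[0,1)$ to convert the previous recursion into
\[
a_m^{-p}\ge a_{m-1}^{-p}+p\,c(q,\gamma)\,t_m^{p}.
\]
Iterating and using $a_0=\|f\|\le 1$ for $f\in A_1(\cD)$ yields $a_m^{-p}\ge 1+p\,c(q,\gamma)\sum_{k=1}^m t_k^p$, from which the bound $\|f_m\|\le C(q,\gamma)\bigl(1+\sum_{k=1}^m t_k^p\bigr)^{-1/p}=C(q,\gamma)\,e(\tau,p)(m)$ follows by absorbing the constant $p\,c(q,\gamma)$ into $C(q,\gamma)$. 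The main obstacle, in my view, is not the optimization in $\lambda$ (which is routine once $p$ and $q$ are correctly tracked) but rather the clean verification of $F_{f_{m-1}}(G_{m-1})=0$ for WGAFR, since unlike the Chebyshev projection this does not come from a single variational condition but from combining the optimality identities produced at the previous iteration.
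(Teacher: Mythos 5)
Your proof is correct and follows essentially the same route as the one the paper references (and abstracts in Section~6 as the WBGA framework): you independently isolate exactly the three WBGA properties (weak greedy selection, biorthogonality $F_{f_{m-1}}(G_{m-1})=0$, error reduction $\|f_m\|\le\inf_\lambda\|f_{m-1}-\lambda\varphi_m\|$), feed them into the smoothness inequality of Lemma~\ref{LL0} together with Lemma~\ref{LL2}, and close the recursion. One small remark on the point you flag as the ``main obstacle'': the WGAFR biorthogonality is not a combination of two separate first-order identities from the previous iteration — the set $\{(1-w)G_{m-2}+\lambda\varphi_{m-1}:w,\lambda\}$ is precisely $\sp(G_{m-2},\varphi_{m-1})$, so the free-relaxation step is a Chebyshev projection onto that two-dimensional subspace and Lemma~\ref{LL1} gives $F_{f_{m-1}}(G_{m-1})=0$ in one stroke, exactly as for the WCGA. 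Also, your final iteration step via $(1-x)^{-p}\ge 1+px$ is a fine variant; the paper's toolkit would instead raise the contraction to the $p$-th power using $(1-x)^p\le 1-x$ and then apply Lemma~\ref{HL1} to $b_m:=a_m^p$, which yields the same bound.
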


The following Remark \ref{rcR1} in the case of WCGA($\tau$) is from \cite{VTbookMA}, p.421. One can check that it also holds for the WGAFR($\tau$).

\begin{Remark}\label{rcR1} In Theorem \ref{rcT1} the constant $C(GA)$ can be taken as $C\gamma^{1/q}$ with an absolute constant $C$.
\end{Remark}

We can formulate the rate of convergence problem as an extremal problem. Let $\cX$ be a collection of Banach spaces, for instance, for  fixed $1<q\le 2$ and $\ga >0$ define
$$
\cX(\ga,q) := \{X\,:\, \rho(u,X) \le \gamma u^q \}.
$$
Then for a specific greedy algorithm GA define 
$$
er_m(\cX,GA) := \sup_{X\in \cX} \sup_{\cD} \sup_{f\in A_1(\cD)} \sup_{realizations\, of\, GA} \|f_m\|_X.
$$
Note that the supremum over realizations of GA indicates that a realization of a specific GA may not be unique even in 
the case of algorithms without weakness in their greedy steps. Thus, Theorem \ref{rcT1} and Remark \ref{rcR1} 
give the following upper bounds
\be\label{rc2}
er_m(\cX(\ga,q),WCGA(\tau)) \le C\gamma^{1/q} \left(1+\sum_{k=1}^m t_k^p\right)^{-1/p},\quad p:= \frac{q}{q-1}
\ee
and
\be\label{rc3}
er_m(\cX(\ga,q),WGAFR(\tau)) \le C\gamma^{1/q} \left(1+\sum_{k=1}^m t_k^p\right)^{-1/p},\quad p:= \frac{q}{q-1}.
\ee

Remark \ref{conR2} shows that the condition (\ref{2.2}) is sharp for convergence in the class $\cX(\ga,q)$ of Banach spaces. However, we do not know if the upper bounds (\ref{rc2}) and (\ref{rc3}) are sharp.

We now present a rate of convergence result for the GAWR($t,\br$). 

\begin{Theorem}[{\cite{VT115}}]\label{rcT2} Let $X$ be a uniformly smooth Banach space with modulus of smoothness $\rho(u)\le \gamma u^q$, $1<q\le 2$. Let 
$\br:=\{2/(k+2)\}_{k=1}^\infty$.   Then, we have for the GAWR($t,\br$) for any $f\in A_1(\cD)$
$$
\|f_m\|\le C(t,q,\gamma) m^{-1+1/q}.
$$
\end{Theorem}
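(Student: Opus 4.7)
The plan is to derive a one-step recurrence
\[
\|f_m\|^q \le (1-r_m)^q \|f_{m-1}\|^q + C\, r_m^q,\qquad C=C(t,q,\gamma),
\]
and then unwind it with $r_m=2/(m+2)$ to obtain $\|f_m\|^q \le K m^{1-q}$ for a suitable $K=K(t,q,\gamma)$. Taking $q$-th roots then gives the asserted bound $\|f_m\|\le K^{1/q} m^{-1+1/q}$.

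For the recurrence, write $f_m=(1-r_m)f_{m-1}+r_m f-\lambda_m\varphi_m$; by the optimality of $\lambda_m$ in (\ref{as5}), for any scalar $\beta$ we have $\|f_m\|\le\|(1-r_m)f_{m-1}+r_m(f-\beta\varphi_m)\|$ (by taking $\lambda=r_m\beta$). I would choose $\beta=\beta_m:=F_{f_{m-1}}(f)/F_{f_{m-1}}(\varphi_m)$, so that $F_{f_{m-1}}(f-\beta_m\varphi_m)=0$. The key point is that $f\in A_1(\cD)$ yields $|F_{f_{m-1}}(f)|\le\|F_{f_{m-1}}\|_\cD$, while the weak dual greedy step guarantees $|F_{f_{m-1}}(\varphi_m)|\ge t\,\|F_{f_{m-1}}\|_\cD$; consequently $|\beta_m|\le 1/t$ and $\|f-\beta_m\varphi_m\|\le 1+1/t$. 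Now apply the $q$-power smoothness inequality
\[
\|x+y\|^q\le\|x\|^q+q\|x\|^{q-1}F_x(y)+C(q,\gamma)\|y\|^q,
\]
a standard consequence (Pisier/Xu) of $\rho(u,X)\le\gamma u^q$, with $x=(1-r_m)f_{m-1}$ and $y=r_m(f-\beta_m\varphi_m)$. Since $F_{(1-r_m)f_{m-1}}=F_{f_{m-1}}$, the linear term vanishes and the recurrence drops out with $C=C(q,\gamma)(1+1/t)^q$.

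Setting $A_m:=\|f_m\|^q$, I would prove $A_m\le Km^{1-q}$ by induction on $m$. After dividing by $Km^{1-q}$, the inductive step reduces to
\[
\Bigl(\tfrac{m}{m+2}\Bigr)^{\!q}\Bigl(\tfrac{m}{m-1}\Bigr)^{\!q-1}+\frac{C}{K}\cdot\frac{2^q m^{q-1}}{(m+2)^q}\le 1,
\]
and asymptotic expansion in $1/m$ shows the left side equals $1-(q+1)/m+2^q(C/K)/m+O(m^{-2})$; hence it suffices to take $K(q+1)\ge 2^qC$ and then enlarge $K$ further to absorb the finitely many base indices where the asymptotic is too crude (using $A_m\le\|f\|^q\le 1$ as a crude bound).

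The main obstacle is precisely the $q$-power smoothness inequality: it is what allows the recurrence to close on $\|f_m\|^q$ rather than on $\|f_m\|$. Using the first-power Banach smoothness estimate $\|x+y\|\le\|x\|+F_x(y)+2\|x\|\rho(\|y\|/\|x\|)$ would instead produce a factor $\|f_{m-1}\|^{1-q}$ that blows up as $\|f_{m-1}\|\to 0$ and blocks a clean induction. Once the $q$-power form is invoked, the remainder is the elementary recurrence analysis above.
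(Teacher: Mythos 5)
Your proof is correct, but it proceeds by a genuinely different route from the one this paper's machinery (and its source \cite{VT115}) is built around. Your key move is to invoke the $q$-power uniform-smoothness inequality
\[
\|x+y\|^q \le \|x\|^q + q\|x\|^{q-1}\,\Re F_x(y) + C(q,\gamma)\|y\|^q ,
\]
the Pisier/Xu--Roach characterization of $q$-uniformly smooth spaces via the duality map $J_q(x)=\|x\|^{q-1}F_x$. Combined with your choice $\beta_m=F_{f_{m-1}}(f)/F_{f_{m-1}}(\varphi_m)$ (legitimate: $|F_{f_{m-1}}(f)|\le\|F_{f_{m-1}}\|_\cD$ by Lemma~\ref{LL2} since $f\in A_1(\cD)$, and $|F_{f_{m-1}}(\varphi_m)|\ge t\|F_{f_{m-1}}\|_\cD$, so $|\beta_m|\le 1/t$), this annihilates the linear term and yields the clean one-step recurrence $\|f_m\|^q\le(1-r_m)^q\|f_{m-1}\|^q+C(q,\gamma)(1+1/t)^q r_m^q$, from which the induction you describe closes. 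By contrast, the framework in this paper never uses the $q$-power inequality; it works exclusively with the first-power estimate of Lemma~\ref{LL0}, $\|x+uy\|\le\|x\|+\Re(uF_x(y))+2\|x\|\rho(u\|y\|/\|x\|)$, which produces the $\|f_{m-1}\|^{1-q}$ factor you correctly identify as the obstruction, and then disposes of it by a dedicated recurrence lemma of the type in Lemma~\ref{LeL12} (or Lemma~\ref{ML4}): one only needs the contraction estimate to hold on the iterations where $\|f_{m-1}\|$ is already above the target threshold $A\,m^{1/q-1}$, which caps $\|f_{m-1}\|^{-q}$, while the stability bound $\|f_m\|\le\|f_{m-1}\|+O(1/m)$ controls the remaining iterations. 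Your route trades elementary ingredients for a heavier external characterization theorem and buys an explicit, self-contained two-term recurrence; the paper's route uses only the modulus-of-smoothness definition but requires the more delicate case analysis baked into Lemma~\ref{LeL12}. Two small remarks: the ``crude bound'' $A_m\le\|f\|^q\le 1$ you use to handle small $m$ does in fact hold for the GAWR (taking $\lambda=0$ in the relaxation step gives $\|f_m\|\le(1-r_m)\|f_{m-1}\|+r_m\|f\|$, so $\|f_m\|\le\|f\|$ by induction), and the small-$m$ patching can be cleaned up by noting that $\sup_{m\ge2} h(m)/(1-g(m))$ is finite (with $g,h$ your two terms), so a single choice $K\ge 2^qC\cdot\sup_m h(m)/(1-g(m))$ suffices without enlarging $K$ further.
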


{\bf First conclusions.} Let us discuss the case $\tau =\{t\}$. In this case by Theorem \ref{rcT1} both the WCGA($t$) and the WGAFR($t$) have rate $(1+mt^p)^{-1/p}$ of convergence with $p:= \frac{q}{q-1}$. This means that, for instance,
\be\label{rc4}
er_m(\cX(\ga,q),WCGA(t)) \le C(t,q) m^{1/q-1}.
\ee
Therefore, the weakness parameter $t$ only affects the constant but not the rate of decay with respect to $m$. 
In particular, for the WOGA in a Hilbert space it guarantees the rate of decay $m^{-1/2}$ with respect to $m$. 
Let us compare this with the rate of convergence of the WGA in a Hilbert space. The following result is known 
(see, for instance, \cite{VTbook}, p.94). 

\begin{Theorem}\label{rcT3} Let $\cD$ be an arbitrary dictionary in $H$. Assume $\tau :=\{t_k\}_{k=1}^\infty$ is a non-increasing sequence. Then, for $f \in A_1(\cD)$ we have for the WGA($\tau$)
\be\label{3.17}
\|f_m\| \le \left(1+\sum_{k=1}^m t^2_k\right)^{-\frac{t_m}{2(2+t_m)}}.  
\ee
\end{Theorem}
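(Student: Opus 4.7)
The plan is to reduce the theorem to a scalar recurrence for $a_m := \|f_m\|^2$ and then solve it by induction with an ansatz tuned to produce the exponent $t_m/(2+t_m)$. The starting point is the Hilbert-space identity: the one-dimensional minimization (\ref{1.1}) yields $\la_m = \<f_{m-1},\ff_m\>$ (taking $\|\ff_m\|=1$ for simplicity), so $\|f_m\|^2 = \|f_{m-1}\|^2 - \la_m^2$. With $b_m := \sup_{\phi\in\cD}|\<f_{m-1},\phi\>|$, the weak dual greedy step (\ref{1.2}) with parameter $t_m$ gives $|\la_m| \ge t_m b_m$, hence the descent estimate $a_m \le a_{m-1} - t_m^2 b_m^2$. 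Telescoping then provides the global budget $\sum_{k=1}^{m}\la_k^2 \le \|f\|^2 \le 1$.

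The second step is to leverage $f\in A_1(\cD)$ to turn this into a closed recursion in $a_{m-1}$ alone. Writing $f = \sum_j c_j g_j$ with $g_j\in\cD$ and $\sum|c_j| \le 1$, one gets $|\<f_{m-1},f\>| \le b_m$. Combined with the decomposition $\<f_{m-1},f\> = \|f_{m-1}\|^2 + \<f_{m-1},G_{m-1}\>$ and the bound
\[
|\<f_{m-1},G_{m-1}\>| = \Big|\sum_{k=1}^{m-1}\la_k\<f_{m-1},\ff_k\>\Big| \le b_m\sum_{k=1}^{m-1}|\la_k|
\]
(using $\ff_k\in\cD$), this yields the key inequality $a_{m-1} \le b_m(1+B_{m-1})$ with $B_{m-1}:=\sum_{k=1}^{m-1}|\la_k|$. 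Substituting back gives the closed recursion
\[
a_m \le a_{m-1}\Big(1 - \frac{t_m^2\, a_{m-1}}{(1+B_{m-1})^2}\Big).
\]

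The remainder of the proof is an induction on $m$ based on the ansatz $a_m \le S_m^{-t_m/(2+t_m)}$ where $S_m := 1+\sum_{k=1}^{m}t_k^2$. The base case $m=0$ reduces to $\|f\|\le 1$. For the induction step, the monotonicity of $\tau$ is essential: it allows one to replace the factors $t_k$, $k\le m-1$, by lower bounds expressed in terms of $t_m$, and together with the budget $\sum\la_k^2\le 1$ and a weighted Cauchy--Schwarz estimate it controls $B_{m-1}$ by an appropriate power of $S_m$. Feeding these estimates into the recursion above and comparing with the ansatz reduces the induction step to an explicit algebraic inequality in the variables $a_{m-1}$, $S_{m-1}$, and $t_m$.

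The main obstacle is producing the sharp exponent $t_m/(2(2+t_m))$. A crude use of the recursion (for example, $B_{m-1}\le\sqrt{m-1}$ from Cauchy--Schwarz on the budget) recovers only the classical Jones rate $m^{-1/6}$ in the case $t_m=1$ and fails to track a general monotone $\tau$ correctly. The delicate point is to verify, under the monotonicity hypothesis $t_1\ge t_2\ge\cdots$, an algebraic inequality stating that whenever the ansatz holds through step $m-1$, the decrement in the recursion is exactly sufficient to maintain it at step $m$. This self-consistency pins down the precise exponent $t_m/(2+t_m)$ on the squared residual $a_m$, and taking the square root gives the stated exponent $t_m/(2(2+t_m))$ on $\|f_m\|$.
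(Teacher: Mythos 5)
Your setup is correct: in a Hilbert space the WGA gives $a_m:=\|f_m\|^2=a_{m-1}-\lambda_m^2$ with $|\lambda_m|\ge t_m b_m$, and for $f\in A_1(\cD)$ one has $a_{m-1}\le b_m(1+B_{m-1})$ with $B_{m-1}=\sum_{k<m}|\lambda_k|$, which yields the recursion $a_m\le a_{m-1}\bigl(1-t_m^2 a_{m-1}/(1+B_{m-1})^2\bigr)$; the ansatz $a_m\le S_m^{-t_m/(2+t_m)}$ is also the right target. The gap is in how you propose to control $B_{m-1}$. Using only the budget $\sum\lambda_k^2\le 1$, monotonicity $t_k\ge t_m$, and (weighted) Cauchy--Schwarz gives at best $B_{m-1}\le\sqrt{m-1}\le S_m^{1/2}/t_m$, hence $(1+B_{m-1})^2\le 2S_m/t_m^2$, and the recursion degrades to $a_m\le a_{m-1}\bigl(1-\tfrac12 t_m^4 a_{m-1}/S_m\bigr)$. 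This is not strong enough: with $u_m=1/a_m$ it only yields $u_m\ge u_{m-1}+\tfrac12 t_m^4/S_m$, which for $\tau=\{t\}$ accumulates to $u_m\gtrsim t^2\log S_m$, i.e.\ $\|f_m\|\lesssim(\log m)^{-1/2}$, not a power of $m$. Equivalently, if you try to close your stated induction with this bound on $B_{m-1}$, the step reduces to $t_m\ge S_{m-1}^{t_m/(2+t_m)}$, which already fails at $m=2$ unless $t_2=0$. A fixed bound on $B_{m-1}$ in terms of $S_m$ alone cannot work, because $B_{m-1}$ can genuinely be as large as $a_{m-1}^{-1/t_{m-1}}$.

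The missing idea is a monotone invariant rather than a global Cauchy--Schwarz bound: the quantity $P_m:=(1+B_m)^{t_m}a_m$ is non-increasing. Writing $1+B_m=(1+B_{m-1})(1+x_m)$ with $x_m=|\lambda_m|/(1+B_{m-1})$, the estimates $\lambda_m^2/a_{m-1}\ge t_m x_m$ (from $|\lambda_m|\ge t_m b_m$ and $a_{m-1}\le b_m(1+B_{m-1})$) and $(1+x)^{t_m}\le 1+t_m x$ (valid for $t_m\in(0,1]$) give $P_m/P_{m-1}\le(1+B_{m-1})^{t_m-t_{m-1}}(1+t_mx_m)(1-t_mx_m)\le 1$, where monotonicity of $\tau$ is used exactly once, via $(1+B_{m-1})^{t_m-t_{m-1}}\le 1$. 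Since $P_0=a_0\le 1$, this yields the self-improving bound $1+B_{m-1}\le a_{m-1}^{-1/t_{m-1}}$, i.e.\ a bound in terms of the \emph{current residual} rather than $m$ or $S_m$. Feeding it back gives $u_m\ge u_{m-1}+t_m^2 u_{m-1}^{-2/t_{m-1}}\ge u_{m-1}\bigl(1+t_m^2 u_{m-1}^{-1-2/t_m}\bigr)$, and raising to the power $1+2/t_m$ (Bernoulli) produces $u_m^{1+2/t_m}\ge u_{m-1}^{1+2/t_m}+t_m^2+2t_m$. The induction hypothesis plus $t_{m-1}\ge t_m$ gives $u_{m-1}^{1+2/t_m}\ge S_{m-1}$, so $u_m^{1+2/t_m}\ge S_m$, which is precisely the stated bound. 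Without this invariant the route you describe cannot reach the sharp exponent.
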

In a particular case $\tau =\{t\}$ (\ref{3.17}) gives
$$
\|f_m\| \le (1+mt^2)^{-\frac{t}{4+2t}}, \quad 0<t\le 1. 
$$
This estimate implies the  inequality 
$$
\|f_m\| \le C_1(t)m^{-at},   
$$
with the exponent $at$ approaching $0$ linearly in $t$. It was proved in \cite{LTe2}   that this exponent cannot decrease to $0$ at a slower rate than linear.

\begin{Theorem}[{\cite{LTe2}}]\label{rcT4}   There exists an absolute constant $b>0$ such that, for any $t>0$, we can find a dictionary $\cD_t$ and a function $f_t \in A_1(\cD_t)$ such that, for some realization $G_m^t(f_t,\cD_t)$ of the Weak Greedy Algorithm with weakness parameter $t$, we have
$$
\liminf_{m\to \infty} \|f_t -G_m^t(f_t,\cD_t)\|m^{bt}  >0. 
$$
\end{Theorem}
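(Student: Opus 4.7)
\textit{Proof plan.} The strategy is to construct, for each $t\in(0,1]$, a specific ``adversarial'' dictionary $\cD_t$ in a separable Hilbert space, a target $f_t\in A_1(\cD_t)$, and an explicit legal realization of the WGA($t$) whose residual norms decay only polynomially slowly. Recall that in Hilbert space the greedy step produces the identity
$$
\|f_m\|^2 \;=\; \|f_{m-1}\|^2 \;-\; |\langle f_{m-1},\ff_m\rangle|^2,
$$
while the weakness condition only forces $|\langle f_{m-1},\ff_m\rangle|\ge tM_{m-1}$, where $M_{m-1}:=\sup_{\phi\in\cD}|\langle f_{m-1},\phi\rangle|$. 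Hence to slow down the decay I must design $\cD_t$ so that (i) $M_{m-1}$ is small compared with $\|f_{m-1}\|$, and (ii) some element $\psi_m\in\cD_t$ meets the weakness threshold almost on the nose, i.e.\ $|\langle f_{m-1},\psi_m\rangle|$ just barely equals $tM_{m-1}$, so that selecting $\ff_m=\psi_m$ is a \emph{legal} choice for the algorithm.

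The construction I would adopt is the recursive/tree construction of \cite{LTe2}. One fixes an orthonormal family $\{e_\alpha\}$ indexed by the nodes of a suitable infinite tree, picks slowly decreasing coefficients $(c_\alpha)$ with $\sum|c_\alpha|\le 1$ so that $f_t:=\sum c_\alpha e_\alpha\in A_1(\{e_\alpha\})$, and then augments the dictionary with a family of ``distractor'' unit vectors of the form
$$
\psi_m \;=\; \alpha_m e_{\alpha(m)} + \beta_m v_m,\qquad \alpha_m^2+\beta_m^2=1,
$$
where $v_m$ is a unit vector orthogonal both to $f_t$ and to every previously used direction, $\alpha(m)$ is the node whose coordinate currently maximizes $|\langle f_{m-1},e_\alpha\rangle|$, and the tilt $(\alpha_m,\beta_m)$ is tuned so that $|\langle f_{m-1},\psi_m\rangle|=tM_{m-1}$. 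Because the $v_m$'s are orthogonal to $f_t$ and to each other, projecting $f_{m-1}$ off $\psi_m$ preserves the self-similar profile of the residual on the useful $e_\alpha$-coordinates: at the next step the algorithm faces an essentially shrunken copy of the previous situation, and the recipe iterates, producing a legitimate realization of WGA($t$) in which $\ff_m=\psi_m$ for every $m$.

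The resulting rate estimate comes from the recursion
$$
\|f_m\|^2 \;=\; \|f_{m-1}\|^2 - t^2 M_{m-1}^2,
$$
combined with an analysis of how the profile of ordered coordinates $|\langle f_{m-1},e_\alpha\rangle|$ evolves. If the coefficients $(c_\alpha)$ and the tree's branching are tuned so that the residual effectively spreads its mass over a growing set of nodes, one arrives at a lower bound of the type $M_{m-1}^2 \le C\|f_{m-1}\|^2/m$. Substituting gives
$$
\|f_m\|^2 \;\ge\; \|f_{m-1}\|^2\Bigl(1-\tfrac{Ct^2}{m}\Bigr),
$$
and iterating yields $\|f_m\|^2\gtrsim m^{-Ct^2}$. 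Sharpening the exponent from $t^2$ to a quantity linear in $t$, which is what the theorem asserts, requires the subtler Livshitz--Temlyakov choice in which the effective ``width'' of the residual at step $m$ is of order $m/t$ rather than $m$, so that the factor $(1-Ct^2/m)$ is replaced by $(1-bt/m)$ for some absolute $b>0$. This is the main obstacle: assembling the dictionary so that simultaneously $f_t\in A_1(\cD_t)$, the supremum $M_{m-1}$ is realized (up to the factor $t$) by one of the distractors $\psi_m$, and the residual retains enough self-similar structure for the recursion to persist indefinitely with the correct dependence on $t$. Once the construction is in place, taking $\liminf$ in the inequality $\|f_m\|^2\gtrsim m^{-bt}\|f_t\|^2$ yields the conclusion with any $b$ strictly smaller than the constant produced by the construction.
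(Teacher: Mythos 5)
Your proposal identifies the right framework for a Hilbert-space lower bound of this type: a dictionary consisting of an orthonormal family together with tilted distractors $\psi_m=\alpha_m e_{\alpha(m)}+\beta_m v_m$ that are ``just legal'' for the WGA($t$), combined with the exact identity $\|f_m\|^2=\|f_{m-1}\|^2-|\langle f_{m-1},\ff_m\rangle|^2$ and the adversarial choice $|\langle f_{m-1},\ff_m\rangle|=tM_{m-1}$. That is indeed the spirit of the Livshitz--Temlyakov construction. However, your argument stops exactly where the difficulty begins, and the one quantitative claim you make about that hard step is internally inconsistent: to turn $1-t^2M_{m-1}^2/\|f_{m-1}\|^2$ into $1-bt/m$ you need $M_{m-1}^2/\|f_{m-1}\|^2\approx b/(tm)$, i.e.\ an effective width of order $tm$, not $m/t$ as you wrote (width $m/t$ would give the factor $1-t^3/m$, and width $m$ gives $1-t^2/m$). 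So the passage that is supposed to upgrade the easy $t^2$ exponent to the linear-in-$t$ exponent is, as stated, not even dimensionally right.

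More seriously, the central estimate $M_{m-1}^2\le C\|f_{m-1}\|^2/W_m$, for whatever width $W_m$ you intend, is asserted without proof, and it is precisely the content of the theorem. The ``self-similar profile'' you appeal to rests on the old distractors $\psi_j$ staying orthogonal to the running residual; this fails as soon as the greedy walk revisits an $e$-coordinate. Concretely, with $\psi_j=te_{\alpha(j)}+\sqrt{1-t^2}\,v_j$, if at a later step $m$ one selects $\psi_m$ with $\alpha(m)=\alpha(j)$, then $\langle\psi_m,\psi_j\rangle=t^2$ and hence $\langle f_m,\psi_j\rangle=\langle f_{m-1},\psi_j\rangle-c_m t^2$ becomes nonzero, so $\psi_j$ re-enters the competition for $\sup_{g\in\cD}|\langle f_m,g\rangle|$ and the intended control of $M_m$ collapses. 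Managing these interactions, together with the accumulating orthogonal $v_j$-component of the residual (which inflates $\|f_m\|^2$ relative to its $e$-part) and the constraint $f_t\in A_1(\cD_t)$, is precisely the recursive bookkeeping that Livshitz and Temlyakov carry out and that is missing here. What you have written is a plan that names the obstacle and gestures at it; the step that would actually deliver the exponent $bt$ for all $t\in(0,1]$ is not supplied.
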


We now compare three algorithms WDGA($t$), WCGA($t$), and WGAFR($t$). The advantages of the WDGA are the following: 

(A) It provides an expansion of an element $f$ with respect to the dictionary $\cD$;

(B) It has the simplest out of these three algorithms approximation step. 

The disadvantage of the WDGA($t$) (on the example of the WGA($t$)) is the following:

(C) Its rate of convergence depends heavily on the weakness parameter $t$ and is worse than the rate of convergence
of the other two algorithms.

The WGAFR($t$) is close to the WDGA($t$) in the sense of simplicity (B), but it does not provide an expansion. In the sense of rate of convergence both the WCGA($t$) and WGAFR($t$) are similar and better than the WDGA($t$). 
The WCGA($t$) has the most involved approximation step. The above comparison motivated researches to 
try to modify the WDGA($t$) in such a way that the modification keeps the advantages (A) and (B) and improves on 
the rate of convergence. We now discuss some of these modifications. 

\section{Some modifications of XGA and WDGA}
\label{mo}

\subsection{GA with prescribed in advance coefficients}

We begin with somewhat unexpected results. As we already mentioned above we would like to keep the property (A). 
From the definition of a dictionary it follows that any element $f\in X$ can be approximated arbitrarily well by finite linear combinations of the dictionary elements. In this section we study representations of an element $f\in X$ by a series
\be\label{7.1}
f\sim \sum_{j=1}^\infty c_j(f)g_j(f), \quad g_j(f) \in \cD^{\pm},\quad c_j(f)>0, \quad j=1,2,\dots.  
\ee
In building the representation (\ref{7.1}) we should construct two sequences: \newline
  $\{g_j(f)\}_{j=1}^\infty$ and $\{c_j(f)\}_{j=1}^\infty$.  In this section the construction of $\{g_j(f)\}_{j=1}^\infty$ is based on ideas used in greedy-type nonlinear approximation (greedy-type algorithms). This justifies the use of the term {\it greedy expansion} for (\ref{7.1}) considered in the section.  The construction of $\{g_j(f)\}_{j=1}^\infty$ is, clearly, the most important and difficult part in building the representation (\ref{7.1}). 
In the paper \cite{VT116} (see also \cite{VTbook}, S.6.7) we pushed to the extreme the flexibility choice of the coefficients $c_j(f)$ in (\ref{7.1}). We made these coefficients independent of an element $f\in X$. Surprisingly, for properly chosen coefficients we obtained results for the corresponding dual greedy expansion.   Even more surprisingly, we obtained similar results for the corresponding $X$-greedy expansions. We proceed to the formulation of these results. 
For convenience, let $\cC:=\{c_m\}_{m=1}^\infty$ be a fixed sequence of positive numbers.  We restrict ourselves to positive numbers and use the symmetric dictionary $\cD^{\pm}$ instead of the $\cD$.  

 {\bf $X$-Greedy Algorithm with coefficients $\cC$ (XGA($\cC$)).} We define $f_0:=f$, $G_0:=0$. Then, for each $m\ge 1$ we have the following inductive definition.

(1) $\ff_m\in\cD$ is such that (we assuming existence)
$$
 \|f_{m-1}-c_m\ff_m\|_X=\inf_{g\in\cD^{\pm}}\|f_{m-1}-c_m g\|_X. 
$$

(2) Let
$$
f_m:=f_{m-1}-c_m\ff_m,\qquad G_m:=G_{m-1}+c_m\ff_m.
$$

It will be convenient for us to use the following notation (see Definition \ref{Dnorm}). Denote 
$$
  \|F_f\|_\cD:= \sup_{g\in \cD}|F_f(g)| =  \sup_{g\in \cD^{\pm}}F_f(g) \quad \text{and} \quad r_\cD(f) := \sup_{F_f}\|F_f\|_\cD.
$$
We note that, in general, a norming functional $F_f$ is not unique. This is why we take $\sup_{F_f}$ over all norming functionals of $f$ in the definition of $r_\cD(f)$. It is known that in the case of uniformly smooth Banach spaces (our primary object here) the norming functional $F_f$ is unique. In such a case we do not need $\sup_{F_f}$ in the definition of $r_\cD(f)$, we have $r_\cD(f)=\|F_f\|_\cD$.

  {\bf Dual Greedy Algorithm with weakness $\tau$ and coefficients $\cC$\newline (DGA($\tau,\cC$)).} Let $\tau$ be a weakness sequence. 
We define $f_0 :=  f$, $G_0:=0$. Then, for each $m\ge 1$ we have the following inductive definition.

(1) $\varphi_m  \in \cD^{\pm}$ is any element satisfying
$$
F_{f_{m-1}}(\varphi_m) \ge t_m  \|F_{f_{m-1}}\|_\cD. 
$$

(2) Let  
$$
f_m :=   f_{m-1}-c_m\varphi_m,\qquad G_m:=G_{m-1}+c_m\ff_m.
$$

As above, in   the case $\tau=\{t\}$, $t\in(0,1]$, we write $t$ instead of $\tau$ in the notation.
The first result on convergence  properties of the DGA($t,\cC$) was obtained in \cite{VT111} (also, see \cite{VTbook}, pp.368-369).  

\begin{Theorem}[{\cite{VT111}}]\label{moT1} Let $X$ be a uniformly smooth Banach space with the modulus of smoothness $\rho(u)$. Assume $\cC=\{c_j\}_{j=1}^\infty$ is such that $c_j\ge 0$, $j=1,2,\dots$,
$$
\sum_{j=1}^\infty c_j =\infty,
$$
and for any $y>0$
\be\label{7.14}
\sum_{j=1}^\infty \rho(yc_j) <\infty.  
\ee
Then, for the  DGA($t,\cC$) we have
\be\label{7.15}
\liminf_{m\to \infty}\|f_m\| =0. 
\ee
\end{Theorem}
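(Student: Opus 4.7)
The plan is to combine the uniform-smoothness inequality with the two hypotheses $\sum c_j=\infty$ and $\sum\rho(yc_j)<\infty$. Starting from the standard Banach-space estimate (obtained from the definition of $\rho$ applied to $x/\|x\|$ and $y/\|x\|$)
$$
\|x-uy\|\le\|x\|-uF_x(y)+2\|x\|\rho(u\|y\|/\|x\|),\qquad x\neq 0,
$$
the substitution $x=f_{m-1}$, $y=\varphi_m$, $u=c_m$, combined with $\|\varphi_m\|\le 1$ and the weak-dual inequality $F_{f_{m-1}}(\varphi_m)\ge t\|F_{f_{m-1}}\|_\cD$, yields the basic recursion
\be\label{plan:rec}
\|f_m\|\le\|f_{m-1}\|-c_m t\|F_{f_{m-1}}\|_\cD+2\|f_{m-1}\|\rho(c_m/\|f_{m-1}\|).
\ee

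First I would show that $\|f_m\|$ stays uniformly bounded. Fix $A:=\|f\|+1$. On any step with $\|f_{m-1}\|\ge A$, the monotonicity of $\rho(u)/u$ (Remark \ref{xim}) yields $2\|f_{m-1}\|\rho(c_m/\|f_{m-1}\|)\le 2A\rho(c_m/A)$, so (\ref{plan:rec}) gives $\|f_m\|\le\|f_{m-1}\|+2A\rho(c_m/A)$; on steps with $\|f_{m-1}\|<A$, the trivial $\|f_m\|\le\|f_{m-1}\|+c_m$ suffices. Iterating along the maximal ``high'' runs and invoking (\ref{7.14}) with $y=1/A$ produces
$$
\|f_m\|\le M:=A+\sup_j c_j+2A\sum_{j=1}^\infty\rho(c_j/A)<\infty.
$$

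Assume, for contradiction, that $\liminf_m\|f_m\|\ge\alpha>0$. For $m$ large, $\alpha\le\|f_{m-1}\|\le M$, so $2\|f_{m-1}\|\rho(c_m/\|f_{m-1}\|)\le 2M\rho(c_m/\alpha)$; telescoping (\ref{plan:rec}) and using summability of $\{\rho(c_m/\alpha)\}$ gives $\sum_m c_m\|F_{f_{m-1}}\|_\cD<\infty$. Writing $S_m:=\sum_{j=1}^m c_j\to\infty$, the classical divergence $\sum_m c_m/S_{m-1}=\infty$ forces a subsequence $\{m_k\}$ with $S_{m_k-1}\|F_{f_{m_k-1}}\|_\cD\to 0$; otherwise $\sum c_m\|F_{f_{m-1}}\|_\cD$ would itself diverge.

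Finally, fix $\varepsilon>0$ and, using density of $\sp\cD$, pick $h=\sum_{i=1}^n\beta_i g_i$ with $g_i\in\cD$ and $\|f-h\|<\varepsilon$. Since $f_{m-1}=f-G_{m-1}$ with $G_{m-1}=\sum_{j=1}^{m-1}c_j\varphi_j$, expand
$$
\|f_{m_k-1}\|=F_{f_{m_k-1}}(f-h)+F_{f_{m_k-1}}(h)-F_{f_{m_k-1}}(G_{m_k-1}),
$$
and bound the three terms by $\varepsilon$, $(\sum_i|\beta_i|)\|F_{f_{m_k-1}}\|_\cD$, and $S_{m_k-1}\|F_{f_{m_k-1}}\|_\cD$ respectively, to get $\limsup_k\|f_{m_k-1}\|\le\varepsilon$. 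Arbitrariness of $\varepsilon$ contradicts $\alpha>0$ and yields (\ref{7.15}). The main obstacle is precisely this last step: merely $\|F_{f_{m-1}}\|_\cD\to 0$ along a subsequence does not force $\|f_{m-1}\|\to 0$, since the natural dictionary representation of $f_{m-1}$ involves $G_{m-1}$, whose $\ell_1$-coefficient mass is of order $S_{m-1}\to\infty$; the remedy is to sharpen the selection to $S_{m_k-1}\|F_{f_{m_k-1}}\|_\cD\to 0$, which is available precisely because the hypotheses force both $\sum c_j\|F_{f_{j-1}}\|_\cD<\infty$ and $\sum c_j/S_{j-1}=\infty$ simultaneously.
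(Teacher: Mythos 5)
Your proof is correct and, as far as I can tell, follows essentially the same route as the original argument in \cite{VT111} (which the survey cites without reproducing): derive the uniform-smoothness recursion, establish uniform boundedness of $\|f_m\|$ via the $\rho(c_m/A)$-control plus the trivial bound $\|f_m\|\le\|f_{m-1}\|+c_m$, deduce $\sum_m c_m\|F_{f_{m-1}}\|_\cD<\infty$ under the contradiction hypothesis, invoke the Abel--Dini divergence $\sum c_m/S_{m-1}=\infty$ to force $\liminf_m S_{m-1}\|F_{f_{m-1}}\|_\cD=0$, and close with the decomposition $\|f_{m-1}\|=F_{f_{m-1}}(f-h)+F_{f_{m-1}}(h)-F_{f_{m-1}}(G_{m-1})$. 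You correctly identify and resolve the crux of the matter, namely that the weight $S_{m-1}$ in the subsequence selection is exactly what is needed to control $F_{f_{m-1}}(G_{m-1})$; the only trivial slips are that "$\liminf_m\|f_m\|\ge\alpha$" should give "$\|f_{m-1}\|\ge\alpha/2$ for $m$ large" rather than "$\ge\alpha$," and that $\sup_j c_j<\infty$ (used in defining $M$) deserves a word of justification via $\rho(u)\ge\max(0,u-1)$ and $\rho(c_j)\to0$.
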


  In  \cite{VT116} we proved an analogue of Theorem \ref{moT1} for the XGA($\cC$)
 and improved upon the convergence in Theorem \ref{moT1} in the case of uniformly smooth Banach spaces with power-type modulus of smoothness.  Under an extra assumption on $\cC$ we replaced $\liminf$ by $\lim$. Here is the corresponding result from \cite{VT116}.
\begin{Theorem}[{\cite{VT116}}]\label{moT2} Let $\cC\in\ell_q\setminus\ell_1$ be a monotone sequence. Then both DGA($t,\cC$) and  XGA($\cC$) converge in any uniformly smooth Banach space $X$ with modulus of smoothness $\rho(u)\le \gamma u^q$, $q\in(1,2]$.
\end{Theorem}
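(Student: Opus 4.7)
My plan is to split the proof into two steps: (a) establish $\liminf_{m\to\infty} \|f_m\| = 0$, and (b) upgrade this to $\lim_{m\to\infty} \|f_m\| = 0$ by a small-oscillation argument. For the DGA($t,\cC$), step (a) is immediate from Theorem \ref{moT1}: the hypotheses $\rho(u) \le \gamma u^q$ and $\cC \in \ell_q$ give $\sum_j \rho(y c_j) \le \gamma y^q \sum_j c_j^q < \infty$ for every $y > 0$, which is (\ref{7.14}), and $\sum c_j = \infty$ is just $\cC \notin \ell_1$. For the XGA($\cC$), step (a) is not formally covered, but the $X$-greedy rule gives a stronger per-step bound than the dual-greedy one (because $\|f_m\|_X = \inf_{g \in \cD^{\pm}}\|f_{m-1} - c_m g\|$ may be compared with any $g$), so the argument of Theorem \ref{moT1} adapts.

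The workhorse for both steps is the standard modulus-of-smoothness bound $\|x - y\| \le \|x\| - F_x(y) + 2\|x\| \rho(\|y\|/\|x\|)$. Applied with $x = f_{m-1}$, $y = c_m \ff_m$, $\|\ff_m\|\le 1$, and the power-type assumption $\rho(u) \le \gamma u^q$, it yields
\begin{equation*}
\|f_m\| \le \|f_{m-1}\| - c_m F_{f_{m-1}}(\ff_m) + 2\gamma c_m^q \|f_{m-1}\|^{1-q}.
\end{equation*}
For the DGA the greedy rule gives $F_{f_{m-1}}(\ff_m) \ge t \|F_{f_{m-1}}\|_\cD \ge 0$; for the XGA, first comparing with an arbitrary $g \in \cD^{\pm}$ and then taking the supremum over such $g$ gives the analogous inequality with $t = 1$. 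Dropping the nonpositive dual term produces the one-sided oscillation estimate
\begin{equation}\label{oscbound}
\|f_m\| - \|f_{m-1}\| \le 2\gamma c_m^q \|f_{m-1}\|^{1-q}.
\end{equation}

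For step (b), I argue by contradiction. Suppose $\limsup_m \|f_m\| > 2\alpha > 0$. Since $\liminf_m \|f_m\| = 0$, for each large $N$ with $\|f_N\| > 3\alpha/2$ I set $n(N)$ to be the largest $n < N$ with $\|f_n\| \le \alpha$; then $n(N) \to \infty$ as $N \to \infty$, and $\|f_j\| > \alpha$ for all $j \in (n(N), N]$. Because $q > 1$, this gives $\|f_{j-1}\|^{1-q} \le \alpha^{1-q}$ for $j > n(N)+1$, so summing \eqref{oscbound} and using $\|f_{n(N)+1}\| \le \|f_{n(N)}\| + c_{n(N)+1} \le \alpha + c_{n(N)+1}$ yields
\begin{equation*}
\|f_N\| \le \alpha + c_{n(N)+1} + 2\gamma \alpha^{1-q}\sum_{j > n(N)+1} c_j^q.
\end{equation*}
Since $\cC \in \ell_q$, and monotonicity forces $c_j \to 0$, the right-hand side tends to $\alpha$ as $N \to \infty$, contradicting $\|f_N\| > 3\alpha/2$. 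The main obstacle I anticipate is step (a) for the XGA, which is not covered by the cited Theorem \ref{moT1} and must be set up from scratch; the monotonicity hypothesis on $\cC$ most naturally enters there, for instance through an Abel-type summation converting a uniform positive lower bound on $\|F_{f_{m-1}}\|_\cD$ (valid as long as $\|f_{m-1}\|$ stays bounded away from zero) into a contradiction with $\sum c_j = \infty$. Step (b), the genuine strengthening from $\liminf$ to $\lim$ relative to Theorem \ref{moT1}, is handled entirely by the oscillation inequality \eqref{oscbound}.
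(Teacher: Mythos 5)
Your plan matches exactly what the paper says \cite{VT116} does: prove $\liminf_m\|f_m\|=0$ first (Theorem \ref{moT1} and its XGA analogue) and then upgrade to $\lim_m\|f_m\|=0$ under the power-type hypothesis. Your oscillation bound $\|f_m\|-\|f_{m-1}\|\le 2\gamma c_m^q\|f_{m-1}\|^{1-q}$ is the correct consequence of the smoothness inequality, and the contradiction argument for step (b) is sound: once $\liminf=0$, the excursions of $\|f_m\|$ above any $\alpha>0$ are controlled by the tail $\sum_{j>n}c_j^q\to 0$ plus the single jump $c_{n+1}\to 0$, which squeezes $\limsup\le\alpha$ for every $\alpha>0$. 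Two small remarks. First, the ``gap'' you flag for step (a) with XGA($\cC$) is not really a gap: comparing with an arbitrary $g\in\cD^\pm$ and taking the supremum gives exactly the DGA per-step inequality with $t=1$, and since the proof of Theorem \ref{moT1} (from \cite{VT111}) only uses that recursive inequality, the $\liminf$-conclusion transfers verbatim; there is no need to ``set it up from scratch.'' Second, your speculation that a uniform positive lower bound on $\|F_{f_{m-1}}\|_\cD$ follows from $\|f_{m-1}\|$ being bounded away from zero is false in general and is not how Theorem \ref{moT1} works, but this does not affect your argument because you never actually rely on it.

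A feature worth pointing out explicitly: nowhere in your argument is the monotonicity of $\cC$ used. Step (a) for the DGA is Theorem \ref{moT1} (no monotonicity assumed), the XGA adaptation above likewise needs none, and step (b) uses only $\cC\in\ell_q$ (which already gives $c_j\to 0$ and the vanishing tails). So either your derivation silently strengthens the cited statement by dropping a hypothesis, or the original proof in \cite{VT116} takes a different route in which monotonicity genuinely plays a role. You should state openly that your proof does not invoke monotonicity, rather than speculating vaguely about where it ``most naturally enters''; as written, that final sentence is the weakest part of an otherwise clean proposal.
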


In  \cite{VT116} we also addressed a question of the rate of convergence of these algorithms.   We proved the following theorem.
\begin{Theorem}[{\cite{VT116}}]\label{moT3} Let $X$ be a uniformly smooth Banach space with modulus of smoothness $\rho(u)\le \gamma u^q$, $q\in(1,2]$. 
We set $s:=(1+1/q)/2$ and $\cC_s:=\{k^{-s}\}_{k=1}^\infty$. Then both 
DGA($t,\cC_s$) and XGA($\cC_s$) (for this algorithm $t=1$) converge 
for $f\in A_1(\cD)$ with the following rate: For any $r\in(0,t(1-s))$
$$
\|f_m\|\le C(r,t,q,\gamma)m^{-r}.
$$
\end{Theorem}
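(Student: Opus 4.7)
Plan. The strategy is to reduce both DGA($t,\cC_s$) and XGA($\cC_s$) to a single one-step descent inequality and then analyze the resulting nonlinear recursion, whose unperturbed version already has the rate $m^{-t(1-s)}$. First I would use the standard modulus-of-smoothness bound $\|x-y\|\le\|x\|-F_x(y)+2\|x\|\rho(\|y\|/\|x\|)$, specialized to $x=f_{m-1}$, $y=c_m\varphi_m$, combined with $\rho(u)\le\gamma u^q$, to obtain
$$\|f_m\|\le\|f_{m-1}\|-tc_m\|F_{f_{m-1}}\|_\cD+2\gamma c_m^q\|f_{m-1}\|^{1-q}.$$
For DGA this is immediate from $F_{f_{m-1}}(\varphi_m)\ge t\|F_{f_{m-1}}\|_\cD$. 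For XGA, apply the same inequality to any near-optimizer $\varphi\in\cD^{\pm}$ of $F_{f_{m-1}}$ and use the minimizing property of the $X$-greedy step to absorb the slack, recovering the same bound with $t=1$.

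Next I would derive the access inequality for $f\in A_1(\cD)$. From $\sum|a_j|\le 1$ one gets $|F_{f_{m-1}}(f)|\le\|F_{f_{m-1}}\|_\cD$, and since $G_{m-1}=\sum_{k=1}^{m-1}c_k\varphi_k$, the triangle inequality yields $|F_{f_{m-1}}(G_{m-1})|\le\|F_{f_{m-1}}\|_\cD\sum_{k=1}^{m-1}c_k$; combining with $\|f_{m-1}\|=F_{f_{m-1}}(f-G_{m-1})$ gives $\|F_{f_{m-1}}\|_\cD\ge a_{m-1}/B_{m-1}$ with $a_{m-1}:=\|f_{m-1}\|$ and $B_{m-1}:=1+\sum_{k=1}^{m-1}c_k$. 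Substitution produces the recursion
$$a_m\le a_{m-1}(1-tc_m/B_{m-1})+2\gamma c_m^q a_{m-1}^{1-q}.$$
For $c_k=k^{-s}$, Euler--Maclaurin gives $B_{m-1}=m^{1-s}/(1-s)+O(1)$, hence $tc_m/B_{m-1}=\beta/m+O(m^{-2+s})$ with $\beta:=t(1-s)$. The ``unperturbed'' linear recursion $b_m=b_{m-1}(1-\beta/m)$ yields $b_m\asymp m^{-\beta}$, which is why $r<\beta$ is the natural rate target.

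The rate bound would be proved by induction: $a_m\le Cm^{-r}$ for a suitable $C=C(r,t,q,\gamma)$ and all $m\ge m_0$. The main obstacle is that the perturbation $2\gamma c_m^q a_{m-1}^{1-q}$ is \emph{increasing} as $a_{m-1}\downarrow 0$ (since $1-q<0$), so an upper bound on $a_{m-1}$ does not control the perturbation from above. To circumvent this I would also invoke the universal bound $\rho(u)\le u$, which gives the alternative estimate $2a_{m-1}\rho(c_m/a_{m-1})\le 2c_m=2m^{-s}$, and split each induction step at the equilibrium level $a^*\asymp m^{-(1-s)}$ at which progress and perturbation would balance. In the regime $a_{m-1}\ge c_0 m^{-(1-s)}$ with $c_0=c_0(\gamma,t,q,r)$ large, the $\gamma u^q$ bound makes the perturbation at most $\eta\beta a_{m-1}/m$ for any prescribed $\eta>0$, so $a_m\le a_{m-1}(1-(1-\eta)\beta/m)$; fixing $\eta$ with $r<(1-\eta)\beta$ (possible since $r<\beta$) and using $a_{m-1}\le C(m-1)^{-r}$ closes the step by elementary comparison of $(m-1)^{-r}(1-(1-\eta)\beta/m)$ with $m^{-r}$. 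In the regime $a_{m-1}<c_0m^{-(1-s)}$ the universal bound gives $a_m\le a_{m-1}+2m^{-s}$; since $r<t(1-s)\le 1-s<s$, both $m^{-(1-s)}$ and $m^{-s}$ are $o(m^{-r})$, so $a_m\le Cm^{-r}$ once $C$ is chosen to exceed the threshold constant.

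The hard part is the book-keeping in the above-threshold case: closing the induction reduces to the exponent inequality $q(r-s)+1<0\Longleftrightarrow r<1-s$ (automatic from $r<t(1-s)$) together with fixing $C$ of order $(\gamma/(\beta-r))^{1/q}$ to absorb the perturbation constant uniformly in $m$. The blow-up $C\to\infty$ as $r\uparrow t(1-s)$ is precisely what forces the strict inequality $r<t(1-s)$ in the theorem.
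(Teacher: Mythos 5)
Your proposal is essentially correct and follows the standard approach for this result. The paper itself does not reproduce a proof of Theorem~\ref{moT3} (it is cited from \cite{VT116}), but the argument you outline is the expected one, and the two-regime recursion analysis you perform ``by hand'' is precisely what the paper's Chapter~IV packages as Lemma~\ref{LeL12}: take $\alpha:=r$, note that the increment bound $a_m\le a_{m-1}+2m^{-s}\le a_{m-1}+Am^{-r}$ (valid since $r<s$) gives hypothesis~(\ref{Le3.20}), and that whenever $a_\nu\ge A\nu^{-r}$ one in fact has $a_\nu\gg \nu^{-(1-s)}$ (since $r<1-s$), so your above-threshold estimate $a_{\nu+1}\le a_\nu(1-(1-\eta)\beta/\nu)$ supplies hypothesis~(\ref{Le3.21}) with $\beta':=(1-\eta)t(1-s)>r=\alpha$. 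Two points worth making explicit when you write this up: (i) the identity $1/q-s=-(1-s)$, which is exactly the reason for the choice $s=(1+1/q)/2$, is what makes the threshold below which the $\gamma u^q$ bound becomes useless coincide with the level below which the crude bound $\rho(u)\le u$ already delivers $a_m\lesssim m^{-(1-s)}\le m^{-r}$; your remark on $q(r-s)+1<0\Leftrightarrow r<1-s$ is the same algebra viewed from the induction. (ii) The $X$-greedy reduction should be stated via an $\epsilon$-near maximizer $g_\epsilon\in\cD^{\pm}$ of $F_{f_{m-1}}$ followed by $\|f_{m-1}-c_m\varphi_m\|\le\|f_{m-1}-c_m g_\epsilon\|$ and a limit $\epsilon\to0$, rather than assuming an exact maximizer exists; you gesture at this, and it is fine, but it is the one place where the DGA and XGA cases genuinely diverge. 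Otherwise the derivation of the key recursion
$$
a_m\le a_{m-1}\Bigl(1-\frac{tc_m}{B_{m-1}}\Bigr)+2\gamma c_m^q a_{m-1}^{1-q},\qquad B_{m-1}=1+\sum_{k=1}^{m-1}c_k,
$$
the Euler--Maclaurin estimate $tc_m/B_{m-1}=\beta/m+O(m^{-2+s})$ with $\beta=t(1-s)$, and the observation that $C\to\infty$ as $r\uparrow t(1-s)$ (forcing the strict inequality) are all correct.
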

In the case $t=1$, Theorem \ref{moT3} provides the rate of convergence $m^{-r}$   with $r$ arbitrarily close to $(1-1/q)/2$.  It would be interesting to know if the rate $m^{-(1-1/q)/2}$ is the best that can be achieved by algorithms DGA($t,\cC_s$) and XGA($\cC_s$) or, more generally, by algorithms DGA($t,\cC$) and XGA($\cC$). 

\begin{Remark}\label{moR1} The above results can be interpreted as a generalization of the following classical Riemann's theorem on convergence of sequences: Assume $\cC=\{c_j\}_{j=1}^\infty$ is such that $c_j\ge 0$, $j=1,2,\dots$, and 
\be\label{inf}
\sum_{j=1}^\infty c_j =\infty,\quad \lim_{j\to\infty} c_j =0.
\ee
Then for any real number $a$ we can find the signs $\epsilon_j =1$ or $=-1$ such that  $\sum_{j=1}^\infty \epsilon_j c_j = a$. In the Riemann's theorem the role of $X$ is played by $\bbR$ and the dictionary $\cD$ consists of one element $1$. 
Theorem \ref{moT2} generalizes the Riemann's theorem in the following way. Under an additional to (\ref{inf}) assumptions that $\cC$ is monotone and $\cC \in \ell_q$,  $q\in(1,2]$, it guaranties that for any $f\in X$ and any dictionary $\cD$ we can find the  signs $\epsilon_j$ and elements $\ff_j \in\cD$ such that $f = \sum_{j=1}^\infty \epsilon_j \ff_j c_j$. 
\end{Remark}

\subsection{Modification of WDGA}
   
 {\bf Dual Greedy Algorithm with parameters $(\tau,b,\mu)$ (DGA$(\tau,b,\mu)$).}
Let $X$ be a uniformly smooth Banach space with modulus of smoothness $\rho(u)$ and let $\mu(u)$ be a continuous majorant of $\rho(u)$: $\rho(u)\le\mu(u)$, $u\in[0,\infty)$. For a weakness sequence $\tau$ and a parameter $b\in (0,1]$ we define sequences
$\{f_m\}_{m=0}^\infty$, $\{\ff_m\}_{m=1}^\infty$, $\{c_m\}_{m=1}^\infty$ inductively. Let $f_0:=f$. If   $f_{m-1}=0$ for some $m\ge 1$, then we set $f_j=0$ for $j\ge m$ and stop. If $f_{m-1}\neq 0$ then we conduct the following three steps.

(1) Take any $\ff_m \in \cD^{\pm}$ such that
\be\label{7.21}
F_{f_{m-1}}(\ff_m) \ge t_m\|F_{f_{m-1}}\|_\cD.  
\ee

(2) Choose $c_m>0$ from the equation (we assume that $\mu$ is such that a solution exists)
\be\label{7.22}
\|f_{m-1}\|\mu(c_m/\|f_{m-1}\|) = \frac{t_mb}{2}c_m\|F_{f_{m-1}}\|_\cD.  
\ee

(3) Define
\be\label{7.23}
f_m:=f_{m-1}-c_m\ff_m.  
\ee
   
  For the reader's convenience let us figure out how the DGA$(1,b,u^2/2)$ works in Hilbert space. Consider its $m$th iteration. Let $\ff_m\in\cD^{\pm}$ be from (\ref{7.21}) with $t_m=1$. Then it is clear that $\ff_m$ 
maximizes the $\<f_{m-1},g\>$ over the dictionary $\cD^{\pm}$ and
$$
\<f_{m-1},\ff_m\> = \|f_{m-1}\|\|F_{f_{m-1}}\|_\cD.
$$
The PGA would use $\ff_m$ with the coefficient $\<f_{m-1},\ff_m\>$ at this step. 
The DGA$(1,b,u^2/2)$ uses the same $\ff_m$ and only a fraction of $\<f_{m-1},\ff_m\>$:
\be\label{7.43}
c_m = b\|f_{m-1}\|\|F_{f_{m-1}}\|_\cD.  
\ee
Thus, the choice $b=1$ in (\ref{7.43}) corresponds to the PGA. The choice $b\in (0,1)$ was introduced in 2003 in \cite{VT111} (see the preprint version of it from 2003) and studied there in the case of Hilbert space for $b\in (0,1]$ and in the case of Banach spaces for $b\in (0,1)$. We point out  that the technique developed in \cite{VT111} for the case of general Banach spaces, does not work for $b=1$. The idea of using a shorter step than the one provided by the PGA was called in the later follow up literature by "shrinkage".  

The following convergence and rate of convergence results are from \cite{VT111}.

 \begin{Theorem}[{\cite{VT111}}]\label{moT4} Let $X$ be a uniformly smooth Banach space with the modulus of smoothness $\rho(u)$ and let $\mu(u)$ be a continuous majorant of $\rho(u)$ with the property $\mu(u)/u \downarrow 0$ as $u\to +0$. Then, for any $t\in (0,1]$ and $b\in (0,1)$ the DGA$(t,b,\mu)$ converges in $X$.  
\end{Theorem}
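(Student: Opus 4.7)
The plan is to combine a standard modulus-of-smoothness descent with a density-driven summation argument. First I would derive the fundamental one-step bound. Applying the inequality $\|x+y\|+\|x-y\|\le 2\|x\|(1+\rho(\|y\|/\|x\|))$ with $x=f_{m-1}$, $y=c_m\varphi_m$, together with $\|f_{m-1}+c_m\varphi_m\|\ge\|f_{m-1}\|+c_m F_{f_{m-1}}(\varphi_m)$, the greedy selection $F_{f_{m-1}}(\varphi_m)\ge t\|F_{f_{m-1}}\|_\cD$, the majorization $\rho\le\mu$, and finally the defining equation (\ref{7.22}) $2\|f_{m-1}\|\mu(c_m/\|f_{m-1}\|)=tb\,c_m\|F_{f_{m-1}}\|_\cD$, I obtain
\[
\|f_m\|\le\|f_{m-1}\|-t(1-b)c_m\|F_{f_{m-1}}\|_\cD.
\]
Hence $\{\|f_m\|\}$ is nonincreasing with some limit $a\ge 0$, and telescoping yields $\sum_{m\ge 1}c_m\|F_{f_{m-1}}\|_\cD\le\|f\|/(t(1-b))<\infty$.

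Next, assume for contradiction that $a>0$ and set $\alpha_m:=\|F_{f_{m-1}}\|_\cD$. The function $s(u):=\mu(u)/u$ is continuous and, by the hypothesis $\mu(u)/u\downarrow 0$ as $u\to 0^+$, monotone with $s(0^+)=0$, so (\ref{7.22}) gives $c_m=\|f_{m-1}\|\,s^{-1}((tb/2)\alpha_m)$. If some subsequence had $\alpha_{m_k}\ge\beta>0$, then $c_{m_k}\ge a\,s^{-1}(tb\beta/2)>0$, so $c_{m_k}\alpha_{m_k}$ would be bounded below along the subsequence, contradicting summability. Therefore $\alpha_m\to 0$.

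Now I would exploit density of $\sp(\cD)$ in $X$: for any $\varepsilon>0$ pick $g_\varepsilon=\sum_{j=1}^n a_jh_j$ with $h_j\in\cD$ and $\|f-g_\varepsilon\|<\varepsilon$; then $|F_{f_{m-1}}(f)|\le(\sum_j|a_j|)\alpha_m+\varepsilon$, so $F_{f_{m-1}}(f)\to 0$. Since $f=f_{m-1}+G_{m-1}$, this forces
\[
F_{f_{m-1}}(G_{m-1})=F_{f_{m-1}}(f)-\|f_{m-1}\|\longrightarrow -a,
\]
and thus $|F_{f_{m-1}}(G_{m-1})|\ge a/2$ for all large $m$. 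Combining this with the trivial bound $|F_{f_{m-1}}(G_{m-1})|\le\alpha_m S_{m-1}$, where $S_m:=\sum_{k=1}^m c_k$, yields $\alpha_m S_{m-1}\ge a/2$ eventually.

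To close the argument I would use a logarithmic-summation trick. Since $\alpha_m\to 0$ while $\alpha_m S_{m-1}\ge a/2$, necessarily $S_{m-1}\to\infty$. From $\alpha_m\ge a/(2S_{m-1})$ and $\ln(1+x)\le x$,
\[
c_m\alpha_m\ge\frac{a}{2}\cdot\frac{S_m-S_{m-1}}{S_{m-1}}\ge\frac{a}{2}\ln\frac{S_m}{S_{m-1}},
\]
so $\sum_{m\ge M}c_m\alpha_m\ge(a/2)\ln(S_N/S_{M-1})\to\infty$, contradicting the summability of $\sum c_m\alpha_m$. Hence $a=0$. The hard part is precisely this last leg: simple examples show that $\alpha_m\to 0$ does not by itself force $\|f_m\|\to 0$ in a general Banach space, so one must exploit the algorithm-specific identity $f=f_{m-1}+G_{m-1}$ together with the summability $\sum c_m\alpha_m<\infty$ to convert the density statement into a quantitative contradiction.
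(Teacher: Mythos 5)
Your proof is correct and follows essentially the same strategy as the paper's: the one-step inequality $\|f_m\|\le\|f_{m-1}\|-t(1-b)c_m\|F_{f_{m-1}}\|_\cD$ obtained from the modulus of smoothness and the defining equation (\ref{7.22}), summability $\sum c_m\|F_{f_{m-1}}\|_\cD<\infty$, the observation that (\ref{7.22}) forces $\|F_{f_{m-1}}\|_\cD\to 0$ if $\|f_m\|\not\to 0$, and then the density plus Abel--Dini (logarithmic summation) contradiction via $f=f_{m-1}+G_{m-1}$. All steps check out.
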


The following result gives the rate of convergence in the case $\tau =\{t\}$.

\begin{Theorem}[{\cite{VT111}}]\label{moT5} Assume $X$ has a modulus of smoothness $\rho(u)\le \gamma u^q$, $q\in (1,2]$. Denote $\mu(u) := \gamma u^q$. Then, for any dictionary $\cD$ and any $f\in A_1(\cD)$, the rate of convergence of the DGA$(t,b,\mu)$ is given by 
$$
\|f_m\|\le C(t,b,\gamma,q)m^{-\frac{t(1-b)}{p(1+t(1-b))}}, \quad p:= \frac{q}{q-1}.
$$
\end{Theorem}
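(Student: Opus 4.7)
The plan is to derive a one-step decay inequality from the modulus of smoothness, specialize the defining equation for $c_m$ to the power-type case $\mu(u)=\gamma u^q$, extract a lower bound on $\|F_{f_{m-1}}\|_\cD$ from the hypothesis $f\in A_1(\cD)$, and finally close the resulting nonlinear recursion.

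First I would apply the standard uniformly-smooth-space inequality $\|x+y\|\le\|x\|+F_x(y)+2\|x\|\rho(\|y\|/\|x\|)$ with $x=f_{m-1}$ and $y=-c_m\ff_m$. Using $\|\ff_m\|\le 1$, the weak-greedy bound $F_{f_{m-1}}(\ff_m)\ge t\|F_{f_{m-1}}\|_\cD$, the majorization $\rho\le\mu$, and the defining identity $\|f_{m-1}\|\mu(c_m/\|f_{m-1}\|)=(tb/2)c_m\|F_{f_{m-1}}\|_\cD$, the two error contributions combine to yield the clean per-step bound $\|f_m\|\le\|f_{m-1}\|-t(1-b)c_m\|F_{f_{m-1}}\|_\cD$. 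This is the mechanism by which the shrinkage factor $b\in(0,1)$ absorbs the second-order error from the modulus of smoothness while still leaving a net decrease of order $t(1-b)$.

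Specializing $\mu(u)=\gamma u^q$, solving the defining equation yields $c_m=(tb/(2\gamma))^{p-1}\|F_{f_{m-1}}\|_\cD^{p-1}\|f_{m-1}\|$ with $p=q/(q-1)$, so the per-step bound becomes $\|f_m\|\le\|f_{m-1}\|\bigl(1-A\|F_{f_{m-1}}\|_\cD^p\bigr)$ for a constant $A=A(t,b,\gamma,q)$. Next, since $f\in A_1(\cD)$ forces $|F(f)|\le\|F\|_\cD$ for every $F\in X^*$, writing $f_{m-1}=f-G_{m-1}$, using the identity $F_{f_{m-1}}(f_{m-1})=\|f_{m-1}\|$, and estimating $|F_{f_{m-1}}(G_{m-1})|\le\|F_{f_{m-1}}\|_\cD\sum_{j<m}c_j$ produce the key lower bound $\|F_{f_{m-1}}\|_\cD\ge\|f_{m-1}\|/(1+S_{m-1})$, where $S_{m-1}:=\sum_{j=1}^{m-1}c_j$.

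Combining the last two displays gives the driven recursion $\|f_{m-1}\|-\|f_m\|\ge C\|f_{m-1}\|^{p+1}/(1+S_{m-1})^p$, coupled with the explicit formula $c_m=(tb/(2\gamma))^{p-1}\|F_{f_{m-1}}\|_\cD^{p-1}\|f_{m-1}\|$ for the increments of $S_m$. The crude a priori estimate $c_m\le(tb/(2\gamma))^{p-1}$ already gives $S_m\le C_1 m$. To extract the sharp exponent $\theta:=t(1-b)/(p(1+t(1-b)))$ claimed in the statement, I would then run a bootstrap: feed a preliminary power-decay estimate for $\|f_m\|$ back into the formula for $c_j$ to improve the bound on $S_m$; substitute the improved $S_m$-bound into the recursion to obtain a sharper decay of $\|f_m\|$; and iterate a bounded number of times. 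The argument is then closed by a standard sequence lemma of the type found in Chapter 2 of \cite{VTbook}, which converts a recursion of the form $x_m\le x_{m-1}(1-Ax_{m-1}^\beta/m^\delta)$ into a power-rate conclusion.

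The hard part is the calibration of this bootstrap: the coupling between $\|f_m\|$ and $S_m$ means that a single naive substitution loses a factor of roughly $m^{1/p}$, which is precisely why the exponent $\theta$ in the theorem is strictly smaller than the WCGA exponent $1/p$ from Theorem \ref{rcT1}. Producing exactly $\theta$ rather than a weaker exponent requires exploiting the algebraic structure of the defining equation, in particular the way the factor $tb/2$ is converted into the numerator $t(1-b)$ of $\theta$ after the shrinkage step; this same structure dictates that the bootstrap converges to the precise balance between the rate and the history sum recorded in the denominator $p(1+t(1-b))$.
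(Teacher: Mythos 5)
Your first three steps are correct and match the structure of the argument: the per-step bound $\|f_m\|\le\|f_{m-1}\|-t(1-b)c_m\|F_{f_{m-1}}\|_\cD$, the explicit formula $c_m=(tb/(2\gamma))^{p-1}\|F_{f_{m-1}}\|_\cD^{p-1}\|f_{m-1}\|$, and the lower bound $\|F_{f_{m-1}}\|_\cD\ge\|f_{m-1}\|/(1+S_{m-1})$ are all right. But the bootstrap you propose in the final step does not close. Write $u_m:=\|F_{f_{m-1}}\|_\cD$, $B_0:=(tb/(2\gamma))^{p-1}$, $A:=t(1-b)B_0$, so the recursion reads $\|f_m\|\le\|f_{m-1}\|(1-Au_m^p)$ with $u_m\ge\|f_{m-1}\|/(1+S_{m-1})$. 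If you assume $\|f_m\|\ll m^{-\theta_0}$, then $c_m\le B_0\|f_{m-1}\|\ll m^{-\theta_0}$ gives $S_m\ll m^{1-\theta_0}$, and feeding this back produces a recursion whose balanced exponent is $\theta_1=1/p-1+\theta_0$. Even with the sharper H\"older estimate (using $c_m=(c_mu_m)^{1/q}(B_0\|f_{m-1}\|)^{1/p}$ and the telescoping bound $\sum c_mu_m\le 1/(t(1-b))$, one gets $S_m\ll m^{(1-\theta_0)/p}$ and hence $\theta_1=\theta_0/p$), the fixed-point map sends $0\mapsto 0$ and never leaves $\theta=0$; the bootstrap cannot be started from the crude a priori bound. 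The "calibration" you allude to is therefore not a matter of iterating more carefully — the iterative scheme itself has the wrong fixed point.

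The missing idea is a multiplicative invariant: set $N_m:=1+\sum_{j\le m}c_j$ and $\beta:=t(1-b)$, and show that $y_m:=\|f_m\|N_m^\beta$ is non-increasing. Indeed, $y_m\le y_{m-1}(1-Au_m^p)(1+c_m/N_{m-1})^\beta$, and since $c_m/N_{m-1}=B_0u_m^{p-1}\|f_{m-1}\|/N_{m-1}\le B_0u_m^p$ and $\beta\le1$, one has $(1+c_m/N_{m-1})^\beta\le 1+\beta B_0u_m^p=1+Au_m^p$ precisely because $\beta B_0=A$. Thus $y_m\le y_{m-1}(1-A^2u_m^{2p})\le y_{m-1}$, giving $\|f_m\|N_m^{t(1-b)}\le\|f_0\|\le1$ for all $m$. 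This invariant is the genuine algebraic consequence of the defining equation (\ref{7.22}) that you were gesturing at, and the exact choice $\beta=t(1-b)$ is the only one for which the gain from the error decay and the loss from the coefficient-sum growth cancel. From the invariant, $N_{m-1}\le\|f_{m-1}\|^{-1/\beta}$, hence $u_m\ge\|f_{m-1}\|^{1+1/\beta}$ and $\|f_m\|\le\|f_{m-1}\|\bigl(1-A\|f_{m-1}\|^{p(1+1/\beta)}\bigr)$; since $p(1+1/\beta)=1/\theta$ with $\theta=t(1-b)/(p(1+t(1-b)))$, a single application of Lemma \ref{LeL4} (or Lemma \ref{LeL12}) now yields $\|f_m\|\le C(t,b,\gamma,q)\,m^{-\theta}$, with no iteration needed.
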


The following result gives the rate of convergence for general $\tau$.

\begin{Theorem}[{\cite{VT111}}]\label{moT6} Let $\tau :=\{t_k\}_{k=1}^\infty$ be a nonincreasing sequence $1\ge t_1\ge t_2 \dots >0$ and $b\in (0,1)$. Assume $X$ has a modulus of smoothness $\rho(u)\le \gamma u^q$, $q\in (1,2]$. Denote $\mu(u) := \gamma u^q$. Then, for any dictionary $\cD$ and any $f\in A_1(\cD)$, the rate of convergence of the DGA$(\tau,b,\mu)$ is given by 
$$
\|f_m\|\le C(b,\gamma,q)\left(1+\sum_{k=1}^mt_k^p\right)^{-\frac{t_m(1-b)}{p(1+t_m(1-b))}}, \quad p:= \frac{q}{q-1}.
$$
\end{Theorem}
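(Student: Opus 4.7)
My plan is to follow the standard four-step program used for greedy algorithms of dual type: (i) derive a one-step decrease inequality from the smoothness of $X$ and the definitions of the algorithm; (ii) solve for $c_m$ explicitly; (iii) bootstrap a lower bound on $\|F_{f_{m-1}}\|_\cD$ from the assumption $f\in A_1(\cD)$; (iv) solve the resulting scalar recursion.

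\textbf{Step 1 (decrease inequality).} Starting from the standard uniform-smoothness bound
$$\|x+y\|\le \|x\|+F_x(y)+2\|x\|\rho(\|y\|/\|x\|),$$
applied to $x=f_{m-1}$, $y=-c_m\varphi_m$ (so $\|y\|\le c_m$), and using $\rho\le\mu$ together with the weak-greedy inequality (\ref{7.21}), I obtain
$$\|f_m\|\le \|f_{m-1}\|-c_m t_m\|F_{f_{m-1}}\|_\cD+2\|f_{m-1}\|\mu(c_m/\|f_{m-1}\|).$$
Inserting the defining identity (\ref{7.22}) of $c_m$ collapses the last two terms into the clean estimate
$$\|f_m\|\le \|f_{m-1}\|-(1-b)\,t_m c_m\|F_{f_{m-1}}\|_\cD.$$

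\textbf{Step 2 (explicit $c_m$).} Solving (\ref{7.22}) with $\mu(u)=\gamma u^q$ gives
$$c_m=\left(\frac{t_m b\,\|F_{f_{m-1}}\|_\cD}{2\gamma}\right)^{p-1}\|f_{m-1}\|,\qquad p=\tfrac{q}{q-1},$$
(using $1/(q-1)=p-1$). Plugging this into Step 1 yields the multiplicative recursion
$$\|f_m\|\le \|f_{m-1}\|\bigl(1-K\,t_m^p\|F_{f_{m-1}}\|_\cD^p\bigr),\qquad K:=(1-b)\bigl(b/(2\gamma)\bigr)^{p-1}.$$

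\textbf{Step 3 (lower bound on $\|F_{f_{m-1}}\|_\cD$).} Write $f=\sum a_j g_j$ with $g_j\in\cD^\pm$, $\sum|a_j|\le 1$, and set $S_{m-1}:=\sum_{k<m}c_k$, $G_{m-1}:=\sum_{k<m}c_k\varphi_k$. Since $f_{m-1}=f-G_{m-1}$, the element $f_{m-1}/(1+S_{m-1})$ is a convex combination of elements of $\cD^\pm$, hence lies in $A_1(\cD)$. Using $F_{f_{m-1}}(f_{m-1})=\|f_{m-1}\|$, this gives
$$\|F_{f_{m-1}}\|_\cD\ \ge\ F_{f_{m-1}}\!\Bigl(\tfrac{f_{m-1}}{1+S_{m-1}}\Bigr)=\frac{\|f_{m-1}\|}{1+S_{m-1}}.$$
Substituting into Step 2, with $y_m:=\|f_m\|$,
$$y_m\le y_{m-1}\Bigl(1-K\,t_m^p\frac{y_{m-1}^p}{(1+S_{m-1})^p}\Bigr).$$
At the same time, the explicit formula in Step 2 together with $\|F_{f_{m-1}}\|_\cD\le 1$ and $t_m\le 1$ gives $c_m\le\bigl(b/(2\gamma)\bigr)^{p-1}y_{m-1}$, so that the growth of $S_{m-1}$ is tied to the past residuals.

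\textbf{Step 4 (solving the recursion).} This is the step where I expect the real work. The recursion is not of the clean form $y_m\le y_{m-1}(1-C t_m^p y_{m-1}^p)$ that one meets for WCGA/WGAFR (which would immediately yield the $m^{-1/p}$ rate); the coupling to $S_{m-1}$, which is itself comparable to $\sum_{k<m}y_{k-1}$, forces a slower rate. The strategy is to posit an ansatz of the form $y_m^{1/\kappa}\bigl(1+\sum_{k\le m} t_k^p\bigr)\le C$ for $\kappa=t_m(1-b)/(p(1+t_m(1-b)))$ and verify it by induction on $m$. The exponent $\kappa$ is precisely the one that makes the one-step decrement $y_{m-1}-y_m\gtrsim t_m^p y_{m-1}^{p+1}/(1+S_{m-1})^p$ balance against the growth of $1+S_{m-1}\lesssim 1+\sum_{k<m}y_{k-1}$ in the self-consistent way; one checks that both $y_{m-1}-y_m$ and $y_{m-1}\cdot(t_m^p/(1+\sum t_k^p))$ then scale with the same power, closing the induction. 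The monotonicity $t_1\ge t_2\ge\cdots$ is used precisely to make the $t_m$-dependent exponent well-behaved as $m$ increases (moving from step $m-1$ to step $m$ only strengthens the hypothesis, never weakens it).

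\textbf{Main obstacle.} Steps 1--3 are routine once one knows the template; the actual difficulty is the nonlinear, non-autonomous recursion in Step 4, which couples $y_m$ to the cumulative quantity $S_{m-1}$. Guessing the correct ansatz $\kappa=t_m(1-b)/(p(1+t_m(1-b)))$ and verifying it inductively (including tracking the $m$-dependence of $\kappa$ itself via monotonicity of $\tau$) is the heart of the proof; it explains why the rate here is strictly worse than the $e(\tau,p)$ rate enjoyed by WCGA and WGAFR in Theorem \ref{rcT1}.
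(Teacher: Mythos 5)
Your Steps 1--3 are correct and set up the one-step recursion in the standard way: from uniform smoothness and (\ref{7.22}) you get $\|f_m\|\le\|f_{m-1}\|-(1-b)t_m c_m\|F_{f_{m-1}}\|_\cD$, solving (\ref{7.22}) gives the explicit $c_m$, and $f_{m-1}/(1+S_{m-1})\in A_1(\cD)$ gives $\|F_{f_{m-1}}\|_\cD\ge\|f_{m-1}\|/(1+S_{m-1})$. The gap is in Step 4, and it is not merely a matter of writing out the induction: the upper bound you propose for $1+S_{m-1}$ is too crude to close the argument at the claimed exponent. The decrement bound $\|f_{m-1}\|-\|f_m\|\ge K t_m^p\|f_{m-1}\|^{p+1}/(1+S_{m-1})^p$ only yields $\kappa_m=\tfrac{t_m(1-b)}{p(1+t_m(1-b))}$ when combined with the \emph{sharp} upper bound $1+S_{m-1}\lesssim\|f_{m-1}\|^{-1/((1-b)t_{m-1})}$. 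The bound you cite, $1+S_{m-1}\lesssim 1+\sum_{k<m}\|f_{k-1}\|$ (from $c_k\lesssim\|f_{k-1}\|$), is far weaker: in the constant-$\tau$, Hilbert-space test case ($q=p=2$, $t=1$, $b=1/2$, hence $\kappa=1/6$), the theorem forces $1+S_m\asymp m^{1/3}$ while $1+\sum_{k<m}\|f_{k-1}\|\asymp m^{5/6}$; plugging the latter into the decrement inequality gives only $\gtrsim m^{-13/6}$ where $\gtrsim m^{-7/6}$ is required, and the induction stalls. The root of the problem is that you estimate $c_m$ from above (via $\|F_{f_{m-1}}\|_\cD\le 1$) and the decrement from below (via $\|F_{f_{m-1}}\|_\cD\ge\|f_{m-1}\|/(1+S_{m-1})$), but these two extremes of $\|F_{f_{m-1}}\|_\cD$ cannot both be sharp, and the analysis must track their interplay.

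The missing ingredient is a telescoping estimate that handles this trade-off automatically. From Steps 1 and 3 one has $\|f_m\|\le\|f_{m-1}\|\bigl(1-(1-b)t_m\,\tfrac{c_m}{1+S_{m-1}}\bigr)$, and since $c_m=(1+S_m)-(1+S_{m-1})$, convexity of $x\mapsto(1+x)^{-\beta}$ gives $\|f_m\|\le\|f_{m-1}\|\bigl(\tfrac{1+S_{m-1}}{1+S_m}\bigr)^{(1-b)t_m}$. Because $\tau$ is nonincreasing, all exponents $(1-b)t_k$ in the product over $k\le m$ can be replaced by the smallest one $(1-b)t_m$, and the product telescopes to $\|f_m\|\le\|f\|\,(1+S_m)^{-(1-b)t_m}$. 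This bound simultaneously delivers the sharp control $1+S_m\lesssim\|f_m\|^{-1/((1-b)t_m)}$ and uses the monotonicity of $\tau$ in an essential way (not merely to compare $\kappa_{m-1}$ with $\kappa_m$). Coupling it with the lower bound $c_k\ge(t_kb/(2\gamma))^{p-1}\|f_{k-1}\|^p/(1+S_{k-1})^{p-1}$ and a dichotomy/contradiction argument (either $\|f_m\|$ is already below the target, or it is not and then $1+S_m$ can be bounded from below enough to contradict the telescoping bound) produces the stated rate. Without this telescoping step, the ansatz you posit cannot be closed by induction as written.
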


\section{Algorithms designed for approximation of elements of $A_1(\cD)$}
\label{A1}

The discussed above algorithms XGA, WDGA, WCGA, GAWFR, and their modifications can be applied to any $f\in X$.
We formulated some convergence results for arbitrary elements of $X$. We also discussed the problem of the rate of convergence of these algorithms under an extra assumption that $f\in A_1(\cD)$. In this section we discuss algorithms, which are  designed for approximation of elements of $A_1(\cD)$. This means that in order to run these algorithms and in order to guarantee their convergence properties we need to know in advance that $f\in A_1(\cD)$. 

\subsection{Thresholding algorithms}

The above discussed algorithms may be classified by their greedy steps. The ones with the $X$-greedy step fall in the category of $X$-greedy algorithms and those, which use the weak dual greedy step fall in the category of dual greedy algorithms. It is clear that at each iteration of all the above algorithms the greedy step is the most difficult one to realize. 
Namely, calculation (or even estimating) the quantity $\sup_{\phi\in \cD}|F_{f_{m-1}}(\phi)|$ is indeed a very difficult task. 
In this subsection we will try to avoid calculating the quantity $\sup_{\phi\in \cD}|F_{f_{m-1}}(\phi)|$ at the greedy step. 
Formally, the weak dual greedy step is a thresholding step by its nature. However, as we have pointed out it uses the
quantity $\sup_{\phi\in \cD}|F_{f_{m-1}}(\phi)|$. In this subsection we discuss some dual greedy algorithms, whose greedy 
step at the $m$th iteration uses a threshold determined only by the following information from the previous iterations: $\|f_{m-1}\|_X$, the coefficients $c_1,\dots,c_{m-1}$ from $G_{m-1} = c_1\ff_1+\cdots+c_{m-1}\ff_{m-1}$, and the characteristics of the Banach space $X$. We also discuss the case of a fixed threshold $\delta>0$. We want to point out that the information $f\in A_1(\cD)$ allows us to substantially simplify   the greedy step of the algorithm. 

Let us begin with a known remark (see, for instance, \cite{VTbook}, p.346) on the WCGA. 

\begin{Remark}\label{A1R1} Theorem \ref{rcT1} holds for a slightly modified versions of the WCGA($\tau$) and the WGAFR($\tau$), for which at the greedy step we require
\be\label{2.9}
F_{f_{m-1}}(\varphi_m) \ge t_m\|f_{m-1}\|,\quad \varphi_m\in \cD^\pm.  
\ee
\end{Remark}
This statement follows from the fact that, in the proof of Theorem  \ref{rcT1}, the relation
$$
F_{f_{m-1}}(\varphi_m) \ge t_m\sup_{\phi\in \cD^\pm} F_{f_{m-1}}(\phi)
$$
was  only  used to get (\ref{2.9}).
  
 The following algorithm was introduced and studied in \cite{VT111} (also, see \cite{VTbook}, p.374).
 
  {\bf Modified Dual Greedy Algorithm $(\tau,b,\mu)$ (MDGA$(\tau,b,\mu)$).}
Let $X$ be a uniformly smooth Banach space with modulus of smoothness $\rho(u)$ and let $\mu(u)$ be a continuous majorant of $\rho(u)$: $\rho(u)\le\mu(u)$, $u\in[0,\infty)$. For a sequence $\tau =\{t_k\}_{k=1}^\infty$, $t_k \in (0,1]$ and a parameter $b\in (0,1)$, we define for $f\in A_1(\cD)$ the following sequences
$\{f_m\}_{m=0}^\infty$, $\{\ff_m\}_{m=1}^\infty$, $\{c_m\}_{m=1}^\infty$ inductively. Let $f_0:=f$. If $f_{m-1}=0$ for some $m\ge 1$, then we set $f_j=0$ for $j\ge m$ and stop. If $f_{m-1}\neq 0$ then we conduct the following three steps.

(1) Take any $\ff_m \in \cD^{\pm}$ such that
$$
F_{f_{m-1}}(\ff_m) \ge t_m\|f_{m-1}\|\left(1+\sum_{j=1}^{m-1}c_j\right)^{-1}. 
$$

(2) Choose $c_m>0$ from the equation
$$
 \mu(c_m/\|f_{m-1}\|) = \frac{t_mb}{2}c_m \left(1+\sum_{j=1}^{m-1}c_j\right)^{-1}. 
$$

(3) Define
$$
f_m:=f_{m-1}-c_m\ff_m. 
$$
 
We have the following rate of convergence result for this algorithm. 

\begin{Theorem}[{\cite{VT111}}]\label{A1T1} Let $\tau :=\{t_k\}_{k=1}^\infty$ be a nonincreasing sequence $1\ge t_1\ge t_2 \dots >0$ and $b\in (0,1)$. Assume that $X$ has a modulus of smoothness $\rho(u)\le \gamma u^q$, $q\in (1,2]$. Denote $\mu(u) := \gamma u^q$. Then, for any dictionary $\cD$ and any $f\in A_1(\cD)$, the rate of convergence of the MDGA$(\tau,b,\mu)$ is given by 
$$
\|f_m\|\le C(b,\gamma,q)\left(1+\sum_{k=1}^mt_k^p\right)^{-\frac{t_m(1-b)}{p(1+t_m(1-b))}}, \quad p:= \frac{q}{q-1}.
$$
\end{Theorem}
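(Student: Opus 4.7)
The plan is to reduce the analysis to the same coupled recursion that appears in the proof of Theorem \ref{moT6}, using the hypothesis $f\in A_1(\cD)$ to replace the dual quantity $\|F_{f_{m-1}}\|_\cD$ (which drives the DGA proof) by the computable quantity $\|f_{m-1}\|/s_{m-1}$, where I set $s_{m-1}:=1+\sum_{j=1}^{m-1}c_j$.

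First I would check that MDGA is well-defined on $A_1(\cD)$. Writing $f=f_{m-1}+\sum_{j<m}c_j\ff_j$ and applying the norming functional $F_{f_{m-1}}$, which has dual norm $1$, one uses $F_{f_{m-1}}(f_{m-1})=\|f_{m-1}\|$, $|F_{f_{m-1}}(f)|\le\|F_{f_{m-1}}\|_\cD$ (since $f\in A_1(\cD)$ is a convex-combination limit of elements of $\cD^\pm$), and $|F_{f_{m-1}}(\ff_j)|\le\|F_{f_{m-1}}\|_\cD$ for each $j$, to deduce the key lower bound
$$\|F_{f_{m-1}}\|_\cD\ge\|f_{m-1}\|/s_{m-1}.$$
Consequently the MDGA threshold $t_m\|f_{m-1}\|/s_{m-1}$ never exceeds $t_m\|F_{f_{m-1}}\|_\cD$, so step (1) always admits a valid $\ff_m$; in particular any element produced by an ordinary weak dual greedy step with weakness $t_m$ is admissible.

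Next, I would apply the standard smoothness inequality $\|x+y\|\le\|x\|+F_x(y)+2\|x\|\rho(\|y\|/\|x\|)$ with $x=f_{m-1}$, $y=-c_m\ff_m$ (using $\|\ff_m\|\le1$ and $\rho\le\mu$) to obtain
$$\|f_m\|\le\|f_{m-1}\|-c_m F_{f_{m-1}}(\ff_m)+2\|f_{m-1}\|\mu(c_m/\|f_{m-1}\|).$$
The two designed-in equations of MDGA now do their job cleanly: step (1) replaces $F_{f_{m-1}}(\ff_m)$ by $t_m\|f_{m-1}\|/s_{m-1}$, and multiplying step (2) by $2\|f_{m-1}\|$ rewrites the loss term as $t_m b\,c_m\|f_{m-1}\|/s_{m-1}$. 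Together they collapse into
$$\|f_m\|\le\|f_{m-1}\|\Bigl(1-\frac{t_m(1-b)c_m}{s_{m-1}}\Bigr).$$

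Finally, with $\mu(u)=\gamma u^q$ one can solve step (2) explicitly,
$$c_m=\Bigl(\frac{t_m b}{2\gamma}\Bigr)^{1/(q-1)}\|f_{m-1}\|^{p}\,s_{m-1}^{-1/(q-1)},\qquad p=q/(q-1),$$
which combined with the previous display gives a coupled recursion in $a_m:=\|f_m\|$ and $s_m$. The main obstacle is iterating this pair; the plan is the same as for Theorem \ref{moT6}: track a mixed monomial $a_m^{\alpha}s_m^{\beta}$ whose step-to-step ratio is controlled by the above inequality, choose the exponents so the ratio stays $\le 1$, and separately sum the closed form for $c_m$ to obtain $s_m\gtrsim(1+\sum_{k\le m}t_k^p)^{1/p}$. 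Matching exponents forces the precise rate $t_m(1-b)/(p(1+t_m(1-b)))$ in the conclusion, and the stated bound on $\|f_m\|$ follows.
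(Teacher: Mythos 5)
Your first three paragraphs are correct and match the standard reduction: the lower bound $\|F_{f_{m-1}}\|_\cD\ge\|f_{m-1}\|/s_{m-1}$ for $f\in A_1(\cD)$, the smoothness inequality of Lemma~\ref{LL0}, and the designed-in cancellation produced by steps (1) and (2) of MDGA together yield exactly
$$\|f_m\|\le\|f_{m-1}\|\Bigl(1-\tfrac{t_m(1-b)c_m}{s_{m-1}}\Bigr),\qquad c_m=\Bigl(\tfrac{t_m b}{2\gamma}\Bigr)^{p-1}\|f_{m-1}\|^{p}s_{m-1}^{-(p-1)},$$
which is the coupled system one must iterate. The last paragraph, however, contains a real gap. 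The claimed lower bound $s_m\gtrsim(1+\sum_{k\le m}t_k^p)^{1/p}$ is false in general: take $\cD=\{g\}$ and $f=g$; then all residuals are scalar multiples of $g$, so $\|f_m\|+s_m$ is constant, hence $s_m\le 2$ for all $m$, while $\sum t_k^p$ may be unbounded. Moreover, even where it held, combining it with the monomial bound $a_m\le s_m^{-t_m(1-b)}$ would give the exponent $t_m(1-b)/p$, not the stated $t_m(1-b)/(p(1+t_m(1-b)))$, so ``matching exponents'' does not close the argument. The underlying problem is that you try to lower bound $s_m$, which requires a lower bound on $a_{k-1}$ that you do not have.

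The correct finish uses the monomial bound in the opposite direction, to eliminate $s$ and obtain a one-variable recursion for $a_k:=\|f_k\|$. Fix the target index $m$ and set $\alpha:=t_m(1-b)$. By monotonicity of $\tau$ we have $\alpha\le t_k(1-b)$ for all $k\le m$, and a short computation (using $(1+x)^\alpha\le 1+\alpha x$) shows that $a_k s_k^\alpha$ is nonincreasing for $k\le m$; since $a_0\le 1$ and $s_0=1$, this gives $s_{k-1}\le a_{k-1}^{-1/\alpha}$. Substituting this \emph{upper} bound on $s_{k-1}$ into $\beta_k:=c_k/s_{k-1}=(b/2\gamma)^{p-1}t_k^{p-1}(a_{k-1}/s_{k-1})^p$ yields $\beta_k\ge K t_k^{p-1}a_{k-1}^{p(1+\alpha)/\alpha}$ with $K=(b/(2\gamma))^{p-1}$, hence
$$a_k\le a_{k-1}\bigl(1-(1-b)K\,t_k^p\,a_{k-1}^{p(1+\alpha)/\alpha}\bigr),\qquad k\le m.$$
Raising to the power $\theta:=p(1+\alpha)/\alpha\ge 1$, using $(1-x)^\theta\le 1-x$ on $[0,1]$, and applying Lemma~\ref{HL1} to $\tilde a_k:=a_k^{\theta}$ with $y_k=(1-b)Kt_k^p$ and $C_1=1$ gives $\tilde a_m\le(1+(1-b)K\sum_{k\le m}t_k^p)^{-1}$, and therefore $a_m\le C(b,\gamma,q)(1+\sum_{k\le m}t_k^p)^{-1/\theta}$ with $1/\theta=t_m(1-b)/(p(1+t_m(1-b)))$, which is exactly the claimed rate.
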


 Clearly, this modification is more ready for practical implementation than the DGA$(\tau,b,\mu)$. 
 
 We now consider two algorithms defined and studied in \cite{VT115} with a different type of thresholding. These algorithms work for any $f\in X$. We begin with the Dual Greedy Algorithm with Relaxation and Thresholding (DGART).

 {\bf DGART.} 
  We define $f_0   :=f$ and $G_0  := 0$. Then for  a given parameter $\de\in(0,1/2]$    we have the following inductive definition for $m\ge1$

(1) $\varphi_m   \in \cD^{\pm}$ is any element satisfying
\be\label{6.2}
F_{f_{m-1}}(\varphi_m  ) \ge  \de.  
\ee
If there is no $\varphi_m   \in \cD^{\pm}$ satisfying (\ref{6.2}) then we stop.

(2) Find $w_m$ and $ \lambda_m $ such that
$$
\|f-((1-w_m)G_{m-1} + \la_m\varphi_m)\| = \inf_{ \la,w}\|f-((1-w)G_{m-1} + \la\varphi_m)\|
$$
and define
$$
G_m:=   (1-w_m)G_{m-1} + \la_m\varphi_m.
$$

(3) Let
$$
f_m   := f-G_m.
$$
If $\|f_m\|\le \de\|f\|$ then we stop, otherwise we proceed to the $(m+1)$th iteration.

The following algorithm is a thresholding-type modification of the WCGA. This modification can be applied to any $f\in X$ as well.  

 {\bf Chebyshev Greedy Algorithm with Thresholding $\de$  (CGAT($\de$)).}  For a given parameter $\de\in(0,1/2]$, we conduct instead of the greedy step  of the WCGA the following thresholding step: Find $\ff_m\in\cD^{\pm}$ such that $F_{f_{m-1}}(\ff_m)\ge \de$. 
Choosing such a $\ff_m$, if one exists, we   apply the approximation step  of the WCGA.
If such $\ff_m$ does not exist, then we stop. We also stop if $\|f_m\|\le \de\|f\|$.

\begin{Theorem}[{\cite{VT115}}] \label{A1T2}Let $X$ be a uniformly smooth Banach space with modulus of smoothness $\rho(u)\le \gamma u^q$, $1<q\le 2$. Then, for any $f\in  A_1(\cD)$ the DGART (CGAT($\de$))  will stop after $m \le C(\gamma) \de^{-p}\ln(1/\de)$, $p:=q/(q-1)$, iterations with
$$
\|f_m\|\le \de.
$$
\end{Theorem}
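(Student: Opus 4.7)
The plan is to establish two facts and combine them. First, as long as $\|f_{m-1}\| > \delta$, the thresholding step is feasible, meaning there exists $\ff_m \in \cD^\pm$ with $F_{f_{m-1}}(\ff_m) \ge \delta$. Second, each iteration contracts the residual geometrically, $\|f_m\| \le \|f_{m-1}\|(1 - c(\gamma,q)\delta^p)$. Since $f \in A_1(\cD)$ implies $\|f\| \le 1$, iterating gives $\|f_m\| \le \exp(-cm\delta^p)$, and requiring this to be at most $\delta$ yields the iteration count $m \le C(\gamma)\delta^{-p}\ln(1/\delta)$. If the algorithm instead halts earlier because no valid $\ff_m$ exists, then $\|F_{f_{m-1}}\|_\cD < \delta$, and (as shown below) this already forces $\|f_{m-1}\| < \delta$.

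For the feasibility fact, the key is the orthogonality $F_{f_{m-1}}(G_{m-1}) = 0$, which holds for both variants: in CGAT, $G_{m-1}$ is the Chebyshev projection of $f$ onto $Y_{m-1}$, so the norming functional of $f - G_{m-1}$ annihilates the whole subspace $Y_{m-1}$, to which $G_{m-1}$ belongs; in DGART, the free relaxation at step $m-1$ enforces $F_{f_{m-1}}(G_{m-2}) = 0$ and $F_{f_{m-1}}(\ff_{m-1}) = 0$ by the two-parameter optimality in $(w,\lambda)$, and these combine linearly through $G_{m-1} = (1-w_{m-1})G_{m-2} + \lambda_{m-1}\ff_{m-1}$. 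It follows that $\|f_{m-1}\| = F_{f_{m-1}}(f_{m-1}) = F_{f_{m-1}}(f)$. Since $f \in A_1(\cD)$ is a limit of finite sums $\sum_j a_j g_j$ with $g_j \in \cD$ and $\sum_j|a_j| \le 1$, a straightforward continuity argument gives $|F_{f_{m-1}}(f)| \le \|F_{f_{m-1}}\|_\cD$, so $\|f_{m-1}\| \le \|F_{f_{m-1}}\|_\cD$, which is exactly what is needed.

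For the contraction I would invoke the standard consequence of uniform smoothness,
$$\|f_{m-1} - \lambda\ff_m\| \le \|f_{m-1}\| - \lambda F_{f_{m-1}}(\ff_m) + 2\|f_{m-1}\|\rho\!\left(\lambda/\|f_{m-1}\|\right), \qquad \lambda \ge 0,$$
which is valid since $\|\ff_m\| \le 1$. The minimality of $G_m$ in both algorithms gives $\|f_m\| \le \|f_{m-1} - \lambda\ff_m\|$ for every $\lambda$ (in CGAT since $G_{m-1} + \lambda\ff_m \in Y_m$, and in DGART by taking $w=0$ in the free-relaxation step). Substituting the threshold bound $F_{f_{m-1}}(\ff_m) \ge \delta$ together with $\rho(u) \le \gamma u^q$, and optimizing at $\lambda^* \asymp (\delta/\gamma)^{1/(q-1)}\|f_{m-1}\|$, produces the claimed per-step contraction with $c(\gamma,q) \asymp \gamma^{-1/(q-1)}$ and $p = q/(q-1)$. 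The main obstacle is the uniform handling of the two algorithms: once the orthogonality of $F_{f_{m-1}}$ against $G_{m-1}$ is in place for both, the remainder is the now-standard smoothness-and-optimize computation, applied with the fixed threshold $\delta$ in place of the usual $t\|F_{f_{m-1}}\|_\cD$.
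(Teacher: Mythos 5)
Your proof is correct and follows essentially the same route as the paper: use the biorthogonality of $F_{f_{m-1}}$ against $G_{m-1}$ (from the Chebyshev or two-parameter free-relaxation optimality) together with $f\in A_1(\cD)$ to get $\|f_{m-1}\|\le \|F_{f_{m-1}}\|_\cD$, which guarantees the threshold step is feasible as long as $\|f_{m-1}\|>\delta$; then apply the smoothness inequality with the fixed threshold $\delta$ and optimize $\lambda$ to obtain the geometric contraction $\|f_m\|\le\|f_{m-1}\|(1-c(\gamma,q)\delta^p)$; iterate from $\|f_0\|=\|f\|\le 1$ to reach the stated iteration count. The only cosmetic remark is that the implied constant also depends on $q$ (not just $\gamma$), and one should note that $\delta\le 1/2$ together with $\gamma\ge 2^{-q}$ keeps $\lambda^*\le\|f_{m-1}\|$ and the contraction factor in $(0,1)$; but these are details the paper also absorbs into the constant.
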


 \subsection{Algorithms providing approximants from $A_1(\cD)$}
 
 Sometimes, it is convenient to have an approximant of a special form. Clearly, all our greedy algorithms provide after $m$ iterations approximants, which are sparse $m$-term approximants. Here we discuss some additional properties in a style that the approximant resembles the function under approximation. Namely, we begin with the requirement that the approximant comes from  $A_1(\cD)$ provided that $f\in A_1(\cD)$. We proceed to  one more thresholding-type algorithm (see \cite{VT94}).  Keeping in mind possible applications of this algorithm we  consider instead of $A_1(\cD)$  the closure of the convex hull of $\cD$, which we denote conv($\cD$) or  $A_1(\cD^+)$. Clearly, $A_1(\cD) = \conv(\cD^\pm)$, where $\cD^\pm := \{\pm g\,:\, g\in \cD\}$ is the symmetrized version of dictionary $\cD$. 
 Let $\e=\{\e_n\}_{n=1}^\infty $, $\e_n> 0$, $n=1,2,\dots$ . 

 {\bf Incremental Algorithm with schedule $\e$ (IA($\e$)).} 
Let $f\in \conv(\cD)$. Denote $f_0^{i,\e}:= f$ and $G_0^{i,\e} :=0$. Then, for each $m\ge 1$ we have the following inductive definition.

(1) $\ff_m^{i,\e} \in \cD$ is any element satisfying
$$
F_{f_{m-1}^{i,\e}}(\ff_m^{i,\e}-f) \ge -\e_m.
$$

(2) Define
$$
G_m^{i,\e}:= (1-1/m)G_{m-1}^{i,\e} +\ff_m^{i,\e}/m.
$$

(3) Let
$$
f_m^{i,\e} := f- G_m^{i,\e}.
$$

We note that it is known that  for any bounded linear functional $F$ and any $\cD$
$$
\sup_{g\in\cD}F(g) = \sup_{f\in  \conv(\cD)} F(f). 
$$
Therefore, for any $F$ and any $f\in  \conv(\cD)$ 
$$
\sup_{g\in\cD}F(g) \ge F(f).
$$
This guarantees existence of $\ff_m^{i,\e}$.

\begin{Theorem}[{\cite{VT94}}]\label{A1T3} Let $X$ be a uniformly smooth Banach space with  modulus of smoothness $\rho(u)\le \gamma u^q$, $1<q\le 2$. Define
$$
\e_n := K_1\gamma ^{1/q}n^{-1/p},\qquad p=\frac{q}{q-1},\quad n=1,2,\dots .
$$
Then, for any $f\in \conv(\cD)$ we have
$$
\|f_m^{i,\e}\| \le C(K_1) \gamma^{1/q}m^{-1/p},\qquad m=1,2\dots.
$$
\end{Theorem}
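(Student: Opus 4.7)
The plan is to extract a one-step recursion for $A_m := \|f_m^{i,\e}\|$ from the modulus of smoothness and then close it by induction on $m$. First I would unroll the update rule to observe $G_m^{i,\e} = (1/m)\sum_{k=1}^m \ff_k^{i,\e} \in \conv(\cD)$, so that
$$ f_m^{i,\e} = f_{m-1}^{i,\e} - (\ff_m^{i,\e} - G_{m-1}^{i,\e})/m, \qquad \|\ff_m^{i,\e} - G_{m-1}^{i,\e}\| \le 2. $$
Then I would apply the standard smoothness inequality $\|x + y\| \le \|x\| + F_x(y) + 2\|x\|\rho(\|y\|/\|x\|)$ (valid for $x \neq 0$) with $x = f_{m-1}^{i,\e}$ and $y = -(\ff_m^{i,\e} - G_{m-1}^{i,\e})/m$. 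Using $G_{m-1}^{i,\e} = f - f_{m-1}^{i,\e}$ and the defining property of $\ff_m^{i,\e}$, one has $F_{f_{m-1}}(\ff_m - G_{m-1}) = F_{f_{m-1}}(\ff_m - f) + \|f_{m-1}\| \ge \|f_{m-1}\| - \e_m$. Combining with $\rho(u) \le \gamma u^q$ yields the main recursion
$$ A_m \le (1 - 1/m) A_{m-1} + \e_m/m + 2^{q+1}\gamma\, A_{m-1}^{1-q} m^{-q}. $$

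Next I would prove by induction that $A_m \le K m^{-1/p}$ with $K := C(K_1)\gamma^{1/q}$ for a constant $C(K_1)$ fixed below. The base case uses $A_1 \le \|f\| + \|\ff_1\| \le 2$. For the inductive step I would split into two cases. If $A_{m-1} \le K m^{-1/p}/2$, the trivial estimate $A_m \le A_{m-1} + 2/m$ yields $A_m \le K m^{-1/p}/2 + 2/m \le K m^{-1/p}$, since $4 m^{-1/q} \le 4 \le K$ whenever $K \ge 4$. If instead $A_{m-1} > K m^{-1/p}/2$, this lower bound gives the effective estimate $A_{m-1}^{1-q} < (K/2)^{1-q} m^{(q-1)/p}$; combining with the inductive hypothesis $A_{m-1} \le K(m-1)^{-1/p}$ and the concavity bound $(1-1/m)^{1/q} \le 1 - 1/(qm)$, which implies $(1-1/m)(m-1)^{-1/p} \le m^{-1/p} - m^{-1-1/p}/q$, the recursion collapses to
$$ A_m \le K m^{-1/p} + m^{-1-1/p}\Bigl[-K/q + K_1 \gamma^{1/q} + 2^{q+1}\gamma (K/2)^{1-q}\Bigr]. $$
Writing $K = C\gamma^{1/q}$ and dividing the bracket by $\gamma^{1/q}$, non-positivity reduces to $C/q \ge K_1 + 2^{2q} C^{1-q}$, which holds for $C$ large enough in terms of $K_1$ and $q$ because $1-q<0$.

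The main obstacle, and the reason for the case split, is the factor $A_{m-1}^{1-q}$ in the recursion: since $1-q<0$, it \emph{grows} as $A_{m-1}$ shrinks, so an upper bound on $A_{m-1}$ from the inductive hypothesis alone cannot control it. The rescue is that whenever no useful lower bound on $A_{m-1}$ is in hand, the residual is already well below the target rate, and the crude step estimate $A_m \le A_{m-1} + 2/m$ already lies below $m^{-1/p}$ (using $1/p \le 1$); the smoothness-driven step is invoked only in the regime where the lower bound needed to tame $A_{m-1}^{1-q}$ is automatic.
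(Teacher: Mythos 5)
Your proof is correct and rests on the same smoothness estimate as the paper's, but you close the argument differently. The paper derives the one-step estimate, observes that it yields the multiplicative contraction $\|f_m\| \le (1 - 3/(4m))\|f_{m-1}\|$ \emph{whenever} $\|f_{m-1}\| \ge C(K_1)\gamma^{1/q}(m-1)^{-1/p}$, and then hands this conditional contraction together with the crude bound $\|f_m\| \le \|f_{m-1}\| + 2/m$ to a general technical lemma on numerical sequences (the paper's Lemma \ref{ML4}, a variant of Lemma \ref{LeL12}), which produces the rate in black-box fashion. You instead bake the dichotomy directly into a self-contained induction: when $A_{m-1}$ already sits below $Km^{-1/p}/2$ the crude step estimate finishes the inductive step, and when it sits above, the lower bound on $A_{m-1}$ tames the $A_{m-1}^{1-q}$ term and the algebra closes. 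These are the same underlying observations; the case split you introduce is precisely what the paper's numerical lemma abstracts away, so the proofs are logically equivalent even though yours avoids citing any sequence lemma. The paper's route is more modular and reuses the lemma across several theorems; yours is more explicit and keeps the constants fully visible. One small point worth adding: you say the final constant $C$ is ``large enough in terms of $K_1$ and $q$,'' and you also implicitly need $K = C\gamma^{1/q} \ge 4$ for the small-residual branch; to match the paper's claim that $C$ depends only on $K_1$, invoke the elementary bound $\rho(u) \ge \max(0,u-1)$, which forces $\gamma \ge 2^{-q}$, hence $\gamma^{1/q} \ge 1/2$, and note $2^{2q}$ and $q$ are uniformly bounded over $q \in (1,2]$, so an absolute choice $C = C(K_1)$ suffices.
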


We now present a relaxed type of algorithm from \cite{VT80} (see also \cite{VTbook}, p.347).

 {\bf Weak Relaxed Greedy Algorithm (WRGA($\tau$)).} 
We define $f_0 :=  f$ and $G_0  := 0$. Then, for each $m\ge 1$ we have the following inductive definition.

(1) $\varphi_m \in \cD^\pm$ is any element satisfying
$$
F_{f_{m-1}}(\varphi_m - G_{m-1}) \ge t_m \sup_{g\in \cD^\pm} F_{f_{m-1}}(g - G_{m-1}).
$$

(2) Find $0\le \lambda_m \le 1$ such that
$$
\|f-((1-\la_m)G_{m-1} + \la_m\varphi_m)\| = \inf_{0\le \la\le 1}\|f-((1-\la)G_{m-1} + \la\varphi_m)\|
$$
and define
$$
G_m:=   (1-\la_m)G_{m-1} + \la_m\varphi_m.
$$

(3) Let
$$
f_m   := f-G_m.
$$

\begin{Theorem}[{\cite{VT80}}]\label{A1T4} Let $X$ be a uniformly smooth Banach space with modulus of smoothness $\rho(u) \le \gamma u^q$, $1<q\le 2$. Then, for a weakness sequence $\tau$ we have for any $f\in A_1(\cD$ that 
$$
\|f_m\| \le C_1(q,\gamma)\left(1+\sum_{k=1}^m t_k^p\right)^{-1/p},\quad p:= \frac{q}{q-1},
$$
with a constant $C_1(q,\gamma)$ which may depend only on $q$ and $\gamma$.
\end{Theorem}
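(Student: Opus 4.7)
The plan is to convert the step-wise optimality of $\la_m$ into a scalar recursion for $a_m := \|f_m\|$ of the form $a_m \le a_{m-1} - C_0 t_m^p a_{m-1}^{p+1}$, and then iterate it via the classical ``raise to the $-p$th power'' trick. First I would use that $\la_m \in [0,1]$ is a minimizer to bound $\|f_m\| \le \|f_{m-1} - \la(\varphi_m - G_{m-1})\|$ for every $\la \in [0,1]$, and then apply the standard modulus-of-smoothness inequality
$$
\|x - \la y\| \le \|x\| - \la F_x(y) + 2\|x\|\rho\!\left(\la\|y\|/\|x\|\right), \qquad \la \ge 0,
$$
with $x = f_{m-1}$, $y = \varphi_m - G_{m-1}$, combined with $\rho(u) \le \gamma u^q$.

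The second step is to extract two auxiliary estimates. A trivial induction shows that $G_{m-1}$ is a convex combination of $\{0,\varphi_1,\ldots,\varphi_{m-1}\} \subset A_1(\cD)$, hence $\|G_{m-1}\|\le 1$ and $\|\varphi_m - G_{m-1}\| \le 2$; choosing $\la = 0$ also gives $a_m \le a_{m-1} \le \cdots \le a_0 = \|f\| \le 1$. For the crucial lower bound on the linear term, I would express $f$ as a convex combination of elements of $\cD^\pm$ (possible in the real case since $0 \in \conv(\cD^\pm)$), yielding $F_{f_{m-1}}(f) \le \sup_{g \in \cD^\pm} F_{f_{m-1}}(g)$ and therefore
$$
\|f_{m-1}\| = F_{f_{m-1}}(f - G_{m-1}) \le \sup_{g \in \cD^\pm} F_{f_{m-1}}(g - G_{m-1}).
$$
The weak selection rule then delivers $F_{f_{m-1}}(\varphi_m - G_{m-1}) \ge t_m a_{m-1}$. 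Substituting yields, for every $\la \in [0,1]$,
$$
a_m \le a_{m-1}(1-\la t_m) + 2^{q+1}\gamma\,\la^q\, a_{m-1}^{1-q}.
$$

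Third I would optimize and iterate. The unconstrained minimizer is $\la^* = (t_m / (q\cdot 2^{q+1}\gamma))^{1/(q-1)} a_{m-1}^{p}$; the bounds $a_{m-1} \le 1$, $t_m \le 1$, together with the lower bound $2q\gamma \ge 1$ forced by $\rho(2) \ge 1$ and $\rho(u) \le \gamma u^q$, force $\la^* \in [0,1]$. Plugging it in and collecting powers (using $1/(q-1) = p/q$) produces
$$
a_m \le a_{m-1} - C_0(q,\gamma)\,t_m^p\,a_{m-1}^{p+1}, \qquad C_0(q,\gamma) = c(q)\,\gamma^{1-p}.
$$
Setting $\de_m := C_0 t_m^p a_{m-1}^p$ and using $(1-\de_m)^{-p} \ge 1 + p\de_m$ converts this into
$$
a_m^{-p} \ge a_{m-1}^{-p} + pC_0 t_m^p,
$$
which telescopes, starting from $a_0^{-p} \ge 1$, to $a_m^{-p} \ge 1 + pC_0 \sum_{k=1}^m t_k^p$, and hence the claimed bound with $C_1(q,\gamma) = \max\!\left(1, (pC_0)^{-1/p}\right)$.

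The main friction points I anticipate are purely bookkeeping: verifying that $\la^*$ really lies in $[0,1]$ throughout the iteration (which rests on the geometric constraint $\rho(2) \ge 1$ combined with $\|f\| \le 1$), and keeping the $q$- and $\gamma$-dependence tidy so that $C_1(q,\gamma)$ emerges in the stated form. The greedy-step estimate and the telescoping itself are otherwise routine manipulations.
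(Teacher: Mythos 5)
Your proof is correct and follows essentially the same route that Temlyakov's original argument (and the related proofs reproduced in Chapter~IV of this paper) takes: pass to the residual recursion $f_m=f_{m-1}-\la_m(\varphi_m-G_{m-1})$, apply the two-sided smoothness estimate of Lemma~\ref{LL0}, use $f\in A_1(\cD)$ together with Lemma~\ref{LL2} to lower-bound the linear term by $t_m\|f_{m-1}\|$, and then close the recursion $a_m\le a_{m-1}(1-C_0 t_m^p a_{m-1}^p)$. Your direct telescoping via $(1-\de)^{-p}\ge 1+p\de$ is an inline reproduction of Lemma~\ref{HL1} (applied to $a_m^p$); the only stylistic difference from the paper's usual presentation is that you unpack that lemma rather than cite it. The bookkeeping checks out: $q2^{q+1}\gamma\ge 2q>1$ does follow from $\rho(2)\ge 1$ and $\rho(u)\le\gamma u^q$, so $\la^*\in[0,1]$, and the $\gamma$-dependence collapses to $C_1(q,\gamma)\asymp\gamma^{1/q}$ as expected.
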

 
 \section{Unified way of analyzing some greedy algorithms} 
 \label{U}
 
We have discussed above some important greedy-type algorithms, which are useful from applied point of view and interesting from theoretical point of view. 
One of the main goals of this section is to present a unified way of analyzing different
greedy-type algorithms in Banach spaces. This section is based on the paper \cite{VT165}. In that paper 
we defined a class of Weak Biorthogonal Greedy Algorithms
and proved convergence and rate of convergence results for algorithms from this class. In particular, the following well known algorithms~--- Weak Chebyshev Greedy Algorithm and Weak Greedy Algorithm with Free Relaxation (which we have discussed above)~--- 
belong to this class. We also introduced and studied in \cite{VT165} one more algorithm~--- Rescaled Weak Relaxed Greedy Algorithm~--- from the above class. 

 {\bf Weak Biorthogonal Greedy Algorithms (WBGA($\tau$)).} 
 Let $\tau:=\{t_m\}_{m=1}^\infty$, $t_m\in[0,1]$, be a weakness  sequence. We define $f_0   :=f$ and $G_0  := 0$. Suppose that for each $m\ge 1$ the algorithm has the following properties.

{\bf (1) Greedy selection.} At the $m$th iteration the algorithm selects a $\varphi_m   \in \cD^\pm$, which satisfies 
$$
F_{f_{m-1}}(\varphi_m) \ge t_m  \|F_{f_{m-1}}\|_\cD.
$$

{\bf (2) Biorthogonality.} At the $m$th iteration the algorithm constructs an approximant 
$G_m \in \sp(\ff_1,\dots,\ff_m)$ such that
$$
F_{f_m}(G_m) =0,\quad \text{where}\quad f_m:= f-G_m.
$$

{\bf (3) Error reduction.} We have
$$
\|f_m\| \le \inf_{\la\ge 0}\|f_{m-1} -\la \ff_m\|.
$$

Note that the step (1) of $m$th iteration is very similar to the weak dual greedy step with parameter $t_m$.
We stress that the WBGA is not an algorithm -- it is a class (collection) of algorithms, which have properties (1)--(3) listed above. 

\begin{Remark}\label{UR1} The greedy selection of $\ff_m$ at step (1) implies that 
$$
\inf_{\la\ge 0}\|f_{m-1} -\la \ff_m\| = \inf_{\la}\|f_{m-1} -\la \ff_m\|.
$$
\end{Remark}
Indeed, for $\la<0$ we have
$$
\|f_{m-1} -\la \ff_m\| \ge F_{f_{m-1}}(f_{m-1} -\la \ff_m) \ge \|f_{m-1}\|.
$$

We begin with a convergence result.

\begin{Theorem}[{\cite{VT165}}]\label{UT1} Let $X$ be a uniformly smooth Banach space with modulus of smoothness $\rho(u)$. Assume that a weakness sequence $\tau :=\{t_k\}_{k=1}^\infty$ satisfies the condition: For any $\theta >0$ we have
$$
\sum_{m=1}^\infty t_m \xi_m(\rho,\tau,\theta) =\infty.
$$
Suppose that an algorithm $\cA$ belongs to the class WBGA.  
 Then, for any $f\in X$ and any $\cD$ we have 
$$
\lim_{m\to \infty} \|f_m\| =0.
$$
\end{Theorem}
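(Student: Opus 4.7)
The plan is to mimic the convergence argument for the WCGA (Theorem~\ref{conT1}) using only the three abstract properties (1)--(3) of the WBGA class. Taking $\la = 0$ in the error reduction property (3) gives the monotonicity $\|f_m\| \le \|f_{m-1}\|$, so $\|f_m\|$ converges to some $\alpha \ge 0$; I will argue by contradiction, assuming $\alpha > 0$. The second ingredient is a standard uniform-smoothness inequality: combining the defining bound $\tfrac{1}{2}(\|x+uy\|+\|x-uy\|) \le \|x\|(1 + \rho(u\|y\|/\|x\|))$ with the trivial lower bound $\|x - uy\| \ge F_x(x - uy) = \|x\| - uF_x(y)$ yields
\[
\|x - uy\| \le \|x\| - uF_x(y) + 2\|x\|\rho(u\|y\|/\|x\|).
\]
Specializing to $x = f_{m-1}$, $y = \ff_m$ (so $\|y\| \le 1$), applying the greedy selection (1) and the error reduction (3), and normalizing by setting $u = v\|f_{m-1}\|$, I obtain the master inequality
\[
\|f_m\| \le \|f_{m-1}\|\bigl(1 - v\,t_m\,\|F_{f_{m-1}}\|_\cD + 2\rho(v)\bigr)\qquad\text{for every } v > 0.
\]
To exploit this I need a positive uniform lower bound on $\|F_{f_{m-1}}\|_\cD$.

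The main obstacle, and the decisive use of biorthogonality (2), is producing that lower bound. From $f = G_m + f_m$ together with (2) I get $F_{f_m}(f) = F_{f_m}(G_m) + F_{f_m}(f_m) = \|f_m\|$. Since $\sp(\cD)$ is dense in $X$, given $\e > 0$ I choose $h = \sum_{j=1}^N a_j g_j$ with $g_j \in \cD$ and $\|f - h\| < \e$; writing $A := \sum_j |a_j|$,
\[
\|f_m\| = F_{f_m}(f) \le |F_{f_m}(h)| + \e \le A\,\|F_{f_m}\|_\cD + \e.
\]
Taking $\e = \alpha/2$ (legal since $\|f_m\| \ge \alpha$) yields the uniform bound $\|F_{f_m}\|_\cD \ge \alpha/(2A)$ for all $m$.

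To close the argument I set $\theta := \alpha/(8A)$ and $v := \xi_m(\rho,\tau,\theta)$, so that $\rho(v) = \theta t_m v$. The master inequality then reads
\[
\|f_m\| \le \|f_{m-1}\|\bigl(1 - t_m\xi_m(\|F_{f_{m-1}}\|_\cD - 2\theta)\bigr) \le \|f_{m-1}\|\bigl(1 - t_m\xi_m\,\alpha/(4A)\bigr).
\]
Iterating and using $1 - x \le e^{-x}$ give
\[
\|f_m\| \le \|f_0\|\exp\!\Bigl(-\frac{\alpha}{4A}\sum_{k=1}^m t_k\xi_k(\rho,\tau,\theta)\Bigr),
\]
and the hypothesis $\sum_k t_k\xi_k(\rho,\tau,\theta) = \infty$ forces $\|f_m\| \to 0$, contradicting $\alpha > 0$. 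Thus $\alpha = 0$ and the theorem follows.
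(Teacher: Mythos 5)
Your proof is correct and follows essentially the same route as the cited reference \cite{VT165}: monotonicity of $\|f_m\|$ from the error-reduction property, a uniform lower bound $\|F_{f_{m-1}}\|_\cD \ge \alpha/(2A)$ from biorthogonality ($F_{f_m}(f)=\|f_m\|$) together with density of $\sp\cD$, the smoothness inequality, and iteration of the resulting contraction to contradict $\alpha>0$ via the divergence hypothesis. One small slip in your derivation of the master inequality: to get $\|x-uy\|\le\|x\|-uF_x(y)+2\|x\|\rho(u\|y\|/\|x\|)$ from the definition of $\rho$ one must lower-bound $\|x+uy\|$ by $F_x(x+uy)=\|x\|+uF_x(y)$, not lower-bound $\|x-uy\|$ as you wrote; the inequality you actually use is nevertheless correct and is exactly Lemma~\ref{LL0} of the paper, so the argument goes through unchanged.
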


Here is a direct corollary of the above theorem. 

\begin{Theorem}[{\cite{VT165}}]\label{UT2} Let a Banach space $X$ have modulus of smoothness $\rho(u)$ of power type $1<q\le 2$, that is, $\rho(u) \le \gamma u^q$. Assume that the weakness sequence $\tau$ satisfies the condition
\be\label{1.2c}
\sum_{m=1}^\infty t_m^p =\infty, \quad p:=\frac{q}{q-1}. 
\ee
Then, the WBGA($\tau$)-type algorithm converges in $ X$.
\end{Theorem}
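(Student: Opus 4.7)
The plan is to derive Theorem \ref{UT2} as a direct consequence of Theorem \ref{UT1} by checking that the divergence of $\sum t_m^p$ forces the divergence of $\sum t_m\xi_m(\rho,\tau,\theta)$ for every $\theta>0$. Everything reduces to controlling $\xi_m$ from below using the power-type estimate $\rho(u)\le\gamma u^q$.

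First, I would recall Definition \ref{xi}: $\xi_m$ is the unique positive solution of $\rho(\xi_m)=\theta t_m\xi_m$. Plugging in the hypothesis $\rho(u)\le\gamma u^q$ at $u=\xi_m$ yields
\[
\theta t_m\xi_m=\rho(\xi_m)\le\gamma\xi_m^{q},
\]
which, after dividing by $\xi_m$ (assuming $t_m>0$; if $t_m=0$ the corresponding term contributes nothing), gives
\[
\xi_m^{q-1}\ge \frac{\theta t_m}{\gamma},\qquad\text{hence}\qquad \xi_m\ge\Bigl(\frac{\theta t_m}{\gamma}\Bigr)^{1/(q-1)}.
\]

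Next, I would multiply by $t_m$ and use $1+\tfrac{1}{q-1}=\tfrac{q}{q-1}=p$ to obtain
\[
t_m\xi_m\ge\Bigl(\frac{\theta}{\gamma}\Bigr)^{1/(q-1)} t_m^{\,1+1/(q-1)}=\Bigl(\frac{\theta}{\gamma}\Bigr)^{1/(q-1)} t_m^{p}.
\]
Summing over $m$ and invoking hypothesis (\ref{1.2c}),
\[
\sum_{m=1}^{\infty}t_m\xi_m(\rho,\tau,\theta)\ge\Bigl(\frac{\theta}{\gamma}\Bigr)^{1/(q-1)}\sum_{m=1}^{\infty}t_m^{p}=\infty,
\]
and this holds for every $\theta>0$. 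The hypothesis of Theorem \ref{UT1} is therefore satisfied, so every WBGA($\tau$)-type algorithm converges in $X$.

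There is essentially no obstacle here; the argument is just an algebraic inversion of the defining equation for $\xi_m$. The only minor care needed is to keep $\theta$ as a free positive parameter throughout (so that the estimate $\sum t_m\xi_m=\infty$ holds for every $\theta$, as required by Theorem \ref{UT1}) and to dispose trivially of indices with $t_m=0$.
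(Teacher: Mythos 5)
Your proposal is correct and is exactly the standard reduction: bound $\xi_m$ from below by solving $\rho(\xi_m)=\theta t_m\xi_m$ against the majorant $\gamma u^q$ to get $\xi_m\ge(\theta t_m/\gamma)^{1/(q-1)}$, hence $t_m\xi_m\ge(\theta/\gamma)^{1/(q-1)}t_m^p$, and conclude via Theorem \ref{UT1}. This is the same derivation by which the paper obtains Corollary \ref{conC1} from Theorem \ref{conT1}, so it matches the intended argument.
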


We now present a rate of convergence result. 

\begin{Theorem}[{\cite{VT165}}]\label{UT3} Let $X$ be a uniformly smooth Banach space with modulus of smoothness $\rho(u)\le \gamma u^q$, $1<q\le 2$.  
Then, for the WBGA($\tau$)-type algorithm we have for $f\in A_1(\cD)$
$$
\|f_m\| \le    C(q,\gamma)  \left(1+\sum_{k=1}^mt_k^p\right)^{-1/p},
\quad p:=\frac{q}{q-1},
$$
with $C(q,\gamma)= 4(2\gamma)^{1/q}$.
\end{Theorem}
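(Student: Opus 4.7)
The plan is to run the standard smoothness-plus-optimization argument, using the three defining properties of the WBGA one after another.

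First, I would write down the basic smoothness inequality. For any nonzero $x \in X$, any $y \in X$ with $\|y\| \le 1$, and any $\la \ge 0$, the definition of $\rho$ combined with the subgradient inequality $\|x-\la y\| + \|x+\la y\| - 2\|x\| \le 2\|x\|\rho(\la/\|x\|)$ and $\|x+\la y\| \ge F_x(x+\la y) = \|x\| + \la F_x(y)$ yields
\begin{equation*}
\|x-\la y\| \le \|x\| - \la F_x(y) + 2\|x\|\rho(\la/\|x\|).
\end{equation*}
Applied with $x=f_{m-1}$ and $y=\ff_m \in \cD^\pm$, and using $\rho(u)\le \gamma u^q$, this becomes
\begin{equation*}
\|f_{m-1}-\la\ff_m\| \le \|f_{m-1}\| - \la F_{f_{m-1}}(\ff_m) + 2\gamma\la^q\|f_{m-1}\|^{1-q}.
\end{equation*}

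Second, I would use properties (2) and the assumption $f \in A_1(\cD)$ to replace $F_{f_{m-1}}(\ff_m)$ by a quantity comparable to $\|f_{m-1}\|$. Biorthogonality at step $m-1$ gives $F_{f_{m-1}}(G_{m-1})=0$, so $\|f_{m-1}\| = F_{f_{m-1}}(f_{m-1}) = F_{f_{m-1}}(f)$. Writing $f=\sum_j a_j g_j$ with $\sum|a_j|\le 1$ and $g_j\in\cD$ (and passing to a limit for the closure), one gets $\|f_{m-1}\|\le \|F_{f_{m-1}}\|_\cD$. Combined with the greedy selection $F_{f_{m-1}}(\ff_m) \ge t_m\|F_{f_{m-1}}\|_\cD$, this yields
\begin{equation*}
F_{f_{m-1}}(\ff_m) \ge t_m\|f_{m-1}\|.
\end{equation*}
Plugging this into the smoothness estimate and invoking the error reduction property (3), I obtain
\begin{equation*}
\|f_m\| \le \inf_{\la\ge 0}\bigl(\|f_{m-1}\| - \la t_m\|f_{m-1}\| + 2\gamma\la^q\|f_{m-1}\|^{1-q}\bigr).
\end{equation*}

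Third, I would carry out the scalar optimization. Setting the derivative in $\la$ to zero gives $\la^* = (t_m\|f_{m-1}\|^q/(2\gamma q))^{1/(q-1)}$, and substituting back produces the one-step recursion
\begin{equation*}
\|f_m\| \le \|f_{m-1}\|\bigl(1 - A_q\gamma^{1-p}\, t_m^p\|f_{m-1}\|^p\bigr), \qquad p=\tfrac{q}{q-1},
\end{equation*}
for an explicit constant $A_q$ depending only on $q$. Finally, I would apply the standard sequence lemma: setting $a_m:=\|f_m\|^{-p}$ and using the elementary inequality $(1-x)^{-p}\ge 1+px$ for $x\in[0,1)$ and $p\ge 1$, the recursion gives $a_m \ge a_{m-1} + pA_q\gamma^{1-p}t_m^p$, and since $a_0=\|f\|^{-p}\ge 1$ (because $\|f\|\le 1$ on $A_1(\cD)$), iteration yields
\begin{equation*}
\|f_m\|^{-p} \ge 1 + pA_q\gamma^{1-p}\sum_{k=1}^m t_k^p.
\end{equation*}
Inverting and using $(1+B\Sigma)^{-1/p} \le B^{-1/p}(1+\Sigma)^{-1/p}$ when $B\le 1$ gives the claimed bound with constant of the form $C(q,\gamma)=B^{-1/p}$; a short computation shows $B^{-1/p}$ can be taken as $4(2\gamma)^{1/q}$, using $1/p = 1-1/q$ and the fact that $q\in(1,2]$.

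The bulk of the argument is routine; the main obstacle is tracking the constant $4(2\gamma)^{1/q}$ through the optimization in $\la$ and the subsequent inversion of the recursion, since one must verify that the factors $q$, $p$, and $(2\gamma q)^{1/q}$ assemble into precisely this bound for all $q\in(1,2]$. Everything else is a direct application of the three WBGA properties to the standard Temlyakov scheme.
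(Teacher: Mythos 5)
Your proof is correct, and it follows essentially the same route as the cited argument in [VT165] (and the paper's standard template): the modulus-of-smoothness inequality (Lemma \ref{LL0}), biorthogonality plus Lemma \ref{LL2} to get $F_{f_{m-1}}(\ff_m)\ge t_m\|f_{m-1}\|$, the error-reduction property, a one-parameter optimization in $\la$, and a recursion lemma. The only cosmetic difference is that you invert the recursion directly via $a_m:=\|f_m\|^{-p}$ together with $(1-x)^{-p}\ge 1+px$, whereas the paper's toolkit would more naturally set $a_m:=\|f_m\|^p$, use $(1-x)^p\le 1-x$, and invoke Lemma \ref{HL1}; both are equivalent, and your version even gives a marginally tighter intermediate bound $(1+pB\Sigma)^{-1/p}$. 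The constant does assemble to $4(2\gamma)^{1/q}$ as you claim: one needs $C^p\ge 1$ and $C^p p B\ge 1$ with $B=\frac{1}{p(2\gamma q)^{p-1}}$, which reduces to $4^p(2\gamma)^{p-1}\ge 1$ and $4^p/q^{p-1}\ge 1$; both hold with room to spare for $q\in(1,2]$ once one observes $\gamma\ge 2^{-q}$ (from $\rho(2)\ge 1$).
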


Previous results (see Theorems \ref{rcT1} and \ref{rcT2}) show that we can simplify the WCGA to the WGAFR, which involves two parameters optimization at each iteration, and keep the result of the rate of convergence in the whole generality. We can further simplify the WCGA to the GAWR($\tau,\br$) with $\br:=\{2/(k+2)\}_{k=1}^\infty$, which involves one parameter optimization at each iteration, but in this case we only know the rate of convergence result for special weakness sequence $\tau = \{t\}$, $t\in (0,1]$. One of the goals of this section is to present 
a new greedy-type algorithm~--- the Rescaled Weak Relaxed Greedy Algorithm~--- that does two one parameter optimizations at each iteration and provides the rate of convergence result in the whole generality. One can view this new algorithm as a version of the WGAFR. 

{\bf Rescaled Weak Relaxed Greedy Algorithm (RWRGA($\tau$)).}
Let $\tau:=\{t_m\}_{m=1}^\infty$, $t_m\in[0,1]$, be a weakness  sequence. We define $f_0   :=f$ and $G_0  := 0$. Then, for each $m\ge 1$ we inductively define

1). $\varphi_m   \in \cD^\pm$ is any satisfying
$$
F_{f_{m-1}}(\varphi_m) \ge t_m \|F_{f_{m-1}}\|_\cD.
$$

2). Find $ \lambda_m \ge 0$ such that
$$
\|f_{m-1} - \la_m\varphi_m\| = \inf_{ \la\ge 0}\|f_{m-1} - \la\varphi_m\|.
$$

3). Find $ \mu_m$ such that
$$
\|f - \mu_m(G_{m-1} + \la_m\varphi_m)\| = \inf_{ \mu}\|f - \mu(G_{m-1} + \la_m\varphi_m)\|
$$
and define
$$
G_m:=   \mu_m(G_{m-1} + \la_m\varphi_m).
$$

4). Denote
$$
f_m   := f-G_m.
$$

We note that the above algorithm does not use any information of the Banach space $X$. It makes this algorithm universal alike the WCGA and the WGAFR.
A variant of the above algorithm was studied in~\cite{GG}. In~\cite{GG} the $\la_m$ was not chosen as a solution of the line search, it was specified in terms of 
$f_{m-1}$ and parameters $\gamma$ and $q$ characterizing smoothness of the 
Banach space $X$  (see further discussion at the end of Section 8 of \cite{VT165}). 

\begin{Theorem}[{\cite{VT165}}]\label{UT4} Let $X$ be a uniformly smooth Banach space with modulus of smoothness $\rho(u)\le \gamma u^q$, $1<q\le 2$.  
Then, for the RWRGA($\tau$) we have for any $f\in A_1(\cD)$
$$
\|f_m\| \le  C(q,\gamma)\left(1+\sum_{k=1}^mt_k^p\right)^{-1/p},
\quad p:=\frac{q}{q-1}.
$$
\end{Theorem}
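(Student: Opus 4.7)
The plan is to verify that the RWRGA($\tau$) belongs to the class WBGA($\tau$) and then invoke Theorem \ref{UT3}, which already supplies the desired bound on $A_1(\cD)$. Thus no fresh analytic estimate is required; the proof reduces to checking the three defining properties of a Weak Biorthogonal Greedy Algorithm.

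Property (1), greedy selection, is immediate, since step 1) of RWRGA is literally the weak dual greedy step with parameter $t_m$. For property (2), biorthogonality, I would set $h_m := G_{m-1} + \la_m \ff_m$, so that $G_m = \mu_m h_m \in \sp(\ff_1,\dots,\ff_m)$ by induction. Since $X$ is uniformly smooth, the norm is Fr\'echet differentiable at every nonzero point and its derivative is given by the norming functional. Hence the unconstrained one-parameter minimization of $\mu \mapsto \|f - \mu h_m\|$ in step 3) forces the first-order optimality condition $F_{f_m}(h_m) = 0$, from which $F_{f_m}(G_m) = \mu_m F_{f_m}(h_m) = 0$ follows at once; the degenerate cases $f_m = 0$ or $h_m = 0$ only simplify the identity.

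For the error reduction property (3), plugging $\mu = 1$ into the line search of step 3) gives
$$
\|f_m\| = \|f - \mu_m h_m\| \le \|f - h_m\| = \|f_{m-1} - \la_m \ff_m\| = \inf_{\la \ge 0}\|f_{m-1} - \la \ff_m\|,
$$
the last equality being exactly the definition of $\la_m$ in step 2). Together, (1)--(3) place RWRGA($\tau$) inside the class WBGA($\tau$), so the claimed rate follows directly from Theorem \ref{UT3} with constant $C(q,\gamma) = 4(2\gamma)^{1/q}$.

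I expect the only subtle point to be the first-order condition used in (2): it is essential that $\mu$ ranges over all of $\bbR$ in step 3), so that the minimizer is an interior critical point of the convex, coercive function $\mu \mapsto \|f - \mu h_m\|$, and its one-sided derivatives therefore vanish. Once this is in hand the argument is a clean reduction and no further analysis is needed.
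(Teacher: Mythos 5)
Your proof is correct and takes the same route the paper takes: the paper explicitly introduces the class WBGA($\tau$), proves the rate bound once for that class (Theorem~\ref{UT3}), and then records that the WCGA, the WGAFR, and the RWRGA all belong to it, so Theorem~\ref{UT4} is the special case of Theorem~\ref{UT3} for the RWRGA. Your verification of the three membership properties is sound, including the observation that the unconstrained line search over $\mu$ in step~3), together with uniform smoothness (equivalently, Lemma~\ref{LL1} applied to the one-dimensional subspace spanned by $G_{m-1}+\la_m\ff_m$), yields the biorthogonality relation $F_{f_m}(G_m)=0$, and that the choice $\mu=1$ in step~3) combined with the optimality of $\la_m$ in step~2) gives the error reduction property.
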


Theorems~\ref{rcT1} and~\ref{UT4}  show that three algorithms~---
the WCGA($\tau$), the WGAFR($\tau$), and the RWRGA($\tau$)~--- provide the same upper bounds for approximation in smooth Banach spaces.  

Algorithms from the class WBGA, in particular, the WCGA, the WGAFR, and the RWRGA, have a greedy step (greedy selection), which is based on the norming functional $F_{f_{m-1}}$. As we mentioned above, for this reason, this type of algorithms is called dual greedy algorithms. 
There exists other natural class of greedy algorithms, which we discussed above -- the $X$-greedy type algorithms. These algorithms do not use the norming functional and, therefore, might be more suitable for implementation than the dual greedy algorithms.  
There are $X$-greedy companions for the dual greedy algorithms: 
the WCGA, the WGAFR, and the RWRGA. We build an $X$-greedy companion of a dual greedy algorithm by replacing 
the greedy step~--- the first step of the $m$th iteration~--- by minimizing over all possible choices of a newly added element from the dictionary. We follow this principle in the construction of the $X$-greedy companion of the RWRGA. 
 
 {\bf Rescaled Relaxed $X$-Greedy Algorithm (RRXGA).}
 We define $f_0   :=f$ and $G_0  := 0$. Then for each $m\ge 1$ we inductively define

1). Find $ \lambda_m \ge 0$ and $\ff_m\in \cD^\pm$ (we assume existence) such that
$$
\|f_{m-1}- \la_m\varphi_m\| = \inf_{ \la\ge 0; g\in \cD^\pm}\|f_{m-1}-  \la g\|.
$$

2). Find $ \mu_m$ such that
$$
\|f-\mu_m(G_{m-1} + \la_m\varphi_m)\| = \inf_{ \mu}\|f-\mu(G_{m-1} + \la_m\varphi_m)\|
$$
and define
$$
G_m:=   \mu_m(G_{m-1} + \la_m\varphi_m).
$$

3). Denote
$$
f_m   := f-G_m.
$$

It is known (see~\cite{VTbook}) that results on the $X$-greedy companions of dual relaxed greedy algorithms can be derived from the proofs of the corresponding results for the dual greedy algorithms. 
This was also illustrated on the RWRGA and its companion RRXGA in \cite{VT165} in the proof of the following theorem.

\begin{Theorem}[{\cite{VT165}}]\label{UT5} Let $X$ be a uniformly smooth Banach space with modulus of smoothness $\rho(u)\le \gamma u^q$, $1<q\le 2$. Then, for the RRXGA we have for any $f\in A_1(\cD)$
$$
\|f_m\| \le  C(q,\gamma) (1+m)^{-1/p},\quad p:=q/(q-1).
$$
\end{Theorem}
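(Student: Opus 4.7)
The plan is to closely follow the proof of Theorem~\ref{UT4} for the RWRGA($\tau$) with $\tau=\{1\}$, the key observation being that the $X$-greedy selection in step 1 of the RRXGA, being optimal over all $(\la,\phi)\in[0,\infty)\times\cD^\pm$, majorises the residual that any particular dual greedy competitor would produce. Write $p:=q/(q-1)$. The argument has three ingredients.

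First, I establish the biorthogonality relation $F_{f_{m-1}}(G_{m-1})=0$ for every $m\ge 1$. For $m=1$ this is trivial from $G_0=0$. For $m\ge 2$, the scalar convex function $\mu\mapsto\|f-\mu(G_{m-2}+\la_{m-1}\ff_{m-1})\|$ is minimised at $\mu_{m-1}$ by step 2 of the preceding iteration; differentiating at the minimiser, which is permitted because the norming functional is unique in a uniformly smooth space, gives $F_{f_{m-1}}(G_{m-1}/\mu_{m-1})=0$, and hence $F_{f_{m-1}}(G_{m-1})=0$ (the degenerate case $\mu_{m-1}=0$ forces $G_{m-1}=0$ and is trivial). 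Since $f\in A_1(\cD)=\conv(\cD^\pm)$, this gives
$$
\|F_{f_{m-1}}\|_\cD=\sup_{\phi\in\cD^\pm}F_{f_{m-1}}(\phi)\ge F_{f_{m-1}}(f)=F_{f_{m-1}}(f_{m-1})+F_{f_{m-1}}(G_{m-1})=\|f_{m-1}\|.
$$

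Second, by the $X$-greedy optimality, $\|f_{m-1}-\la_m\ff_m\|\le\|f_{m-1}-\la\phi\|$ for every $\la\ge 0$ and $\phi\in\cD^\pm$. The standard smoothness inequality arising from $\rho(u)\le\ga u^q$ reads
$$
\|f_{m-1}-\la\phi\|^q\le\|f_{m-1}\|^q-q\la F_{f_{m-1}}(\phi)\|f_{m-1}\|^{q-1}+2\ga\la^q.
$$
Minimising the right-hand side in $\la$ at $\la^*=[F_{f_{m-1}}(\phi)/(2\ga)]^{1/(q-1)}\|f_{m-1}\|$ and letting $F_{f_{m-1}}(\phi)\uparrow\|F_{f_{m-1}}\|_\cD$ over $\phi\in\cD^\pm$ yields
$$
\|f_{m-1}-\la_m\ff_m\|^q\le\|f_{m-1}\|^q\Bigl(1-\tfrac{q-1}{(2\ga)^{p-1}}\|F_{f_{m-1}}\|_\cD^{p}\Bigr).
$$
Using $\mu=1$ as a competitor in step 2 of the RRXGA gives $\|f_m\|\le\|f_{m-1}-\la_m\ff_m\|$, so combining with the first ingredient,
$$
\|f_m\|^q\le\|f_{m-1}\|^q\bigl(1-C(q,\ga)\|f_{m-1}\|^{p}\bigr),\qquad C(q,\ga):=\tfrac{q-1}{(2\ga)^{p-1}}.
$$

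Third, setting $a_m:=\|f_m\|^q$ recasts this as $a_m\le a_{m-1}\bigl(1-C(q,\ga)a_{m-1}^{p-1}\bigr)$ with $a_0=\|f\|^q\le 1$; a standard lemma on positive monotone sequences (e.g.\ in \cite{VTbook}) yields $a_m\le C'(q,\ga)(1+m)^{-1/(p-1)}=C'(q,\ga)(1+m)^{-(q-1)}$, and therefore $\|f_m\|\le C''(q,\ga)(1+m)^{-1/p}$. The only genuine departure from the proof of Theorem~\ref{UT4} is the observation used at the start of the second ingredient; the remainder—the smoothness expansion, the one-parameter optimisation, and the scalar recursion—reproduces the RWRGA analysis verbatim. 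I expect the main obstacle to be conceptual rather than computational: one must see that the absence of the norming functional from the RRXGA selection is harmless because the $X$-greedy optimum is always at least as good as the dual greedy competitor. The only technical care needed is in handling the degenerate cases $\mu_m=0$ and $G_{m-1}+\la_m\ff_m=0$ when establishing biorthogonality.
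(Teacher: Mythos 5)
Your proposal is correct and follows essentially the same route the paper takes: establish biorthogonality via the rescaling step, observe that the $X$-greedy selection in step~1 of the RRXGA automatically dominates the dual greedy step with weakness parameter $t=1$ of the RWRGA, and then run the standard smoothness recursion exactly as in Theorem~\ref{UT4}. This is precisely the ``$X$-greedy companion inherits the dual greedy rate'' principle the paper invokes. One imprecision worth flagging: the $q$-th power smoothness inequality you quote in the second ingredient does not follow directly from the paper's first-order Lemma~\ref{LL0}, and the constant $2\gamma$ there is not exactly right for general $1<q<2$ (raising the linear estimate to the $q$-th power goes the wrong way since $x\mapsto x^q$ is convex). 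This is cosmetic, not a gap: using instead Lemma~\ref{LL0} directly, namely $\|f_{m-1}-\la\phi\|\le\|f_{m-1}\|\bigl(1-\la F_{f_{m-1}}(\phi)/\|f_{m-1}\|+2\gamma\la^q/\|f_{m-1}\|^q\bigr)$, together with your bound $\|F_{f_{m-1}}\|_\cD\ge\|f_{m-1}\|$ and the competitor $\mu=1$, gives a recursion $\|f_m\|\le\|f_{m-1}\|\bigl(1-c(q,\gamma)\|f_{m-1}\|^p\bigr)$ directly in $\|f_m\|$, to which Lemma~\ref{LeL4} (with $r=1/p$) applies and yields the same rate with constants matching the paper's.
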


\section{Stability}
\label{S}

In the previous sections we discussed two important properties of an algorithm -- convergence and rate of convergence. 
In this section we present some results on one more important property of an algorithm -- stability. Stability can be understood in different ways. Clearly, stability means that small perturbations do not result in a large change in the outcome of the algorithm. In this section we discuss two kinds of perturbations -- noisy data and errors made in the process of realization of the algorithm. 

\subsection{Noisy data} 

Usually, in the greedy algorithms literature the noisy data is understood in the following deterministic way. Take a number $\e\ge 0$ and two elements $f$, $f^\e$ from $X$ such that
\be\label{S1}
\|f-f^\e\| \le \e,\quad
f^\e/A(\e) \in A_1(\cD),
\ee
with some number $A(\e)>0$.
Then we interpret $f$ as a noisy version of our original signal $f^\e$, for which we know that it has some good properties formulated in terms of $A_1(\cD)$. The first results on approximation of noisy data (in the sense of (\ref{S1})) were obtained in \cite{VT115} for the WCGA and WGAFR. Later, in \cite{VT165} we proved Theorem \ref{ST1} (see below), which 
covers all algorithms from the collection WBGA($\tau$). Thus, Theorem \ref{ST1} covers the known results from  \cite{VT115} on WCGA and WGAFR and also covers the corresponding result on the RWRGA.

\begin{Theorem}[{\cite{VT165}}]\label{ST1} Let $X$ be a uniformly smooth Banach space with modulus of smoothness $\rho(u)\le \gamma u^q$, $1<q\le 2$. Assume that $f$ and $f^\e$ satisfy (\ref{S1}).
Then, for any algorithm from the collection  WBGA($\tau$) applied to $f$ we have
$$
\|f_m\| \le  \max\left\{2\e,\, C(q,\gamma)(A(\e)+\e) \Big(1+\sum_{k=1}^mt_k^p\Big)^{-1/p}\right\},
\quad p:=\frac{q}{q-1},
$$
with $C(q,\gamma)= 4(2\gamma)^{1/q}$.
\end{Theorem}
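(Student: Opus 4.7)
The plan is to mirror the proof of the clean-data bound in Theorem~\ref{UT3}, tracking how the noise $\e$ perturbs the key inequalities, and splitting into the two regimes indicated by the $\max$ on the right-hand side. The three abstract properties defining the class WBGA$(\tau)$~--- greedy selection, biorthogonality, and error reduction~--- are used in exactly the same places; the only new ingredient is a noisy lower bound for $\|F_{f_{m-1}}\|_\cD$.

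First, using biorthogonality at the $(m-1)$st step (with the convention $G_0=0$), I would write
$$
F_{f_{m-1}}(f) = F_{f_{m-1}}(f_{m-1}) + F_{f_{m-1}}(G_{m-1}) = \|f_{m-1}\|.
$$
Since $f^\e/A(\e) \in A_1(\cD) = \overline{\conv}(\cD^\pm)$, the identity $\|F_{f_{m-1}}\|_\cD = \sup_{g\in \cD^\pm} F_{f_{m-1}}(g)$ yields $F_{f_{m-1}}(f^\e) \le A(\e)\|F_{f_{m-1}}\|_\cD$. Combining this with $|F_{f_{m-1}}(f - f^\e)| \le \e$ gives the crucial noisy bound
$$
\|F_{f_{m-1}}\|_\cD \ge \frac{\|f_{m-1}\| - \e}{A(\e)}.
$$
Next, I would split into cases. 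If $\|f_m\| \le 2\e$, the first term in the $\max$ already dominates. Otherwise, by taking $\la = 0$ in the error-reduction property, the sequence $\{\|f_j\|\}$ is non-increasing, so $\|f_j\| > 2\e$ for all $j \le m$, and therefore $\|f_{j-1}\| - \e \ge \|f_{j-1}\|/2$ at every iteration $j = 1,\dots, m$.

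With this case fixed, I apply the standard smoothness lemma $\|f_{j-1} - \la \ff_j\| \le \|f_{j-1}\| - \la F_{f_{j-1}}(\ff_j) + 2\|f_{j-1}\|\rho(\la/\|f_{j-1}\|)$, the greedy selection $F_{f_{j-1}}(\ff_j) \ge t_j \|F_{f_{j-1}}\|_\cD$, the bound $\rho(u) \le \gamma u^q$, and the error-reduction property to obtain, for every $\la \ge 0$,
$$
\|f_j\| \le \|f_{j-1}\| - \frac{\la t_j \|f_{j-1}\|}{2A(\e)} + \frac{2\gamma \la^q}{\|f_{j-1}\|^{q-1}}.
$$
Optimizing over $\la$ converts this into the recursion
$$
\|f_j\| \le \|f_{j-1}\|\!\left(1 - C(q,\gamma) \, t_j^p \, \frac{\|f_{j-1}\|^p}{A(\e)^p}\right), \qquad p=\frac{q}{q-1}.
$$
Since $\|f_0\| = \|f\| \le \|f^\e\| + \e \le A(\e)+\e$, I would rewrite the recursion with $(A(\e)+\e)^p$ in place of $A(\e)^p$ in the denominator~--- which only weakens it~--- and close the estimate with the standard DeVore--Temlyakov numerical lemma: if $a_j \le a_{j-1}(1 - c\, t_j^p (a_{j-1}/B)^p)$ with $a_0 \le B$, then $a_m \le B(1 + c'\sum_{k=1}^m t_k^p)^{-1/p}$. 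Applied with $B = A(\e)+\e$, this yields $\|f_m\| \le C(q,\gamma)(A(\e)+\e)(1+\sum_{k=1}^m t_k^p)^{-1/p}$, matching the second argument of the $\max$.

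The main delicate point, and the step I expect to require the most care, is the case analysis around the threshold $\|f_m\| > 2\e$: one needs the monotonicity of $\|f_j\|$ (which comes from error reduction with $\la=0$) to propagate the strict inequality $\|f_{j-1}\| > 2\e$ backwards through all earlier iterations, so that the factor-of-two loss $\|f_{j-1}\|-\e \ge \|f_{j-1}\|/2$ can be used uniformly in $j$. A minor but related subtlety is choosing the correct scale ($A(\e)+\e$ rather than $A(\e)$) when invoking the numerical lemma, which is what produces the $A(\e)+\e$ factor stated in the theorem; the constant $C(q,\gamma)=4(2\gamma)^{1/q}$ from Theorem~\ref{UT3} is then inherited unchanged.
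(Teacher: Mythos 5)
Your proof mirrors the paper's argument faithfully: biorthogonality gives $F_{f_{m-1}}(f)=\|f_{m-1}\|$, the membership $f^\e/A(\e)\in A_1(\cD)$ together with $|F_{f_{m-1}}(f-f^\e)|\le\e$ yields the noisy lower bound $\|F_{f_{m-1}}\|_\cD\ge(\|f_{m-1}\|-\e)/A(\e)$, the $2\e$ threshold combined with the monotonicity coming from error reduction at $\la=0$ propagates $\|f_{j-1}\|-\e\ge\|f_{j-1}\|/2$ through all earlier iterations, and the optimized smoothness inequality closes via the numerical recursion (Lemma~\ref{HL1} after the substitution $a_j=(\|f_j\|/B)^p$ with $B=A(\e)+\e$). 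The one place where you assert rather than argue is the claim that $C(q,\gamma)=4(2\gamma)^{1/q}$ is ``inherited unchanged'' from Theorem~\ref{UT3}: the noisy recursion carries an extra factor $2^{-p}$ in the coefficient of $t_j^p$ (coming precisely from the $\|f_{j-1}\|/2$ bound), so the numerical lemma yields a final constant that is $2$ times the clean-case constant $p^{1/p}(2q\gamma)^{1/q}$; to recover the stated $4(2\gamma)^{1/q}$ one must still check that $2\,p^{1/p}(2q\gamma)^{1/q}\le 4(2\gamma)^{1/q}$, equivalently $p^{1/p}q^{1/q}\le 2$ for $q\in(1,2]$ (which holds, with equality at $q=2$), rather than invoke inheritance.
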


\subsection{Approximate greedy algorithms}

The first results on the approximate versions of the algorithms WCGA and WRGA were obtained in \cite{VT94}. Further results were obtained in \cite{De}, \cite{DeDis}, and \cite{VT165}. In this subsection we formulate some results from the paper \cite{VT165}. 
We begin with the definitions of the approximate versions AWCGA,  AWGAFR, and  ARWRGA  of the previously mentioned algorithms (namely, the WCGA, the WGAFR, and the RWRGA). Then we give the definition of the class of Approximate Weak Biorthogonal Greedy Algorithms (AWBGA).  It was shown in \cite{VT165} that the algorithms AWCGA,  AWGAFR, and  ARWRGA  belong to the class of the Approximate Weak Biorthogonal Greedy Algorithms.  

{\bf Approximate norming functional.} For each algorithm let $\{F_m\}_{m=0}^\infty$ denote a sequence of functionals such that for any $m \geq 0$
\be\label{S2}
\|F_m\| \leq 1
\ \text{ and }\ 
F_m(f_m) \geq (1-\delta_m)\|f_m\|,
\ee
where $\{f_m\}_{m=0}^\infty$ is the sequence of remainders (residuals) produced by the corresponding algorithm.

We start with an approximate version of the WCGA, which was studied in~\cite{VT94} and~\cite{De}.

{\bf Approximate Weak Chebyshev Greedy Algorithm (AWCGA($\tau$))}
We denote $f^c_0 := f^{c,\tau}_0 :=f$. Then for each $m\ge 1$ we inductively define\\
1). $\varphi^{c}_m :=\varphi^{c,\tau}_m \in \cD^\pm$ is any satisfying
\[
F_{m-1}(\varphi^{c}_m) \geq t_m \| F_{m-1}\|_\cD. 
\]
2). Define
\[
\Phi^c_m := \Phi^{c,\tau}_m := \sp \{\varphi^{c}_j\}_{j=1}^m,
\]
and define $G_m^c := G_m^{c,\tau}$ to be any such element from $\Phi^c_m
$ that
\[
\|f - G_m^c\| \leq (1+\eta_m) \inf_{G \in \Phi_m^c} \|f - G\|.
\]
3). Denote
\[
f^{c}_m := f^{c,\tau}_m := f-G^c_m.
\]

\noindent
Next, we present an approximate version of the WGAFR, which was studied in \cite{VT165}.

{\bf Approximate Weak Greedy Algorithm with Free Relaxation (AWGAFR($\tau$))}
We denote $f^{fr}_0 := f^{fr,\tau}_0 :=f$. Then for each $m\ge 1$ we inductively define\\
1). $\varphi^{fr}_m :=\varphi^{fr,\tau}_m \in \cD^\pm$ is any satisfying
\[
F_{m-1}(\varphi^{fr}_m) \geq t_m \| F_{m-1}\|_\cD. 
\]
2). Find $w_m$ and $\lambda_m \ge 0$ such that 
\[
\|f - ((1-w_m)G^{fr}_{m-1} + \lambda_m \varphi^{fr}_m)\| 
\le (1+\eta_m) \inf_{\lambda\ge 0, w}\|f - ((1-w)G^{fr}_{m-1} + \lambda\varphi^{fr}_m)\|
\]
and define $G_m^{fr} := G_m^{fr,\tau} := (1-w_m)G^{fr}_{m-1} + \lambda_m \varphi^{fr}_m$.\\
3). Denote
\[
f^{fr}_m := f^{fr,\tau}_m := f-G^{fr}_m.
\]

\noindent
Lastly, we introduce an approximate version of the RWRGA, which was studied in \cite{VT165}.

{\bf Approximate Rescaled Weak Relaxed Greedy Algorithm \newline(ARWRGA($\tau$))}
We denote $f^r_0 := f^{r,\tau}_0 :=f$. Then for each $m\ge 1$ we inductively define\\
1). $\varphi^{r}_m :=\varphi^{r,\tau}_m \in \cD^\pm$ is any satisfying
\[
F_{m-1}(\varphi^{r}_m) \geq t_m \| F_{m-1}\|_\cD. 
\]
2). Find $\lambda_m \ge 0$ such that 
\[
\|f - (G^r_{m-1} + \lambda_m \varphi^r_m)\| 
\le (1+\eta_m) \inf_{\lambda\ge 0}\|f - (G^r_{m-1} + \lambda\varphi^r_m)\|.
\]
3). Find $\mu_m \ge 0$ such that 
\[
\|f - \mu_m(G^r_{m-1} + \lambda_m \varphi^r_m)\| 
\le (1+\eta_m) \inf_{\mu\ge 0}\|f - \mu(G^r_{m-1} + \lambda_m\varphi^r_m)\|
\]
and define $G_m^r := G_m^{r,\tau} := \mu_m(G^r_{m-1} + \lambda_m \varphi^r_m)$.\\
4). Denote
\[
f^{r}_m := f^{r,\tau}_m := f-G^r_m.
\]

Finally, we define the class of Approximate Weak Biorthogonal Greedy Algorithms.

{\bf Approximate Weak Biorthogonal Greedy Algorithms (AWBGA).} 
We define $f_0 := f$ and $G_0 := 0$. 
Suppose that for each $m\ge 1$ the algorithm has the following properties.
\\\indent
{\bf (1) Greedy selection.} 
At the $m$th iteration the algorithm selects a $\varphi_m \in \cD^\pm$, which satisfies 
\[
F_{m-1}(\varphi_m) \ge t_m \|F_{m-1}\|_\cD.
\]
\\\indent
{\bf (2) Biorthogonality.} 
At the $m$th iteration the algorithm constructs an approximant $G_m \in \sp(\ff_1,\dots,\ff_m)$ such that
\[
|F_m(G_m)| \leq \e_m.
\]
\indent
{\bf (3) Error reduction.} 
We have
\[
\|f_m\| \leq (1+\eta_m) \inf_{\la\ge 0}\|f_{m-1} - \la \ff_m\|.
\]

\begin{Proposition}[{\cite{VT165}}]\label{SP1}
The AWCGA($\tau$), the AWGAFR($\tau$), and \newline the ARWRGA($\tau$) belong to the class   AWBGA($\tau$) with 
\[
\e_m = \inf_{\lambda>0} \frac{1}{\lambda} (\delta_m + \eta_m + 2\rho(\lambda\|G_m\|)).
\]
\end{Proposition}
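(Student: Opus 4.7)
\medskip
\noindent\textbf{Proof proposal.} The plan is to verify, for each of the three algorithms AWCGA($\tau$), AWGAFR($\tau$), and ARWRGA($\tau$), the three properties defining the class AWBGA($\tau$): greedy selection (1), biorthogonality (2) with the asserted $\e_m$, and error reduction (3). Property (1) is built directly into step 1) of each algorithm's definition, so no work is required. For property (3), the key observation is that in each algorithm's approximate minimization the element $G_{m-1}+\la\ff_m$ is a feasible competitor for every $\la\ge 0$: in AWCGA it lies in $\Phi_m^c$; in AWGAFR it corresponds to $w=0$; in ARWRGA it appears inside the infimum in step 2), and step 3)'s rescaling cannot hurt since $\mu=1$ is admissible. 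Applying each algorithm's $(1+\eta_m)$-inequality then yields $\|f_m\|\le(1+\eta_m)\inf_{\la\ge 0}\|f_{m-1}-\la\ff_m\|$.

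The substantive work is property (2). For all three algorithms I exhibit $(1+\nu)G_m$ as an admissible competitor in the minimization defining $G_m$, for $\nu$ in a neighborhood of $0$: for AWCGA because $(1+\nu)G_m\in\Phi_m^c$; for ARWRGA because step 3) itself ranges over the family $\mu(G_{m-1}^r+\la_m\ff_m^r)$; for AWGAFR by writing $(1+\nu)G_m^{fr}=(1-\tilde w)G_{m-1}^{fr}+\tilde\la\ff_m^{fr}$ with $\tilde\la=(1+\nu)\la_m\ge 0$ (valid for $\nu\ge-1$). The corresponding $(1+\eta_m)$-inequality gives $\|f_m\pm\la G_m\|=\|f-(1\pm\la)G_m\|\ge\|f_m\|/(1+\eta_m)\ge(1-\eta_m)\|f_m\|$ for $\la>0$ in the admissible range. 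Combining with the standard modulus-of-smoothness inequality
\begin{equation*}
\|f_m+\la G_m\|+\|f_m-\la G_m\|\le 2\|f_m\|\bigl(1+\rho(\la\|G_m\|/\|f_m\|)\bigr)
\end{equation*}
produces the upper bound $\|f_m\pm\la G_m\|\le\|f_m\|\bigl(1+\eta_m+2\rho(\la\|G_m\|/\|f_m\|)\bigr)$.

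Now $\|F_m\|\le 1$ and $F_m(f_m)\ge(1-\delta_m)\|f_m\|$ yield, for $\la>0$,
\begin{equation*}
\la F_m(G_m)=F_m(f_m+\la G_m)-F_m(f_m)\le\|f_m\|\bigl(\delta_m+\eta_m+2\rho(\la\|G_m\|/\|f_m\|)\bigr);
\end{equation*}
dividing by $\la$ and reparameterizing $\la\mapsto\la\|f_m\|$ (the claim is trivial when $f_m=0$) gives $F_m(G_m)\le\la^{-1}\bigl(\delta_m+\eta_m+2\rho(\la\|G_m\|)\bigr)$, and taking the infimum over $\la>0$ delivers $F_m(G_m)\le\e_m$. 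The matching lower bound $F_m(G_m)\ge-\e_m$ follows by the symmetric computation with $-\la G_m$ in place of $+\la G_m$. I expect the main obstacle to be the admissibility bookkeeping for AWGAFR, where $(1+\nu)G_m^{fr}$ is only guaranteed admissible for $\nu\ge-1$: one must argue carefully that this restricted range still produces the unrestricted infimum in $\e_m$, for instance by exploiting the two-parameter freedom in AWGAFR to bound $|F_m(G_{m-1})|$ and $|F_m(\ff_m)|$ separately and then recombining via $G_m^{fr}=(1-w_m)G_{m-1}^{fr}+\la_m\ff_m^{fr}$.
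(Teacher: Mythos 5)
Your overall strategy — deriving the biorthogonality bound by perturbing $G_m$ to $(1\pm\lambda)G_m$, invoking the $(1+\eta_m)$-inequality to lower-bound $\|f-(1\pm\lambda)G_m\|$, and combining with the two-point modulus-of-smoothness estimate and the approximate norming inequality $F_m(f_m)\ge(1-\delta_m)\|f_m\|$ — is the right one and matches the standard technique used in the exact WBGA analysis. The upper bound $F_m(G_m)\le\e_m$ goes through cleanly, since $(1+\lambda)G_m$ is admissible for every $\lambda>0$ in all three algorithms. But two concrete problems remain.

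First, the error-reduction step for ARWRGA is wrong as stated. You assert that ``step 3)'s rescaling cannot hurt since $\mu=1$ is admissible,'' but that reasoning is valid only for an exact minimizer. In the approximate setting each $(1+\eta_m)$-minimization can lose a factor $(1+\eta_m)$ relative to \emph{any} competitor, including $\mu=1$; chaining the step~2 and step~3 inequalities gives $\|f_m\|\le(1+\eta_m)^2\inf_{\lambda\ge0}\|f_{m-1}-\lambda\ff_m\|$, not $(1+\eta_m)$. So ARWRGA sits in AWBGA only with an adjusted $\eta_m' = 2\eta_m+\eta_m^2$, which then propagates into the formula for $\e_m$. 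This needs to be tracked explicitly.

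Second, and more substantively, the lower bound $F_m(G_m)\ge-\e_m$ is not established in full. That direction requires $\|f_m+\lambda G_m\|=\|f-(1-\lambda)G_m\|\ge\|f_m\|/(1+\eta_m)$, which needs $(1-\lambda)G_m$ to be admissible. For AWCGA this is free because $\Phi_m^c$ is a linear subspace, but for both AWGAFR and ARWRGA the admissible set is a half-space (resp.\ a ray through the origin), so $(1-\lambda)G_m$ is admissible only for $\lambda\le1$ — after the reparametrization $\lambda\mapsto\lambda/\|f_m\|$, the infimum defining $\e_m$ is only accessible over $(0,1/\|f_m\|]$ rather than $(0,\infty)$, and these two infima can genuinely differ. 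You flag this obstacle for AWGAFR but not for ARWRGA, where the identical cone-constraint issue arises. Moreover, the fix you sketch (bounding $|F_m(G_{m-1})|$ and $|F_m(\ff_m)|$ separately and recombining through $G_m^{fr}=(1-w_m)G_{m-1}^{fr}+\lambda_m\ff_m^{fr}$) would produce a bound depending on $w_m$ and $\lambda_m$ individually, not on $\|G_m\|$ alone, so it is unlikely to reproduce the single-quantity formula $\e_m=\inf_{\lambda>0}\lambda^{-1}(\delta_m+\eta_m+2\rho(\lambda\|G_m\|))$. Until the $\lambda>1$ range (equivalently, the $\nu<-1$ perturbations) is handled for the two cone-constrained algorithms, the claim $|F_m(G_m)|\le\e_m$ with the \emph{unrestricted} infimum is not proved.
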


We now formulate the convergence and the rate of convergence results for the class AWBGA($\tau$). The corresponding 
corollaries for the algorithms AWCGA($\tau$), AWGAFR($\tau$), and  ARWRGA($\tau$) the reader can find in \cite{VT165}.

\begin{Theorem}[{\cite{VT165}}]\label{ST2}
Let $X$ be a uniformly smooth Banach space with modulus of smoothness $\rho(u) \leq \gamma u^q$, $1 < q \le 2$. 
Let sequences $\{t_m\}_{m=1}^\infty$, $\{\delta_m\}_{m=0}^\infty$, $\{\eta_m\}_{m=1}^\infty$, $\{\e_m\}_{m=1}^\infty$ be such that
\be\label{S3}
\sum_{k=1}^\infty t_k^p = \infty,
\ee
\be\label{S4}
\delta_{m-1} + \eta_m = o(t_m^p),
\quad
\e_m = o(1).
\ee
Then each algorithm from AWBGA($\tau$) converges in $ X$.
\end{Theorem}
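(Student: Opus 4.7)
I would derive and repeatedly exploit a single master inequality that packages together the three defining properties of the AWBGA. From $\|f_{m-1}+\la\ff_m\|+\|f_{m-1}-\la\ff_m\|\le 2\|f_{m-1}\|(1+\rho(\la/\|f_{m-1}\|))$ (valid for $\la\ge 0$ since $\|\ff_m\|\le 1$), the lower bound $\|f_{m-1}+\la\ff_m\|\ge F_{m-1}(f_{m-1})+\la F_{m-1}(\ff_m)\ge(1-\delta_{m-1})\|f_{m-1}\|+\la F_{m-1}(\ff_m)$ coming from the approximate norming property, and the greedy selection (1), one obtains
\[
\|f_{m-1}-\la\ff_m\|\le\|f_{m-1}\|(1+\delta_{m-1})+2\|f_{m-1}\|\rho(\la/\|f_{m-1}\|)-\la t_m\|F_{m-1}\|_\cD
\]
for every $\la\ge 0$. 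Inserting this into the error reduction (3), using $\rho(u)\le\gamma u^q$, and optimizing over $\la$ yields the master inequality
\begin{equation}\label{STmaster}
\|f_m\|\le(1+\eta_m)\|f_{m-1}\|\bigl(1+\delta_{m-1}-C_{q,\gamma}(t_m\|F_{m-1}\|_\cD)^p\bigr),
\end{equation}
with $C_{q,\gamma}=(q-1)/\bigl(q(2\gamma q)^{p-1}\bigr)$.

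Next I would use density of $\sp\cD$ in $X$ to translate any lower bound on $\|f_m\|$ into a lower bound on $\|F_m\|_\cD$, and then derive a contradiction from $\liminf_m\|f_m\|>0$. For any $\eta>0$ choose $v=\sum c_jg_j$ with $g_j\in\cD^\pm$, $c_j>0$, and $\|f-v\|<\eta$; setting $A(\eta):=\sum c_j$ one has $|F(f)|\le A(\eta)\|F\|_\cD+\eta$ for every $F\in X^*$ with $\|F\|\le 1$. Assume now $\liminf_m\|f_m\|=a>0$. For all large $m$ we have $\|f_m\|\ge a/2$, $\delta_m\le 1/2$ and $\e_m\le a/16$, so biorthogonality (2) gives $F_m(f)=F_m(f_m)+F_m(G_m)\ge(1-\delta_m)\|f_m\|-\e_m\ge a/8$, and the density bound with $\eta=a/16$ forces
\[
\|F_m\|_\cD\ge c_0(a):=\frac{a}{16\,A(a/16)}>0.
\]
Substituting into \eqref{STmaster} and using $\eta_m+\delta_{m-1}=o(t_m^p)$ gives $\|f_m\|\le\|f_{m-1}\|\bigl(1-(C_{q,\gamma}/2)(t_mc_0(a))^p\bigr)$ for all sufficiently large $m$; the divergence $\sum t_m^p=\infty$ then forces $\|f_m\|\to 0$, contradicting $\|f_m\|\ge a/2$. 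Hence $\liminf_m\|f_m\|=0$.

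The final step---and the main technical obstacle---is to upgrade $\liminf_m\|f_m\|=0$ to $\lim_m\|f_m\|=0$; since $\|f_m\|$ need not be monotone and the perturbations $\eta_m,\delta_{m-1}$ are not summable in general, one has to show that $\|f_m\|$ cannot rise back above a threshold once it has dipped below it. Given $\epsilon>0$, choose $m_0$ with $\|f_{m_0}\|<\epsilon/2$ and large enough that for every $m\ge m_0$ we have $(1+\eta_m)(1+\delta_{m-1})<2$, $\delta_{m-1}<1/2$, $\e_{m-1}<\epsilon/8$, and $\eta_m+\delta_{m-1}\le(C_{q,\gamma}/2)c_0(\epsilon)^pt_m^p$ whenever $t_m>0$. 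If $\|f_m\|\le\epsilon$ failed for some $m>m_0$, let $j$ be the smallest such index; then $\|f_{j-1}\|\le\epsilon$ and the crude bound $\|f_j\|\le(1+\eta_j)(1+\delta_{j-1})\|f_{j-1}\|<2\|f_{j-1}\|$ forces $\|f_{j-1}\|>\epsilon/2$, so the density argument of the previous paragraph produces $\|F_{j-1}\|_\cD\ge c_0(\epsilon)$. For $t_j>0$, \eqref{STmaster} now yields $\|f_j\|\le\|f_{j-1}\|\le\epsilon$; for $t_j=0$ the hypothesis $\eta_j+\delta_{j-1}=o(t_j^p)$ forces $\eta_j=\delta_{j-1}=0$, so again $\|f_j\|\le\|f_{j-1}\|\le\epsilon$. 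Each case contradicts $\|f_j\|>\epsilon$, so $\|f_m\|\le\epsilon$ for all $m\ge m_0$, and since $\epsilon>0$ was arbitrary, $\lim_m\|f_m\|=0$.
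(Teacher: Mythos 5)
Your proof is correct, and it follows the same standard strategy used in \cite{VT165} and for the exact WBGA in the paper: derive a recursive inequality for $\|f_m\|$ from the modulus of smoothness, the approximate norming functional, the greedy selection and the (approximate) biorthogonality; use density of $\sp\cD$ in $X$ to bootstrap a positive lower bound on $\|F_m\|_\cD$ from a positive lower bound on $\|f_m\|$; conclude $\liminf_m\|f_m\|=0$; and then show the norm cannot recross a fixed threshold. Your treatment of the $\liminf\to\lim$ step (and of the non-summable perturbations) is the genuinely delicate part and you handle it correctly.

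Two small points of polish. First, when you substitute $\|F_{m-1}\|_\cD\ge c_0(a)$ into \eqref{STmaster} and then form the infinite product, you should note that the factor $1-(C_{q,\gamma}/2)(t_m c_0(a))^p$ is nonnegative: this is automatic since $\|F_m\|_\cD\le\|F_m\|\le 1$ forces $c_0(a)\le 1$ (otherwise the bound $\|F_m\|_\cD\ge c_0(a)>1$ already contradicts $\|F_m\|_\cD\le 1$), and $C_{q,\gamma}<1$ because $\rho(2)\ge 1$ forces $2\gamma q\ge 1$. Second, the interpretation of $\delta_{m-1}+\eta_m=o(t_m^p)$ for indices with $t_m=0$ is nonstandard; the argument is cleaner if one simply notes that the eventual inequality $\delta_{m-1}+\eta_m\le\epsilon t_m^p$ (for all large $m$) forces $\delta_{m-1}=\eta_m=0$ whenever $t_m=0$ and $m$ is large, which is exactly the range used in the threshold step. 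Neither point affects the validity of the argument.
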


\begin{Theorem}[{\cite{VT165}}]\label{ST3}
Let $X$ be a uniformly smooth Banach space with modulus of smoothness $\rho(u)\le \gamma u^q$, $1<q\le 2$. 
Take a number $\e\ge 0$ and two elements $f$, $f^\e$ from $X$ such that
\[
\|f-f^\e\| \le \e,\quad
f^\e/A(\e) \in A_1(\cD),
\]
with some number $A(\e) > 0$.
Then an algorithm from AWBGA($\tau$) with error parameters $\{t_m\}_{m=1}^\infty$, $\{\delta_m\}_{m=0}^\infty$, $\{\eta_m\}_{m=1}^\infty$, $\{\e_m\}_{m=1}^\infty$, satisfying 
\begin{align}
\label{S5}
&\delta_m + \e_m/\|f_m\| \leq 1/4,
\\
\label{S6}
&\delta_m + \eta_{m+1} \leq \frac{1}{2} C(q,\gamma)^{-p} A(\e)^{-p} t_{m+1}^p \|f_m\|^p
\end{align}
for any $m\ge 0$, provides
\[
\|f_m\| \le \max\left\{4\e,\, C(q,\gamma) (A(\e)+\e) \Big(1 + \sum_{k=1}^m t_k^p\Big)^{-1/p}\right\},
\]
where $C(q,\gamma) = 4q (2\gamma)^q \Big(\frac{2}{q-1}\Big)^{1/p}$ and $p = q/(q-1)$.
\end{Theorem}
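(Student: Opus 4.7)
The plan is to prove Theorem \ref{ST3} by a contraction-plus-induction argument, following the same template used for the WCGA/WGAFR rate bounds (Theorem \ref{rcT1}) but adapted to the approximate setting. The target is a recursive inequality of the form
$$
\|f_m\| \le \|f_{m-1}\| - \frac{c(q,\gamma)}{A(\e)^p}\,t_m^p\,\|f_{m-1}\|^{p+1}
$$
valid whenever $\|f_{m-1}\| \ge 4\e$; once this is in hand, a standard iteration lemma converts it to the desired $(1+\sum t_k^p)^{-1/p}$ rate after rescaling by $A(\e)+\e$, and the $\max$ with $4\e$ handles the regime where the residual has already become small.

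The first technical step is to show the lower bound $\|F_{m-1}\|_\cD \ge \|f_{m-1}\|/(2A(\e))$ whenever $\|f_{m-1}\| \ge 4\e$. Since $\|F_{m-1}\| \le 1$ and $f^\e/A(\e) \in A_1(\cD)$, one has $F_{m-1}(f^\e) \le A(\e)\|F_{m-1}\|_\cD$. I would write $F_{m-1}(f^\e) = F_{m-1}(f_{m-1}) + F_{m-1}(G_{m-1}) + F_{m-1}(f^\e - f)$, and use the approximate norming property $F_{m-1}(f_{m-1}) \ge (1-\delta_{m-1})\|f_{m-1}\|$, the biorthogonality bound $|F_{m-1}(G_{m-1})| \le \e_{m-1}$, and $\|f-f^\e\| \le \e$ to get $F_{m-1}(f^\e) \ge (1-\delta_{m-1})\|f_{m-1}\| - \e_{m-1} - \e$. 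Condition \eqref{S5} gives $\delta_{m-1}\|f_{m-1}\| + \e_{m-1} \le \|f_{m-1}\|/4$, so this is at least $\tfrac{3}{4}\|f_{m-1}\| - \e \ge \tfrac{1}{2}\|f_{m-1}\|$ in the regime $\|f_{m-1}\| \ge 4\e$, giving the claimed bound.

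The second technical step is the smoothness-based contraction. Starting from the inequality $\|x-\la y\| + \|x+\la y\| \le 2\|x\|(1+\rho(\la\|y\|/\|x\|))$ with $x = f_{m-1}$ and $y = \ff_m$, and lower-bounding $\|x+\la\ff_m\|$ by $F_{m-1}(x+\la\ff_m) \ge (1-\delta_{m-1})\|f_{m-1}\| + \la F_{m-1}(\ff_m)$, I obtain
$$
\|f_{m-1} - \la\ff_m\| \le \|f_{m-1}\|(1+\delta_{m-1}) + \frac{2\gamma\la^q}{\|f_{m-1}\|^{q-1}} - \la F_{m-1}(\ff_m).
$$
The error-reduction property plus the greedy selection $F_{m-1}(\ff_m) \ge t_m\|F_{m-1}\|_\cD \ge t_m\|f_{m-1}\|/(2A(\e))$ lets me minimize in $\la \ge 0$ explicitly, which yields a bound of the form $\|f_m\| \le (1+\eta_m)[(1+\delta_{m-1})\|f_{m-1}\| - c'(q,\gamma)t_m^p\|f_{m-1}\|^{p+1}/A(\e)^p]$. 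Expanding and using \eqref{S6} to control $\delta_{m-1} + \eta_m$ by half of the main-gain term $c'(q,\gamma)t_m^p\|f_{m-1}\|^{p}/A(\e)^p$, the cross terms are absorbed and the desired contraction emerges (with the specific constant $C(q,\gamma) = 4q(2\gamma)^q(2/(q-1))^{1/p}$ chosen so that \eqref{S6} exactly matches the slack in the contraction).

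Finally, I set $b_m := \|f_m\|/(A(\e)+\e)$, noting that the base case $b_0 \le 1$ follows from $\|f\| \le \|f^\e\| + \e \le A(\e)+\e$. The contraction reads $b_m \le b_{m-1}(1 - \beta t_m^p b_{m-1}^p)$ in the good regime, and the standard lemma (cf.\ \cite{VTbook}, used for Theorem \ref{rcT1}) yields $b_m \le C(q,\gamma)(1+\sum_{k=1}^m t_k^p)^{-1/p}$. In the complementary regime $\|f_{m-1}\| < 4\e$, the slackness between $4\e$ here and the $2\e$ in Theorem \ref{ST1} plus the smallness of $\eta_m$ forced by \eqref{S6} (which scales with $\|f_{m-1}\|^p$) ensure that $\|f_m\|$ remains under $\max\{4\e, C(q,\gamma)(A(\e)+\e)(1+\sum t_k^p)^{-1/p}\}$, closing the induction. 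The principal obstacle is constant bookkeeping: one must tune $C(q,\gamma)$ so that \eqref{S6} is tight enough to absorb both the $\delta_{m-1}$ term inside the contraction and the $\eta_m$ multiplicative factor, while \eqref{S5} simultaneously guarantees the lower bound on $\|F_{m-1}\|_\cD$; any looser constant would break the recursion's self-sustainability.
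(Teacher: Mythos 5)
Your proposal is correct and follows essentially the same route as the proof of this result in \cite{VT165}: the lower bound $\|F_{m-1}\|_\cD \ge \|f_{m-1}\|/(2A(\e))$ obtained from the approximate norming property $F_{m-1}(f_{m-1})\ge(1-\delta_{m-1})\|f_{m-1}\|$, the biorthogonality bound $|F_{m-1}(G_{m-1})|\le\e_{m-1}$, and $\|f-f^\e\|\le\e$; the smoothness-based one-step contraction via $\|x-\la y\|+\|x+\la y\|\le 2\|x\|(1+\rho(\la\|y\|/\|x\|))$ with conditions \eqref{S5}--\eqref{S6} tuned so the approximation errors are absorbed by half the contraction gain; and an iteration lemma of Lemma~\ref{HL1} type applied to $b_m^p$ with base case $\|f_0\|\le A(\e)+\e$. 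The one part you leave as a gesture---closing the induction when $\|f_{m-1}\|<4\e$---does go through, since the same step-1 argument gives $\|F_{m-1}\|_\cD\ge\|f_{m-1}\|/(4A(\e))$ already for $\|f_{m-1}\|\ge 2\e$ (so the contraction, with a worse constant, keeps $\|f_m\|\le\|f_{m-1}\|<4\e$ there), while for $\|f_{m-1}\|<2\e$ condition \eqref{S6} forces $\eta_m\le 1$ and the trivial error-reduction bound $\|f_m\|\le(1+\eta_m)\|f_{m-1}\|<4\e$ finishes the argument.
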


\begin{Corollary}[{\cite{VT165}}]\label{SC1}
Let $X$ be a uniformly smooth Banach space with modulus of smoothness $\rho(u) \le \gamma u^q$, $1<q\le 2$. 
Then for any $f \in A_1(\cD)$ an algorithm from  AWBGA($\tau$) with error parameters $\{t_m\}_{m=1}^\infty$, $\{\delta_m\}_{m=0}^\infty$, $\{\eta_m\}_{m=1}^\infty$, $\{\e_m\}_{m=1}^\infty$, satisfying 
\begin{align*}
&\delta_m + \e_m/\|f_m\| \leq 1/4,
\\
&\delta_m + \eta_{m+1} \leq \frac{1}{2} C(q,\gamma)^{-p} t_{m+1}^p \|f_m\|^p
\end{align*}
for any $m\ge 0$, provides
\[
\|f_m\| \le C(q,\gamma)\Big(1+\sum_{k=1}^m t_k^p\Big)^{-1/p},
\]
where $C(q,\gamma) = 4q(2\gamma)^q \Big(\frac{2}{q-1}\Big)^{1/p}$ and $p = q/(q-1)$.
\end{Corollary}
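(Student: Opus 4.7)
The plan is to obtain Corollary \ref{SC1} as an immediate specialization of Theorem \ref{ST3} to the noise-free setting. Specifically, I would set $\e := 0$, $f^\e := f$, and $A(\e) := 1$. The first choice makes $\|f - f^\e\| = 0 \le \e$ trivially, and the assumption $f \in A_1(\cD)$ gives $f^\e/A(\e) = f \in A_1(\cD)$, so the structural hypothesis of Theorem \ref{ST3} is satisfied.

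Next I would verify that the two error-parameter conditions (\ref{S5}) and (\ref{S6}) of Theorem \ref{ST3} are exactly the ones imposed in the statement of the corollary. Condition (\ref{S5}), namely $\delta_m + \e_m/\|f_m\| \le 1/4$, is assumed verbatim. Condition (\ref{S6}) reads $\delta_m + \eta_{m+1} \le \tfrac{1}{2} C(q,\gamma)^{-p} A(\e)^{-p} t_{m+1}^p \|f_m\|^p$, and with the choice $A(\e) = 1$ the factor $A(\e)^{-p}$ collapses, matching the hypothesis of the corollary. Hence Theorem \ref{ST3} applies to the sequence $\{f_m\}$ generated by any algorithm from AWBGA($\tau$) run on $f$.

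The conclusion of Theorem \ref{ST3} then yields
\[
\|f_m\| \le \max\Bigl\{4\e,\, C(q,\gamma)(A(\e)+\e) \Bigl(1+\sum_{k=1}^m t_k^p\Bigr)^{-1/p}\Bigr\},
\]
and substituting $\e = 0$, $A(\e) = 1$ reduces this to $\|f_m\| \le C(q,\gamma)(1+\sum_{k=1}^m t_k^p)^{-1/p}$, which is exactly the desired bound with $C(q,\gamma) = 4q(2\gamma)^q(2/(q-1))^{1/p}$.

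There is no real obstacle here: the corollary is a pure specialization and requires only the bookkeeping above. The substantive content (the modulus-of-smoothness estimate, the biorthogonality argument, and the handling of the perturbation terms $\delta_m, \eta_m, \e_m$) has been absorbed into Theorem \ref{ST3}. The only mildly delicate point to double-check is that the ``$4\e$'' branch of the max genuinely vanishes when $\e = 0$, which it does, so the clean power-type rate survives the specialization.
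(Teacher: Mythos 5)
Your proof is correct and is exactly the intended derivation: the corollary is obtained by specializing Theorem~\ref{ST3} to $\e=0$, $f^\e=f$, $A(\e)=1$, under which the hypotheses match verbatim and the $\max\{4\e,\cdot\}$ collapses to the stated rate. No gaps.
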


\newpage

 \medskip
 {\bf \Large Chapter II : Greedy approximation with respect to dictionaries with special properties.}
  \medskip

In Chapter I we discussed different greedy algorithms with respect to arbitrary dictionaries. We discussed their convergence and rate of convergence properties. Clearly, in the case of rate of convergence property the class 
of elements, for which we guarantee that rate of convergence, depends on the given dictionary. In our case it is 
the $A_1(\cD)$ class. It is a surprising fact that the researchers managed to obtain convergence and rate of convergence 
results, which hold for any dictionary and are determined by the properties of the Banach space $X$. Moreover, it turns out that the property of $X$, which plays the fundamental role in those results, is very simple -- it is the modulus of smoothness $\rho(u,X)$ of the Banach space $X$. Certainly, the results of Chapter I can be applied to any dictionary 
$\cD$. In this chapter we discuss the following general question. How specific properties of a given dictionary can improve 
the convergence and rate of convergence results for greedy approximation? The central topic here will be the concept of 
Lebesgue-type inequalities. Very briefly it means that we want to make the following step in evaluation of quality of an algorithm. In Chapter I the quality of a greedy algorithm was expressed in terms of the rate of convergence of this algorithm for the whole class $A_1(\cD)$ -- the guarantied rate for all elements of $A_1(\cD)$. Here, with the concept of 
Lebesgue-type inequalities, we would like to guarantee the optimal error of approximation (understood in different ways) 
for all individual elements. 

\section{Lebesgue-type inequalities}
\label{Leb}

\subsection{General setting} 

In this subsection we follow Section 8.7 of \cite{VTbookMA}. 
In a general setting we are working in a Banach space $X$ with a redundant system of elements $\cD$ (dictionary $\cD$).   
An element (function, signal) $f\in X$ is said to be $m$-sparse with respect to $\cD$ if
it has a representation $h=\sum_{i=1}^mx_ig_i$,   $g_i\in \cD$, $i=1,\dots,m$. The set of all $m$-sparse elements is denoted by $\Sigma_m(\cD)$. For a given element $f$ we introduce the error of best $m$-term approximation
$
\sigma_m(f,\cD) := \inf_{h\in\Sigma_m(\cD)} \|f-h\|.
$
We are interested in the following fundamental problem of sparse approximation. 

{\bf Problem.} How to design a practical algorithm that builds sparse approximations, which provide errors comparable with errors of the best $m$-term approximations? 

In a general setting we study an algorithm (approximation method) $\cA = \{A_m(\cdot,\cD)\}_{m=1}^\infty$ with respect to a given dictionary $\cD$. The sequence of mappings $A_m(\cdot,\cD)$ defined on $X$ satisfies the condition: for any $f\in X$, $A_m(f,\cD)\in \Sigma_m(\cD)$. In other words, $A_m$ provides an $m$-term approximant with respect to $\cD$. It is clear that for any $f\in X$ and any $m$ we have
$
\|f-A_m(f,\cD)\| \ge \sigma_m(f,\cD).
$
We are interested in such pairs $(\cD,\cA)$ for which the algorithm $\cA$ provides for any $f\in X$ approximation close to best $m$-term approximation.

\begin{Remark}\label{integer} In the formulations of results of this section we use for convenience the following agreement. 
For a positive number $S$ the expression "$S$ iterations" means "$\lceil S\rceil$ iterations". For example, $A_S(f,\cD):= A_{\lceil S\rceil}(f,\cD)$ and $f_S := f_{\lceil S\rceil}$.  
\end{Remark}

We introduce the corresponding definitions.
\begin{Definition}\label{LebD1} We say that $\cD$ is an almost greedy dictionary with respect to $\cA$ if there exist two constants $C_1$ and $C_2$ such that for any $f\in X$ we have
\begin{equation}\label{L1}
\|f-A_{C_1m}(f,\cD)\| \le C_2\sigma_m(f,\cD),\quad m=1,2,\dots.
\end{equation}
\end{Definition}
If $\cD$ is an almost greedy dictionary with respect to $\cA$ then $\cA$ provides almost ideal sparse approximation. It provides $C_1m$-term approximant as good (up to a constant $C_2$) as the ideal $m$-term approximant for every $f\in X$. In the case $C_1=1$ we call $\cD$ a greedy dictionary.
We also need a more general definition. Let $\phi(u)$ be a  function such that 
$\phi \, :\, \bbN \to \bbR_+$. 
\begin{Definition}\label{LebD2} We say that $\cD$ is a $\phi$-greedy dictionary with respect to $\cA$ if there exists a constant $C_3$ such that for any $f\in X$ we have
\begin{equation}\label{L2}
\|f-A_{\phi(m)m}(f,\cD)\| \le C_3\sigma_m(f,\cD), \quad m=1,2,\dots.
\end{equation}
\end{Definition}

 Inequalities of the form (\ref{L1}) and (\ref{L2}) are called the Lebesgue-type inequalities. If $\cD=\Psi$ is a basis then in the above definitions we replace dictionary by basis. 
 
 \begin{Remark}\label{LebR1} Assume that $\cD$ is a $\phi$-greedy dictionary with respect to $\cA$. Let $f\in X$ be an $m$-sparse with respect to $\cD$ element: $f\in \Sigma_m(\cD)$. Then, clearly $\sigma_m(f,\cD)=0$, and, therefore, by (\ref{L2}) the algorithm $\cA$ recovers $f$ exactly ($A_{\phi(m)m}(f,\cD)=f$) after $\phi(m)m$ iterations of the algorithm.
 \end{Remark}

\subsection{Some general results for the WCGA}

 A very important advantage of the WCGA  is its convergence and rate of convergence properties. The WCGA is well defined for all iterations $m$. Moreover, it is known (see Theorem \ref{conT1} above) that the WCGA with weakness parameter $t\in(0,1]$ converges for all $f$ in all uniformly smooth Banach spaces with respect to any dictionary.  We discuss here the Lebesgue-type inequalities for the WCGA($t$) with weakness parameter $t\in(0,1]$. This discussion is based on papers \cite{LivTem} and \cite{VT144} (see also \cite{VTbookMA}, Section 8.7.2). For notational convenience we consider here a countable dictionary $\cD=\{g_i\}_{i=1}^\infty$. The following assumptions {\bf A1} and {\bf A2} were used in \cite{LivTem}.
 For a given $f_0$ let sparse element (signal)
 $$
 f:=f^\e=\sum_{i\in T}x_ig_i,\quad g_i\in\cD,
 $$
 be such that $\|f_0-f^\e\|\le \e$ and $|T|=K$. For $A\subset T$ denote
 $$
 f_A:=f_A^\e := \sum_{i\in A}x_ig_i.
 $$
 
  {\bf A1.} We say that $f=\sum_{i\in T}x_ig_i$ satisfies the Nikol'skii-type $\ell_1X$ inequality with parameter $r$ if for any $A\subset T$
 \begin{equation}\label{zC1}
 \sum_{i\in A} |x_i| \le C_1|A|^{r}\|f_A\|. 
 \end{equation}
 We say that a dictionary $\cD$ has the Nikol'skii-type $\ell_1X$ property with parameters $K$, $r$   if any $K$-sparse element satisfies the Nikol'skii-type
 $\ell_1X$ inequality with parameter $r$.

{\bf A2.}  We say that $f=\sum_{i\in T}x_ig_i$ has incoherence property with parameters $D$ and $U$ if for any $A\subset T$ and any $\Lambda$ such that $A\cap \Lambda =\emptyset$, $|A|+|\Lambda| \le D$ we have for any $\{c_i\}$
\begin{equation}\label{zC2}
\|f_A-\sum_{i\in\Lambda}c_ig_i\|\ge U^{-1}\|f_A\|.
\end{equation}
We say that a dictionary $\cD$ is $(K,D)$-unconditional with a constant $U$ if for any $f=\sum_{i\in T}x_ig_i$ with
$|T|\le K$ inequality (\ref{zC2}) holds.

The term {\it unconditional} in {\bf A2} is justified by the following remark. The above definition of $(K,D)$-unconditional dictionary is equivalent to the following definition. Let $\cD$ be such that any subsystem of $D$ distinct elements $e_1,\dots,e_D$ from $\cD$ is linearly independent and for any $A\subset [1,D]$ with $|A|\le K$ and any coefficients $\{c_i\}$ we have
$$
\|\sum_{i\in A}c_ie_i\| \le U\|\sum_{i=1}^Dc_ie_i\|.
$$

It is convenient for us to use the following assumption {\bf A3} introduced in \cite{VT144}, which is a corollary of assumptions {\bf A1} and {\bf A2}. 

{\bf A3.} We say that $f=\sum_{i\in T}x_ig_i$ has $\ell_1$ incoherence property with parameters $D$, $V$, and $r$ if for any $A\subset T$ and any $\Lambda$ such that $A\cap \Lambda =\emptyset$, $|A|+|\Lambda| \le D$ we have for any $\{c_i\}_{i\in\Lambda}$
\begin{equation}\label{zC3}
\sum_{i\in A}|x_i| \le V|A|^r\|f_A-\sum_{i\in\Lambda}c_ig_i\|.
\end{equation}
A dictionary $\cD$ has $\ell_1$ incoherence property with parameters $K$, $D$, $V$, and $r$ if for any $A\subset B$, $|A|\le K$, $|B|\le D$ we have for any $\{c_i\}_{i\in B}$
$$
\sum_{i\in A} |c_i| \le V|A|^r\|\sum_{i\in B} c_ig_i\|.
$$

It is clear that {\bf A1} and {\bf A2} imply {\bf A3} with $V=C_1U$. Also, {\bf A3} implies {\bf A1} with $C_1=V$ and {\bf A2} with $U=VK^r$. Obviously, we can restrict ourselves to $r\le 1$.

We now proceed to the main results of \cite{LivTem} and \cite{VT144} on the WCGA with respect to redundant dictionaries. The following Theorem \ref{zT2.1} from \cite{VT144} in the case $q=2$ was proved in \cite{LivTem}. We use the standard notation $q':= q/(q-1)$. 

 \begin{Theorem}\label{zT2.1} Let $X$ be a Banach space with $\rho(u)\le \gamma u^q$, $1<q\le 2$. Suppose $K$-sparse $f^\e$ satisfies {\bf A1}, {\bf A2} and $\|f_0-f^\e\|\le \e$. Assume that $rq'\ge 1$. Then there exists a positive constant $C(t,\gamma,C_1)$ such that the WCGA($t$) with weakness parameter $t$ applied to $f_0$ provides after 
 $$S:= \lceil C(t,\gamma,C_1)U^{q'}\ln (U+1) K^{rq'}\rceil$$
  iterations 
$$
\|f_{S}\| \le C\e\quad\text{for}\quad K+S\le D
$$
with an absolute constant $C$.
\end{Theorem}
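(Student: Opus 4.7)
The plan is to translate the hypotheses A1, A2 on $f^\e$ (equivalently, A3 with $V=C_1U$) into a uniform lower bound on $\|F_{f_{m-1}}\|_\cD$ so long as the residual is still a constant multiple larger than $\e$, and then feed this bound into the standard one-step decay recursion for the WCGA in a uniformly smooth Banach space. Let $\Gamma_m$ denote the set of dictionary indices selected by WCGA in the first $m$ iterations, and set $T_m:=T\setminus\Gamma_{m-1}$ for the still-unhit portion of the true support $T$. The first ingredient is the Chebyshev biorthogonality $F_{f_{m-1}}(\phi)=0$ for every $\phi\in\sp\{g_j:j\in\Gamma_{m-1}\}$; combined with $F_{f_{m-1}}(f_{m-1})=\|f_{m-1}\|$, $F_{f_{m-1}}(G_{m-1})=0$, and $\|f_0-f^\e\|\le\e$, this yields
\[
\|f_{m-1}\|-\e \;\le\; F_{f_{m-1}}(f^\e) \;=\; \sum_{i\in T_m}x_i F_{f_{m-1}}(g_i) \;\le\; \Big(\sum_{i\in T_m}|x_i|\Big)\|F_{f_{m-1}}\|_\cD.
\]

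Next, I apply A3 with $A=T_m$ and $\Lambda=\Gamma_{m-1}$ (admissible since $|T_m|+|\Gamma_{m-1}|\le K+S\le D$) and optimize the free coefficients $\{c_j\}_{j\in\Lambda}$: this dominates $\sum_{i\in T_m}|x_i|$ by $V|T_m|^r$ times the best approximation of $f^\e$ from $\sp\{g_j:j\in\Gamma_{m-1}\}$, which is in turn at most $\|f^\e-G_{m-1}\|\le\|f_{m-1}\|+\e$ by the triangle inequality. Hence, whenever $\|f_{m-1}\|\ge 2\e$,
\[
\|F_{f_{m-1}}\|_\cD \;\ge\; \frac{\|f_{m-1}\|-\e}{V|T_m|^r(\|f_{m-1}\|+\e)} \;\ge\; \frac{1}{3VK^r}.
\]
Plugging into the standard WCGA one-step bound $\|f_m\|^{q'}\le\|f_{m-1}\|^{q'}-c(q,\gamma)(t\|F_{f_{m-1}}\|_\cD)^{q'}$, which follows from $\rho(u)\le\gamma u^q$ together with Chebyshev optimality, produces a constant additive decrement of $\|f_m\|^{q'}$ of order $t^{q'}/(VK^r)^{q'}$ at every iteration while $\|f_{m-1}\|\ge 2\e$.

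The main obstacle is upgrading this raw bookkeeping to the announced count $S=O(U^{q'}\ln(U+1)K^{rq'})$, since the naive additive tally yields a bound proportional to $\|f_0\|^{q'}(VK^r)^{q'}$ and thus fails to produce the correct dependence. I expect one handles this through a Livshitz--Temlyakov-style dyadic scheme: partition the iterations into blocks indexed by the scale $2^{-s}M$ of $\|f_{m-1}\|$, use the sharper form of the preceding inequality (with $|T_m|^r$ rather than $K^r$) together with a separate counting argument that the WCGA necessarily captures a geometrically increasing fraction of $T$ across consecutive scales, so that $|T_m|$ itself halves each time the residual halves. Each block then costs $O(V^{q'}K^{rq'})$ iterations, the number of blocks is $O(\ln(U+1))$ via the $(K,D)$-unconditionality hypothesis A2 (which pins the effective dynamic range of $\|f_{m-1}\|/\e$ to a polynomial in $U$), and summing the per-block budgets with $V=C_1U$ yields $S\le\lceil C(t,\gamma,C_1)U^{q'}\ln(U+1)K^{rq'}\rceil$ together with $\|f_S\|\le C\e$.
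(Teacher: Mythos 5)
Your proposal cleanly reconstructs one half of the argument but leaves the other half — the genuinely hard part — as an unverified sketch. Here is the precise state of affairs.

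The paper derives Theorem~\ref{zT2.1} by combining two auxiliary results: Theorem~\ref{zT2.3} (an exponential-decay estimate $\|f_m\|\le\|f_k\|\exp(-c_1(m-k)/K^{rq'})+2\e$ under \textbf{A3}) and Theorem~\ref{zT2.4} (a bound of the form $\|f_S\|\le CU\e$ after $S=\lceil C'U^{q'}\ln(U+1)K^{rq'}\rceil$ iterations under \textbf{A1}, \textbf{A2}). One first runs Theorem~\ref{zT2.4} to bring the residual from an arbitrary starting value down to $O(U\e)$, then uses the exponential decay of Theorem~\ref{zT2.3} for another $O(U^{q'}K^{rq'}\ln U)$ iterations to drop the extra factor $U$. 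Your opening computations — the chain $\|f_{m-1}\|-\e\le F_{f_{m-1}}(f^\e)=\sum_{i\in T_m}x_iF_{f_{m-1}}(g_i)$ via Chebyshev biorthogonality, followed by \textbf{A3} with $A=T_m$, $\Lambda=\Gamma_{m-1}$ to get $\|F_{f_{m-1}}\|_\cD\ge 1/(3VK^r)$ whenever $\|f_{m-1}\|\ge 2\e$, fed into the uniform-smoothness one-step estimate — are exactly the ingredients of Theorem~\ref{zT2.3}, and that part of your argument is correct (modulo a small error in the one-step form: the smoothness estimate $\rho(u)\le\gamma u^q$ with Chebyshev optimality gives the \emph{multiplicative} bound $\|f_m\|\le\|f_{m-1}\|\bigl(1-c(q,\gamma)(t\|F_{f_{m-1}}\|_\cD)^{q'}\bigr)$, not the additive recursion $\|f_m\|^{q'}\le\|f_{m-1}\|^{q'}-c(q,\gamma)(t\|F_{f_{m-1}}\|_\cD)^{q'}$ that you wrote, although the multiplicative version is what you actually want for the exponential decay).

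The gap is in the second half. As you correctly observe, the exponential decay alone yields an iteration count proportional to $V^{q'}K^{rq'}\ln(\|f_0\|/\e)$, and $\|f_0\|/\e$ is not controlled by any hypothesis of the theorem (nothing bounds $\|f^\e\|$). Your fix — a dyadic scheme in which $|T_m|$ halves each time the residual halves, with \textbf{A2} ``pinning the effective dynamic range of $\|f_{m-1}\|/\e$ to a polynomial in $U$'' — is stated as an expectation, not argued, and as stated it is not accurate: the unconditionality hypothesis does not bound $\|f_0\|/\e$ a priori; it only constrains the residual \emph{once} a sufficiently large portion of $T$ has already been absorbed into $\Gamma_m$. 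The actual mechanism (Theorem~\ref{zT2.4}, whose proof is in \cite{LivTem} and \cite{VT144}) is a scale-by-scale capture argument on the coefficients $|x_i|$ of $f^\e$: within a block of $O(C_1^{q'}U^{q'}K^{rq'})$ iterations the algorithm is forced to pick up all indices whose coefficients are above the current threshold, \textbf{A1} and \textbf{A2} are both used to control how many scales can matter (giving the $\ln(U+1)$ factor), and once all of $T$ is in $\Gamma_m$ the Chebyshev projection immediately gives $\|f_m\|\le\|f_0-f^\e\|\le\e$. This is a genuinely different and more delicate induction than the decay recursion you set up, and without it the proposal does not yield the announced iteration count. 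In short: you have a correct derivation of the content of Theorem~\ref{zT2.3}, but Theorem~\ref{zT2.4} — the part the paper explicitly invokes and outsources — remains unproved in your sketch.
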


 It was pointed out in \cite{LivTem} that Theorem \ref{zT2.1} provides a corollary for Hilbert spaces that gives sufficient conditions somewhat weaker than the known RIP (see below) conditions on $\cD$ for the Lebesgue-type inequality to hold. We formulate the corresponding definitions and results. 
  Let $\cD$ be the Riesz dictionary with depth $D$ and parameter $\delta\in (0,1)$. This class of dictionaries is a generalization of the class of classical Riesz bases. We give a definition in a general Hilbert space (see \cite{VTbook}, p.306).
\begin{Definition}\label{zD3.1} A dictionary $\cD$ is called the Riesz dictionary with depth $D$ and parameter $\delta \in (0,1)$ if, for any $D$ distinct elements $e_1,\dots,e_D$ of the dictionary and any coefficients $a=(a_1,\dots,a_D)$, we have
\begin{equation}\label{z3.3}
(1-\de)\|a\|_2^2 \le \|\sum_{i=1}^D a_ie_i\|^2\le(1+\de)\|a\|_2^2.
\end{equation}
We denote the class of Riesz dictionaries with depth $D$ and parameter $\delta \in (0,1)$ by $R(D,\de)$.
\end{Definition}
The term Riesz dictionary with depth $D$ and parameter $\delta \in (0,1)$ is another name for a dictionary satisfying the Restricted Isometry Property (RIP) with parameters $D$ and $\de$. The following simple lemma holds.
\begin{Lemma}\label{zL3.1} Let $\cD\in R(D,\de)$ and let $e_j\in \cD$, $j=1,\dots, s$. For $f=\sum_{i=1}^s a_ie_i$ and $A \subset \{1,\dots,s\}$ denote
$$
S_A(f) := \sum_{i\in A} a_ie_i.
$$
If $s\le D$ then
$$
\|S_A(f)\|^2 \le (1+\de)(1-\de)^{-1} \|f\|^2.
$$
\end{Lemma}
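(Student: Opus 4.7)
The plan is to apply the Riesz inequality \eqref{z3.3} twice: once as an upper bound on $\|S_A(f)\|^2$ in terms of the coefficient norm restricted to $A$, and once as a lower bound on $\|f\|^2$ in terms of the full coefficient norm. The monotonicity $\|a_A\|_2^2 \le \|a\|_2^2$ then bridges the two.

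In more detail, first I would write $a = (a_1,\dots,a_s)$ and note that both the full sum $f=\sum_{i=1}^s a_i e_i$ and the partial sum $S_A(f)=\sum_{i\in A} a_i e_i$ are linear combinations of at most $s \le D$ distinct elements of $\cD$, so Definition \ref{zD3.1} applies to each. Applying the right-hand inequality of \eqref{z3.3} to $S_A(f)$ gives
$$
\|S_A(f)\|^2 \le (1+\de)\sum_{i\in A}|a_i|^2 \le (1+\de)\sum_{i=1}^s|a_i|^2,
$$
where the second inequality is just the trivial fact that dropping nonnegative terms only decreases the sum. Applying the left-hand inequality of \eqref{z3.3} to $f$ gives
$$
\sum_{i=1}^s|a_i|^2 \le (1-\de)^{-1}\|f\|^2.
$$
Combining the two yields the claimed bound
$$
\|S_A(f)\|^2 \le (1+\de)(1-\de)^{-1}\|f\|^2.
$$

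There is no real obstacle here; the only thing to check is that the Riesz property is indeed applicable to the subsum, which holds because the elements $e_j$ appearing in $S_A(f)$ are a subset of the $s\le D$ distinct elements of $\cD$ used in $f$, so the hypothesis of Definition \ref{zD3.1} is satisfied (and if repeated indices were a concern, we would restrict to the actual distinct terms, which only makes the argument easier).
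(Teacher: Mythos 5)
Your proof is correct and is the natural argument: apply the right-hand Riesz inequality to $S_A(f)$, the left-hand inequality to $f$, and bridge via $\sum_{i\in A}|a_i|^2\le\sum_{i=1}^s|a_i|^2$. The paper states this lemma without proof (calling it a ``simple lemma''), and this two-sided application of \eqref{z3.3} is evidently the intended argument.
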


Lemma \ref{zL3.1} implies that if $\cD\in R(D,\de)$ then it is $(D,D)$-unconditional with a constant $U=(1+\de)^{1/2}(1-\de)^{-1/2}$.

In the case of a Hilbert space the WCGA($\tau$) is called the Weak Orthogonal Greedy Algorithm (WOGA($\tau$)) and 
the Weak Orthogonal Matching Pursuit (WOMP($\tau$)). 
 \begin{Theorem}\label{zT2.2} Let $X$ be a Hilbert space. Suppose $K$-sparse $f^\e$ satisfies  {\bf A2} and $\|f_0-f^\e\|\le \e$. Then the WOGA($t$) with weakness parameter $t$ applied to $f_0$ provides
$$
\|f_{C(t,U) K}\| \le C\e\quad\text{for}\quad  K+\lceil  C(t,U) K\rceil \le D
$$
with an absolute constant $C$.
\end{Theorem}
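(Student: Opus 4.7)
My plan is to derive Theorem~\ref{zT2.2} from Theorem~\ref{zT2.1} specialized to the Hilbert space setting, where $q=2$, $q'=2$, and $\gamma=1/2$ by~(\ref{Lprho}). Theorem~\ref{zT2.1} requires $rq'\ge 1$, i.e.\ $r\ge 1/2$, so the main task is to show that in a Hilbert space \textbf{A2} with constant $U$ implies \textbf{A1} with parameter $r=1/2$ and constant $C_1$ depending polynomially on $U$. Once this reduction is in place, substituting $q=2$, $\gamma=1/2$, $r=1/2$, $C_1\asymp U$ into Theorem~\ref{zT2.1} yields $S\le C(t,\tfrac12,C_1)\,U^{2}\ln(U+1)\,K\le C(t,U)K$ iterations, after which $\|f_S\|\le C\e$, provided $K+S\le D$, which is exactly the conclusion of Theorem~\ref{zT2.2}.

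The bridge between \textbf{A2} and \textbf{A1} in a Hilbert space is the Riesz-type lower bound for subdictionaries: for any $A_0\subset\cD$ with $|A_0|\le K$ and any coefficients $\{c_i\}_{i\in A_0}$,
\[
\Bigl(\sum_{i\in A_0}|c_i|^2\Bigr)^{1/2}\le C\,U\,\Bigl\|\sum_{i\in A_0}c_ig_i\Bigr\|.
\]
In a Hilbert space, \textbf{A2} is equivalent to the angle bound $\|P_\Lambda^\perp f_A\|\ge U^{-1}\|f_A\|$, obtained by minimizing over the coefficients $c_i$, $i\in\Lambda$. Applying this with a single-element $A=\{j\}$ and $\Lambda=A_0\setminus\{j\}$ for each $j\in A_0$ gives $\|P_\Lambda^\perp g_j\|\ge U^{-1}$. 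Feeding these one-element Schur complement lower bounds into a spectral analysis of the Gram matrix $G_{A_0}=(\langle g_i,g_j\rangle)_{i,j\in A_0}$ produces a uniform lower bound on its smallest eigenvalue, which is the Riesz inequality above. Once this is done, Cauchy--Schwarz gives, for any $A\subset T$,
\[
\sum_{i\in A}|x_i|\le|A|^{1/2}\Bigl(\sum_{i\in A}|x_i|^2\Bigr)^{1/2}\le C\,U\,|A|^{1/2}\,\|f_A\|,
\]
which is \textbf{A1} with $r=1/2$ and $C_1\asymp U$, completing the reduction.

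The hard part will be the Riesz lower bound. Assumption \textbf{A2} by itself controls only one-element Schur complements of the Gram matrix, while what is needed is an estimate on the smallest eigenvalue of the full $|A_0|\times|A_0|$ block. The naive route, using only the $\ell_\infty$ bound $\max_i|c_i|\le U\|\sum c_ig_i\|$ that falls directly out of \textbf{A2}, yields \textbf{A1} with $r=1$ and would give the much weaker $S=O(U^{2}K^{2})$ through Theorem~\ref{zT2.1}. The orthogonal decomposition and the Pythagorean identity available in a Hilbert space are essential to upgrade the one-element angle information into a full eigenvalue bound with constant of order $U^{-1}$; this is precisely the reason the linear-in-$K$ estimate of Theorem~\ref{zT2.2} is a Hilbert-space phenomenon and not a general Banach-space statement.
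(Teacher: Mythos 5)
Your plan---deduce Theorem~\ref{zT2.2} from Theorem~\ref{zT2.1} with $q=q'=2$, $\gamma=1/2$, after first proving that in a Hilbert space \textbf{A2} with constant $U$ forces \textbf{A1} with $r=1/2$ and $C_1\asymp U$---is the route the paper indicates, and the Cauchy--Schwarz step passing from the Riesz-type lower bound to \textbf{A1} is correct, as is the bookkeeping of the iteration count.

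The mechanism you offer for the Riesz lower bound itself does not close, and this is a genuine gap. You feed only the single-index angle bounds $\|P_{A_0\setminus\{j\}}^{\perp}g_j\|\ge U^{-1}$ into a ``spectral analysis'' and assert a $K$-independent lower bound on $\lambda_{\min}(G_{A_0})$. For a normalized subdictionary the single-index bounds say exactly $(G_{A_0}^{-1})_{jj}\le U^2$ for every $j$, and a bound on the diagonal of $G^{-1}$ yields only $\operatorname{tr}(G^{-1})\le KU^2$, hence $\lambda_{\min}(G)\ge 1/(KU^2)$---precisely the $r=1$ rate you are trying to beat, and essentially sharp under the single-index hypotheses alone (take $K$ unit vectors with all pairwise inner products equal to $-\alpha$: for $\alpha$ near $1/(K-1)$ one can make every $(G^{-1})_{jj}=U^2$ while $\lambda_{\min}(G)=1-(K-1)\alpha\asymp 1/(KU^2)$). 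Also, contrary to what you write, \textbf{A2} controls far more than one-element Schur complements, and it is this extra strength that does the work: \textbf{A2} gives $\|\sum_{i\in A}c_ig_i\|\le U\|\sum_{i\in T}c_ig_i\|$ for subsets $A\subset T$ of \emph{arbitrary} size, so splitting any sign vector $\theta\in\{\pm1\}^T$ into its $+$ and $-$ parts yields $\|\sum_{i\in T}\theta_ic_ig_i\|\le 2U\|\sum_{i\in T}c_ig_i\|$ for every $\theta$. Averaging the square over uniform random $\theta$ and using $\mathbb{E}\,\theta_i\theta_j=\delta_{ij}$ gives
\[
\sum_{i\in T}|c_i|^2\|g_i\|^2=\mathbb{E}\Bigl\|\sum_{i\in T}\theta_ic_ig_i\Bigr\|^2\le 4U^2\Bigl\|\sum_{i\in T}c_ig_i\Bigr\|^2,
\]
which is the Riesz lower bound with constant $2U$. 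It is this Parseval-for-random-signs identity, not a Schur-complement computation from single-index data, that supplies the Hilbert-space gain; your instinct that the Pythagorean structure is essential was right, but as written the argument would establish only the weaker $S=O(U^2K^2)$ iteration count.
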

Theorem \ref{zT2.2} implies the following corollaries.
 \begin{Corollary}\label{zC2.1} Let $X$ be a Hilbert space. Suppose any $K$-sparse $f$ satisfies   {\bf A2}.   Then the WOGA($t$) with weakness parameter $t$ applied to $f_0$ provides
$$
\|f_{C(t,U) K}\| \le C\sigma_K(f_0,\cD)\quad\text{for}\quad K+\lceil C(t,U) K\rceil \le D
$$
with an absolute constant $C$.
\end{Corollary}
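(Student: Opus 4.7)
The plan is to reduce Corollary~\ref{zC2.1} to Theorem~\ref{zT2.2} by choosing a near-best $K$-term approximant of $f_0$ to serve as the sparse target $f^\e$. Set $\sigma:=\sigma_K(f_0,\cD)$; we may assume $\sigma>0$, since the case $\sigma=0$ follows by an obvious limiting (or degenerate) argument. From the definition of the infimum, pick a $K$-sparse element $f^\e=\sum_{i\in T}x_i g_i$ with $|T|=K$ and
$$
\|f_0-f^\e\|\le 2\sigma,
$$
and declare $\e:=2\sigma$.

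Next I would verify that the hypotheses of Theorem~\ref{zT2.2} are satisfied for this pair. By construction $\|f_0-f^\e\|\le\e$. The element $f^\e$ is $K$-sparse, and the standing hypothesis of Corollary~\ref{zC2.1} says that \emph{every} $K$-sparse element satisfies property \textbf{A2}; in particular $f^\e$ does, with the same incoherence constant $U$. Hence Theorem~\ref{zT2.2} applies to $f_0$ and $f^\e$ exactly as stated, without any additional work.

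Applying Theorem~\ref{zT2.2} under the depth constraint $K+\lceil C(t,U)K\rceil\le D$, the WOGA($t$) run on $f_0$ yields
\be
\|f_{C(t,U)K}\|\le C\e = 2C\,\sigma_K(f_0,\cD),
\ee
and absorbing the factor $2$ into the absolute constant produces the claimed Lebesgue-type bound. The hard part is not located in this corollary at all: all the genuine technical content --- the weak orthogonal selection, the exploitation of the unconditionality constant $U$ through \textbf{A2}, and the determination of the $O(K)$ iteration count --- is packaged inside Theorem~\ref{zT2.2}. The only mild subtlety in the present deduction is that the infimum defining $\sigma_K(f_0,\cD)$ need not be attained in general, which is neutralized by the factor-of-two slack taken above; shrinking this slack to $1+\eta'$ for any $\eta'>0$ and tracking it through Theorem~\ref{zT2.2} would at most affect the value of the absolute constant $C$.
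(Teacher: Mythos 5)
Your proof is correct and takes essentially the same route the paper intends: the paper states that Corollary~\ref{zC2.1} follows from Theorem~\ref{zT2.2}, and your argument --- choosing a near-optimal $K$-sparse $f^\e$ with $\|f_0-f^\e\|\le 2\sigma_K(f_0,\cD)$ (the $(K,D)$-unconditionality hypothesis guaranteeing that this $f^\e$ satisfies \textbf{A2} with the same $U$), applying the theorem, and absorbing the factor $2$ into the absolute constant --- is exactly that deduction. Your handling of the unattained infimum and the degenerate case $\sigma_K(f_0,\cD)=0$ is also sound.
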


 \begin{Corollary}\label{zC2.2} Let $X$ be a Hilbert space. Suppose $\cD\in R(D,\de)$.     Then the WOGA($t$) with weakness parameter $t$ applied to $f_0$ provides
$$
\|f_{C(t,\de) K}\| \le C\sigma_K(f_0,\cD)\quad\text{for}\quad K+\lceil  C(t,\de) K\rceil \le D
$$
with an absolute constant $C$.
\end{Corollary}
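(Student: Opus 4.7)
The plan is to deduce Corollary \ref{zC2.2} directly from Corollary \ref{zC2.1} by showing that the Riesz/RIP hypothesis $\cD \in R(D,\de)$ automatically supplies the $(K,D)$-unconditionality hypothesis {\bf A2} needed in Corollary \ref{zC2.1}, with a constant $U$ that depends only on $\de$ (not on $K$ or $D$).

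First, I would invoke Lemma \ref{zL3.1}. Given any $K$-sparse element $f = \sum_{i\in T} x_i g_i$ with $|T| = K$ and any disjoint $\Lambda$ with $|T| + |\Lambda| \le D$, I apply Lemma \ref{zL3.1} to the sum $h := f_A - \sum_{i\in\Lambda} c_i g_i$, which is a linear combination of at most $|A| + |\Lambda| \le K + |\Lambda| \le D$ distinct dictionary elements. Choosing the subset to be exactly the indices supporting $f_A$ (viewed inside the index set of $h$), Lemma \ref{zL3.1} gives
\[
\|f_A\|^2 \le \frac{1+\de}{1-\de}\,\|h\|^2,
\]
that is,
\[
\|f_A\| \le U\,\Big\|f_A - \sum_{i\in\Lambda} c_i g_i\Big\|, \qquad U := \Big(\frac{1+\de}{1-\de}\Big)^{1/2}.
\]
This is exactly assumption {\bf A2} with constant $U = U(\de)$, valid for every $K$-sparse $f$ with $K+|\Lambda| \le D$.

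Next, since every $K$-sparse element in the Hilbert space $X$ now satisfies {\bf A2} with this $\de$-dependent constant $U$, I apply Corollary \ref{zC2.1} directly. Corollary \ref{zC2.1} produces an absolute constant $C$ and a constant $C(t,U)$ such that, for any $f_0 \in X$, after $\lceil C(t,U) K\rceil$ iterations of the WOGA($t$) we have
\[
\|f_{C(t,U)K}\| \le C\,\sigma_K(f_0,\cD),
\]
provided $K + \lceil C(t,U) K\rceil \le D$. Since $U$ depends only on $\de$, we can rewrite $C(t,U) = C(t,\de)$, which is the claim of Corollary \ref{zC2.2}.

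There is no real obstacle here: the whole statement is a clean corollary, and the only substantive step is the short identification of the RIP constant $\de$ with the unconditionality constant $U$ via Lemma \ref{zL3.1}. The nontrivial work lies upstream, in Theorem \ref{zT2.2} and Corollary \ref{zC2.1}, which already handle the WOGA($t$) analysis under {\bf A2}.
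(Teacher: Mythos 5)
Your proof is correct and follows exactly the route the paper intends: Lemma \ref{zL3.1} gives that $\cD\in R(D,\de)$ is $(D,D)$-unconditional (hence $(K,D)$-unconditional for any $K\le D$) with $U=((1+\de)/(1-\de))^{1/2}$, so {\bf A2} holds for every $K$-sparse element with a $\de$-dependent constant, and Corollary \ref{zC2.1} then yields the stated bound with $C(t,U)=C(t,\de)$. The paper states this reduction explicitly in the sentence following Lemma \ref{zL3.1} and treats Corollary \ref{zC2.2} as an immediate consequence, just as you do.
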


  We   emphasized in \cite{LivTem} that in Theorem \ref{zT2.1} (with $q=2$) we impose our conditions on an individual function $f^\e$. It may happen that the dictionary does not have the Nikol'skii  $\ell_1X$ property and $(K,D)$-unconditionality but the given $f_0$ can be approximated by $f^\e$ which does satisfy assumptions {\bf A1} and {\bf A2}. Even in the case of a Hilbert space the above results from \cite{LivTem} add something new to the study based on the RIP property of a dictionary. First of all, Theorem \ref{zT2.2} shows that it is sufficient to impose assumption {\bf A2} on $f^\e$ in order to obtain exact recovery and the Lebesgue-type inequality results. Second, Corollary \ref{zC2.1} shows that the condition {\bf A2}, which is weaker than the RIP condition, is sufficient for exact recovery and the Lebesgue-type inequality results. Third, Corollary \ref{zC2.2} shows that even if we impose our assumptions in terms of RIP we do not need to assume that $\de < \de_0$. In fact, the result works for all $\de<1$ with parameters depending on $\de$.
  
Theorem \ref{zT2.1} follows from the combination of Theorems \ref{zT2.3} and \ref{zT2.4}.
In case $q=2$ these theorems were proved in \cite{LivTem} and in general case $q\in(1,2]$ -- in \cite{VT144}. 

\begin{Theorem}\label{zT2.3} Let $X$ be a Banach space with $\rho(u)\le \gamma u^q$, $1<q\le 2$. Suppose for a given $f_0$ we have $\|f_0-f^\e\|\le \e$ with $K$-sparse $f:=f^\e$ satisfying {\bf A3}. Then for any $k\ge 0$ we have for $K+m \le D$, $m\ge k$
$$
\|f_m\| \le \|f_k\|\exp\left(-\frac{c_1(m-k)}{K^{rq'}}\right) +2\e, \quad q':=\frac{q}{q-1},
$$
where $c_1:= \frac{t^{q'}}{2(16\gamma)^{\frac{1}{q-1}} V^{q'}}$.
\end{Theorem}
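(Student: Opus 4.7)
The plan is to establish a one-step contraction $\|f_m\|\le \|f_{m-1}\|(1-cK^{-rq'})$ valid whenever $\|f_{m-1}\|\ge 2\e$, iterate it via $1-x\le e^{-x}$, and use the monotonicity $\|f_m\|\le \|f_{m-1}\|$ (immediate from $Y_{m-1}\subset Y_m$ and the Chebyshev projection step defining $G_m$) to handle the case in which the residual drops below $2\e$ at some earlier step. The constant $c$ will come out of the form $t^{q'}/(C_qV^{q'}\gamma^{1/(q-1)})$, matching the stated $c_1$.

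First I would derive the one-step reduction. Chebyshev minimality gives $\|f_m\|\le \|f_{m-1}-\la\ff_m\|$ for every $\la$, and the standard smoothness inequality
\[
\|f_{m-1}-\la\ff_m\| \le \|f_{m-1}\|-\la F_{f_{m-1}}(\ff_m)+2\gamma\la^q/\|f_{m-1}\|^{q-1},
\]
obtained from $\rho(u)\le \gamma u^q$, $\|\ff_m\|\le 1$, and $\|f_{m-1}+\la\ff_m\|\ge \|f_{m-1}\|+\la F_{f_{m-1}}(\ff_m)$, yields after optimizing over $\la\ge 0$
\[
\|f_m\|\le \|f_{m-1}\|\bigl(1-C(q)\gamma^{-1/(q-1)}(F_{f_{m-1}}(\ff_m))^{q'}\bigr).
\]
The greedy selection gives $F_{f_{m-1}}(\ff_m)\ge t\|F_{f_{m-1}}\|_\cD$ (nonnegative after a sign flip in $\cD^\pm$), so the problem reduces to lower-bounding $\|F_{f_{m-1}}\|_\cD$.

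The main obstacle is precisely this lower bound, and it is where assumption \textbf{A3} gets engaged. Let $\Lambda_{m-1}:=\{\ff_1,\dots,\ff_{m-1}\}$ and $A:=T\setminus\Lambda_{m-1}$, and write $f^\e-G_{m-1}=f_A^\e-\sum_{i\in\Lambda_{m-1}}c_ig_i$ by absorbing the part of $f^\e$ supported on $T\cap\Lambda_{m-1}$ together with $G_{m-1}$ into a single linear combination over $\Lambda_{m-1}$. The Chebyshev projection makes $F_{f_{m-1}}$ vanish on $\sp(\Lambda_{m-1})$, so $F_{f_{m-1}}(f^\e-G_{m-1})=F_{f_{m-1}}(f_A^\e)$ and hence
\[
\|f_{m-1}\|=F_{f_{m-1}}(f_{m-1})=F_{f_{m-1}}(f_0-f^\e)+F_{f_{m-1}}(f_A^\e)\le \e+\|F_{f_{m-1}}\|_\cD\sum_{i\in A}|x_i|.
\]
Applying \textbf{A3} to the representation $f^\e-G_{m-1}=f_A^\e-\sum_{i\in\Lambda_{m-1}}c_ig_i$, which is permissible because $|A|+|\Lambda_{m-1}|\le K+m-1\le D$, gives $\sum_{i\in A}|x_i|\le V|A|^r\|f^\e-G_{m-1}\|\le VK^r(\|f_{m-1}\|+\e)$. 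Combining,
\[
\|F_{f_{m-1}}\|_\cD\ge \frac{\|f_{m-1}\|-\e}{VK^r(\|f_{m-1}\|+\e)}\ge \frac{1}{3VK^r}\quad\text{whenever }\|f_{m-1}\|\ge 2\e.
\]

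Plugging this back into the smoothness estimate produces $\|f_m\|\le \|f_{m-1}\|(1-c_1K^{-rq'})$ with a $c_1$ of the stated form, and $1-x\le e^{-x}$ gives $\|f_m\|\le \|f_{m-1}\|\exp(-c_1/K^{rq'})$. Iterating from step $k$ as long as the residual stays $\ge 2\e$ yields the exponential factor $\|f_k\|\exp(-c_1(m-k)/K^{rq'})$; if instead $\|f_j\|<2\e$ at some $k\le j\le m$, monotonicity forces $\|f_m\|<2\e$. Bounding $\|f_m\|$ by the maximum, hence by the sum, of the two possibilities gives the desired inequality.
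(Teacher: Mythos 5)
Your proof is structurally correct and follows the standard route for WCGA Lebesgue-type inequalities, which is the route taken in \cite{LivTem} and \cite{VT144} (the paper itself only cites the proof). All the ingredients are in place: the smoothness estimate from Lemma \ref{LL0} combined with the Chebyshev minimality $\|f_m\|\le\|f_{m-1}-\la\ff_m\|$ gives the one-step reduction in terms of $a:=F_{f_{m-1}}(\ff_m)$; the vanishing of $F_{f_{m-1}}$ on $\sp(\ff_1,\dots,\ff_{m-1})$ (Lemma \ref{LL1}) lets you peel off the part of $f^\e-G_{m-1}$ supported there and apply \textbf{A3} to $A=T\setminus\Lambda_{m-1}$; the resulting lower bound on $\|F_{f_{m-1}}\|_\cD$ under the threshold $\|f_{m-1}\|\ge 2\e$ closes the recursion; monotonicity of $\|f_m\|$ under the WCGA then cleans up the case where the residual drops below $2\e$ before step $m$. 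The dimension count $|A|+|\Lambda_{m-1}|\le K+(m-1)\le D$ is also handled correctly.

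The one place that needs a flag is the claim that the constant ``matches the stated $c_1$.'' Carrying your estimates through carefully, the threshold $\|f_{m-1}\|\ge 2\e$ together with $\|f^\e-G_{m-1}\|\le\|f_{m-1}\|+\e$ yields $\|F_{f_{m-1}}\|_\cD\ge\frac{1}{3VK^r}$, and plugging $a\ge t/(3VK^r)$ into the optimized smoothness step produces a contraction factor $1-\frac{(1-1/q)t^{q'}}{(2\gamma q)^{1/(q-1)}\,3^{q'}V^{q'}K^{rq'}}$; for $q=2$ this is $1-\frac{t^2}{72\gamma V^2K^{2r}}$, whereas the stated $c_1$ requires $1-\frac{t^2}{32\gamma V^2K^{2r}}$. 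To recover exactly $c_1=\frac{t^{q'}}{2(16\gamma)^{1/(q-1)}V^{q'}}$ one would need $\|F_{f_{m-1}}\|_\cD\ge\frac{1}{2VK^r}$ rather than $\frac{1}{3VK^r}$, which your derivation does not give; the published proof must use a slightly different estimate at this point. This is a bookkeeping discrepancy only --- your argument proves the theorem with a (marginally) smaller admissible $c_1$ and the same exponential decay, and does not affect any downstream use --- but as written the proposal does not quite establish the exact constant claimed in the statement.
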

In all theorems that follow we assume $rq'\ge 1$.
\begin{Theorem}\label{zT2.4} Let $X$ be a Banach space with $\rho(u)\le \gamma u^q$, $1<q\le 2$. Suppose $K$-sparse $f^\e$ satisfies {\bf A1}, {\bf A2} and $\|f_0-f^\e\|\le \e$. Then the WCGA($t$) with weakness parameter $t$ applied to $f_0$ provides after $$S := \lceil C'U^{q'}\ln (U+1) K^{rq'} \rceil $$ iterations 
$$
\|f_{S}\| \le CU\e\quad\text{for}\quad K+S \le D
$$
with an absolute constant $C$ and $C' = C_2(q)\gamma^{\frac{1}{q-1}} C_1^{q'} t^{-q'}$.
\end{Theorem}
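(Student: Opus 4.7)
The plan is to reduce to Theorem \ref{zT2.3} by checking that \textbf{A3} follows from the assumptions \textbf{A1} and \textbf{A2}, and then to choose the iteration count $S$ so that the exponential decay in Theorem \ref{zT2.3} overwhelms the initial residual. The first step is to verify that \textbf{A1} and \textbf{A2} together imply \textbf{A3} with parameter $V = C_1 U$: for any $A\subset T$ and any $\Lambda$ disjoint from $A$ with $|A|+|\Lambda|\le D$ and any coefficients $\{c_i\}_{i\in\Lambda}$, the Nikol'skii-type inequality \textbf{A1} applied to $f_A$ gives $\sum_{i\in A}|x_i|\le C_1|A|^r\|f_A\|$, while \textbf{A2} gives $\|f_A\|\le U\|f_A-\sum_{i\in\Lambda}c_ig_i\|$. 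Chaining these two inequalities is precisely \textbf{A3} with $V=C_1U$.

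With \textbf{A3} verified, I would invoke Theorem \ref{zT2.3} with $k=0$ to obtain
$$
\|f_m\|\le \|f_0\|\exp\!\left(-\frac{c_1 m}{K^{rq'}}\right)+2\e, \qquad c_1=\frac{t^{q'}}{2(16\gamma)^{1/(q-1)}(C_1U)^{q'}},
$$
for every $m$ with $K+m\le D$. With the prescribed $S=\lceil C'U^{q'}\ln(U+1)K^{rq'}\rceil$ and $C'=C_2(q)\gamma^{1/(q-1)}C_1^{q'}t^{-q'}$, direct substitution shows that $c_1S/K^{rq'}\ge\alpha(q)\ln(U+1)$ for a constant $\alpha(q)=C_2(q)/(2\cdot 16^{1/(q-1)})$ which can be made as large as we please by enlarging $C_2(q)$; thus $\|f_S\|\le \|f_0\|(U+1)^{-\alpha(q)}+2\e$.

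It remains to absorb the factor $\|f_0\|$. In the clean case $\|f_0\|\le CU\e$ the monotonicity $\|f_m\|\le\|f_{m-1}\|$ of the WCGA residuals (coming from the Chebyshev projection step) immediately gives $\|f_S\|\le\|f_0\|\le CU\e$. Otherwise, I would use the triangle inequality $\|f_0\|\le\|f^\e\|+\e$ together with the bound on $\|f^\e\|$ extracted from \textbf{A2} (applied with $A=T$ and suitable $\Lambda\subset\cD\setminus T$, or from the ambient norm of the sparse piece) so that $\|f_0\|$ is bounded by a fixed power of $U$ times $\e$; choosing $C_2(q)$ large enough then makes $\|f_0\|(U+1)^{-\alpha(q)}\le U\e$. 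The main obstacle is precisely this last step: the naive application of Theorem \ref{zT2.3} gives an iteration count proportional to $\ln(\|f_0\|/\e)$, and tightening this to the claimed $\ln(U+1)$ is the essence of the Lebesgue-type sharpening, requiring one to use unconditionality (\textbf{A2}) to prevent pathological blow-up of $\|f_0\|$ relative to the noise level $\e$.
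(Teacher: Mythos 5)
Your verification that \textbf{A1} and \textbf{A2} imply \textbf{A3} with $V=C_1U$ is correct (the paper states this explicitly in the text following the definitions). However, the main step of your argument --- deducing Theorem \ref{zT2.4} from a single application of Theorem \ref{zT2.3} with $k=0$ --- cannot work, and you have in fact identified the obstruction yourself but then claimed it can be patched in a way that it cannot.

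Applying Theorem \ref{zT2.3} with $k=0$ gives $\|f_m\|\le\|f_0\|\exp(-c_1m/K^{rq'})+2\e$, so to reach the level $CU\e$ one needs $m\gtrsim c_1^{-1}K^{rq'}\ln(\|f_0\|/(U\e))$, an iteration count that depends on the unbounded ratio $\|f_0\|/\e$. Your proposed remedy is to bound $\|f_0\|$ by $\|f^\e\|+\e$ and then use \textbf{A2} to control $\|f^\e\|$ by a power of $U$ times $\e$. This cannot succeed: \textbf{A2} is a purely geometric incoherence condition on the dictionary and constrains the ratios $\|f_A-\sum_{\Lambda}c_ig_i\|/\|f_A\|$ only; it says nothing about the absolute size of the coefficients $x_i$. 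Take any dictionary satisfying \textbf{A2} and set $f^\e:=Ng_1$ with $N$ enormous and $\e$ tiny; all hypotheses hold with the same $U$, yet $\|f^\e\|/\e$ is arbitrarily large. Enlarging the absolute constant $C_2(q)$ cannot absorb a quantity that is not controlled by any parameter allowed in $C_2(q)$.

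The mechanism that the actual proof must exploit is of a different nature and is not captured by exponential decay of the residual norm alone. The crucial structural fact about the WCGA is that $G_m$ is the Chebyshev projection of $f_0$ onto $Y_m=\sp(\ff_1,\dots,\ff_m)$; hence as soon as $Y_m$ contains all the elements $\{g_i:i\in T\}$ supporting $f^\e$, one has $\|f_m\|\le\|f_0-f^\e\|\le\e$ regardless of how large $\|f_0\|$ was initially. The proof of Theorem \ref{zT2.4} in \cite{LivTem}, \cite{VT144} is therefore a support-capture argument tracking the ``missed'' part $f^\e_{T\setminus\Gamma_m}$ of the sparse element, interleaved with residual-decay estimates; the $\ln(U+1)$ factor comes from this two-scale bookkeeping, not from pre-bounding $\|f_0\|$. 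Note also that the paper states that Theorem \ref{zT2.1} ``follows from the combination of Theorems \ref{zT2.3} and \ref{zT2.4}'', which makes clear that \ref{zT2.4} is proved independently of \ref{zT2.3} and then \ref{zT2.3} is applied afterwards (with $k=S$, starting from the level $CU\e$) to improve the bound to $C\e$; deriving \ref{zT2.4} from \ref{zT2.3} runs the logic backwards.
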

We formulate an immediate corollary of Theorem \ref{zT2.4} with $\e=0$.
\begin{Corollary}\label{zC2.3} Let $X$ be a Banach space with $\rho(u)\le \gamma u^q$. Suppose $K$-sparse $f$ satisfies {\bf A1}, {\bf A2}. Then the WCGA($t$) with weakness parameter $t$ applied to $f$ recovers it exactly after $S := \lceil C'U^{q'}\ln (U+1) K^{rq'} \rceil $ iterations  under condition $K+S \le D$.
\end{Corollary}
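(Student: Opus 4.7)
The plan is to simply specialize Theorem \ref{zT2.4} to the noiseless case $\e=0$. Set $f_0 := f$, where $f$ is the given $K$-sparse element satisfying assumptions \textbf{A1} and \textbf{A2}, and take $f^\e := f$ so that the hypothesis $\|f_0-f^\e\|\le \e$ holds trivially with $\e=0$. All the structural hypotheses of Theorem \ref{zT2.4} are then in force: $X$ has modulus of smoothness $\rho(u)\le \gamma u^q$, $f^\e=f$ is $K$-sparse, and it satisfies \textbf{A1}, \textbf{A2}; moreover $rq'\ge 1$ is the standing assumption preceding the theorem.

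Apply Theorem \ref{zT2.4} with this data. The theorem guarantees that, provided $K+S \le D$ with $S := \lceil C'U^{q'}\ln(U+1)K^{rq'}\rceil$, the WCGA($t$) iterates satisfy
\[
\|f_S\| \le CU\e = 0.
\]
Hence $f_S = 0$, which by the definition $f_S = f-G_S$ means $G_S = f$. In other words, after $S$ iterations the WCGA($t$) produces an approximant that coincides with $f$, which is exactly the claimed exact recovery. The constant $C' = C_2(q)\gamma^{1/(q-1)} C_1^{q'} t^{-q'}$ is inherited verbatim from Theorem \ref{zT2.4}.

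There is no genuine obstacle here: the corollary is a direct specialization and requires no new argument beyond observing that $CU\cdot 0 = 0$ forces the residual to vanish. The only thing worth double-checking is bookkeeping of the constants (that $C'$ in the corollary agrees with the $C'$ in Theorem \ref{zT2.4}) and that the sparsity-plus-iteration budget condition $K+S\le D$ is carried over unchanged, both of which are immediate.
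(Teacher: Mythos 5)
Your proof is correct and follows exactly the route the paper intends: the paper itself introduces Corollary \ref{zC2.3} as ``an immediate corollary of Theorem \ref{zT2.4} with $\e=0$,'' and your argument is precisely that specialization, with $f^\e = f$, $\e=0$, so the bound $\|f_S\|\le CU\e = 0$ forces $G_S = f$. The bookkeeping of the constant $C'$ and the budget condition $K+S\le D$ is carried over correctly.
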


We formulate versions of Theorem \ref{zT2.4} with assumptions {\bf A1}, {\bf A2} replaced by a single assumption {\bf A3} and replaced by two assumptions {\bf A2} and {\bf A3} (see \cite{VTbookMA}, pp.430-431).  

\begin{Theorem}\label{zT2.5} Let $X$ be a Banach space with $\rho(u)\le \gamma u^q$, $1<q\le 2$. Suppose $K$-sparse $f^\e$ satisfies {\bf A3}  and $\|f_0-f^\e\|\le \e$. Then the WCGA($t$) with weakness parameter $t$ applied to $f_0$ provides after $$S:=\lceil C(t,\gamma,q)V^{q'}\ln (VK) K^{rq'}\rceil$$ iterations
$$
\|f_{S}\| \le CVK^r\e\quad\text{for}\quad K+S\le D
$$
with an absolute constant $C$ and $C(t,\gamma,q) = C_2(q)\gamma^{\frac{1}{q-1}}  t^{-q'}$.
\end{Theorem}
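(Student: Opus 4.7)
The plan is to invoke Theorem~\ref{zT2.3} directly, since its hypotheses (namely assumption A3 and $\|f_0-f^\e\|\le\e$) coincide with the hypotheses of Theorem~\ref{zT2.5}. The remainder of the argument is then essentially an arithmetic calibration of the constants. Explicitly, Theorem~\ref{zT2.3} states that for any $k\le m$ with $K+m\le D$,
$$\|f_m\|\le\|f_k\|\exp\!\left(-\frac{c_1(m-k)}{K^{rq'}}\right)+2\e,\qquad c_1=\frac{t^{q'}}{2(16\gamma)^{1/(q-1)}V^{q'}}.$$

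I would first apply this with $k=0$ and $m=S$. The key cancellation is that the $V^{q'}$ in $c_1^{-1}$ is matched by the $V^{q'}$ factor inside the choice $S=\lceil C(t,\gamma,q)V^{q'}\ln(VK)K^{rq'}\rceil$ with $C(t,\gamma,q)=C_2(q)\gamma^{1/(q-1)}t^{-q'}$. A direct substitution collapses the exponent to $\alpha\ln(VK)$, where $\alpha=C_2(q)/(2\cdot16^{1/(q-1)})$ is an absolute constant that can be made arbitrarily large by enlarging $C_2(q)$. Consequently
$$\|f_S\|\le\|f_0\|(VK)^{-\alpha}+2\e.$$

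To produce the stated bound $\|f_S\|\le CVK^r\e$, I would run the exponential-decay estimate in blocks: choose $m_0\asymp V^{q'}K^{rq'}$ so that $\exp(-c_1 m_0/K^{rq'})\le 1/2$, apply Theorem~\ref{zT2.3} successively on windows $[(j-1)m_0,jm_0]$, and telescope the resulting geometric series to obtain $\|f_{jm_0}\|\le 2^{-j}\|f_0\|+4\e$. If $\|f_0\|\le VK^r\e$ the claim is immediate, since for WCGA the approximant $G_m$ is the Chebyshev projection onto a growing span and therefore $\|f_m\|$ is non-increasing; otherwise we take $j=O(\ln(VK))$ halvings, and the total iteration count $jm_0$ matches the advertised $S$.

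The main obstacle is the logarithmic factor: one has to argue that $O(\ln(VK))$ halvings suffice rather than the naive $O(\ln(\|f_0\|/\e))$. Assumption~A3 gives only a \emph{lower} bound $\|f^\e\|\ge(\sum_{i\in T}|x_i|)/(VK^r)$ together with the dictionary bound $\|f^\e\|\le\sum_{i\in T}|x_i|$, so the ratio $\|f_0\|/\e$ is not a priori polynomially bounded in $V,K$. Overcoming this requires either a preliminary normalization of $f^\e$ (by scale invariance of the WCGA and of the inequality), or a short warm-up whose cost is absorbed in the constant $C_2(q)$: after an initial window of length $m_0$ one checks that the residual has been reduced to a polynomial-in-$VK$ multiple of $\e$, and from that point the decay with exponent $\alpha\ln(VK)$ (with $\alpha$ chosen $>r+1$) suffices to bring the prefactor below $VK^r\e$.
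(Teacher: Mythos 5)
The gap you flag is fatal, not a surmountable technicality, and neither of your two proposed fixes can close it. Your chain of halvings yields $\|f_S\|\le\|f_0\|(VK)^{-\alpha}+4\e$, which is at most $CVK^{r}\e$ only if $\|f_0\|/\e\lesssim (VK)^{\alpha+\textrm{const}}$, and the hypotheses give no such bound: one has $\|f_0\|\le\|f^\e\|+\e$ and $\|f^\e\|\le\sum_{i\in T}|x_i|\le VK^r\|f^\e\|$, which provides a lower bound on $\|f^\e\|$ in terms of the coefficient mass but no upper bound in terms of $\e$. Scale invariance does not help, because a rescaling $f_0\mapsto\lambda f_0$ sends $\e\mapsto\lambda\e$, leaving the ratio $\|f_0\|/\e$ unchanged. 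A fixed-length warm-up absorbed into $C_2(q)$ cannot help either, because reducing the residual from $\|f_0\|$ to $\textrm{poly}(VK)\,\e$ via Theorem~\ref{zT2.3} necessarily costs on the order of $V^{q'}K^{rq'}\ln(\|f_0\|/\e)$ iterations, which is not $O(1)$ and is not controlled by $V$, $K$, $q$, $\gamma$, $t$. The sharpest way to see the problem is to set $\e=0$: Theorem~\ref{zT2.5} then asserts $\|f_S\|=0$, exact recovery in $S$ steps (cf.\ Corollary~\ref{zC2.3}), while Theorem~\ref{zT2.3} with $\e=0$ gives only $\|f_m\|\le\|f_0\|\exp(-c_1m/K^{rq'})$, which is strictly positive for every finite $m$. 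Therefore no argument that uses only Theorem~\ref{zT2.3} can prove Theorem~\ref{zT2.5}.

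This is consistent with how the paper organizes the results: Theorem~\ref{zT2.7} is explicitly stated to follow from the \emph{combination} of Theorems~\ref{zT2.5} and~\ref{zT2.3}, which means Theorem~\ref{zT2.5} is an independent ingredient rather than a consequence of Theorem~\ref{zT2.3}. Its actual proof (in \cite{VT144}, see also \cite{VTbookMA}, pp.~430--431) is parallel to that of Theorem~\ref{zT2.4}: one tracks which indices of the support $T$ of $f^\e$ have entered the span used by the WCGA, and shows via assumption~A3 inside an inductive scheme that within blocks of about $V^{q'}K^{rq'}$ iterations the algorithm either reduces the residual by a fixed factor or discovers new support elements; since $|T|=K$, this terminates after $S\asymp V^{q'}K^{rq'}\ln(VK)$ steps with $\|f_S\|\lesssim VK^r\e$, independently of $\|f_0\|$. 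The role of Theorem~\ref{zT2.3} is the opposite of what you assume: it is applied \emph{after} Theorem~\ref{zT2.5}, starting from the already-small residual $\|f_S\|\le CVK^r\e$, to drive the bound down to $O(\e)$ in Theorem~\ref{zT2.7}.
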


\begin{Theorem}\label{zT2.6} Let $X$ be a Banach space with $\rho(u)\le \gamma u^q$, $1<q\le 2$. Suppose $K$-sparse $f^\e$ satisfies {\bf A2}, {\bf A3}  and $\|f_0-f^\e\|\le \e$. Then the WCGA($t$) with weakness parameter $t$ applied to $f_0$ provides after $$S:=\lceil C(t,\gamma,q)V^{q'}\ln (U+1) K^{rq'}\rceil$$ iterations
$$
\|f_{S}\| \le CU\e\quad\text{for}\quad K+S\le D
$$
with an absolute constant $C$ and $C(t,\gamma,q) = C_2(q)\gamma^{\frac{1}{q-1}}  t^{-q'}$.
\end{Theorem}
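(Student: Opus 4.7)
Since assumption A3 is satisfied, Theorem \ref{zT2.3} applies directly and yields, for every $0\le k\le m$ with $K+m\le D$,
\[
\|f_m\|\le \|f_k\|\exp\!\bigl(-c_1(m-k)/K^{rq'}\bigr)+2\e,\qquad c_1=\frac{t^{q'}}{2(16\gamma)^{1/(q-1)}V^{q'}}.
\]
The plan is to use this exponential contraction as the engine --- the rate $c_1\asymp V^{-q'}$ supplied by A3 is what produces the $V^{q'}$ factor in the stated iteration count $S$ --- and to bring in A2 both to cap the length of the contraction phase at $O(\ln(U+1))$ instead of $O(\ln(VK))$ and to lower the terminal bound from the $2\e$ of Theorem \ref{zT2.3} to the advertised $CU\e$. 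This is the same architecture as the proof of Theorem \ref{zT2.4}, with A3 replacing the A1-based coefficient estimate used there.

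Concretely, I choose $C(t,\gamma,q)=C_2(q)\gamma^{1/(q-1)}t^{-q'}$ with $C_2(q)$ large enough (depending only on $q$) so that the stated $S$ satisfies $c_1S/K^{rq'}\ge 2\ln(U+1)$; setting $k=0,\,m=S$ in the displayed inequality then gives $\|f_S\|\le \|f_0\|(U+1)^{-2}+2\e$. For the improvement to $CU\e$, for each $m$ I set $T_1(m):=\{i\in T:g_i\in\{\ff_1,\ldots,\ff_m\}\}$, $T_2(m):=T\setminus T_1(m)$, and decompose $G_m=G_m^T+G_m^{T^c}$ according to whether the chosen dictionary element lies in $T$ or not; A2 with $A=T_2(m)$ and $\Lambda=T_1(m)\cup\supp(G_m^{T^c})$ then yields $\|f^\e-G_m\|\ge U^{-1}\|f^\e_{T_2(m)}\|$, hence $\|f^\e_{T_2(m)}\|\le U(\|f_m\|+\e)$. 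Plugged into the lower bound $\|F_{f_{m-1}}\|_\cD\ge (\|f_{m-1}\|-\e)/(VK^r(\|f_{m-1}\|+\e))$ obtained from A3 together with the Chebyshev orthogonality $F_{f_{m-1}}(y)=0$ for $y\in Y_{m-1}$, this shows that the one-step contraction $\|f_m\|\le(1-\delta)\|f_{m-1}\|$, with $\delta\asymp t^{q'}(VK^r)^{-q'}$, remains in force down to $\|f_{m-1}\|\asymp U\e$; beyond that point the Chebyshev-projection monotonicity $\|f_{k+1}\|\le\|f_k\|$ pins the residual at that level. (If $T_2(S)=\emptyset$ the conclusion is immediate, since $f^\e\in Y_S$ forces $\|f_S\|\le\|f-f^\e\|\le\e$.)

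The main obstacle is verifying that $S$ iterations actually suffice to reach the $CU\e$ threshold without an a priori bound on $\|f_0\|$. This is handled by a dichotomy: if $\|f_0\|\le CU\e$ the conclusion is immediate from monotonicity, while otherwise the $(U+1)^{-2}$ reduction earned in the first display drags $\|f_0\|$ below $CU\e$ within the prescribed $S$ steps. The clean separation of roles --- A3 for the contraction rate (yielding $V^{q'}$ in place of the $(VK^r)^{q'}$ of Theorem \ref{zT2.5}), and A2 for both the $\ln(U+1)$ depth of the contraction phase and the terminal $U\e$ ceiling --- is exactly what makes Theorem \ref{zT2.6} sharper than Theorem \ref{zT2.5}.
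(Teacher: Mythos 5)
Your division of roles --- A3 driving the contraction rate $c_1\asymp V^{-q'}$ via Theorem~\ref{zT2.3}, and A2 supplying the factor $\ln(U+1)$ and the terminal level $CU\e$ --- matches the way this theorem sits between Theorems~\ref{zT2.4} and~\ref{zT2.5}, and your A2 computation giving $\|f^\e_{T_2(m)}\|\le U(\|f_m\|+\e)$ is correct. But the dichotomy you offer for ``the main obstacle'' does not close it. If $\|f_0\|>CU\e$, applying Theorem~\ref{zT2.3} from $k=0$ with the stated $S$ yields at best $\|f_S\|\le \|f_0\|(U+1)^{-c}+2\e$ for an absolute constant $c$ (set by $C_2(q)$), and this is $\le CU\e$ only when $\|f_0\|\lesssim U^{c+1}\e$. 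The hypotheses put no such bound on $\|f_0\|$: both A2 and A3 are invariant under rescaling $f^\e$, so $\|f^\e\|/\e$ (hence $\|f_0\|/\e$) may be arbitrarily large, and the ``$(U+1)^{-2}$ reduction'' then does not ``drag $\|f_0\|$ below $CU\e$.'' Indeed, taking $\e=0$ shows the gap starkly: Corollary~\ref{zC2.3} asserts exact recovery ($\|f_S\|=0$) after the same $S$ steps, which a pure application of Theorem~\ref{zT2.3} from $k=0$ can never produce, since it only gives a strictly positive exponential upper bound.

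The missing ingredient is a structural argument, independent of $\|f_0\|/\e$, showing that within the prescribed $S$ iterations either all of $T$ has been captured --- in which case $f^\e\in Y_S$ and the Chebyshev projection gives $\|f_S\|\le\|f_0-f^\e\|\le\e$ --- or the residual has already fallen to $CU\e$. This requires coupling the one-step contraction to the progressive removal of the large entries of $\{|x_i|\}_{i\in T_2(m)}$ (a counting argument on the unresolved support in blocks of $\approx V^{q'}K^{rq'}$ iterations), rather than invoking Theorem~\ref{zT2.3} once from $k=0$. Your parenthetical about $T_2(S)=\emptyset$ gestures at the right mechanism but treats it as a side case rather than the engine; and your claim that the one-step contraction ``remains in force down to $\|f_{m-1}\|\asymp U\e$'' is asserted rather than derived --- the A3-based bound $\|F_{f_{m-1}}\|_\cD\ge(\|f_{m-1}\|-\e)/(VK^r(\|f_{m-1}\|+\e))$ keeps the contraction active for $\|f_{m-1}\|\gtrsim\e$ regardless of $U$, so no $U\e$ threshold, nor the conclusion $\|f_S\|\le CU\e$ after exactly $S$ steps, follows from what you have written.
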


Theorems \ref{zT2.5} and \ref{zT2.3} imply the following analog of Theorem \ref{zT2.1}.
  
  \begin{Theorem}\label{zT2.7} Let $X$ be a Banach space with $\rho(u)\le \gamma u^q$, $1<q\le 2$. Suppose $K$-sparse $f^\e$ satisfies {\bf A3}  and $\|f_0-f^\e\|\le \e$. Then the WCGA($t$) with weakness parameter $t$ applied to $f_0$ provides after $$S:=\lceil C(t,\gamma,q)V^{q'}\ln (VK) K^{rq'}\rceil$$ iterations
$$
\|f_{S}\| \le C\e\quad\text{for}\quad K+S \le D
$$
with an absolute constant $C$ and $C(t,\gamma,q) = C_2(q)\gamma^{\frac{1}{q-1}}  t^{-q'}$.
\end{Theorem}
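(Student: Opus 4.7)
The plan is to obtain Theorem \ref{zT2.7} as a direct two-stage consequence of the previously stated Theorems \ref{zT2.5} and \ref{zT2.3}, both of which assume exactly the same hypothesis ({\bf A3} plus $\|f_0-f^\e\|\le\e$). The first theorem delivers a preliminary error bound of the form $\|f_{S_1}\|\le CVK^r\e$ after a number of WCGA($t$) iterations already of the desired order; the second theorem provides a linear-exponential decay that, once $\|f_{S_1}\|$ is small enough relative to $\e$, drives the residual down to an absolute multiple of $\e$ after additional iterations of the same order of magnitude. Combining the two gives the claim with a total iteration count $S$ of the asserted form and an absolute constant in front of $\e$.

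More precisely, I would first invoke Theorem \ref{zT2.5} to fix
$$
S_1 := \lceil C(t,\gamma,q) V^{q'}\ln(VK) K^{rq'}\rceil,\qquad C(t,\gamma,q)=C_2(q)\gamma^{1/(q-1)}t^{-q'},
$$
and conclude that, as long as $K+S_1\le D$, we have $\|f_{S_1}\|\le C_0 VK^r\e$ for an absolute constant $C_0$. The residual sequence of the WCGA is monotone, so at this point the algorithm has effectively reduced to the same setting with a better proxy for the norm of $f_{S_1}$, while the $K$-sparse reference $f^\e$ and its approximation quality $\e$ are unchanged.

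Next I would apply Theorem \ref{zT2.3} with $k=S_1$ and $m=S:=S_1+S_2$, where $S_2$ is chosen so that the exponential factor kills the prefactor $C_0VK^r$. Solving
$$
C_0 VK^r\,\exp\!\left(-\frac{c_1 S_2}{K^{rq'}}\right)\le \e,\qquad c_1=\frac{t^{q'}}{2(16\gamma)^{1/(q-1)}V^{q'}},
$$
gives $S_2\ge c_1^{-1}K^{rq'}\ln(C_0 VK^r)$, which in view of $r\le 1$ is bounded by a constant (depending only on $q$) times $\gamma^{1/(q-1)}t^{-q'}V^{q'}K^{rq'}\ln(VK)$. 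Hence $S_2$ is of the same order as $S_1$, and Theorem \ref{zT2.3} gives $\|f_S\|\le \e+2\e=3\e$, which is the conclusion with absolute constant $C=3$. By enlarging the constant $C(t,\gamma,q)$ implicit in $S$ we ensure $S_1+S_2\le S$ as stated.

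The main obstacle is not conceptual but bookkeeping: one has to verify that the logarithmic factor $\ln(C_0VK^r)$ produced in Stage 2 can indeed be absorbed into the $\ln(VK)$ factor in the statement (this uses $r\le 1$ and harmlessly enlarges the absolute constants), and that the sum $S_1+S_2$ still fits inside a single expression of the form $\lceil C(t,\gamma,q)V^{q'}\ln(VK)K^{rq'}\rceil$ with the prescribed $C(t,\gamma,q)$. Both are routine once the two stages are correctly assembled, and no new estimate beyond Theorems \ref{zT2.5} and \ref{zT2.3} is required.
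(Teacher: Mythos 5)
Your proof is correct and follows exactly the two-stage combination of Theorems \ref{zT2.5} and \ref{zT2.3} that the paper itself indicates just before stating the theorem. One small slip: the display in Stage 2 should read $C_0VK^r\exp\left(-\frac{c_1 S_2}{K^{rq'}}\right)\le 1$ rather than $\le\e$; your subsequent solution $S_2\ge c_1^{-1}K^{rq'}\ln(C_0VK^r)$ and the conclusion $\|f_S\|\le 3\e$ already correspond to the corrected version, and absorbing $\ln(C_0VK^r)$ into $\ln(VK)$ via $r\le 1$ is exactly the routine bookkeeping you describe.
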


The following edition of Theorems \ref{zT2.1} and \ref{zT2.7} is also useful in applications. It follows from Theorems \ref{zT2.6} and \ref{zT2.3}.

 \begin{Theorem}\label{zT2.8} Let $X$ be a Banach space with $\rho(u)\le \gamma u^q$, $1<q\le 2$. Suppose $K$-sparse $f^\e$ satisfies {\bf A2}, {\bf A3}  and $\|f_0-f^\e\|\le \e$. Then the WCGA($t$) with weakness parameter $t$ applied to $f_0$ provides after $$S:= \lceil C(t,\gamma,q)V^{q'}\ln (U+1) K^{rq'}\rceil$$ iterations
$$
\|f_{S}\| \le C\e\quad\text{for}\quad K+S\le D
$$
with an absolute constant $C$ and $C(t,\gamma,q) = C_2(q)\gamma^{\frac{1}{q-1}}  t^{-q'}$.
\end{Theorem}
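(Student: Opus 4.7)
The plan is to derive Theorem \ref{zT2.8} by chaining Theorem \ref{zT2.6} with the geometric decay provided by Theorem \ref{zT2.3}, exactly as the author hints. These two ingredients complement each other: Theorem \ref{zT2.6} already yields essentially the right iteration count but only up to the unwanted factor $U$ in the error bound, and Theorem \ref{zT2.3} is then used to shave that factor off.

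First I would set $S_1 := \lceil C_1(t,\gamma,q) V^{q'} \ln(U+1) K^{rq'} \rceil$ and apply Theorem \ref{zT2.6} directly: under hypotheses \textbf{A2}, \textbf{A3} and $\|f_0 - f^\e\| \le \e$, with $K + S_1 \le D$, this produces an intermediate residual satisfying $\|f_{S_1}\| \le C_0 U \e$ for some absolute constant $C_0$. Next, since \textbf{A3} continues to hold, I would extend WCGA($t$) past step $S_1$ and invoke Theorem \ref{zT2.3} with $k = S_1$. For any $m \ge S_1$ with $K + m \le D$,
$$
\|f_m\| \le \|f_{S_1}\| \exp\!\left(-\frac{c_1(m - S_1)}{K^{rq'}}\right) + 2\e \le C_0 U \e \exp\!\left(-\frac{c_1(m - S_1)}{K^{rq'}}\right) + 2\e,
$$
where $c_1 = t^{q'}/\bigl(2(16\gamma)^{1/(q-1)} V^{q'}\bigr)$ is the explicit constant from Theorem \ref{zT2.3}.

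Second, I would choose $m - S_1$ to be the smallest integer exceeding $(K^{rq'}/c_1)\ln(C_0 U)$, so that the first summand above is at most $\e$ and hence $\|f_m\| \le 3\e$. Since $1/c_1 \asymp \gamma^{1/(q-1)} V^{q'} t^{-q'}$ and $\ln(C_0 U) \le \ln(U+1) + O(1)$, the additional block of iterations has size $O\bigl(\gamma^{1/(q-1)} t^{-q'} V^{q'} K^{rq'} \ln(U+1)\bigr)$, which is of the same form as $S_1$. Setting $S := S_1 + (m - S_1)$, both pieces fit into $\lceil C(t,\gamma,q) V^{q'} \ln(U+1) K^{rq'}\rceil$ after enlarging the constant $C(t,\gamma,q) = C_2(q) \gamma^{1/(q-1)} t^{-q'}$ by an absolute factor. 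The constraint $K + S \le D$ is precisely the hypothesis, so Theorem \ref{zT2.3} applies throughout.

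There is no substantial obstacle; the only delicate point is the bookkeeping of constants, namely that the logarithm $\ln(C_0 U)$ required to drive the exponential contraction down to $1/U$ can indeed be absorbed into $\ln(U+1)$ up to an absolute additive constant, which in turn is swallowed by the prefactor $C(t,\gamma,q)$ in the iteration count. Once this is verified, the conclusion $\|f_S\| \le C\e$ with absolute $C$ (one may take $C = 3$) follows with no further work, completing the proof.
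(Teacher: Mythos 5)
Your proposal is correct and is precisely the route the paper intends: the text immediately preceding Theorem \ref{zT2.8} states that it follows from Theorems \ref{zT2.6} and \ref{zT2.3}, and you combine them in exactly that way (run \ref{zT2.6} for $S_1$ steps to get $\|f_{S_1}\|\le C_0 U\e$, then use the geometric contraction of \ref{zT2.3} for an extra $\asymp \gamma^{1/(q-1)} t^{-q'} V^{q'} K^{rq'}\ln(U+1)$ steps to absorb the factor $U$). The bookkeeping is sound, using $U\ge 1$ so that $\ln(C_0U)\ll\ln(U+1)$ and the monotonicity of the WCGA residuals to pass from the specific $m$ you find to the prescribed $S$.
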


\subsection{Some specific examples for the WCGA}

In this subsection, following \cite{VT144} (see also \cite{VTbookMA}, Section 8.7.4), we discuss applications of Theorems from the previous subsection for specific dictionaries $\cD$. Mostly, $\cD$ will be a basis $\Psi$ for $X$. Because of that we use $m$ instead of $K$ in the notation of sparse approximation.  In some of our examples we take $X=L_p$, $2\le p<\infty$. Then it is known that $\rho(u,L_p) \le \gamma u^2$ with $\gamma = (p-1)/2$. In some other examples we take $X=L_p$, $1<p\le 2$. 
Then it is known that $\rho(u,L_p) \le \gamma u^{p}$,  with $\gamma = 1/p$.
 
In this subsection $L_p(\Omega)$, $\Omega$ is a bounded domain in $\bbR^d$, stands for the $L_p(\Omega,\mu)$ with $\mu$ being the normalized Lebesgue measure on $\Omega$. All the statements of this subsection hold for all $m\in \bbN$. 

\begin{Proposition}\label{zExample 1.} Let $\Psi$ be a uniformly bounded orthogonal system normalized in $L_p(\Omega)$, $2\le p<\infty$, $\Omega$ is a bounded domain. Then for any $f_0\in L_p(\Omega)$ we have 
 \begin{equation}\label{z4.4}
\|f_{C(t,p,\Omega)m\ln (m+1)}\|_p \le C\sigma_m(f_0,\Psi)_p .
\end{equation}
\end{Proposition}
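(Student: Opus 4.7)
The plan is to reduce the statement to Theorem \ref{zT2.7} by verifying that the system $\Psi^{\pm}$ satisfies the $\ell_1$ incoherence property \textbf{A3} with exponent $r=1/2$ and a constant $V$ depending only on $p$, $\Omega$ and the uniform bound. Since $\rho(u,L_p)\le \tfrac{p-1}{2}u^2$ for $2\le p<\infty$ we have $q=2$, $q'=2$, whence $rq'=1$, and Theorem \ref{zT2.7} then yields iteration count $S=\lceil C(t,p)V^{2}\ln(Vm)\,m\rceil$, which is of order $m\ln(m+1)$ as required.

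The central calculation is the verification of \textbf{A3}. Let $M:=\sup_i\|\psi_i\|_\infty$, which is finite by assumption. Since $\mu$ is a probability measure on $\Omega$, one has $\|\cdot\|_2\le\|\cdot\|_p$ for $p\ge 2$, and the normalization $\|\psi_i\|_p=1$ combined with the pointwise bound $|\psi_i|^{p-2}\le M^{p-2}$ gives
$$
1=\|\psi_i\|_p^p\le M^{p-2}\|\psi_i\|_2^2,\qquad\text{so}\qquad \|\psi_i\|_2\ge M^{-(p-2)/2}.
$$
For any disjoint partition $B=A\sqcup\Lambda$ and any coefficients $\{c_i\}_{i\in B}$, orthogonality of $\Psi$ in $L_2$ gives $\bigl\|\sum_{i\in B}c_i\psi_i\bigr\|_2^2=\sum_{i\in B}|c_i|^2\|\psi_i\|_2^2\ge M^{-(p-2)}\sum_{i\in A}|c_i|^2$, and Cauchy--Schwarz together with $\|\cdot\|_2\le\|\cdot\|_p$ produce
$$
\sum_{i\in A}|c_i|\le |A|^{1/2}\Bigl(\sum_{i\in A}|c_i|^2\Bigr)^{1/2}\le M^{(p-2)/2}|A|^{1/2}\Bigl\|\sum_{i\in B}c_i\psi_i\Bigr\|_p.
$$
This is exactly \textbf{A3} with $V=M^{(p-2)/2}$ and $r=1/2$, and it holds for arbitrary $K,D$, so the side condition $K+S\le D$ in Theorem \ref{zT2.7} is vacuous.

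To conclude, for the given $f_0$ and $m$, fix $\eta>0$ and pick an $m$-sparse $f^\varepsilon$ with $\|f_0-f^\varepsilon\|_p\le \sigma_m(f_0,\Psi)_p+\eta=:\varepsilon$. Applying Theorem \ref{zT2.7} with $K=m$ and the values of $V$, $r$ determined above, and absorbing the factors $\ln V$ and $M$ into the constant $C(t,p,\Omega)$, gives $\|f_S\|_p\le C\varepsilon$ for $S=\lceil C(t,p,\Omega)\,m\ln(m+1)\rceil$; letting $\eta\to 0$ then yields the claimed Lebesgue-type inequality. The only non-obvious point in the whole argument is extracting the lower bound $\|\psi_i\|_2\ge M^{-(p-2)/2}$ from the hypotheses --- once this is in hand, Bessel's identity plus Cauchy--Schwarz do all the work, and the critical coincidence $rq'=1$ is precisely what turns the potential power factor $m^{rq'}$ in the iteration count from Theorem \ref{zT2.7} into the single $\ln(m+1)$ logarithmic overhead.
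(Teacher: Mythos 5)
Your proposal is correct and takes essentially the same route as the paper: the paper likewise derives Proposition \ref{zExample 1.} from Theorem \ref{zT2.7} by verifying the $\ell_1$-incoherence property {\bf A3} with $r=1/2$, and your computation (using the uniform bound $M$ to get $\|\psi_i\|_2\ge M^{-(p-2)/2}$, then $L_2$-orthogonality, Cauchy--Schwarz, and $\|\cdot\|_2\le\|\cdot\|_p$) is exactly the standard verification from \cite{VT144}, with the critical identity $rq'=1$ (for $q=2$) turning $K^{rq'}$ into the linear factor $m$ and leaving only the logarithmic overhead.
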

 
The proof of Proposition \ref{zExample 1.} is based on   Theorem \ref{zT2.7}.
 
\begin{Corollary}\label{zExample 2.} Let $\Psi$ be the normalized in $L_p$, $2\le p<\infty$, real $d$-variate trigonometric
system. Then Proposition \ref{zExample 1.} applies and gives  
for any $f_0\in L_p$
\begin{equation}\label{z4.1}
\|f_{C(t,p,d)m\ln (m+1)}\|_p \le C\sigma_m(f_0,\Psi)_p .
\end{equation}
\end{Corollary}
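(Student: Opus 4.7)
The plan is to verify that the hypotheses of Proposition~\ref{zExample 1.} are satisfied by the normalized real $d$-variate trigonometric system $\Psi$ on $\Omega := \mathbb{T}^d = [0,2\pi)^d$, so that the conclusion of Proposition~\ref{zExample 1.} specializes directly to the desired Lebesgue-type inequality (\ref{z4.1}).

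First, I would recall the definition of $\Psi$: it consists of the constant $1$ together with the functions $\cos\langle k,x\rangle$ and $\sin\langle k,x\rangle$ as $k$ ranges over a set of frequencies in $\mathbb{Z}^d$ chosen so that each real trigonometric monomial appears exactly once. With respect to the normalized Lebesgue measure on $\mathbb{T}^d$ these functions are pairwise orthogonal; this is immediate from the product structure of the measure and the one-dimensional orthogonality relations.

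Second, I would show uniform boundedness of the $L_p$-normalized system. For a fixed nonzero $k\in\mathbb{Z}^d$, the function $x\mapsto\cos\langle k,x\rangle$ is periodic in the direction $k$ and its $L_p(\mathbb{T}^d)$-norm reduces, via Fubini in coordinates adapted to $k$ (or by direct computation on any maximal one-parameter subgroup), to $c_p := \|\cos t\|_{L_p([0,2\pi], dt/2\pi)}$, a positive constant depending only on $p$. The same holds for $\sin\langle k,x\rangle$, while the constant $1$ already has $L_p$-norm $1$. Hence each element of $\Psi$ has the form $\psi/c_p$ or $1$, so $\sup_{\psi\in\Psi}\|\psi\|_{L_\infty(\mathbb{T}^d)}\le 1/c_p =: M(p)$; this is the required uniform boundedness, with a constant depending only on $p$.

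Third, with orthogonality, $L_p$-normalization, and uniform boundedness all verified, Proposition~\ref{zExample 1.} applies with this choice of $\Omega$ and produces precisely (\ref{z4.1}), the constant $C(t,p,d)$ absorbing $M(p)$ and the measure-theoretic constants arising from $\Omega=\mathbb{T}^d$. There is no real obstacle in this argument — the genuine work has already been done in Proposition~\ref{zExample 1.} (and hence in Theorem~\ref{zT2.7}), so the only point to check carefully is the frequency-independence of $\|\cos\langle k,\cdot\rangle\|_p$ on the $d$-torus, which is a standard reduction to one variable.
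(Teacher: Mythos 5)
Your proof is correct and follows essentially the same route as the paper, which states this corollary as an immediate consequence of Proposition~\ref{zExample 1.} once one recognizes the normalized real $d$-variate trigonometric system as a uniformly bounded orthogonal system in $L_p(\bbT^d)$. The only detail worth highlighting — and which you correctly fill in — is the frequency-independence of $\|\cos\<\bk,\cdot\>\|_{L_p(\bbT^d)}$ for $\bk\neq 0$ (via the fact that $\bx\mapsto\<\bk,\bx\>\bmod 2\pi$ pushes normalized Haar measure on $\bbT^d$ forward to normalized Haar measure on $\bbT$), which guarantees that the $L_p$-normalization yields a system with a uniform $L_\infty$ bound.
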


\begin{Proposition}\label{zExample 1q.} Let $\Psi$ be a uniformly bounded orthogonal system normalized in $L_p(\Omega)$, $1< p\le 2$, $\Omega$ is a bounded domain. Then for any $f_0\in L_p(\Omega)$ we have
 \begin{equation}\label{z4.4q}
\|f_{C(t,p,\Omega)m^{p'-1}\ln (m+1)}\|_p \le C\sigma_m(f_0,\Psi)_p .
\end{equation}
\end{Proposition}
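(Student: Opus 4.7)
The plan is to mirror the proof of Proposition~\ref{zExample 1.}: reduce the claim to verifying the $\ell_1$ incoherence property \textbf{A3} for the dictionary $\Psi$ with the right parameters, and then invoke Theorem~\ref{zT2.7}. Since $\rho(u,L_p)\le u^p/p$ for $1<p\le 2$, in the notation of Theorem~\ref{zT2.7} I would take $q=p$, $q'=p'$, $\gamma=1/p$. First, fix $f_0\in L_p(\Omega)$ and, for arbitrary $\de>0$, pick an $m$-sparse $f^\e\in\Sigma_m(\Psi)$ with $\|f_0-f^\e\|_p\le(1+\de)\sigma_m(f_0,\Psi)_p$, so that the noise level is $\e:=(1+\de)\sigma_m(f_0,\Psi)_p$ and the sparsity is $K=m$.

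The main step is to verify \textbf{A3} with parameters $K=m$, $D$ arbitrarily large, exponent $r=1/p$, and constant $V=V(p,M)$ depending only on $p$ and the uniform bound $M:=\sup_i\|\psi_i\|_\infty$, but independent of $K$ and $D$. Given disjoint index sets $A,\Lambda$ and $g=\sum_{i\in A}x_i\psi_i-\sum_{i\in\Lambda}c_i\psi_i$, I would use duality: set $h:=\sum_{j\in A}\sign(x_j)\|\psi_j\|_2^{-2}\psi_j$, so that by orthogonality $\int g\,h=\sum_{j\in A}|x_j|$ (the $\Lambda$-terms drop out since $A\cap\Lambda=\emptyset$), and therefore by Hölder $\sum_{j\in A}|x_j|\le\|g\|_p\|h\|_{p'}$. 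The $L_{p'}$-norm of $h$ is controlled by interpolating between the $L_2$ bound $\|h\|_2\le|A|^{1/2}$ (from orthogonality combined with the elementary lower bound $\|\psi_j\|_2\ge\|\psi_j\|_p=1$) and the $L_\infty$ bound $\|h\|_\infty\le M|A|$. Since $p'\ge 2$, the log-convex interpolation $\|h\|_{p'}\le\|h\|_2^{2/p'}\|h\|_\infty^{1-2/p'}$ yields $\|h\|_{p'}\le M^{1-2/p'}|A|^{1/p}$, using $1/p'+(1-2/p')=1/p$. Hence \textbf{A3} holds with $V=M^{1-2/p'}$ and $r=1/p$.

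With \textbf{A3} verified, I would apply Theorem~\ref{zT2.7}: after $S=\lceil C(t,p,\gamma)V^{p'}\ln(VK)K^{rp'}\rceil$ iterations, which simplifies to $S\asymp_{t,p,\Omega} m^{p'-1}\ln(m+1)$ since $rp'=p'/p=p'-1$, one obtains $\|f_S\|_p\le C\e=C(1+\de)\sigma_m(f_0,\Psi)_p$; letting $\de\to 0^+$ completes the proof. The main obstacle I anticipate is the verification of \textbf{A3} with the sharp exponent $r=1/p$ and $V$ independent of $K$: one must balance the $L_2$ bound (from orthogonality) against the $L_\infty$ bound (from uniform boundedness) for the test function $h$, and also keep the constant $V$ uniform in $m$. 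A minor technicality is that the system is normalized in $L_p$ rather than $L_2$, but since the probability-space inequalities $\|\psi_j\|_2^2\le M\|\psi_j\|_1\le M\|\psi_j\|_p=M$ and $\|\psi_j\|_2\ge\|\psi_j\|_p=1$ give $1\le\|\psi_j\|_2\le M^{1/2}$, this only affects the constants.
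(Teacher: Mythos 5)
Your proof is correct and follows the route the paper indicates: reduce to Theorem~\ref{zT2.7} by verifying assumption \textbf{A3} for $\Psi$ in $L_p$ with $q=p$, $q'=p'$, $\gamma=1/p$, $r=1/p$, and a constant $V$ depending only on $p$ and the uniform bound, so that $rq'=p'-1$ yields the stated iteration count. The duality step $\sum_{j\in A}|x_j|=\int gh\le\|g\|_p\|h\|_{p'}$ with $h=\sum_{j\in A}\sign(x_j)\|\psi_j\|_2^{-2}\psi_j$, followed by $L_2$--$L_\infty$ interpolation of $\|h\|_{p'}$ using $\|\psi_j\|_2\ge\|\psi_j\|_p=1$ (valid since $p\le 2$ and $\mu$ is a probability measure) and $\|\psi_j\|_\infty\le M$, is exactly the right verification, and passing to the infimum over $\varepsilon>\sigma_m(f_0,\Psi)_p$ closes the argument cleanly.
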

 
The proof of Proposition \ref{zExample 1q.} is based on   Theorem \ref{zT2.7}.  

\begin{Corollary}\label{zExample 2q.} Let $\Psi$ be the normalized in $L_p$, $1<p\le 2$, real $d$-variate trigonometric
system. Then Proposition \ref{zExample 1q.} applies and gives  
for any $f_0\in L_p$
\begin{equation}\label{z4.1q}
\|f_{C(t,p,d)m^{p'-1}\ln (m+1)}\|_p \le C\sigma_m(f_0,\Psi)_p .
\end{equation}
\end{Corollary}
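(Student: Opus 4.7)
The plan is to deduce this corollary as an immediate specialization of Proposition \ref{zExample 1q.} to the trigonometric system on $\bbT^d = [0,2\pi]^d$ equipped with normalized Lebesgue measure. All the real work has already been done in Proposition \ref{zExample 1q.}, so what remains is to verify its two hypotheses (orthogonality in $L_2$ and uniform boundedness in $L_\infty$) for the $L_p$-normalized trigonometric system.

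First, I would recall that the real $d$-variate trigonometric system --- the constant $1$ together with $\cos(\langle k,x\rangle)$ and $\sin(\langle k,x\rangle)$ for nonzero $k\in\bbZ^d$ --- is orthogonal in $L_2(\bbT^d)$; this orthogonality is preserved under any scalar rescaling, hence survives normalization in $L_p$. Second, for every nonzero $k$ the unnormalized functions $\cos(\langle k,x\rangle)$, $\sin(\langle k,x\rangle)$ satisfy $\|\cdot\|_\infty \le 1$ and $\|\cdot\|_p \asymp c(p) > 0$, the latter by direct computation (reducing to the univariate case by Fubini and noting that the $L_p$ norm of $\cos(kt)$ on $[0,2\pi]$ is independent of the nonzero integer $k$). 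Hence the $L_p$-normalized functions have $L_\infty$-norm bounded by a constant depending only on $p$. A similar check handles the constant function $1$.

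Given both conditions, Proposition \ref{zExample 1q.} applies with $\Omega=\bbT^d$ and yields exactly
$$
\|f_{C(t,p,d)\,m^{p'-1}\ln(m+1)}\|_p \le C\,\sigma_m(f_0,\Psi)_p,
$$
the constant $C(t,p,\Omega)$ now being renamed $C(t,p,d)$. I do not anticipate any genuine obstacle: the corollary is a direct reading of the proposition once one notes the two standard features of the trigonometric system above. The only quantitative point worth tracking is that the uniform $L_\infty$-bound of the system enters the Nikol'skii-type and $(K,D)$-unconditionality constants appearing implicitly in the proof of Proposition \ref{zExample 1q.}, but this dependence is absorbed into the constant $C(t,p,d)$.
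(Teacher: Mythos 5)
Your proposal is correct and matches the paper's intent exactly: the paper states the corollary as a direct application of Proposition \ref{zExample 1q.} with no further argument, and the only content is the routine verification you supply — that the real $d$-variate trigonometric system is orthogonal in $L_2(\bbT^d)$ and that its $L_p$ norms for $k\ne 0$ equal a single positive constant $c(p)$ (by periodicity/change of variables plus Fubini), so the $L_p$-normalized system is uniformly bounded in $L_\infty$.
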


\begin{Proposition}\label{zExample 4.} Let $\Psi$ be the normalized in $L_p$, $2\le p<\infty$, multivariate Haar basis 
${\mathcal H}^d_p={\mathcal H}_p\times\cdots\times {\mathcal H}_p$. 
Then 
\begin{equation}\label{z4.2}
\|f_{C(t,p,d)m^{2/p'}}\|_p \le C\sigma_m(f_0,{\mathcal H}^d_p)_p .
\end{equation}
\end{Proposition}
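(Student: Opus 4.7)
The plan is to deduce this from Theorem~\ref{zT2.8} applied to the multivariate Haar system $\mathcal{H}^d_p$ with $q=2$ (which is legitimate since $\rho(u,L_p)\le \gamma u^2$ with $\gamma=(p-1)/2$ for $p\ge 2$) and with the incoherence exponent $r=1/p'$, so that $rq'=2/p'$ reproduces the exponent appearing in the statement. The whole task then reduces to verifying assumptions \textbf{A2} and \textbf{A3} for $\mathcal{H}^d_p$ with constants $U=U(p,d)$ and $V=V(p,d)$ independent of the sparsity level $m$ and of the disjoint index sets $A,\Lambda$; once this is done, the iteration count $\lceil C\,V^{q'}\ln(U+1)K^{rq'}\rceil$ produced by Theorem~\ref{zT2.8} becomes $\lceil C(t,p,d)\,m^{2/p'}\rceil$, matching the claim.

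Assumption \textbf{A2} is handled by the classical unconditionality of the tensor-product Haar basis in $L_p(\Omega)$ for $1<p<\infty$: there exists $U(p,d)$ such that $\|\sum_{i\in A}c_i h_i\|_p\le U\|\sum_{i\in B}c_i h_i\|_p$ whenever $A\subset B$, which is exactly $(K,D)$-unconditionality with $K,D$ arbitrary.

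The heart of the matter is \textbf{A3}, and I would derive it from the one-sided square-function bound
$$\Bigl(\sum_{i\in A}|c_i|^p\Bigr)^{1/p}\le C(p,d)\,\Bigl\|\sum_{i\in A}c_i h_i\Bigr\|_p,$$
obtained by the standard three-step recipe: (i) randomize the signs $c_i\mapsto\epsilon_i c_i$ and use Haar unconditionality to compare $\|\sum c_i h_i\|_p$ with the $L_p$-average $(\mathbb{E}_\epsilon\|\sum\epsilon_i c_i h_i\|_p^p)^{1/p}$; (ii) apply Khintchine's inequality (for $p\ge 2$) pointwise in $x$ to identify this average with the square function $\|(\sum|c_i h_i|^2)^{1/2}\|_p$ up to $p$-dependent constants; (iii) use the pointwise embedding $(\sum a_j^p)^{1/p}\le(\sum a_j^2)^{1/2}$ valid for $p\ge 2$ and nonnegative $a_j$, together with the normalization $\|h_i\|_p=1$, to bound $\sum|c_i|^p$ from above by $\|(\sum|c_i h_i|^2)^{1/2}\|_p^p$. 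Combining this with H\"older's inequality $\sum_{i\in A}|c_i|\le|A|^{1/p'}(\sum|c_i|^p)^{1/p}$ and with \textbf{A2} (to pass from $\|f_A\|_p$ to $\|f_A-\sum_{j\in\Lambda}c_j h_j\|_p$) yields \textbf{A3} with $r=1/p'$ and an absolute-in-$m$ constant $V=V(p,d)$.

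To conclude, fix $\eta>0$, pick an $m$-sparse $f^\e\in\Sigma_m(\mathcal{H}^d_p)$ with $\|f_0-f^\e\|_p\le\sigma_m(f_0,\mathcal{H}^d_p)_p+\eta$, and invoke Theorem~\ref{zT2.8} with $K=m$, $\e=\sigma_m+\eta$, $q=2$, $r=1/p'$; this gives $\|f_S\|_p\le C(\sigma_m(f_0,\mathcal{H}^d_p)_p+\eta)$ after $S=\lceil C(t,p,d)m^{2/p'}\rceil$ iterations, and letting $\eta\to 0$ finishes the proof. The only non-routine ingredient is the square-function inequality in step~(iii); once it is in place, the rest is bookkeeping.
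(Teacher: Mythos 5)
Your proof is correct and follows essentially the same approach as the paper, which indicates Proposition~\ref{zExample 4.} is based on Theorem~\ref{zT2.4} (assumptions {\bf A1}, {\bf A2}). You invoke Theorem~\ref{zT2.8} (assumptions {\bf A2}, {\bf A3}) instead, but since {\bf A1} and {\bf A2} imply {\bf A3} with $V=C_1U$ (as noted in the paper) and the technical core in both routes is identical --- verifying {\bf A1} with $r=1/p'$ via Khintchine plus the Haar square function, and {\bf A2} via unconditionality of ${\mathcal H}^d_p$ --- this is a harmless rerouting; if anything, Theorem~\ref{zT2.8} returns the marginally cleaner error bound $\|f_S\|\le C\e$ with an absolute $C$ rather than Theorem~\ref{zT2.4}'s $\le CU\e$.
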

 
The proof of Proposition \ref{zExample 4.} is based on Theorem \ref{zT2.4}.

 \begin{Proposition}\label{zExample 4q.} Let $\Psi$ be the normalized in $L_p$, $1<p\le 2$, univariate Haar basis 
${\mathcal H}_p$.  Then    
\begin{equation}\label{z4.2q}
\|f_{C(t,p)m}\|_p \le C\sigma_m(f_0,{\mathcal H}_p)_p .
\end{equation}
\end{Proposition}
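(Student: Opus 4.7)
The plan is to verify the two-assumption package \textbf{A1} and \textbf{A2} for the normalized univariate Haar system $\mathcal{H}_p = \{\tilde h_I\}_I$ in $L_p$, $1<p\le 2$, and then apply Theorem \ref{zT2.4}. Since $\rho(u,L_p)\le u^p/p$ we are in the regime $q=p$, $\gamma=1/p$, $q'=p'$. I will show that \textbf{A1} holds with Nikol'skii exponent $r=1/p'=1-1/p$, for which $rq'=1$ and $K^{rq'}=m$, producing the linear iteration count $S=C(t,p)m$ claimed in the proposition.

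Verification of \textbf{A2} is classical: the Haar system is an unconditional basis of $L_p$ for $1<p<\infty$ with unconditionality constant $U=U(p)$ depending only on $p$; hence $\mathcal{H}_p$ is $(K,D)$-unconditional for every $K\le D$ with the same constant, and the side condition $K+S\le D$ in Theorem \ref{zT2.4} is vacuous.

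The main step is the Nikol'skii-type inequality \textbf{A1}. I will argue by duality. A direct calculation with $\tilde h_I = |I|^{-1/p} h_I$ shows that, under the standard $L_p$--$L_{p'}$ pairing, its biorthogonal functional is $|I|^{1/p-1} h_I = |I|^{-1/p'} h_I$, which is precisely the Haar function normalized in $L_{p'}$; call it $\tilde h_I^{(p')}$, and note $\|\tilde h_I^{(p')}\|_{p'}=1$. Given any $K$-sparse $f=\sum_{I\in T} x_I \tilde h_I$ and any $A\subset T$, set $\varepsilon_I=\mathrm{sign}(x_I)$ and consider $F:=\sum_{I\in A}\varepsilon_I \tilde h_I^{(p')}\in L_{p'}$. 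Biorthogonality yields $\langle f_A,F\rangle=\sum_{I\in A}|x_I|$, while H\"older's inequality gives $\langle f_A,F\rangle \le \|f_A\|_p\,\|F\|_{p'}$. Because the Haar system is unconditional in $L_{p'}$ (a classical fact, since $p'\ge 2$) and democratic in $L_{p'}$ with fundamental function comparable to $m^{1/p'}$ (a standard consequence of the Littlewood--Paley square function estimate, equivalently of the greedy basis property of Haar in $L_{p'}$), one obtains $\|F\|_{p'}\le C(p)|A|^{1/p'}$. Combining these yields $\sum_{I\in A}|x_I|\le C(p)|A|^{1/p'}\|f_A\|_p$, which is \textbf{A1} with $r=1/p'$ and $C_1=C(p)$.

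With \textbf{A1} and \textbf{A2} in hand, Theorem \ref{zT2.4} applies: take $f^\varepsilon$ to be a near-best $m$-term approximation to $f_0$ from $\mathcal{H}_p$, so that $\varepsilon=\|f_0-f^\varepsilon\|_p\le 2\sigma_m(f_0,\mathcal{H}_p)_p$, and conclude that after $S=\lceil C(t,p)m\rceil$ iterations the WCGA($t$) produces $\|f_S\|_p\le C\,U(p)\,\varepsilon\le C(t,p)\sigma_m(f_0,\mathcal{H}_p)_p$. The main potential obstacle is citing cleanly the democracy of the $L_{p'}$-normalized Haar basis (together with the upper unconditionality constant in $L_{p'}$) used to bound $\|F\|_{p'}$; this is a well-known property of the Haar system but is the one ingredient that has to be imported from outside the excerpt.
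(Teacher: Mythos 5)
Your proof is correct and follows the same framework as the paper: verify the standing assumptions for the Haar system (via duality and the democracy of ${\mathcal H}_{p'}$ in $L_{p'}$, giving $r=1/p'$, so $rq'=1$ and hence a linear iteration count since $q=p$, $q'=p'$, $\gamma=1/p$) and then invoke one of the general Lebesgue-inequality theorems. You apply Theorem \ref{zT2.4} (\textbf{A1}, \textbf{A2}) while the paper cites Theorem \ref{zT2.8} (\textbf{A2}, \textbf{A3}); since \textbf{A1} together with \textbf{A2} implies \textbf{A3} with $V=C_1U$, the two are interchangeable, the only cosmetic difference being that your route leaves a residual factor $U(p)$ in the error constant where the paper's route gives an absolute constant in front of $\sigma_m$.
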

 
The proof of Proposition \ref{zExample 4q.} is based on Theorem \ref{zT2.8}.
  
 \begin{Proposition}\label{zExample 5.} Let $X$ be a Banach space with $\rho(u)\le \gamma u^2$. Assume that $\Psi$ is a normalized Schauder basis for $X$. Then     
 \begin{equation}\label{z4.5}
\|f_{C(t,X,\Psi)m^2\ln (m+1)}\| \le C\sigma_m(f_0,\Psi) .
\end{equation}
\end{Proposition}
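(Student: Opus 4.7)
The plan is to invoke Theorem~\ref{zT2.7} with $q=2$; the only substantive step is to verify that a normalized Schauder basis $\Psi=\{\psi_i\}$ satisfies the $\ell_1$ incoherence property \textbf{A3} with exponent $r=1$ and a constant $V$ depending only on the basis constant $C_\Psi$, uniformly in the sparsity level and with $D$ unrestricted. Once that is done, Proposition~\ref{zExample 5.} reduces to direct substitution into Theorem~\ref{zT2.7}.

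First I would recall that for a normalized Schauder basis the biorthogonal coordinate functionals $\psi_i^*$ satisfy the classical estimate $\|\psi_i^*\|_{X^*}\le 2C_\Psi$, which is an immediate consequence of the uniform boundedness of the partial-sum projections $S_N$ with $\|S_N\|\le C_\Psi$. Given finite disjoint index sets $A,\Lambda\subset\bbN$ and coefficients $\{x_i\}_{i\in A}$, $\{c_i\}_{i\in\Lambda}$, apply $\psi_i^*$ (for $i\in A$) to the element $h:=\sum_{i\in A}x_i\psi_i-\sum_{i\in\Lambda}c_i\psi_i$; since $\psi_i^*(h)=x_i$, this gives $|x_i|\le 2C_\Psi\|h\|$. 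Summing over $i\in A$ yields
\[
\sum_{i\in A}|x_i|\ \le\ 2C_\Psi\,|A|\,\Big\|f_A-\sum_{i\in\Lambda}c_i\psi_i\Big\|,
\]
so \textbf{A3} holds for $\Psi$ with $V=2C_\Psi$, $r=1$, and arbitrary $D$. Note that no unconditionality of $\Psi$ is used here, only the uniform boundedness of the coordinate functionals.

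With this in hand, I would apply Theorem~\ref{zT2.7} as follows. Given $f_0\in X$ and $m\in\bbN$, pick a nearly best $m$-term approximant $f^\e=\sum_{i\in T}x_i\psi_i$ with $|T|=m$ and $\|f_0-f^\e\|\le\e\le 2\sigma_m(f_0,\Psi)$. By the previous paragraph $f^\e$ satisfies \textbf{A3} with the stated parameters. Taking $q=2$ (so $q'=2$ and $rq'=2$) and noting that the side condition $K+S\le D$ is automatic since $D$ is unrestricted, Theorem~\ref{zT2.7} gives
\[
\|f_S\|\ \le\ C\e\ \le\ 2C\,\sigma_m(f_0,\Psi)\qquad\text{after}\qquad S\le\bigl\lceil C(t,\gamma,2)\,(2C_\Psi)^2\,\ln(2C_\Psi m)\,m^2\bigr\rceil
\]
iterations. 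Absorbing $C_\Psi$ and the slowly varying factor $\ln(2C_\Psi m)$ into $C(t,X,\Psi)\,\ln(m+1)$ produces the iteration count $C(t,X,\Psi)\,m^2\ln(m+1)$ claimed.

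The main (and essentially only) obstacle is securing uniformity of $V$ in the sparsity level $K$ and in $D$: a naive estimate that first uses \textbf{A1} with $C_1=2C_\Psi|A|$ and then \textbf{A2} would inflate the constants and ruin the exponent. The Schauder-basis hypothesis handles this cleanly because the coordinate functionals are bounded on all of $X$, so the bound $|x_i|\le 2C_\Psi\|h\|$ is valid for the full error element $h$ regardless of the ambient support; this is exactly what makes \textbf{A3} available with a constant independent of $K$ and $D$, and is the feature that lets Theorem~\ref{zT2.7} yield the stated $m^2\ln(m+1)$ Lebesgue-type rate.
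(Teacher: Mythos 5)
Your proposal is correct and takes essentially the same route the paper intends: the paper only remarks that Proposition~\ref{zExample 5.} ``is based on Theorem~\ref{zT2.7},'' and your verification that a normalized Schauder basis satisfies \textbf{A3} with $r=1$, $V=2C_\Psi$, and unrestricted $D$ via the boundedness of the coordinate functionals $\|\psi_i^*\|\le 2C_\Psi$ is the standard argument that fills in that gap. The rest (taking $q=2$ so $q'=2$ and $rq'=2\ge 1$, choosing a near-best $m$-term approximant, absorbing $\ln(2C_\Psi m)$ into $C(t,X,\Psi)\ln(m+1)$) is a direct substitution into Theorem~\ref{zT2.7}, exactly as claimed.
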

 
The proof of Proposition \ref{zExample 5.} is based on Theorem \ref{zT2.7}.
 
We note that the above bound (\ref{z4.5}) still works if we replace the assumption that $\Psi$ is a Schauder basis by the assumption that a dictionary $\cD$ is $(1,D)$-unconditional with constant $U$. Then we obtain
$$
\|f_{C(t,\gamma,U)K^2\ln (K+1)}\| \le C\sigma_K(f_0,\Psi),\quad\text{for}\quad K+\lceil C(t,\gamma,U)K^2\ln K\rceil \le D .
$$

\begin{Proposition}\label{zExample 5q.} Let $X$ be a Banach space with $\rho(u)\le \gamma u^q$, $1<q\le 2$. Assume that $\Psi$ is a normalized Schauder basis for $X$. Then    
 \begin{equation}\label{z4.5q}
\|f_{C(t,X,\Psi)m^{q'}\ln (m+1)}\| \le C\sigma_m(f_0,\Psi) .
\end{equation}
\end{Proposition}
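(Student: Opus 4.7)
The plan is to reduce the statement to Theorem \ref{zT2.7} by verifying that every normalized Schauder basis $\Psi=\{g_i\}$ automatically satisfies assumption \textbf{A3} (the $\ell_1$ incoherence property) with exponent $r=1$ and some constant $V$ depending only on $\Psi$. This mirrors the argument behind Proposition \ref{zExample 5.}; only the exponent $q'=q/(q-1)$ changes in the final bound.

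First I would fix $K_b$, the basis constant of $\Psi$, and let $\{g_i^*\}$ denote the associated coordinate functionals. Since the partial-sum projections $P_n$ of $\Psi$ satisfy $\|P_n\|\le K_b$, the rank-one operators $P_n-P_{n-1}\colon f\mapsto g_n^*(f)\,g_n$ have norm at most $2K_b$; together with the normalization $\|g_i\|=1$ this yields $\|g_i^*\|\le 2K_b$ for every $i$.

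Next I would verify \textbf{A3} with $V:=2K_b$ and $r:=1$. Fix disjoint finite index sets $A,\Lambda$ and coefficients $\{c_j\}_{j\in\Lambda}$, and set $h:=f_A-\sum_{j\in\Lambda} c_j g_j$. Because $i\in A$ implies $i\notin\Lambda$, the $i$-th expansion coefficient of $h$ equals $x_i$, so $|x_i|=|g_i^*(h)|\le 2K_b\|h\|$. Summing over $i\in A$,
\[
\sum_{i\in A}|x_i|\;\le\;2K_b\,|A|\,\Bigl\|f_A-\sum_{j\in\Lambda}c_j g_j\Bigr\|,
\]
which is precisely \textbf{A3}. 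The condition $rq'=q'\ge 1$ is immediate, and since $\Psi$ is a basis we may take $D=\infty$, so the requirement $K+S\le D$ in Theorem \ref{zT2.7} is vacuous.

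Finally I would apply Theorem \ref{zT2.7} to $f_0$ and to $f^\e$, where $f^\e$ is a best $m$-term approximation of $f_0$ with respect to $\Psi$ and $\e:=\sigma_m(f_0,\Psi)$. With $K=m$, $r=1$, and $V=2K_b$ the theorem prescribes
\[
S:=\bigl\lceil C(t,\gamma,q)\,(2K_b)^{q'}\ln(2K_b\,m)\,m^{q'}\bigr\rceil \;\le\; C(t,X,\Psi)\,m^{q'}\ln(m+1)
\]
iterations and concludes $\|f_S\|\le C\e=C\sigma_m(f_0,\Psi)$, which is the claim. There is no substantive obstacle: the whole argument is the elementary coordinate-functional bound plus an invocation of Theorem \ref{zT2.7}, which does the real work.
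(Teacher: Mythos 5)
Your reduction to Theorem \ref{zT2.7} is exactly the approach the paper indicates ("The proof of Proposition \ref{zExample 5q.} is based on Theorem \ref{zT2.7}"), and the verification of assumption \textbf{A3} with $r=1$ and $V=2K_b$ via the coordinate functionals is correct, as is the absorption of $\ln(2K_b m)$ into a constant times $\ln(m+1)$. The argument is sound and matches the paper's route.
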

 
The proof of Proposition \ref{zExample 5q.} is based on Theorem \ref{zT2.7}.
 
We note that the above bound (\ref{z4.5q}) still works if we replace the assumption that $\Psi$ is a Schauder basis by the assumption that a dictionary $\cD$ is $(1,D)$-unconditional with constant $U$. Then we obtain
$$
\|f_{C(t,\gamma,q,U)K^{q'}\ln (K+1)}\| \le C\sigma_K(f_0,\cD),\quad\text{for}\quad K+\lceil C(t,\gamma,q,U)K^{q'}\ln K\rceil\le D .
$$

\section{Some other algorithms for recovery of sparse elements}
\label{QO}

As we pointed out in Remark \ref{LebR1} the Lebesgue-type inequalities guarantee exact recovery of sparse elements. 
In this section we discuss greedy type algorithms, which are based on other ideas than dual greedy and $X$-greedy algorithms are based on, but provide good results for exact recovery and Lebesgue-type inequalities for special dictionaries. Results of this section are based on the paper \cite{ST}, which is a follow up to the dissertation \cite{S}. 

In the paper \cite{ST} we proved the Lebesgue-type inequalities for greedy approximation in Banach spaces. Let $X$ be a Banach space with norm $\|\cdot\|:=\|\cdot\|_X$.  We introduce a new norm, associated with a dictionary $\cD$,  by the formula
$$
\|f\|_\cD:=\sup_{g\in\cD}\sup_{F_g}|F_g(f)|,\quad f\in X.
$$
Please, do not confuse $\|f\|_\cD$ with the $\|F\|_\cD$ (see Definition \ref{Dnorm}), which is defined for $F\in X^*$. 
  The following concept of the $M$-coherent dictionary in the case of Banach spaces (see \cite{VT107}) is a generalization of the well known concept in the case of Hilbert spaces. 

Let $\cD$ be a dictionary in a Banach space $X$. We define the coherence parameter of this dictionary in the following way
$$
M(\cD):= \sup_{g\neq h;g,h\in\cD}\sup_{F_g}|F_g(h)|.
$$
We note that, in general, a norming functional $F_g$ is not unique. This is why we take $\sup_{F_g}$ over all norming functionals of $g$ in the definitions of $\|f\|_\cD$ and $M(\cD)$. We do not need $\sup_{F_g}$ in those definitions   if for each 
$g\in\cD$ there is a unique norming functional $F_g\in X^*$. Then we define $\cD^*:=\{F_g,g\in\cD\}$ and call $\cD^*$ a {\it dual dictionary} to a dictionary $\cD$. 
 It is known that the uniqueness of the norming functional $F_g$ is equivalent to the property that $g$ is a point of Gateaux smoothness:
 $$
 \lim_{u\to 0}(\|g+uy\|+\|g-uy\|-2\|g\|)/u =0
 $$
 for any $y\in X$. In particular, if $X$ is uniformly smooth then $F_f$ is unique for any $f\neq 0$. We considered in \cite{VT107} the following greedy algorithm, which generalizes the Weak Orthogonal Greedy Algorithm to a smooth Banach space setting.
 
 {\bf Weak Quasi-Orthogonal Greedy Algorithm (WQOGA).} 
 Let $t\in (0,1]$. Denote $f_0:=f_0^{q,t}:=f$ (here and below index $q$ stands for {\it quasi-orthogonal}) and find $\varphi_1:=\varphi_1^{q,t}\in\cD$ such that
 $$
 |F_{\varphi_1}(f_0)| \ge t\sup_{g\in \cD}|F_g(f_0)|.
 $$
 Next, we find $c_1$ satisfying
 $$
 F_{\varphi_1}(f-c_1\varphi_1)=0.
 $$ 
 Denote $f_1:=f_1^{q,t}:=f-c_1\varphi_1$. 
 
 We continue this construction in an inductive way. Assume that we have already constructed residuals $f_0,f_1,\dots,f_{m-1}$ and dictionary elements $\varphi_1,\dots,\varphi_{m-1}$. Now, we pick an element
 $\varphi_m:=\varphi_m^{q,t}\in\cD$ such that   
 $$
 |F_{\varphi_m}(f_{m-1})| \ge t\sup_{g\in \cD}|F_g(f_{m-1})|.
 $$
 Next, we look for $c_1^m,\dots,c_m^m$ satisfying
 \begin{equation}\label{QO9.1}
 F_{\varphi_j}(f-\sum_{i=1}^mc_i^m\varphi_i)=0,\quad j=1,\dots,m. 
 \end{equation}
If there is no solution to (\ref{QO9.1}) then we stop, otherwise we denote $G_m:=G_m^{q,t} := \sum_{i=1}^mc_i^m\varphi_i$ and $f_m:=f_m^{q,t}:=f-G_m $ with $c_1^m,\dots,c_m^m$ satisfying (\ref{QO9.1}).

\begin{Remark} We note that (\ref{QO9.1}) has a unique solution if \newline 
$\det [F_{\varphi_j}(\varphi_i)]_{i,j=1}^m \neq 0$. We apply the WQOGA in the case of a dictionary with the coherence parameter $M:=M(\cD)$. Then, by a simple well known argument on the linear independence of the rows of the matrix $[F_{\varphi_j}(\varphi_i)]_{i,j=1}^m$, we conclude that (\ref{QO9.1}) has a unique solution for any $m<1+1/M$.   Thus, in the case of an $M$-coherent 
dictionary $\cD$, we can run the WQOGA for at least $[1/M]$ iterations.
\end{Remark} 

In the case $t=1$ we call the WQOGA the Quasi-Orthogonal Greedy Algorithm (QOGA). In the case of QOGA we need to make an extra assumption that the corresponding maximizer $\ff_m\in\cD$ exists. Clearly, it is the case when $\cD$ is finite.  

It was proved in \cite{VT107}  (see also \cite{VTbook}, p.382) that the WQOGA is as good as the WOGA in the sense of exact recovery of sparse signals with respect to incoherent dictionaries. The following result was obtained in \cite{VT107} (see Theorem 11.14 there).
 \begin{Theorem}[{\cite{VT107}}]\label{QOT1} Let $t\in (0,1]$. Assume that $\cD$ has coherence parameter $M$. Let $S<\frac{t}{1+t}(1+1/M)$. Then for any
 $f$ of the form
 \begin{equation}\label{QO1.1}
 f=\sum_{i=1}^Sa_i\psi_i,
 \end{equation}
 where $\psi_i$ are distinct elements of $\cD$, the WQOGA recovers it exactly after $S$ iterations. In other words we have that $f^{q,t}_S=0$.
 \end{Theorem}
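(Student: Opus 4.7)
The plan is to show by induction on $m$ that for each $1 \le m \le S$, either $f_{m-1}^{q,t}=0$ (so the algorithm has already recovered $f$), or the greedy selection is forced to pick an atom $\varphi_m$ from the support $\{\psi_1,\dots,\psi_S\}$ that has not been chosen yet. Once I can maintain this invariant, after $S$ iterations the algorithm will have selected all $\psi_i$'s, and a short linear-algebraic argument using the coherence parameter $M$ and the biorthogonality relations will force $f_S^{q,t}=0$.

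For the inductive step, suppose at the start of iteration $m$ we have chosen $\varphi_j = \psi_{\sigma(j)}$ for $j=1,\dots,m-1$, where $\sigma$ is an injection into $\{1,\dots,S\}$; let $T=\sigma(\{1,\dots,m-1\})$. Since $G_{m-1}$ lies in $\sp(\psi_i : i\in T)$, I can write $f_{m-1}=\sum_{i=1}^S x_i\psi_i$ for some coefficients $x_i$. The quasi-orthogonality relations \eqref{QO9.1} give $F_{\psi_i}(f_{m-1})=0$ for every $i\in T$. Writing $A:=\sum_{i=1}^S|x_i|$ and using $|F_{\psi_i}(\psi_k)|\le M$ whenever $i\ne k$, I get two basic estimates:
\begin{equation*}
|F_{\psi_i}(f_{m-1})| \ge |x_i|(1+M) - MA \quad (i\notin T),\qquad |F_g(f_{m-1})| \le MA \quad (g\notin\{\psi_1,\dots,\psi_S\}).
\end{equation*}
Letting $i^\ast$ attain $\max_i |x_i|$, I observe that $A\le S|x_{i^\ast}|$, so
\begin{equation*}
\max_{i\notin T}|F_{\psi_i}(f_{m-1})| \ge |x_{i^\ast}|(1+M-SM),
\end{equation*}
provided $i^\ast\notin T$ (and the case $i^\ast\in T$ is ruled out exactly because the relation $|x_{i^\ast}|(1+M)\le MA\le MS|x_{i^\ast}|$ would force $S\ge 1+1/M$, contradicting our assumption). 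If a candidate $g\notin\{\psi_1,\dots,\psi_S\}$ were to satisfy the weak greedy criterion, I would have $t\cdot |x_{i^\ast}|(1+M-SM) \le |F_g(f_{m-1})|\le MS|x_{i^\ast}|$, leading to $S\ge \frac{t}{1+t}(1+1/M)$, which violates the hypothesis. Moreover, since $F_{\psi_i}(f_{m-1})=0$ for $i\in T$, no element of $T$ can satisfy the greedy criterion either (as $f_{m-1}\ne 0$ forces $\sup_g|F_g(f_{m-1})|>0$). Hence $\varphi_m\in\{\psi_1,\dots,\psi_S\}\setminus T$, completing the induction.

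After $S$ iterations we have $\{\varphi_1,\dots,\varphi_S\}=\{\psi_1,\dots,\psi_S\}$, and $f_S^{q,t}$ belongs to $\sp(\psi_1,\dots,\psi_S)$ and satisfies $F_{\psi_i}(f_S^{q,t})=0$ for every $i$. Writing $f_S^{q,t}=\sum_{i=1}^S y_i\psi_i$ and applying the same coherence estimate to the index $i^\ast$ maximizing $|y_i|$ gives $|y_{i^\ast}|(1+M)\le M(S|y_{i^\ast}|-|y_{i^\ast}|) \cdot \tfrac{S}{S-1}$ style bound; more cleanly, $|y_{i^\ast}|(1+M)\le M\sum_{k}|y_k|\le MS|y_{i^\ast}|$, forcing $S\ge 1+1/M$ unless $y_{i^\ast}=0$. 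Since the hypothesis $S<\frac{t}{1+t}(1+1/M)<1+1/M$ rules this out, all $y_i=0$ and $f_S^{q,t}=0$.

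The main technical point to be careful about is the case analysis for where the maximum $|x_{i^\ast}|$ can lie: ruling out $i^\ast\in T$ is what ensures the inductive argument actually produces a useful lower bound on $\max_{i\notin T}|F_{\psi_i}(f_{m-1})|$, and this is where the coherence bound $S<1+1/M$ (automatic from the hypothesis) is essential. Everything else is bookkeeping with the triangle inequality and the coherence estimate $|F_{\psi_i}(\psi_k)|\le M$. I also need to silently invoke the remark preceding the theorem, which guarantees that the linear system \eqref{QO9.1} defining $c_i^m$ has a unique solution for $m<1+1/M$, so the algorithm runs without stopping for at least $S$ iterations.
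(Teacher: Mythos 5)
Your proof is correct and follows the standard coherence (diagonal-dominance) argument for exact recovery by matching-pursuit--type algorithms; as far as I can tell this is the same route the original proof takes. The two key estimates --- $|F_{\psi_i}(f_{m-1})| \ge (1+M)|x_i| - MA$ for a support index $i\notin T$, and $|F_g(f_{m-1})| \le MA$ for $g$ off the support --- combined with $A\le S|x_{i^*}|$ for the maximizing index, give exactly the threshold $S < \tfrac{t}{1+t}(1+1/M)$ once you balance $t(1+M-SM)$ against $MS$, and the closing linear-algebra step forcing $f_S=0$ from $F_{\psi_i}(f_S)=0$ for all $i$ is the same diagonal-dominance computation in disguise.

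Two small points worth making explicit. First, you use $F_{\psi_i}(\psi_i)=1$ throughout, which presupposes the dictionary is normalized ($\|g\|=1$); the dictionary definition in this paper only requires $\|g\|\le 1$, but normalization is the intended (and usual) setting for coherence-based results, so state the hypothesis. Second, the assertion that $f_{m-1}\ne 0$ by itself forces $\sup_{g}|F_g(f_{m-1})|>0$ is not a general Banach-space fact; the positivity you need actually comes for free from your own lower bound $\max_{i\notin T}|F_{\psi_i}(f_{m-1})|\ge |x_{i^*}|(1+M-SM)>0$ once you have placed $i^*\notin T$, so cite that rather than appealing to a nonexistent general principle. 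With those tweaks the argument is airtight and matches what one expects from \cite{VT107}.
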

It is known (see, for instance, \cite{VTbook}, pp.303--305) that the bound $S<\frac{1}{2}(1+1/M)$ is sharp for exact recovery by the OGA. 

 As above,  we define the best $m$-term approximation in the norm $Y$ as follows
$$
\sigma_m(f)_Y := \inf_{g\in\Sigma_m(\cD)}\|f-g\|_Y.
$$
In this section the norm $Y$ will be either the norm $X$ of our Banach space or the norm $\|\cdot\|_\cD$ defined above.
In \cite{ST} (see Theorems 1.4, 1.5 and Corollary 1.2 there) we proved the following two Lebesgue-type inequalities.
 \begin{Theorem}[{\cite{ST}}]\label{QOT0.1} Assume that $\cD$ is an $M$-coherent dictionary. Then for $m\le 1/(3M)$ we have for the QOGA
 \begin{equation}\label{QO0.2}
 \|f_m\|_\cD \le 13.5\sigma_m(f)_\cD.
 \end{equation}
 \end{Theorem}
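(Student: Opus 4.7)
The plan is to compare the QOGA residual $f_m$ with a best $m$-term approximant in the norm $\|\cdot\|_\cD$. Let $\sigma:=\sigma_m(f)_\cD$ and fix $h=\sum_{i\in T}a_i\psi_i\in\Sigma_m(\cD)$ with $|T|=m$ realizing this infimum (up to $\varepsilon$, letting $\varepsilon\downarrow 0$ at the end); set $r:=f-h$, so $\|r\|_\cD\le\sigma$. The selected elements $\varphi_1,\dots,\varphi_m$ are automatically distinct: if $f_k\neq 0$, the biorthogonality $F_{\varphi_j}(f_k)=0$ for $j\le k$ together with $|F_{\varphi_{k+1}}(f_k)|=\|f_k\|_\cD>0$ forces $\varphi_{k+1}\notin\{\varphi_1,\dots,\varphi_k\}$. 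Put $B:=T\cup\{\varphi_1,\dots,\varphi_m\}$, so $|B|\le 2m$. The decomposition I would use is
\[
f_m = r + (h-G_m), \qquad \|f_m\|_\cD \le \sigma + \|h-G_m\|_\cD,
\]
reducing the task to bounding $\|h-G_m\|_\cD$.

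The core tool is a diagonal-dominance estimate coming from $M$-coherence. For any $u=\sum_{b\in B}\beta_b\,b$ and any $b_0\in B$, using $|F_{b_0}(b)|\le M$ for $b\neq b_0$ and $F_{b_0}(b_0)=\|b_0\|$ gives $|\beta_{b_0}|\,\|b_0\|\le |F_{b_0}(u)|+M\sum_{b\in B\setminus\{b_0\}}|\beta_b|$; summing over $b_0$ and using the hypothesis $m\le 1/(3M)$, which yields $(|B|-1)M\le(2m-1)M<2/3$, gives
\[
\sum_{b\in B}|\beta_b|\le 3\sum_{b\in B}|F_b(u)|.
\]
Applied to $u=h-G_m$, the biorthogonality $F_{\varphi_j}(h-G_m)=F_{\varphi_j}(f_m-r)=-F_{\varphi_j}(r)$ bounds each of the $m$ contributions from $b=\varphi_j$ by $\sigma$. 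For any $g\in\cD\setminus B$, expanding $F_g(h-G_m)=\sum_{b\in B}\beta_b F_g(b)$ yields $|F_g(h-G_m)|\le M\sum_b|\beta_b|$, and with $mM\le 1/3$ this combines with the previous bound to give a clean $O(\sigma)$ estimate for this part of $\cD$.

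The delicate part is bounding $|F_{\psi_i}(h-G_m)|$ for $\psi_i\in T\setminus\{\varphi_1,\dots,\varphi_m\}$, where the only direct estimate is the circular $|F_{\psi_i}(h-G_m)|\le\|f_m\|_\cD+\sigma$. To break the loop I would exploit the greedy criterion: at each iteration $k=1,\dots,m$ the element $\psi_i$ was available but $\varphi_k$ was preferred, so $|F_{\psi_i}(f_{k-1})|\le|F_{\varphi_k}(f_{k-1})|=\|f_{k-1}\|_\cD$. Chaining these inequalities across iterations and tracking the drift $|F_{\psi_i}(f_k)-F_{\psi_i}(f_{k-1})|$ via $M$-coherence applied to the increment $G_k-G_{k-1}$ should yield a uniform bound $|F_{\psi_i}(f_m)|\le C\sigma$. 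The coefficient control needed in the drift step comes from applying the same diagonal-dominance estimate to the biorthogonality equations $F_{\varphi_j}(G_k)=F_{\varphi_j}(f)$, which gives $\sum_j|c_j^k|\le\tfrac{3}{2}m\max_j|F_{\varphi_j}(f)|$.

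The main obstacle is precisely this drift analysis: in the QOGA the update $G_{k-1}\mapsto G_k$ readjusts all previously chosen coefficients $c_1^k,\dots,c_{k-1}^k$, not only appending $c_k^k\varphi_k$, so the increment $G_k-G_{k-1}$ can be supported on the entire set $\{\varphi_1,\dots,\varphi_k\}$. Controlling this coupling while keeping the constants tight is where the work lies; the final numerical constant $13.5=27/2$ is consistent with a product of three factors of $3=(1-2/3)^{-1}$ from successive diagonal-dominance inequalities combined with $\tfrac{1}{2}$-type savings from the triangle reductions $\|f_m\|_\cD\le\sigma+\|h-G_m\|_\cD$ and the splitting between $b=\varphi_j$ and $b\in T\setminus\{\varphi_j\}$ contributions.
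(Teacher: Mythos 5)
Your setup is sound and the first two tools you deploy---the decomposition $f_m=r+(h-G_m)$ and the diagonal-dominance estimate that converts $M$-coherence plus $m\le 1/(3M)$ into $\sum_b|\beta_b|\le 3\sum_b|F_b(u)|$---are exactly the right kind of machinery, and your identification of the case $g=\psi_i\in T\setminus\{\varphi_1,\dots,\varphi_m\}$ as the genuinely problematic one is correct. But the argument stops precisely there: what you call the ``drift analysis'' is announced, not carried out, and the final paragraph openly states that this is where the work lies and that the constant $13.5$ is only ``consistent with'' a guess about how the factors should compose. That is not a proof.

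Worse, the route you sketch for closing the gap is itself circular. The greedy inequality you propose to chain, $|F_{\psi_i}(f_{k-1})|\le\|f_{k-1}\|_\cD$, bounds the quantity of interest by the $\cD$-norm of an \emph{earlier} residual, which is exactly the sort of quantity the theorem is trying to control; nothing in the argument up to that point shows $\|f_{k-1}\|_\cD\lesssim\sigma_m(f)_\cD$ (and it cannot be deduced from the theorem applied at step $k-1$, since $\sigma_{k-1}(f)_\cD$ may be vastly larger than $\sigma_m(f)_\cD$). Likewise, the coefficient estimate you quote from the biorthogonality system, $\sum_j|c_j^k|\le\tfrac{3}{2}m\max_j|F_{\varphi_j}(f)|$, only bounds $\sum_j|c_j^k|$ by a multiple of $\|f\|_\cD$ (since $\varphi_1$ realizes the sup), and $\|f\|_\cD$ bears no relation to $\sigma$; so the drift $F_{\psi_i}(G_k-G_{k-1})$ is not bounded by $O(\sigma)$ this way. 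The missing ingredient is an argument showing that any $\psi_i\in T$ left unselected after $m$ iterations must carry a small coefficient $|a_i|=O(\sigma)$ (equivalently, that whenever the greedy step selects an element outside $T$, the residual $\cD$-norm was already $O(\sigma)$), in the spirit of the exact-recovery analysis behind Theorem~\ref{QOT1}; that is a genuine lemma that you would have to state and prove, and it is not a routine triangle-inequality chase.
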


 \begin{Theorem}[{\cite{ST}}]\label{QOT0.2} Assume that $\cD$ is an $M$-coherent dictionary in a Banach space $X$. There exists an absolute constant $C$   such that for $m\le 1/(3M)$ we have for the QOGA
$$
\|f_m\|_X \le C \inf_{g\in\Sigma_m(\cD)}(\|f-g\|_X + m\|f-g\|_\cD).
$$
\end{Theorem}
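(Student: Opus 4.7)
The plan is to reduce the $X$-norm bound on $\|f_m\|_X$ to the already-established $\cD$-norm bound (Theorem~\ref{QOT0.1}), by exploiting the fact that on $2m$-sparse elements the $\cD$-norm and the $X$-norm are comparable up to a factor of order $m$ (thanks to $M$-coherence and the constraint $m \le 1/(3M)$).

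First, fix an arbitrary $g = \sum_{i=1}^m b_i \psi_i \in \Sigma_m(\cD)$ with distinct $\psi_i\in\cD$, and write $h := f-g$. Since $G_m \in \sp(\ff_1,\dots,\ff_m) \subset \Sigma_m(\cD)$ as well, the difference $u := g - G_m$ is a $2m$-sparse element of the form $u = \sum_{j=1}^{2m} c_j \phi_j$ with $\phi_j\in\cD$ distinct. Splitting via the triangle inequality gives
\be\label{QO0.3}
\|f_m\|_X \le \|f-g\|_X + \|g - G_m\|_X = \|h\|_X + \|u\|_X.
\ee

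The key lemma to establish is the \emph{comparison estimate}: for any $k$-sparse $u=\sum_{j=1}^k c_j\phi_j$ in an $M$-coherent dictionary, applying the norming functional $F_{\phi_{j_0}}$ (where $|c_{j_0}|=\max_j|c_j|$) to $u$ yields $\|u\|_\cD \ge (1-(k-1)M)\max_j|c_j|$, so $\|u\|_X\le \sum_j|c_j|\le k\max_j|c_j|\le \frac{k}{1-(k-1)M}\|u\|_\cD$. With $k=2m$ and $m\le 1/(3M)$ we get $(2m-1)M < 2/3$, hence
\be\label{QO0.4}
\|u\|_X \le 6m\|u\|_\cD \le 6m\bigl(\|g-f\|_\cD + \|f-G_m\|_\cD\bigr) = 6m\bigl(\|h\|_\cD + \|f_m\|_\cD\bigr).
\ee

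Finally, invoke Theorem~\ref{QOT0.1}, which gives $\|f_m\|_\cD \le 13.5\,\sigma_m(f)_\cD \le 13.5\,\|h\|_\cD$. Plugging this into \eqref{QO0.4} and then into \eqref{QO0.3} produces
$$
\|f_m\|_X \le \|h\|_X + 6m\|h\|_\cD + 81m\|h\|_\cD \le C\bigl(\|h\|_X + m\|h\|_\cD\bigr)
$$
with the absolute constant $C=87$. Taking the infimum over $g\in\Sigma_m(\cD)$ yields the stated inequality. The only non-routine ingredient is the sparse comparison \eqref{QO0.4}; the main obstacle is verifying that the coherence bound $(1-(2m-1)M)^{-1}$ behaves like an absolute constant under the threshold $m\le 1/(3M)$, which is exactly why the factor $1/3$ (rather than, say, $1/2$) was chosen in the hypothesis.
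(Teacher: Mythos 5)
Your proof is correct and follows the same route as the paper: reducing the $X$-norm bound to the $\cD$-norm Lebesgue inequality of Theorem~\ref{QOT0.1} via the sparse comparison $\|u\|_X\le \frac{k}{1-(k-1)M}\|u\|_\cD$ applied to the $2m$-sparse element $u=g-G_m$, with the threshold $m\le 1/(3M)$ making the factor $\frac{2m}{1-(2m-1)M}\le 6m$ absolute, and then invoking $\|f_m\|_\cD\le 13.5\,\|f-g\|_\cD$. The one step worth flagging explicitly is the identity $F_{\phi_{j_0}}(\phi_{j_0})=1$, which tacitly assumes dictionary elements have norm exactly $1$; this is the standard convention in the $M$-coherent setting, although the paper's general dictionary definition only requires $\|g\|\le 1$.
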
 
\begin{Corollary}[{\cite{ST}}]\label{QOC0.2} Using the inequality $\|g\|_\cD \le \|g\|_X$ we obtain from Theorem \ref{QOT0.2}
$$
\|f_m\|_X \le C(1+m) \sigma_m(f)_X.
$$
\end{Corollary}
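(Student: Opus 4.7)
The plan is entirely direct: this corollary is a one-line consequence of Theorem \ref{QOT0.2}, obtained by estimating the $\|\cdot\|_\cD$-term inside the infimum by the $\|\cdot\|_X$-term. There is really no obstacle here; the only substantive point to check is the stated inequality $\|h\|_\cD \le \|h\|_X$ valid for every $h\in X$.

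First I would verify this inequality. By definition
$$
\|h\|_\cD = \sup_{g\in\cD}\sup_{F_g}|F_g(h)|,
$$
where the inner supremum is over all norming functionals of $g\in\cD$. Each such $F_g$ satisfies $\|F_g\|_{X^*}=1$, so $|F_g(h)|\le \|h\|_X$, and taking suprema over $F_g$ and over $g\in\cD$ yields $\|h\|_\cD\le\|h\|_X$.

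Next I would apply this pointwise to the residual $h=f-g$ for every $g\in\Sigma_m(\cD)$ on the right-hand side of the bound provided by Theorem \ref{QOT0.2}:
$$
\|f_m\|_X \le C\inf_{g\in\Sigma_m(\cD)}\bigl(\|f-g\|_X + m\|f-g\|_\cD\bigr) \le C\inf_{g\in\Sigma_m(\cD)}(1+m)\|f-g\|_X,
$$
and recognizing the infimum on the right as $\sigma_m(f)_X$ gives the asserted estimate $\|f_m\|_X\le C(1+m)\sigma_m(f)_X$. Since the hypothesis $m\le 1/(3M)$ required by Theorem \ref{QOT0.2} is inherited automatically, no further work is needed.
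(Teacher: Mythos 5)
Your proof is correct and matches the paper's approach exactly: the corollary is stated in the paper as an immediate consequence of Theorem \ref{QOT0.2} via the inequality $\|g\|_\cD \le \|g\|_X$, which you correctly justify by noting that each norming functional $F_g$ has $\|F_g\|_{X^*}=1$. Nothing further is needed.
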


Inequality (\ref{QO0.2}) is a perfect (up to a constant 13.5) Lebesgue-type inequality. It indicates that the norm $\|\cdot\|_\cD$ used in that inequality is a suitable norm for analyzing performance of the QOGA. Corollary \ref{QOC0.2} shows that the Lebesgue-type inequality (\ref{QO0.2}) in the norm $\|\cdot\|_\cD$ implies the Lebesgue-type inequality in the norm $\|\cdot\|_X$. 

We do not know if the Lebesgue-type inequality in Corollary  \ref{QOC0.2}
 is sharp. We know that it can be improved  (see Theorem
2.6 in \cite{ST}) in a Hilbert space. In this case, an extra factor $(1+m)$ can
be replaced by $(1 + m^{1/2})$, and it is known that this cannot
be replaced by a slower growing in $m$ factor, see \cite{GMS}. Thus, in
the case of Hilbert spaces, the technique from \cite{ST} based on the general
inequality (\ref{QO0.2}) provides sharp Lebesque-type inequalities.

\newpage
 \medskip
 {\bf \Large Chapter III : Greedy approximation with respect to special dictionaries.}
  \medskip
  
  \section{Brief Introduction}
  \label{BI}
  
  Probably, the simplest example of an infinite dictionary would be an orthonormal basis of a Hilbert space. The next natural 
  simple dictionary would be a Schauder basis in a Banach space. Theory of greedy approximation with respect to bases is 
  well developed. The reader can find the corresponding results in the books \cite{VTbook}, \cite{VTsparse} and in the very recent survey paper \cite{AAT} (also, see references therein). We will not discuss this direction in detail here but only give a very brief description of the problem setting. 
  
  Let a Banach space $X$, with a basis $\Psi =\{\psi_k\}_{k=1}^\infty$,    be given. We assume that $\|\psi_k\|\ge C>0$, $k=1,2,\dots$,  and consider the following theoretical greedy algorithm. For a given element $f\in X$ we consider the expansion
\begin{equation}\label{BIn1}
f = \sum_{k=1}^\infty c_k(f,\Psi)\psi_k. 
\end{equation}
For an element $f\in X$ we say that a permutation $\rho$ of the positive integers   is  decreasing  if 
\begin{equation}\label{BIn2}
|c_{k_1}(f,\Psi) |\ge |c_{k_2}(f,\Psi) | \ge \dots ,  
\end{equation}
 where $\rho(j)=k_j$, $j=1,2,\dots$, and write $\rho \in D(f)$.
If the   inequalities are strict in (\ref{BIn2}), then $D(f)$ consists of only one permutation. We define the $m$th greedy approximant of $f$, with regard to the basis $\Psi$ corresponding to a permutation $\rho \in D(f)$, by the formula
$$
G_m(f):=G_m(f,\Psi)  :=G_m(f,\Psi,\rho) := \sum_{j=1}^m c_{k_j}(f,\Psi)\psi_{k_j}.
$$
We note that there is another natural greedy-type algorithm based on ordering $\|c_k(f,\Psi)\psi_k\|$ instead of ordering absolute values of coefficients. In this case we do not need the restriction $\|\psi_k\|\ge C>0$, $k=1,2,\dots$. Let $\Lambda_m(f)$ be a set of indices such that
$$
\min_{k\in \Lambda_m(f)}\|c_k(f,\Psi)\psi_k\| \ge \max_{k\notin \Lambda_m(f)}\|c_k(f,\Psi)\psi_k\|.
$$
We define $G^X_m(f,\Psi)$ by the formula
$$
G^X_m(f,\Psi) := S_{\Lambda_m(f)}(f,\Psi), \quad \text{where}\quad S_E(f):=S_E(f,\Psi) := \sum_{k\in E} c_k(f,\Psi)\psi_k.
$$
It is clear that for a normalized basis ($\|\psi_k\|=1,$ $k=1,2,\dots$) the above two greedy algorithms coincide.
It is also clear that the above greedy algorithm $G_m^X(\cdot,\Psi)$ can be considered as a greedy algorithm $G_m(\cdot,\Psi')$, with $\Psi':=\{\psi_k/\|\psi_k\|\}_{k=1}^\infty$ being a normalized version of the $\Psi$. Thus, usually the researchers concentrate on studying the algorithm $G_m(\cdot,\Psi)$. In the above definition of $G_m(\cdot,\Psi)$ we impose an extra condition on a basis $\Psi$: $\inf_k\|\psi_k\|>0$. This restriction allows us to define $G_m(f,\Psi)$ for all $f\in X$.

The above algorithm $G_m(\cdot,\Psi)$ is a simple algorithm, which describes the theoretical scheme  for $m$-term approximation of an element $f$. This algorithm is called  Thresholding Greedy Algorithm (TGA). In order to understand the efficiency of this algorithm we compare its accuracy with the best-possible accuracy when an approximant is a linear combination of $m$ terms from $\Psi$ (see Section \ref{Leb} above). We define the best $m$-term approximation with regard to $\Psi$ as follows:
$$
\sigma_m(f):=\sigma_m(f,\Psi)_X := \inf_{c_k, \La} \|f -\sum_{k\in \La} c_k\psi_k\|_X,
$$
where the infimum is taken over coefficients $c_k$ and sets of indices $\La$ with cardinality $|\La|=m$. The best we can achieve with the algorithm $G_m$ is
$$
\|f-G_m(f,\Psi,\rho)\|_X = \sigma_m(f,\Psi)_X,
$$
or the slightly weaker
\begin{equation}\label{BIn3}
\|f-G_m(f,\Psi,\rho)\|_X \le G\sigma_m(f,\Psi)_X,  
\end{equation}
for all elements $f\in X$, and with a constant $G=C(X,\Psi)$ independent of $f$ and $m$.
It is clear that, when $X=H$ is a Hilbert space and $\mathcal B$ is an orthonormal basis, we have
$$
\|f-G_m(f,\mathcal B,\rho)\|_H = \sigma_m(f,\mathcal B)_H.
$$

 The following concept of a greedy basis has been introduced in \cite{KonT}.

\begin{Definition}\label{BID1}
We call a basis $\Psi$ a greedy basis if for
every $f\in X$ there exists a permutation $\rho \in D(f)$ such
that
$$
\|f-G_m(f,\Psi,\rho)\|_X\leq C\sigma_m (f,\Psi)_X
$$
with a constant $C$ independent of $f$ and $m$.
\end{Definition}

Later, other important concepts of greedy type bases -- quasi-greedy, almost greedy, and others -- were introduced and studied in detail. For instance, characterization theorems for greedy and almost greedy bases were proved. The reader can find those results, for instance, in \cite{VTsparse}. We point out one important feature of bases, which distinguishes them from the redundant dictionaries. If $\Psi$ is a basis then we have the expansion  (\ref{BIn1}) and, therefore, we can identify 
the function $f$ with its sequence of coefficients $\{c_k(f,\Psi)\}$. This, in turn, allows us to apply the very simple greedy type algorithm -- the Thresholding Greedy Algorithm. In this paper we mostly discuss redundant dictionaries and need 
other, more complicated, greedy type algorithms.  

\section{Bilinear approximation}
\label{Bi}

The first example of sparse approximation with respect to redundant dictionaries 
 was considered by E. Schmidt in \cite{Sch}, who studied the
approximation of functions $f(x,y)$ of two variables by bilinear forms,
$$
\sum_{i=1}^mu_i(x)v_i(y),
$$ 
in $L_2([0,1]^2)$. Thus, in this case we use the following dictionary (bilinear dictionary)
\be\label{Bi1}
\Pi := \Pi(1) := \{u(x)v(y)\,:\, \|u\|_{L_2}=\|v\|_{L_2}=1\},
\ee
where the parameter $1$ in $\Pi(1)$ indicates that the functions $u$ and $v$ are functions of a single variable. 
This problem is closely connected with properties of the integral operator
$$
J_f(g) := \int_0^1 f(x,y)g(y) dy
$$
 with kernel $f(x,y)$. E. Schmidt (\cite{Sch}) gave an expansion (known as the Schmidt expansion)
\be\label{Bi2}
f(x,y) = \sum_{j=1}^\infty s_j(J_f) \phi_j(x)\psi_j(y),
\ee
where $\{s_j(J_f)\}$ is a nonincreasing sequence of singular numbers of $J_f$, i.e. $s_j(J_f) := \lambda_j(J^*_fJ_f)^{1/2}$, where $\{\lambda_j(A)\}$ is a sequence of eigenvalues of an operator $A$, and $J^*_f$ is the adjoint operator to $J_f$. The two sequences $\{\phi_j(x)\}$ and $\{\psi_j(y)\}$ form orthonormal sequences of eigenfunctions of the operators $J_fJ_f^*$ and $J_f^*J_f$, respectively. He also proved that 
$$
\left\|f(x,y) -\sum_{j=1}^m s_j(J_f) \phi_j(x)\psi_j(y)\right\|_{L_2} 
$$
\be\label{Bi3}
=
\inf_{u_j,v_j\in L_2, \quad j=1,\dots,m}\left\|f(x,y) -\sum_{j=1}^m  u_j(x)v_j(y)\right\|_{L_2}.
\ee
It was understood later that the above best bilinear approximation can be realized by the following greedy algorithm. Assume $c_j$, $u_j(x)$, $v_j(y)$, $\|u_j\|_{L_2}=\|v_j\|_{L_2}=1$, $j=1,\dots,m-1$, have been constructed after $m-1$ steps of the algorithm. At the $m$th step we choose $c_m$, $u_m(x)$, $v_m(y)$, $\|u_m\|_{L_2}=\|v_m\|_{L_2}=1$, to minimize 
$$
\left\|f(x,y)-\sum_{j=1}^m c_j u_j(x)v_j(y)\right\|_{L_2}.
$$
The above algorithm is the Pure Greedy Algorithm (PGA) with respect to $\Pi$. Therefore, for the PGA with respect to $\Pi$ we have for any $f\in L_2([0,1]^2)$
\be\label{Bi4}
\|f-G_m(f,\Pi)\|_2 = \sigma_m(f,\Pi)_2 = \left(\sum_{j=m+1}^\infty s_j(J_f)^2\right)^{1/2}.
\ee
Relation (\ref{Bi4}) shows that in the case of dictionary $\Pi$ the problems of approximation by the PGA (OGA as well),
of best $m$-term approximation, and of decay of singular numbers $s_j(J_f)$ are closely related. The problem of 
estimating the singular numbers $s_j(J_f)$ for functions $f$ coming from different smoothness classes has a long history.
We give a brief description of it (see also \cite{VT83}).  

\begin{Remark}\label{BiR1a} In this section we discuss the best $m$-term bilinear approximations of functions from different classes. We use the notation 
$$
\sigma_m(\bF,\Pi)_p := \sup_{f\in \bF}\sigma_m(f,\Pi)_p.
$$
Note, that in a number of papers on this topic the following notation is used as well
$$
\tau_m(\bF)_p := \sigma_m(\bF,\Pi)_p.
$$
\end{Remark}

\subsection{H{\"o}lder-Nikol'skii classes}

We begin with presentation of some results for standard
   periodic H{\"o}lder-Nikol'skii classes $NH^{R_1,R_2}_{q_1,q_2} := NH^\bR_\bq$, $\bR=(R_1,R_2)$, $\bq=(q_1,q_2)$, of functions of two variables. We define these classes in the following way. 
First of all we define the vector $L_{\bq}$-norm, $\bq=(q_1,\dots,q_d)$, of functions of $d$ variables $\bx=(x_1,\dots,x_d)$ as
$$
\|f(\bx)\|_{\bq} := \|\cdots\|f(\cdot,x_2,\dots,x_d)\|_{q_1}\cdots\|_{q_d}.
$$

The class $NH^{\bR}_{\bq}$,  is the set of periodic functions $f\in L_{\bq}(\bbT^d)$ such that for each 
$l_j = [R_j]+1$, $j=1,\dots,d$, the following relations hold
$$
\|f\|_{\bq} \le 1,\qquad \|\Delta^{l_j,j}_tf\|_{\bq} \le |t|^{R_j}, \quad j=1,\dots,d,
$$
where $\Delta^{l,j}_t$ is the $l$-th difference with step $t$ in the variable $x_j$. In the case of functions of a single variable $NH^R_q$ coincides with the standard H{\"o}lder class $H^R_q$.
 
We now give some historical remarks on estimating eigenvalues and
singular numbers of integral operators. We begin with the
following theorem that is a corollary of the Weyl Majorant Theorem
(see \cite{GoKr}, p.41).
 \begin{Theorem}\label{BiT5.8} Let $A$ be a compact
(completely continuous) operator in a Hilbert space $H$. Suppose
that 
$$ s_n(A) \ll n^{-r}, \quad r>0. 
$$ 
Then 
$$ |\lambda_n(A)|
\ll n^{-r}.
$$
\end{Theorem}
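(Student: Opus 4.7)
The plan is to combine the Weyl multiplicative inequality (which is the content of the Weyl Majorant Theorem in its multiplicative form) with a Stirling-type estimate.

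First, I would invoke the Weyl inequality: for any compact operator $A$ on a Hilbert space, with eigenvalues $\lambda_j(A)$ arranged so that $|\lambda_1(A)| \ge |\lambda_2(A)| \ge \cdots$ and singular numbers $s_j(A)$ arranged in nonincreasing order, one has
\[
\prod_{j=1}^{n} |\lambda_j(A)| \le \prod_{j=1}^{n} s_j(A), \qquad n=1,2,\dots .
\]
This is the standard companion to the additive Weyl majorant inequality and is already recorded in \cite{GoKr}. Since the $|\lambda_j(A)|$ are nonincreasing, the left side dominates $|\lambda_n(A)|^n$, giving
\[
|\lambda_n(A)|^n \le \prod_{j=1}^{n} s_j(A).
\]

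Next, I would insert the hypothesis $s_n(A) \ll n^{-r}$, i.e.\ $s_j(A) \le C j^{-r}$ for some constant $C$ independent of $j$. Then
\[
\prod_{j=1}^{n} s_j(A) \le C^n (n!)^{-r},
\]
so extracting the $n$th root yields
\[
|\lambda_n(A)| \le C \bigl((n!)^{1/n}\bigr)^{-r}.
\]
Finally, Stirling's formula (or the elementary bound $(n!)^{1/n}\ge n/e$) gives $(n!)^{1/n}\gg n$, and therefore $|\lambda_n(A)| \ll n^{-r}$, as claimed.

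There is no serious obstacle here; the whole argument is a routine application of the Weyl multiplicative inequality plus Stirling, which is precisely why the statement is labelled a corollary. The only point requiring a moment of care is the move from the product bound to a bound on the single term $|\lambda_n(A)|$, which is possible only because the eigenvalues are listed in nonincreasing order of absolute value, so that $|\lambda_n(A)|$ is the smallest of the first $n$ factors.
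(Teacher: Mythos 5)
Your proof is correct, and since the paper does not actually prove this statement but merely cites it as a corollary of the Weyl Majorant Theorem from Gohberg--Krein, your argument via the multiplicative Weyl inequality $\prod_{j\le n}|\lambda_j|\le\prod_{j\le n}s_j$ together with $n!\ge (n/e)^n$ is precisely the standard derivation that the paper has in mind. The only point worth noting is that there is an equally short route via the additive form of the majorant theorem: applying it with $\phi(t)=t^p$ for any $0<p<1/r$ gives $n|\lambda_n|^p\le\sum_{j\le n}|\lambda_j|^p\le\sum_{j\le n}s_j^p\ll n^{1-rp}$, which again yields $|\lambda_n|\ll n^{-r}$; both are routine, and your multiplicative version is if anything the cleaner of the two.
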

I. Fredholm \cite{F} proved that if the kernel $f(x,y)$ is a continuous
function and satisfies the condition 
$$
\sup_{x,y}|f(x,y+t)-f(x,y)|
\le C|t|^\alpha,\quad 0<\alpha\le 1,
$$
 then for an arbitrary $\rho >
2/(2\alpha +1)$ the series 
$$
 \sum_{j=1}^\infty
|\lambda_j(J_f)|^\rho <\infty
$$
 converges.

Starting with that article, smoothness conditions with respect to
one variable were imposed on the kernel. H. Weyl \cite{We} proved the
estimate 
$$ 
\lambda_n(J_f) = o(n^{-R-1/2}) 
$$ 
under the condition
that the kernel $f(x,y)$ is symmetric and continuous and that
$\partial^R{f}/\partial{x}^R$ is continuous. 

Let us introduce some more notation.
Define $NH^{(R,0)}_{q_1,q_2}$ as follows: $f(x,y)$ belongs to this
class if for all $y\in \bbT$ the function $f(\cdot,y)$ of $x$
belongs to the class $H^R_{q_1}B(y)$, and $B(y)$ is such that
$\|B(y)\|_{q_2}\le 1$. We use here the following notation. For a function class 
$F$ and a number $B>0$ we define $FB:= \{f:f/B \in F\}$.

E. Hille and J.D. Tamarkin \cite{HT}  
proved, in particular, that for $1<q\le 2$ and $R\ge 1$
$$
\sup_{f\in NH^{(R,0)}_{q,q'}}|\lambda_n(J_f)| \ll n^{-R-1+1/q}
(\log n)^R, \quad q'=q/(q-1), 
$$ 
and they conjectured that the
extra logarithmic factor can be removed or even replaced by a
logarithmic factor with a negative power.

The next important step was taken by F. Smithies \cite{Sm}. He proved the
estimate 
\be\label{Bi5.5}
\sup_{f\in NH^{(R,0)}_{q,2}}s_n(J_f) \ll
n^{-R-1+1/q},\quad 1<q\le 2,\quad R>1/q-1/2.  
\ee 
Of later results
we mention those of A.O. Gel'fond and M.G. Krein (see \cite{GoKr}, Ch.III,
S9.4), M.Sh. Birman and M.Z. Solomyak \cite{BS}, and J.A. Cochran \cite{Co}.

We proved in \cite{VT39} the   estimate (\ref{Bi5.5}) for $1\le q\le 2$,
which implies that (\ref{Bi5.5}) holds also for $q=1$. This bound was derived 
from the corresponding results on the best bilinear approximations. It was proved 
in \cite{VT39} (see Theorem 2.5 there) that 
$$
\sigma_m(NH^{(R,0)}_{q_1,q_2},\Pi)_{p_1,p_2}  \asymp
m^{-R+(1/q_1-\max(1/2,1/p_1))_+},\quad (a)_+ := \max(a,0),
$$ 
for $1\le q_1\le p_1 \le \infty$, $1\le q_2=p_2\le \infty$ and 
 $R>r(q_1,p_1)$. We denote here $r(q,p):=(1/q-1/p)_+$ for $1\le q\le p \le 2$ or $1\le p\le q\le \infty$ and $r(q,p):= \max(1/2,1/q)$ otherwise.  
 
 Further, the bilinear approximation was extended to the case of vectors $\bx =(x_1,\dots,x_d)$, $\by =(y_1,\dots,y_d)$ instead of scalars $x$ and $y$. We formulate only some of those results. For $d\in \bbN$ consider the dictionary
\be\label{Bi6}
 \Pi(d) := \{u(\bx)v(\by)\,:\, \|u\|_{L_2(\bbT^d)}=\|v\|_{L_2(\bbT^d)}=1\},
\ee
where the parameter $d$ in $ \Pi(d)$  indicates that the functions $u$ and $v$ are functions of $d$ variables. 
Then, as above the problem of best bilinear approximation of $f(\bx,\by)$ in the $L_2(\bbT^{2d})$ with respect to $\Pi(d)$   is closely connected with the properties of the integral operator
$$
J_f(g) := \int_{\bbT^d} f(\bx,\by)g(\by) d\mu(\by)
$$
 with the kernel $f(\bx,\by)$ and the normalized on $\bbT^d$ Lebesgue measure $\mu$.
 
  \begin{Remark}\label{BIR0} Note that in a number of papers the following notation is used for the best $m$-term bilinear approximation
 \be\label{tau}
 \tau_m(f)_\bp := \sigma_m(f,\Pi(d))_\bp.
 \ee
 \end{Remark}

 Let $\bR:= (R_1,\dots,R_n)\in \bbR^n_+$,   be a vector with positive coordinates and $\ba:= (a_1,\dots,a_{n})$, $1\le a_j \le \infty$, $j=1,\dots,n$. Define the H{\"o}lder-Nikol'skii class $NH^{\bR}_\ba$ (see above) as the set of functions $f(\bz)$, $\bz \in \bbT^n$ such that for each 
$l_j := [R_j]+1$, $j=1,\dots,n$,   the following relations hold for  $f(\bz)$
$$
\|f\|_{\ba} \le 1,\qquad \|\Delta^{l_j,j}_tf\|_{\ba} \le |t|^{R_j}, \quad j=1,\dots,n
$$
where $\Delta^{l,j}_t$ is the $l$-th difference with step $t$ in the variable $z_j$. 
 We consider a special case of the above classes, when $n=2d$, $d\in \bbN$, $\bR = (\bR^1,\bR^2)$, with $\bR^i :=(R^i_1,\dots,R^i_d) \in \bbR^d_+$, $i=1,2$, and $\ba$ is such that $a_j =q_1$ for $j=1,\dots,d$ and $a_j =q_2$ for $j=d+1,\dots,2d$. We use the following notation $NH^{\bR^1,\bR^2}_{\bq}$, $\bq :=(q_1,\dots,q_1,q_2,\dots,q_2)$ for this class. 
 
  Let $\bR^i :=(R^i_1,\dots,R^i_d) \in \bbR^d_+$, $i=1,2$, be vectors with positive coordinates. Denote
$$
g(\bR^i) := \left(\sum_{j=1}^d (R^i_j)^{-1}\right)^{-1},\quad i=1,2. 
$$
 
 In addition, we use the following notations $\eta_i := (1/q_i -1/2)_+$, $i=1,2$ and
 $$
 \rho_1(\bq,\bR) := \eta_1(1-\eta_2/g(\bR^2))^{-1},\qquad \rho_2(\bq,\bR) := \eta_2(1-\eta_1/g(\bR^1))^{-1}.
 $$
 
  The following results are from \cite{VT47} (see Theorems 4.1, 4.1', 4.2, 4.2' there). 
 
 \begin{Theorem}[{\cite{VT47}}]\label{BiT1} Let $\bR =(\bR^1,\bR^2)$ be such that $g(\bR^1) \le g(\bR^2)$ and $g(\bR^1) > \eta_1$,
 $g(\bR^2) > \rho_2(\bq,\bR)$. Then for all $1\le \bq \le \infty$ we have
 $$
  \sigma_m(NH^{\bR^1,\bR^2}_{\bq},\Pi(d))_2 \asymp m^{-g(\bR^2) +\eta_1 g(\bR^2)/g(\bR^1)}.
 $$
 \end{Theorem}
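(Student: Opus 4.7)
My plan is to prove the $\asymp$ by establishing matching upper and lower bounds of order $m^{-\beta(\bq,\bR)}$, where $\beta(\bq,\bR) := g(\bR^2)(1-\eta_1/g(\bR^1))$. Note that under the hypotheses $g(\bR^1)\le g(\bR^2)$, $g(\bR^1)>\eta_1$, and $g(\bR^2)>\rho_2(\bq,\bR)$, one checks that $\beta(\bq,\bR)>0$ and that the rate is governed by the $\by$-smoothness $\bR^2$ together with a Nikol'skii-type loss from the $\bx$-side only.

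\emph{Upper bound via dyadic decomposition.} I would expand $f\in NH^{\bR^1,\bR^2}_{\bq}$ as $f = \sum_{\bs,\bt\in \bbZ^d_+}\delta_{\bs,\bt}(f)$, where $\delta_{\bs,\bt}(f)$ is the mixed dyadic frequency block of sizes $\sim 2^{s_j}$ in $x_j$ and $\sim 2^{t_j}$ in $y_j$. The H\"older-Nikol'skii smoothness yields the Jackson-type bound $\|\delta_{\bs,\bt}(f)\|_\bq \ll 2^{-(\bR^1,\bs)-(\bR^2,\bt)}$, and the Nikol'skii inequality for trigonometric polynomials on dyadic blocks lifts this to $\|\delta_{\bs,\bt}(f)\|_2 \ll 2^{-(\bR^1,\bs)-(\bR^2,\bt)+\eta_1|\bs|+\eta_2|\bt|}$. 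On each block I would apply the Schmidt expansion of the integral operator $J_{\delta_{\bs,\bt}(f)}$ to obtain a rank-$r_{\bs,\bt}$ bilinear approximant, whose $L_2$ error is controlled by the singular-value tail. Allocating $\{r_{\bs,\bt}\}$ subject to $\sum r_{\bs,\bt}\le m$ by a Lagrange argument, the optimizer lives on a single "critical hyperplane" in $(\bs,\bt)$-space determined by balancing the weights $-(\bR^1,\bs)-(\bR^2,\bt)+\eta_1|\bs|+\eta_2|\bt|$ against $-\tfrac12\log_2 r_{\bs,\bt}$; summing yields the $m^{-\beta(\bq,\bR)}$ bound, and the threshold conditions on $g(\bR^1),g(\bR^2)$ are precisely what makes the ambient geometric series converge.

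\emph{Lower bound via an extremal function.} Using the identity $\sigma_m(f,\Pi(d))_2^2 = \sum_{j>m} s_j(J_f)^2$, it suffices to exhibit $f^*\in c\cdot NH^{\bR^1,\bR^2}_\bq$ for some absolute $c>0$ with $s_j(J_{f^*})\gg j^{-\beta(\bq,\bR)-1/2}$. My plan is to set $f^* = c\sum_{\bs} 2^{-(\bR^1,\bs)-(\bR^2,\bt^*(\bs))+\eta_1|\bs|}P_\bs(\bx)Q_{\bt^*(\bs)}(\by)$, where $\bt^*(\bs)$ traces the critical hyperplane found in the upper-bound optimization and $P_\bs, Q_\bt$ are $L_2$-normalized linear combinations of exponentials supported on pairwise disjoint dyadic frequency sectors. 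Disjointness forces the $s_j(J_{f^*})$ to coincide, up to reordering, with the scaled block coefficients; counting by magnitude gives the desired decay. Membership $f^*\in c\cdot NH^{\bR^1,\bR^2}_\bq$ follows by applying Nikol'skii/Bernstein inequalities block-by-block and summing the geometric series (the exponents were cooked up for this purpose).

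\emph{Main obstacle.} The essential difficulty is the anisotropic rank-allocation step in the upper bound, together with the matching choice of $\bt^*(\bs)$ for the lower bound: both reduce to the same optimization over $(\bs,\bt)$ with exponents $(\bR^1,\bs)+(\bR^2,\bt)-\eta_1|\bs|-\eta_2|\bt|$. Running it carefully explains the precise form of $\beta(\bq,\bR)$ and pinpoints the hypotheses $g(\bR^1)>\eta_1$ and $g(\bR^2)>\rho_2(\bq,\bR)$. Secondary technical ingredients — the sharp $d$-dimensional mixed Nikol'skii inequality on dyadic blocks and the block-wise Schmidt tail estimate — are classical but must be tracked with vector-valued weights; matching the constants on both sides so that the $\asymp$ truly closes is more delicate than either bound alone.
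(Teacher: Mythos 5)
Your proposal has a genuine conceptual error at its foundation: it conflates the anisotropic H\"older--Nikol'skii class $NH^{\bR^1,\bR^2}_{\bq}$ with the mixed--smoothness classes $\bH^{\br}_{\bq}$ of Section~11.2. The class $NH^{\bR}_{\bq}$ is defined by single-variable differences $\Delta^{l_j,j}_t$ in each coordinate, not by mixed differences. For such a class the dyadic block $\delta_{\bs}(f)$ supported on frequencies $|k_j|\sim 2^{s_j}$ satisfies only $\|\delta_{\bs}(f)\|_{\bq}\ll \min_j 2^{-R_j s_j}$ (one may cancel using the smoothness in any single direction, but not in all directions simultaneously); it does \emph{not} satisfy the multiplicative Jackson bound $\|\delta_{\bs,\bt}(f)\|_{\bq}\ll 2^{-(\bR^1,\bs)-(\bR^2,\bt)}$ that you use as your starting point. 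That product bound is exactly what mixed differences buy you, and it underlies Theorems~\ref{BiT4}--\ref{BiT7} for $\bW^{\br}_{\bq}$ and $\bH^{\br}_{\bq}$; it is false here. Consequently the $2d$-parameter allocation over $(\bs,\bt)\in\bbZ^d_+\times\bbZ^d_+$, the ``critical hyperplane'' optimization, and the lower-bound construction indexed by $\bs\in\bbZ^d_+$ are all built on an estimate that does not hold for the class in question; they would not produce the stated exponent.

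For the anisotropic class the correct decomposition is two-parametric, not $2d$-parametric: index blocks by a pair of scalars $(s,t)$, where the $\bx$-block gathers frequencies $\bk$ with $\max_j|k_j|^{R^1_j}\sim 2^{s}$ (an anisotropic rectangle of $\sim 2^{s/g(\bR^1)}$ modes) and similarly for $\by$ with $2^{t/g(\bR^2)}$ modes. Then Jackson gives $\|\delta_{s,t}(f)\|_{\bq}\ll 2^{-s-t}$, Nikol'skii lifts this to $L_2$ with the loss $2^{\eta_1 s/g(\bR^1)+\eta_2 t/g(\bR^2)}$, and the rank constraint is $r_{s,t}\le\min\bigl(2^{s/g(\bR^1)},2^{t/g(\bR^2)}\bigr)$. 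It is this saturation bound on the rank, interacting with the hypothesis $g(\bR^1)\le g(\bR^2)$, that produces the asymmetric exponent $-g(\bR^2)+\eta_1 g(\bR^2)/g(\bR^1)$ and explains why Theorem~\ref{BiT2} (the case $g(\bR^1)\ge g(\bR^2)$) has a different formula involving $\eta_1-\eta_2$. Your plan does not engage with this asymmetry at all, which is a second sign that the decomposition is the wrong one: a symmetric $(\bs,\bt)$ allocation cannot see why the answer should depend on whether $g(\bR^1)\lessgtr g(\bR^2)$. The broader outline (Jackson, Nikol'skii, block-wise Schmidt expansion, rank allocation, a fooling function for the lower bound) is the right genre of argument, but it must be mounted on the two-parameter anisotropic decomposition, with the rank-saturation constraint handled carefully, before it can reach the claimed bound.
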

  
   \begin{Corollary}[{\cite{VT47}}]\label{BiC1} Let $\bR =(\bR^1,\bR^2)$ be as in Theorem \ref{BiT1}. Then for all $1\le \bq \le \infty$ we have
 $$
 \sup_{f\in NH^{\bR^1,\bR^2}_{\bq}} s_m(J_f) \asymp m^{-g(\bR^2) +\eta_1 g(\bR^2)/g(\bR^1)-1/2}.
 $$
 \end{Corollary}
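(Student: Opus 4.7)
The plan is to derive Corollary \ref{BiC1} from Theorem \ref{BiT1} via the Schmidt-expansion identity (\ref{Bi4}), which in the multivariate setting $\bx,\by \in \bbT^d$ still holds by the same argument applied to the compact operator $J_f \colon L_2(\bbT^d) \to L_2(\bbT^d)$:
$$
\sigma_m(f,\Pi(d))_2^2 \;=\; \sum_{j=m+1}^\infty s_j(J_f)^2, \qquad f \in L_2(\bbT^{2d}).
$$
Write $\alpha := g(\bR^2) - \eta_1\, g(\bR^2)/g(\bR^1)$, so Theorem \ref{BiT1} says $\sigma_m(NH^{\bR^1,\bR^2}_{\bq},\Pi(d))_2 \asymp m^{-\alpha}$, and our target is $\sup_f s_m(J_f) \asymp m^{-\alpha - 1/2}$.

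For the upper bound, fix any $f \in NH^{\bR^1,\bR^2}_{\bq}$ and use monotonicity of the singular numbers to get
$$
m \cdot s_{2m}(J_f)^2 \;\le\; \sum_{j=m+1}^{2m} s_j(J_f)^2 \;\le\; \sigma_m(f,\Pi(d))_2^2 \;\ll\; m^{-2\alpha},
$$
so $s_{2m}(J_f) \ll m^{-\alpha - 1/2}$ uniformly in $f$ over the class, which yields $\sup_f s_m(J_f) \ll m^{-\alpha - 1/2}$.

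For the lower bound, by the $\gg$ direction of Theorem \ref{BiT1}, for each $m$ there exists $f^*_m \in NH^{\bR^1,\bR^2}_{\bq}$ with $\sigma_m(f^*_m,\Pi(d))_2 \ge c_1 m^{-\alpha}$. Fix a large integer $N$ depending only on $\alpha$ and the constants in Theorem \ref{BiT1} so that the upper bound applied at level $Nm$ forces
$$
\sigma_{Nm}(f^*_m,\Pi(d))_2 \;\le\; c_2 (Nm)^{-\alpha} \;\le\; \tfrac{c_1}{2} m^{-\alpha}.
$$
Subtracting squares in the Schmidt identity,
$$
\sum_{j=m+1}^{Nm} s_j(J_{f^*_m})^2 \;=\; \sigma_m(f^*_m,\Pi(d))_2^2 - \sigma_{Nm}(f^*_m,\Pi(d))_2^2 \;\ge\; \tfrac{3 c_1^2}{4}\, m^{-2\alpha},
$$
and bounding the sum above by $(N-1) m \cdot s_{m+1}(J_{f^*_m})^2$ gives $s_{m+1}(J_{f^*_m}) \gg m^{-\alpha - 1/2}$, hence $\sup_f s_m(J_f) \gg m^{-\alpha - 1/2}$.

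The main subtlety is the lower bound: the extremizer depends on $m$, so we cannot conclude from a single value of $\sigma_m$ that $s_m$ is large. The remedy, above, is to compare $\sigma_m$ and $\sigma_{Nm}$ at the \emph{same} extremizer using both directions of Theorem \ref{BiT1} and to choose $N$ large enough to guarantee a quantitative gap; the Schmidt identity then converts this gap into a pointwise lower bound on $s_{m+1}(J_{f^*_m})$.
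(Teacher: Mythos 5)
The paper states this corollary with only a citation to \cite{VT47} and gives no proof of its own, so there is no internal argument to compare against. Your derivation from Theorem \ref{BiT1} via the Schmidt identity is correct and, as far as I can judge, is the standard route. The upper bound via $m\, s_{2m}^2\le\sum_{j>m}s_j^2=\sigma_m^2$ is routine. You are right that the lower bound is the delicate part: a single value of $\sigma_m(f^*_m)$ does not by itself force $s_{m+1}(J_{f^*_m})$ to be large, because the excess could be spread over arbitrarily many small singular values. Your fix --- evaluate $\sigma_m$ and $\sigma_{Nm}$ at the \emph{same} extremizer, use the upper bound at scale $Nm$ and the lower bound at scale $m$ to create a quantitative gap once $N$ is chosen large enough (depending only on $\alpha$ and the implied constants in Theorem \ref{BiT1}), then divide by the $(N-1)m$ terms in the window $m<j\le Nm$ --- is precisely the correct remedy, and it uses both directions of the $\asymp$ in the theorem. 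Two small points worth making explicit in a final write-up: (i) the identity (\ref{Bi4}) is stated in the paper only for $d=1$, and your passing remark that it persists for $d>1$ deserves a one-line justification (rank-$m$ approximation with respect to $\Pi(d)$ is rank-$m$ approximation of the Hilbert--Schmidt operator $J_f$, so Eckart--Young applies verbatim); (ii) to convert $s_{m+1}(J_{f^*_m})\gg m^{-\alpha-1/2}$ into $\sup_f s_n(J_f)\gg n^{-\alpha-1/2}$ you should note you are taking $f=f^*_{n-1}$ and using $(n-1)\asymp n$.
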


 \begin{Theorem}[{\cite{VT47}}]\label{BiT2} Let $\bR =(\bR^1,\bR^2)$ be such that $g(\bR^1) \ge g(\bR^2)$ and $g(\bR^2) > \eta_2$,
 $g(\bR^1) > \rho_1(\bq,\bR)$. Then for all $1\le \bq \le \infty$ we have
 $$
   \sigma_m(NH^{\bR^1,\bR^2}_{\bq},\Pi(d))_2 \asymp m^{-g(\bR^1) +\eta_2 g(\bR^1)/g(\bR^2)+\eta_1-\eta_2}.
 $$
 \end{Theorem}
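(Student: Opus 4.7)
The plan is to prove matching upper and lower bounds for $\sigma_m(NH^{\bR^1,\bR^2}_{\bq},\Pi(d))_2$ separately. For the upper bound, I would build an explicit bilinear approximant via an anisotropic dyadic Fourier decomposition: write $f=\sum_{\bs^1,\bs^2} f_{\bs^1,\bs^2}$, where $f_{\bs^1,\bs^2}$ collects the Fourier modes of $f$ whose frequencies in $\bx$ and $\by$ lie in dyadic parallelepipeds of sizes $(2^{s^1_1},\dots,2^{s^1_d})$ and $(2^{s^2_1},\dots,2^{s^2_d})$, respectively. Nikol'skii-type estimates for the class furnish the block decay $\|f_{\bs^1,\bs^2}\|_\bq \ll 2^{-\sum_j R^1_j s^1_j -\sum_j R^2_j s^2_j}$. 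Each block is a finite trigonometric polynomial that admits an optimal rank-$n$ bilinear approximation in $L_2$ via Schmidt's singular value expansion (see \cite{Sch} and the treatment in \cite{VT39}); passing from $L_\bq$ to $L_2$ introduces embedding losses of orders $2^{\eta_1|\bs^1|}$ and $2^{\eta_2|\bs^2|}$, where $|\bs^i|:=s^i_1+\cdots+s^i_d$. One then allocates $m$ bilinear terms across the dyadic blocks and optimizes; in the regime $g(\bR^1)\ge g(\bR^2)$, the assumptions $g(\bR^2)>\eta_2$ and $g(\bR^1)>\rho_1(\bq,\bR)$ ensure the optimum sits in the interior of the allocation simplex and produces precisely the stated exponent.

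For the lower bound, I would exhibit an extremal function in the class whose best bilinear approximation meets the upper bound. A natural candidate takes the form
$$
f^*(\bx,\by)=c\sum_{\bn\in A}\phi_\bn(\bx)\psi_\bn(\by),
$$
where $\{\phi_\bn\},\{\psi_\bn\}$ are trigonometric polynomials with disjoint dyadic frequency supports, calibrated to the critical scale dictated by the upper-bound optimization, and $|A|\asymp m$. The normalization $c$ is chosen so that $f^*\in NH^{\bR^1,\bR^2}_\bq$; in evaluating this norm, the particular order of integration in $L_\bq=L_{q_1}(\bx)L_{q_2}(\by)$ is what produces the $\eta_1-\eta_2$ term. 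Disjointness of the Fourier supports makes the summands orthogonal in $L_2$, so any bilinear approximant of rank at most $m/2$ leaves at least half of them essentially uncaptured, and an $\ell_2$-type argument on the residual provides the matching lower bound.

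The main obstacle will be the careful calibration of the dyadic frequency shells (in the upper bound) and of the index set $A$ together with the amplitudes of the building blocks $\phi_\bn,\psi_\bn$ (in the lower bound) so that the asymmetric correction $\eta_1-\eta_2$ appears exactly. This correction reflects the non-commutativity of the mixed norm $L_\bq=L_{q_1}(\bx)L_{q_2}(\by)$ under the swap $\bx\leftrightarrow\by$, and is precisely what prevents Theorem \ref{BiT2} from being a direct consequence of Theorem \ref{BiT1} via a symmetry argument; getting this bookkeeping right on both sides is the delicate heart of the proof.
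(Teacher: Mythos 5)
The paper does not reproduce a proof of this theorem; it is cited directly from \cite{VT47}, so I can only assess your sketch against the standard technique. Your high-level template (anisotropic frequency decomposition, Schmidt/singular-value approximation of each block, budget allocation, and a matching extremal construction normalized against the mixed norm $L_\bq$) is indeed the right one from Temlyakov's bilinear-approximation papers, and your identification of the $\eta_1-\eta_2$ asymmetry as the delicate point is correct. However, there is a concrete error in the crucial input that drives the whole optimization.

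You assert the block decay $\|f_{\bs^1,\bs^2}\|_\bq \ll 2^{-\sum_j R^1_j s^1_j -\sum_j R^2_j s^2_j}$. That \emph{product}-type decay is the one enjoyed by the dominating mixed derivative class $\bH^\br_\bq$ of Definition~\ref{BiD2}, where the defining condition involves the \emph{mixed} differences $\Delta_\btt^l(e)$ over all coordinate subsets $e$. Theorem~\ref{BiT2} concerns the anisotropic H{\"o}lder-Nikol'skii class $NH^{\bR^1,\bR^2}_\bq$, which is defined by \emph{separate, single-coordinate} conditions $\|\Delta^{l_j,j}_t f\|_\bq\le |t|^{R_j}$ for each $j$. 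This is a much weaker form of smoothness: on a Littlewood-Paley block at level $\bs$, one only obtains a \emph{minimum}-type bound of the form $\|f_\bs\|_\bq\ll \min_j 2^{-R_j s_j}$, not a product over all coordinates. Equivalently, the correct decomposition for such classes is a \emph{two}-parameter anisotropic one: at scale $(s^1,s^2)$ one takes frequencies $|k_j|\le 2^{s^1/R^1_j}$ in $\bx$ and $|k_j|\le 2^{s^2/R^2_j}$ in $\by$, producing a block of cardinality $\asymp 2^{s^1/g(\bR^1)}\cdot 2^{s^2/g(\bR^2)}$ and norm $\ll \min(2^{-s^1},2^{-s^2})$. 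It is precisely this calibration that makes the harmonic means $g(\bR^1),g(\bR^2)$ emerge in the exponent; running the budget allocation with your product decay would instead produce arithmetic-mean quantities $\sum_j R^i_j$, which do not match the stated answer. The same conflation undermines the lower bound: the extremal function must be confined to a single anisotropic box calibrated to the Nikol'skii (not dominating-mixed) constraints, so that the normalization constant $c$ produces the correct power of $m$. Once the block decay and decomposition are corrected, the rest of your plan — the Schmidt formula for the $L_2$-optimal rank-$m$ bilinear approximant, the Nikol'skii embedding losses $2^{\eta_i|\bs^i|}$ when $q_i<2$, and the asymmetry from the order of integration in $L_\bq$ — points in the right direction.
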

  
   \begin{Corollary}[{\cite{VT47}}]\label{BiC2} Let $\bR =(\bR^1,\bR^2)$ be as in Theorem \ref{BiT2}. Then for all $1\le \bq \le \infty$ we have
 $$
 \sup_{f\in NH^{\bR^1,\bR^2}_{\bq}} s_m(J_f) \asymp m^{-g(\bR^1) +\eta_2 g(\bR^1)/g(\bR^2)+\eta_1-\eta_2-1/2}.
 $$
 \end{Corollary}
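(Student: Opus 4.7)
\begin{proofof}{Corollary \ref{BiC2} (proposal)}
The plan is to deduce the corollary from Theorem \ref{BiT2} via the multivariate analogue of identity (\ref{Bi4}). Because the Schmidt decomposition (\ref{Bi2}) applies verbatim when $x,y$ are replaced by $\bx,\by \in \bbT^d$ (one simply uses $L_2(\bbT^d)$-orthonormal systems $\{\phi_j(\bx)\}$ and $\{\psi_j(\by)\}$), the summands $\phi_j(\bx)\psi_j(\by)$ lie in $\Pi(d)$ and realize the best bilinear $L_2(\bbT^{2d})$-approximation. Consequently
$$
\sigma_m(f,\Pi(d))_2^2 \;=\; \sum_{j>m} s_j(J_f)^2, \qquad f\in L_2(\bbT^{2d}).
$$
Set $\alpha := g(\bR^1) - \eta_2 g(\bR^1)/g(\bR^2) - \eta_1 + \eta_2$, so Theorem \ref{BiT2} reads $\sigma_m(NH^{\bR^1,\bR^2}_{\bq},\Pi(d))_2 \asymp m^{-\alpha}$.

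For the upper bound, monotonicity of the singular numbers gives $m\, s_{2m}(J_f)^2 \le \sum_{j=m+1}^{2m} s_j(J_f)^2 \le \sigma_m(f,\Pi(d))_2^2$, hence
$$
s_{2m}(J_f) \;\le\; m^{-1/2}\,\sigma_m(f,\Pi(d))_2.
$$
Taking supremum over $f\in NH^{\bR^1,\bR^2}_{\bq}$ and applying Theorem \ref{BiT2} yields $\sup_f s_m(J_f) \ll m^{-\alpha-1/2}$, which is exactly the claimed exponent $-g(\bR^1)+\eta_2 g(\bR^1)/g(\bR^2)+\eta_1-\eta_2-1/2$.

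For the matching lower bound I would exhibit a single extremal $f^\star \in NH^{\bR^1,\bR^2}_{\bq}$ whose singular numbers satisfy $s_j(J_{f^\star}) \gg j^{-\alpha-1/2}$. The natural candidate is the same hyperbolic-cross-type test function used in the lower-bound half of the proof of Theorem \ref{BiT2}: a tensor product Fourier construction whose coefficients saturate the H\"older-Nikol'skii constraints in the $\bq$-norm and distribute mass across dyadic frequency blocks so that, at each dyadic level, roughly $N$ of the $s_j$'s are of a common size. Once this spreading is verified block-by-block, ordering the resulting values reproduces the profile $s_j \asymp j^{-\alpha-1/2}$, and summing their squared tails recovers the $m^{-\alpha}$ rate of Theorem \ref{BiT2}, confirming the sharpness.

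The main obstacle is the lower bound: the $\ell_2$ tail estimate $(\sum_{j>m}s_j^2)^{1/2} \gg m^{-\alpha}$ coming directly from Theorem \ref{BiT2} is a priori compatible with all mass being concentrated in a single large $s_j$, which would only force $s_m \gg m^{-\alpha}$, not $m^{-\alpha-1/2}$. Hence one genuinely needs a test function whose Schmidt spectrum is \emph{flat} on dyadic blocks, together with a careful check that the $\bq$-vector smoothness constraint of $NH^{\bR^1,\bR^2}_{\bq}$ is simultaneously satisfied with the correct constants --- this is where the parameters $g(\bR^i)$, $\eta_i$ and the balance condition of Theorem \ref{BiT2} enter to calibrate the construction.
\end{proofof}
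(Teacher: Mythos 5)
Your upper--bound step is correct: the Schmidt identity $\sigma_m(f,\Pi(d))_2^2=\sum_{j>m}s_j(J_f)^2$ carries over to $\bbT^d$ because $J_f$ is a Hilbert--Schmidt operator on $L_2(\bbT^d)$, and monotonicity of $\{s_j\}$ then gives $s_{2m}(J_f)\le m^{-1/2}\sigma_m(f,\Pi(d))_2$; taking suprema and invoking the upper half of Theorem~\ref{BiT2} produces the stated rate (with $s_{2m+1}\le s_{2m}$ handling the odd indices).

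Your treatment of the lower bound, however, has two problems. First, the ``obstacle'' you describe is not one: if the tail $\sum_{j>m}s_j^2\gg m^{-2\alpha}$ were concentrated in a single term $s_j$ with $j$ comparable to $m$, monotonicity gives $s_m\ge s_j\gg m^{-\alpha}$, which is a \emph{stronger} bound than $m^{-\alpha-1/2}$, not a weaker one. The real danger is the opposite scenario, namely the tail mass spreading out over indices $j\gg m$ so that $s_m$ itself is tiny. Second, and more to the point, you do not actually need a new extremal function with flat Schmidt spectrum: the lower bound is already forced by the two--sided estimate of Theorem~\ref{BiT2}, by playing the lower bound at index $m$ against the upper bound at a dilated index $Nm$ for the \emph{same} function. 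Fix $m$ and choose $f_m\in NH^{\bR^1,\bR^2}_{\bq}$ with $\sigma_m(f_m,\Pi(d))_2\ge c_1 m^{-\alpha}$ (the lower half of Theorem~\ref{BiT2}); the upper half gives $\sigma_{Nm}(f_m,\Pi(d))_2\le c_2(Nm)^{-\alpha}$ for every $N\in\bbN$. Then, since $s_j\le s_{m+1}$ for $j>m$,
$$
(N-1)m\,s_{m+1}(J_{f_m})^2\ \ge\ \sum_{j=m+1}^{Nm}s_j(J_{f_m})^2\ =\ \sigma_m(f_m)^2-\sigma_{Nm}(f_m)^2\ \ge\ \bigl(c_1^2-c_2^2N^{-2\alpha}\bigr)m^{-2\alpha}.
$$
Under the hypotheses of Theorem~\ref{BiT2} one has $\alpha>0$ (indeed $g(\bR^1)>\rho_1(\bq,\bR)$ is equivalent to $g(\bR^1)(1-\eta_2/g(\bR^2))-\eta_1>0$, so $\alpha>\eta_2\ge 0$), and so choosing $N$ depending only on $c_1,c_2,\alpha$ makes the bracket at least $c_1^2/2$, giving $s_m(J_{f_m})\ge s_{m+1}(J_{f_m})\gg m^{-\alpha-1/2}$. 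This closes the lower bound directly as a corollary of Theorem~\ref{BiT2}, without the hyperbolic--cross construction and the spectral--flatness verification you left open.
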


Theorems \ref{BiT1} and \ref{BiT2} solve the problem of the behavior (in the sense of order) of the best $m$-term bilinear approximations in the $L_2$ norm for a big variety of the H{\"o}lder-Nikol'skii classes $NH^{\bR^1,\bR^2}_{\bq}$. The reader can find some results on the best $m$-term bilinear approximations in the $L_\bp$ norm for classes $NH^{\bR^1,\bR^2}_{\bq}$ in \cite{VT39}. 

The bilinear approximation turns out to be useful in studying not only the sequences $s_m(J_f)$ of singular numbers but also some of their natural generalizations. We now present the appropriate definitions. We begin with a classical concept of the Kolmogorov width. Let $X$ be a Banach space and $\bF\subset X$ be a  compact subset of $X$. The quantities  
$$
d_n (\bF, X) :=  \inf_{\{u_i\}_{i=1}^n\subset X}
\sup_{f\in \bF}
\inf_{c_i} \left \| f - \sum_{i=1}^{n}
c_i u_i \right\|_X, \quad n = 1, 2, \dots,
$$
are called the {\it Kolmogorov widths} of $\bF$ in $X$. 

For a function $f\in L_1(\bbT^{2d})$ written in the form $f(\bx,\by)$, $\bx,\by \in \bbT^d$ define a function class
$$
\bW^f_q := \left\{g\,:\, g(\bx) = \int_{\bbT^d} f(\bx,\by)\ff(\by) d\mu(\by),\quad \|\ff\|_q \le 1\right\}.
$$
It is well known (see, for instance, \cite{GoKr}) that for $f\in L_2(\bbT^{2d})$ we have
\be\label{Bi7}
s_{m+1}(J_f) = d_m(\bW^f_2,L_2). 
\ee
The following result from \cite{VT47} (see Theorem 4.3 there) is an extension  of Corollary \ref{BiC2}.

\begin{Theorem}[{\cite{VT47}}]\label{BiT3} Let $\bR =(\bR^1,\bR^2)$ be such that $g(\bR^1) \ge g(\bR^2)$ and $g(\bR^2) > \eta_2$.  Then for all $1\le \bq \le \infty$ and all $1\le b \le \infty$ we have
 $$
 \sup_{f\in NH^{\bR^1,\bR^2}_{\bq}} d_m((\bW^f_2,L_b)) \asymp m^{-g(\bR^1) +\eta_2 g(\bR^1)/g(\bR^2)+\eta_1-\eta_2-1/2}
 $$
 under extra conditions: $g(\bR^1) > \rho_1(\bq,\bR)$ for $b\in [1,2]$ and $g(\bR^1) > \rho_1(\bq,\bR)+1/2$ for $b>2$.
 \end{Theorem}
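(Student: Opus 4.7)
The plan is to derive Theorem~\ref{BiT3} by reducing to the case $b=2$, which is Corollary~\ref{BiC2} combined with the identity $s_{m+1}(J_f) = d_m(\bW^f_2, L_2)$ recorded in (\ref{Bi7}), and then bridging to general $b$ by monotonicity of $L_b$ norms on the probability torus $\bbT^d$ and by Nikol'skii-type embeddings. The exponent in the conclusion coincides with that of Corollary~\ref{BiC2}, so the claim is that the same rate survives a change of target norm; the role of the sharpened hypothesis $g(\bR^1) > \rho_1(\bq,\bR)+1/2$ for $b>2$ is to pay for this change.

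\textbf{Upper bound.} For $b \in [1,2]$ the embedding $\|g\|_b \le \|g\|_2$ gives
$$
d_m(\bW^f_2, L_b) \le d_m(\bW^f_2, L_2) = s_{m+1}(J_f),
$$
and the hypotheses of Corollary~\ref{BiC2} (which are exactly $g(\bR^1)>\rho_1(\bq,\bR)$, $g(\bR^2)>\eta_2$) deliver the claimed rate. For $b>2$ this monotonicity is reversed, so I would instead exploit the regularity inherited by $\bW^f_2$: for $\ff \in L_2$ with $\|\ff\|_2\le 1$, averaging $f(\bx,\cdot)$ against $\ff$ preserves the Hölder-Nikol'skii conditions in $\bx$, so $\bW^f_2$ lies (up to a constant) in $NH^{\bR^1}_{q_1}$. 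One then takes as approximating subspace a space of trigonometric polynomials with spectrum a hyperbolic cross of cardinality $\asymp m$ adapted to $\bR^1$; combining the standard $L_{q_1}$-rate of such approximations with a Nikol'skii inequality trading smoothness for integrability (the trade costs precisely $1/2$ in the effective anisotropic smoothness $g(\bR^1)$) yields the matching $L_b$ bound, and the extra $+1/2$ in the hypothesis is exactly the regularity margin this trade consumes.

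\textbf{Lower bound.} For $b \ge 2$ the reverse embedding $\|g\|_b \ge \|g\|_2$ gives $d_m(\bW^f_2, L_b) \ge d_m(\bW^f_2, L_2)$, and Corollary~\ref{BiC2} together with (\ref{Bi7}) supplies the lower bound on the class. For $b<2$ I would construct a specific extremal kernel $f_0 \in NH^{\bR^1,\bR^2}_{\bq}$ of the modulated product type used in the proof of Theorem~\ref{BiT2}, so that $\bW^{f_0}_2$ contains a rescaled ball in a trigonometric-polynomial subspace supported on a hyperbolic cross. A standard volume (Kashin-type) lower bound for the Kolmogorov width of that ball in $L_b$, together with the correct normalization of $f_0$ inside the class, yields the required rate.

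\textbf{Main obstacle.} The delicate step is the upper bound for $b>2$: one must verify that the anisotropic smoothness transfer $f \mapsto \bW^f_2 \subset NH^{\bR^1}_{q_1}$ followed by the anisotropic Nikol'skii embedding $L_{q_1}\hookrightarrow L_b$ produces exactly the exponent $-g(\bR^1)+\eta_2 g(\bR^1)/g(\bR^2)+\eta_1-\eta_2-1/2$, and that the hypothesis $g(\bR^1)>\rho_1(\bq,\bR)+1/2$ is the precise condition making this chain non-degenerate. Tracking the exponents through the anisotropic hyperbolic-cross approximation and matching them with the $\eta_i$ and $g(\bR^i)$ quantities is the main bookkeeping task.
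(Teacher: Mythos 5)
Your reductions for the two "easy" quadrants are correct and are what the paper relies on: for $b\le 2$ the contractive embedding $\|\cdot\|_b\le\|\cdot\|_2$ on the probability torus gives $d_m(\bW^f_2,L_b)\le d_m(\bW^f_2,L_2)=s_{m+1}(J_f)$ by (\ref{Bi7}), and Corollary~\ref{BiC2} finishes; for $b\ge 2$ the reverse embedding gives the lower bound the same way. These steps need no further work.

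The genuine gap is the upper bound for $b>2$. First, the embedding $\bW^f_2\subset NH^{\bR^1}_{q_1}$ that you invoke does not hold in general: averaging against $\ff\in L_2$ via Minkowski and H\"older in $\by$ requires $\|\cdot\|_{q_2'}\le\|\cdot\|_2$, i.e. $q_2\ge 2$; for $q_2<2$ (precisely the regime $\eta_2>0$) there is no such embedding. Second, even when it does hold, it is lossy in a way that changes the exponent: it throws away the smoothness of $f$ in $\by$ entirely, replacing $\bW^f_2$ by a strictly larger class whose Kolmogorov widths miss the correction $\eta_2\big(g(\bR^1)/g(\bR^2)-1\big)\ge 0$ (nonnegative since $g(\bR^1)\ge g(\bR^2)$) that appears in the target exponent $-g(\bR^1)+\eta_2 g(\bR^1)/g(\bR^2)+\eta_1-\eta_2-1/2$. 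In other words your route yields at best $m^{-g(\bR^1)+\eta_1-1/2}$, which is a weaker upper bound whenever $\eta_2>0$. Third, the $-1/2$ in the exponent is not the cost of a Nikol'skii trade; it is the passage from the bilinear $\sigma_m$ rate of Theorem~\ref{BiT2} to the individual singular number $s_m\sim m^{-1/2}\sigma_m$, a feature of the $\ell_2$-summability of the singular sequence, and it is nontrivial to preserve this gain when the target norm is $L_b$ with $b>2$. The paper's source \cite{VT47} obtains this by working directly on the bilinear side in mixed vector $L_\bp$-norms together with singular-number/width machinery (the same framework behind Theorems~\ref{BiT1}--\ref{BiT2} and the companion width theorems \ref{BiT5}, \ref{BiT5a}), not via a one-dimensional smoothness transfer; the extra hypothesis $g(\bR^1)>\rho_1(\bq,\bR)+1/2$ is the smoothness cushion needed to run that mixed-norm/width argument, not a H\"older-class Nikol'skii embedding cost. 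Your lower bound for $b<2$ is a plan rather than an argument; it can be made to work, but you must actually exhibit the extremal kernel, verify its membership in $NH^{\bR^1,\bR^2}_{\bq}$ with the vector $\bq$-norm, and show $\bW^{f_0}_2$ contains a Euclidean ball of the right dimension and radius in a hyperbolic-cross subspace before the volume lower bound in $L_b$ applies.
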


\subsection{Mixed smoothness classes}

In this section we discuss bilinear approximation of the multivariate functions, which have mixed smoothness. 
 We begin with the definition of classes $\bW^\ba_\bq$ (see, for instance, \cite{VTmon}, p.31).
\begin{Definition}\label{BiD1}
In the univariate case, for $a>0$, let
\be\label{Bi8}
F_a(x):= 1+2\sum_{k=1}^\infty k^{-a}\cos (kx-a\pi/2)
\ee
and in the multivariate case, for $\ba=(a_1,\dots,a_n) \in \bbR^n_+$, $\bx=(x_1,\dots,x_n)\in \bbT^n$, let
$$
F_\ba(\bx) := \prod_{j=1}^n F_{a_j}(x_j).
$$
Denote for $\mathbf{1}\le \bq\le \infty$ (we understand the vector inequality coordinate wise)
$$
\bW^\ba_\bq := \{f:f=\varphi\ast F_\ba,\quad \|\varphi\|_\bq \le 1\},
$$
where
$$
(\varphi \ast F_\ba)(\bx):= (2\pi)^{-d}\int_{\bbT^n} F_\ba(\bx-\by) \ff(\by)d\by.
$$
\end{Definition}
The classes $\bW^\ba_\bq$ are classical classes of functions with {\it dominated mixed derivative}
(Sobolev-type classes of functions with mixed smoothness).
 
We now proceed to the definition of the classes $\bH^\ba_\bq$, which is based on the mixed differences (see, for instance, \cite{VTmon}, p.31).  
 
\begin{Definition}\label{BiD2}
Let  $\btt =(t_1,\dots,t_n)$ and $\Delta_{\btt}^l f(\bx)$
be the mixed $l$-th difference with
step $t_j$ in the variable $x_j$, that is
$$
\Delta_{\btt}^l f(\bx) :=\Delta_{t_d,d}^l\cdots\Delta_{t_1,1}^l
f(x_1,\dots ,x_d ) .
$$
Let $e$ be a subset of natural numbers in $[1,d ]$. We denote
$$
\Delta_{\btt}^l (e) :=\prod_{j\in e}\Delta_{t_j,j}^l,\qquad
\Delta_{\btt}^l (\varnothing) := Id \,-\, \text{identity operator}.
$$
We define the class $\bH_{\bq,l}^\ba B$, $l > \|\ba\|_\infty$,   as the set of
$f\in L_\bq(\bbT^d)$ such that for any $e$
\be\label{Bi9}
\bigl\|\Delta_{\btt}^l(e)f(\bx)\bigr\|_\bq\le B
\prod_{j\in e} |t_j |^{a_j} .
\ee
In the case $B=1$ we omit it. It is known (see, for instance, \cite{VTmon}, p.32, for the scalar $q$ and \cite{VT32} for the vector $\bq$) that the classes $\bH^\ba_{\bq,l}$ with different $l>\|\ba\|_\infty$ are equivalent. So, for convenience we omit $l$ from the notation. 
\end{Definition}

The reader can find results on approximation properties of these classes in the books \cite{VTmon}, \cite{VTbookMA}, and \cite{DTU}.
In this section we consider the case, when $n=2d$, $d\in \bbN$, $\mathbf{1}\le \bq \le \infty$, and $\ba$ has a special form: $a_j = r_1$, $a_{d+j}=r_2$ for $j=1,\dots,d$. In this case we write $\bW^\br_\bq$ and $\bH^\br_\bq$ with $\br=(r_1,\dots,r_1,r_2,\dots,r_2)$. 

 We begin with a result in the case of functions of two variables ($n=2$, i.e. $d=1$). The following result is from \cite{VT32} (see Theorems 1, 2.1, and 3.2 there). We need the following notation for $1\le q,p\le\infty$
 \be\label{ksi}
 \xi(q,p):= \left(\frac{1}{q} - \max\left(\frac{1}{2},\frac{1}{p}\right)\right)_+, \quad  (a)_+ :=\max(a,0).
\ee
 
 \begin{Theorem}[{\cite{VT32}}]\label{BiT4} Let $d=1$ and $\bF^\br_\bq$ denote one of the classes $\bW^\br_\bq$ or $\bH^\br_\bq$. Then for $\br > \mathbf{1}$ and $1\le q_1\le p_1 \le \infty$, $1\le q_2,p_2 \le \infty$ we have 
 $$
  \sigma_m(\bF^\br_\bq,\Pi)_\bp \asymp m^{-r_1-r_2 + \xi(q_1,p_1)}.  
 $$
 \end{Theorem}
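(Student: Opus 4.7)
\medskip
\noindent\textbf{Proof plan for Theorem~\ref{BiT4}.} My plan is to obtain matching upper and lower bounds via a dyadic-block analysis in the Fourier domain. First, I would decompose $f = \sum_{\bs \ge \mathbf{0}} f_\bs$, where $\bs=(s_1,s_2)$ and $f_\bs$ is the spectral projection of $f$ onto a dyadic rectangle of size $\asymp 2^{s_1}\times 2^{s_2}$, realized by de la Vall\'ee Poussin--type kernel operators. The definition of $\bW^\br_\bq$ and the equivalent dyadic-block characterization of $\bH^\br_\bq$ yield a uniform bound of the form $\|f_\bs\|_\bq \ll 2^{-s_1 r_1 - s_2 r_2}$. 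The key structural observation is that each block $f_\bs$ is a trigonometric polynomial whose $x$-spectrum has width $\asymp 2^{s_1}$; hence it is \emph{exactly} representable as a bilinear form of rank at most $C 2^{\min(s_1,s_2)}$, and its Schmidt (SVD) expansion provides the best bilinear rank-$N$ approximation in $L_2$, with the tail of singular values controlled by $\|f_\bs\|_2$.

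\medskip
\noindent\textbf{Upper bound.} Given a budget $m$, I would allocate $m_\bs$ bilinear terms to each active block and truncate its SVD; the contribution to the $L_\bp$ error combines (i)~the block norm $\|f_\bs\|_\bq$, (ii)~a Nikol'skii factor for passing from $L_\bq$ to $L_\bp$ on a polynomial with $x$-spectrum $\asymp 2^{s_1}$, which produces the factor $2^{s_1\xi(q_1,p_1)}$, and (iii)~the singular-value tail, polynomial in $m_\bs/2^{\min(s_1,s_2)}$. Crucially, the analogous $\xi(q_2,p_2)$ cost in the $y$-variable can be absorbed by renormalizing the factor $v_i(y)$ of each bilinear term to unit $L_{p_2}$ norm inside the bilinear representation, so only the $x$-side Nikol'skii factor is paid. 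Optimizing $m_\bs$ over the hyperbolic-cross--type region of active blocks subject to $\sum_\bs m_\bs \ll m$ (fully resolving blocks whose total cost is below a threshold and discarding the rest) yields the claimed upper bound $m^{-r_1-r_2+\xi(q_1,p_1)}$.

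\medskip
\noindent\textbf{Lower bound.} It suffices to exhibit a single test function $f^* \in \bF^\br_\bq$ whose best $m$-term bilinear approximation error in $L_\bp$ is bounded below by $m^{-r_1-r_2+\xi(q_1,p_1)}$. A natural candidate is $f^*(x,y) = c_N \sum_{j=1}^{N} \varphi_j(x)\psi_j(y)$, where $\{\varphi_j\}$ and $\{\psi_j\}$ are mutually disjointly supported bumps on scales $2^{-k_1}$ and $2^{-k_2}$ respectively, and $c_N$ is the normalizing constant forcing $f^*$ into the class. Since the tensor summands are mutually $L_2$-orthogonal and the integral operator $J_{f^*}$ has $N$ essentially equal singular values, any bilinear approximation of rank $m < N/2$ must retain a definite fraction of the $L_2$ mass, and by the disjointness this transfers to a lower bound in $L_\bp$. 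Choosing $N \asymp m$ and balancing the scales $k_1, k_2$ at the threshold dictated by the extremal regime of the Nikol'skii inequality in the $x$-variable recovers the matching lower bound.

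\medskip
\noindent\textbf{Main obstacle.} The principal subtlety is the asymmetry of the exponent: only $\xi(q_1,p_1)$ appears even though the dictionary $\Pi$ is symmetric in $(x,y)$. The resolution is that in any rank-one term $u_i(x)v_i(y)$ the $y$-factor can be freely renormalized in $L_{p_2}$ (the normalization being absorbed into the bilinear coefficient), so the $y$-side Nikol'skii loss never materializes in the rank allocation. Carrying this renormalization rigorously through the Schmidt step on each dyadic block, and then checking that the extremal configuration in the lower bound saturates precisely the one-sided $\xi(q_1,p_1)$ factor (rather than a symmetric combination), is the most delicate part of the argument. A secondary task is to handle the $\bH^\br_\bq$ case within the same framework, by replacing the convolution representation available for $\bW^\br_\bq$ with the equivalent mixed-difference/dyadic-block norm characterization.
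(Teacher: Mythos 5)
The paper itself does not prove Theorem~\ref{BiT4}; it merely quotes the result from \cite{VT32}, so there is no ``paper proof'' to compare against. Your broad plan (dyadic spectral blocks, per-block Schmidt/SVD truncation, Nikol'skii inequalities, and a rank-allocation optimization over a hyperbolic-cross region) is indeed the general paradigm used by Temlyakov in that circle of results, so the scaffolding is reasonable. But the argument as sketched has a genuine gap exactly at the step you flag as the ``main obstacle.''

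The claim that the $y$-side Nikol'skii loss ``can be absorbed by renormalizing $v_i$ to unit $L_{p_2}$ norm inside the bilinear representation'' is circular. Rescaling $v_i\mapsto v_i/\|v_i\|_{p_2}$ while putting $\|v_i\|_{p_2}$ into the coefficient leaves the approximant $\sum_i c_i u_i(x)v_i(y)$ unchanged, and hence leaves the error unchanged; normalization conventions cannot cancel a Nikol'skii factor. If one bounds the block tail $R$ naively by $\|R\|_\bp\le N_1^{\xi(q_1,p_1)}N_2^{\xi(q_2,p_2)}\|R\|_\bq$ (where $N_i\asymp 2^{s_i}$ are the block spectral widths), the $y$-factor does appear, and your allocation would produce an exponent with a $\xi(q_2,p_2)$ contribution, contradicting the theorem. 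The true reason only $\xi(q_1,p_1)$ survives is more structural: since the $v_i$'s are completely free, for a fixed choice of $\sp\{u_i\}$ the optimal $v_i$'s realize, fibre by fibre in $y$, the best $L_{p_1}$-projection of $f(\cdot,y)$, so the relevant object is a (weighted) Kolmogorov/approximation-number problem for the integral operator $J_f$ acting into a univariate $L_{p_1}$ space. The $y$-variable then enters only through the smoothness $r_2$ and the size of the class constants; this is also why Remark~\ref{BiLB} needs $r_2>(1/q_2-1/p_2)_+$ for the lower bound --- it guarantees that the $y$-side Nikol'skii cost is dominated and hence invisible in the exponent. Your proposal needs to replace the renormalization heuristic with an argument of this type. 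Relatedly, the appearance of $\max(1/2,1/p_1)$ (rather than $1/p_1$) inside $\xi$ is a signal that the $L_2$ SVD step with $L_2$-orthonormal factors $\phi_j$ must be exploited beyond a two-step $L_{q_1}\to L_2\to L_{p_1}$ Nikol'skii chain; a straightforward two-step chain would give the larger exponent $(1/q_1-1/p_1)_+$ when $p_1>2$.

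A secondary gap is the lower bound: ``disjointly supported bumps'' and ``by the disjointness this transfers to a lower bound in $L_\bp$'' are not enough. On $\bbT^2$ you should work with trigonometric test polynomials concentrated on a chosen dyadic frequency rectangle; the best rank-$m$ bilinear approximant is not required to respect any spatial (or spectral) disjointness of your summands, so transferring an $L_2$ defect to an $L_\bp$ lower bound requires a duality or averaging argument; and to saturate the one-sided $\xi(q_1,p_1)$ exponent the $x$-frequency scale must be tuned to $m$, $r_1$, $r_2$, $q_1$, $p_1$. Lemma/Theorem-type lower bounds of this kind in the cited papers rely on explicit volume or duality estimates (cf.\ Theorem~\ref{MuT1}), not on a soft ``definite fraction of $L_2$ mass'' argument.
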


\begin{Remark}\label{BiLB} Note that Theorem \ref{BiT4} is proved in \cite{VT32} under weaker conditions on $\br$ than above. That restriction on $\br$ is needed for the proof of the upper bounds. For the lower bounds it is sufficient to assume that $\br > (1/q_1-1/p_1, (1/q_2-1/p_2)_+)$.
\end{Remark}

 \begin{Corollary}[{\cite{VT32}}]\label{BiC3} Under conditions of Theorem \ref{BiT4} we have 
 $$
 \sup_{f\in \bF^\br_\bq} s_m(J_f) \asymp m^{-r_1-r_2 +  \max\left(\frac{1}{2},\frac{1}{q_1}\right)-1}.
   $$
 \end{Corollary}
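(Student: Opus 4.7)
\medskip
\noindent \textit{Proof plan.}

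The plan is to deduce the corollary from Theorem \ref{BiT4} combined with the Schmidt identity (\ref{Bi4}), which for $d=1$ reads
$$
\sigma_m(f,\Pi)_2^2 = \sum_{j=m+1}^\infty s_j(J_f)^2.
$$
Hence singular numbers are controlled by best bilinear $L_2$-approximations, and conversely. Throughout I work with the hypotheses of Theorem \ref{BiT4}, noting that the corollary makes no mention of $\bp$ because $s_m(J_f)$ is an intrinsic (Hilbert-space) invariant of the operator $J_f$.

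\textbf{Upper bound.} Monotonicity of $\{s_j(J_f)\}$ yields $m\, s_{2m}(J_f)^2 \le \sum_{j=m+1}^{2m} s_j(J_f)^2 \le \sigma_m(f,\Pi)_2^2$, hence $s_{2m}(J_f)\le m^{-1/2}\sigma_m(f,\Pi)_2$. For $1\le q_1\le 2$ I would apply Theorem \ref{BiT4} with $\bp = (2,2)$: the constraint $q_1\le p_1 = 2$ is satisfied and $\xi(q_1,2) = 1/q_1 - 1/2$, giving $\sigma_m(\bF^\br_\bq,\Pi)_2 \ll m^{-r_1-r_2+1/q_1-1/2}$; dividing by $m^{1/2}$ produces $s_m\ll m^{-r_1-r_2+1/q_1-1}$, and $1/q_1 = \max(1/2,1/q_1)$ in this range. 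For $q_1 > 2$ the embedding $L_{q_1}(\bbT^2)\hookrightarrow L_2(\bbT^2)$ places $\bF^\br_\bq$ inside a constant multiple of $\bF^\br_{(2,q_2)}$, reducing to the previous case and producing the exponent $-r_1-r_2-1/2 = -r_1-r_2+\max(1/2,1/q_1)-1$.

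\textbf{Lower bound.} For the matching lower bound I would use an explicit extremizer of the Schmidt-type form
$$
f^\ast(x,y) = \sum_{k} c_k\,\phi_k(x)\,\psi_k(y),
$$
with $\{\phi_k\}$ and $\{\psi_k\}$ $L_2$-orthonormal systems (e.g., trigonometric polynomials on disjoint dyadic frequency blocks, exactly the construction used to prove the lower bound in Theorem \ref{BiT4}, scaled to sit inside $\bF^\br_\bq$). For such a biorthogonal-product $f^\ast$ one reads off $s_j(J_{f^\ast}) = |c_{\pi(j)}|$ directly, where $\pi$ orders $|c_k|$ decreasingly. Choosing coefficients so that $\sigma_m(f^\ast,\Pi)_2\asymp m^{-\beta}$ with $\beta = r_1+r_2 - \xi(q_1,2)$, as forced by the lower bound of Theorem \ref{BiT4}, then yields $s_m(J_{f^\ast})\asymp m^{-\beta-1/2}$ --- either from the explicit read-off, or from the reverse monotonicity inequality $m\, s_{m+1}(J_{f^\ast})^2 \ge \sigma_m(f^\ast,\Pi)_2^2 - \sigma_{2m}(f^\ast,\Pi)_2^2$ applied on a dyadic window on which the two $\sigma$-values differ by a constant factor. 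The regime $q_1 > 2$ is covered by starting from the $q_1 = \infty$ extremizer, which lies in $\bF^\br_{(q_1,q_2)}$ by the inclusion $\bF^\br_{(\infty,q_2)}\subset \bF^\br_{(q_1,q_2)}$.

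\textbf{Main obstacle.} The upper bound is essentially a one-line consequence of Schmidt's identity plus Theorem \ref{BiT4}. The substantive step is the lower bound: one must either unwind the existing lower-bound construction behind Theorem \ref{BiT4} into the biorthogonal product form above, so that singular values can be read off directly, or justify the strict dyadic decay needed to make the reverse monotonicity argument yield the sharp $m^{-\beta - 1/2}$ order without losing a logarithm. Handling both classes $\bW^\br_\bq$ and $\bH^\br_\bq$ uniformly, and gluing the two regimes $q_1\le 2$ and $q_1 > 2$ via the $L_{q_1}\hookrightarrow L_2$ embedding, is the main piece of bookkeeping.
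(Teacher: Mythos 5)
Your proposal is correct and follows essentially the same route the paper takes: pass to $\bp=(2,2)$ in Theorem \ref{BiT4}, use the Schmidt identity $\sigma_m(f,\Pi)_2^2=\sum_{j>m}s_j(J_f)^2$ together with monotonicity of $\{s_j\}$ for the upper bound, and exhibit for the lower bound a rank-$\sim m$ ``diagonal'' trigonometric-block extremizer (e.g.\ a normalized multiple of $\sum_{k\in I}e^{i(kx-ky)}$ over a dyadic block $I$), whose singular values are all equal and can be read off directly; the exponent arithmetic $(1/q_1-1/2)_+=\max(1/2,1/q_1)-1/2$ matches. Your identification of the lower bound as the substantive step and of the $q_1>2$ regime being handled by inclusion from $q_1=\infty$ are both on target; the only caveat is that the reverse-monotonicity fallback you mention is not reliable at the level of a class supremum, but since your primary route (direct read-off of singular values from the explicit Schmidt-form extremizer) does work, this is not a gap.
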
 
 
 The following analog of Theorem \ref{BiT3} is proved in \cite{VT32} (see Theorems 4.1 and 4.2 there).
 
  \begin{Theorem}[{\cite{VT32}}]\label{BiT5} Let $d=1$ and $\bF^\br_1$ denote one of the classes $\bW^\br_1$ or $\bH^\br_1$. Then for $1\le q,p \le \infty$ and $\br > (1,1+\max(1/2,1/q))$   we have 
 $$
\sup_{f\in \bF^\br_1}  d_m(\bW^f_q)_p \asymp m^{-r_1-r_2 + \xi(q,p)} 
 $$
 with $\xi(q,p)$ defined in (\ref{ksi}).
 \end{Theorem}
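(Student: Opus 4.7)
The plan is to prove matching upper and lower bounds, treating $d_m(\bW^f_q)_p$ as (up to indexing) the approximation number $a_{m+1}(J_f)$ of the integral operator $J_f\colon L_q(\bbT)\to L_p(\bbT)$ with kernel $f$. Any bilinear decomposition $f=\sum_{j=1}^m u_j(x)v_j(y)+R(x,y)$ produces a rank--$m$ operator $\sum_j u_j\otimes\langle v_j,\cdot\rangle$ approximating $J_f$, hence the basic estimate
$$
d_m(\bW^f_q)_p \le \|J_R\|_{L_q\to L_p},
$$
so the upper bound reduces to minimizing this operator norm over rank--$m$ splittings of $f$.

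For the upper bound, I would use the convolution representation $f=\varphi*F_\br$ with $\|\varphi\|_1\le 1$ (and an analogous block estimate for $\bH^\br_1$) together with a hyperbolic--cross dyadic decomposition in both Fourier variables: $F_\br=\sum_{s,t\ge 0} F_\br^{(s,t)}$, where the $(s,t)$--block has spectrum in $|k_1|\asymp 2^s$, $|k_2|\asymp 2^t$. This induces $f=\sum_{s,t}f^{(s,t)}$. Take the approximating subspace to be spanned by all exponentials $e^{ik_1 x}$ with $(k_1,k_2)$ in a hyperbolic--cross index set $\Gamma(m)=\{(s,t):2^{s+t}\ll m\}$, which has total dimension $O(m)$; this handles all $(s,t)\in\Gamma(m)$ exactly, and the remainder is bounded by
$$
\sum_{(s,t)\notin \Gamma(m)} \|J_{f^{(s,t)}}\|_{L_q\to L_p}.
$$
The block--wise operator norm is controlled by combining Bernstein--Nikol'skii inequalities in both variables: in the $y$--variable to capture the smoothing when applying $J_{f^{(s,t)}}$ to the unit ball of $L_q$ rather than $L_1$, and in the $x$--variable to pass from the $L_1$--normalization of $\varphi$ to the target $L_p$. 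A careful combination with the convolution factor $\asymp 2^{-sr_1-tr_2}$ produces a geometric series whose tail sum gives exactly $m^{-r_1-r_2+\xi(q,p)}$.

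For the lower bound, I would construct an extremal $f_0\in\bF^\br_1$ of hyperbolic--cross form, say $f_0(x,y)=c_m\sum_{\bk\in Q_n} a_\bk e^{i(k_1 x+k_2 y)}$ with $|a_\bk|\asymp |k_1|^{-r_1}|k_2|^{-r_2}$ supported on a cross $Q_n$ of cardinality $\asymp m$, with $c_m$ chosen so that $f_0\in \bF^\br_1$. For a well--chosen family $\{\varphi_j\}\subset L_q$ of unit vectors, for instance of Rudin--Shapiro or indicator type on frequency blocks, the images $J_{f_0}(\varphi_j)$ realize up to rescaling the embedding $\ell_q^N\hookrightarrow \ell_p^N$ on a subspace of dimension $\gg m$. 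The classical lower bound on its Kolmogorov $m$--width then produces the required rate $c\,m^{-r_1-r_2+\xi(q,p)}$.

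The main obstacle is obtaining the sharp exponent $\xi(q,p)$ in the upper bound. Applying Theorem~\ref{BiT4} directly via the naive estimate $\|J_R\|_{L_q\to L_p}\le \|R\|_{L_p L_{q'}}$ only yields the weaker exponent $\xi(1,q')=\min(1/2,1/q)$, since mixed--norm bilinear approximation discards information about the smoothing action of $J_f$ on $L_q$. One must exploit the operator structure directly (or simultaneously use $J_f$ and its adjoint $J_f^*\colon L_{p'}\to L_{q'}$, distributing the loss between the two variables via Nikol'skii inequalities in each) to recover the factor $1/q-\max(1/2,1/p)$ in full; this is the central technical point and the place where the structure of the class $\bW^\br_1$ of mixed smoothness is used most essentially.
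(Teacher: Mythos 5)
The paper states this result only as a citation from~\cite{VT32} (Theorems 4.1 and 4.2 there) and gives no proof in the survey itself, so there is no argument here to compare against. Assessed on its own terms, your outline points in a plausible direction: pass from $d_m(\bW^f_q)_p$ to finite-rank approximation of the integral operator $J_f\colon L_q\to L_p$, decompose the convolution kernel $F_\br$ dyadically in the two frequency variables, and choose the rank by a hyperbolic-cross threshold in $(s,t)$. Your observation that bounding $\|J_R\|_{L_q\to L_p}$ through a single mixed norm cannot produce the exponent $\xi(q,p)=(1/q-\max(1/2,1/p))_+$, and that the operator structure (or equivalently the pair $J_f$, $J_f^*$) must be exploited in both variables, is the right diagnosis of the difficulty.

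But the proposal does not close the proof. The blockwise operator-norm estimate---the place where $\xi(q,p)$ actually appears---is not carried out; you only declare that ``one must exploit the operator structure directly,'' which is precisely the content of the theorem. The description of the approximating subspace is also muddled: the subspace for the width lives in $L_p(\bbT)$, functions of $x$ alone, so ``all $e^{ik_1x}$ with $(k_1,k_2)\in\Gamma(m)$'' does not count dimension in the way you intend; the correct bookkeeping keeps the full rank $\asymp\min(2^s,2^t)$ of each retained block $J_{f^{(s,t)}}$ and sums the operator norms $\|J_{f^{(s,t)}}\|_{L_q\to L_p}$ over the discarded $(s,t)$, and that tail sum is where the hard estimate lives. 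The identification of $d_m(\bW^f_q)_p$ with the approximation number $a_{m+1}(J_f)$ is a one-sided inequality $d_m\le a_{m+1}$ in a Banach-space setting, which suffices for the upper bound but not for the lower bound; the lower bound therefore needs the explicit finite-dimensional construction you only gesture at, and it must be verified that the chosen extremal kernel actually lies in $\bF^\br_1$ with the correct normalization. Finally, the $\bH^\br_1$ case, which the theorem also covers, is not addressed at all. As a sketch of the framework the proposal is on target, but every estimate that determines the exponent is left as an assertion rather than a proof.
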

 
 In the above Theorem \ref{BiT5} we consider the case of classes $\bF^\br_1$. Some results on the classes $\bF^\br_\bq$
 are obtained in \cite{VT47} (see Theorem 3.1' there). 
We refer the reader to the paper \cite{VT32} for further results and historical comments on bilinear approximation of functions on two variables with mixed smoothness. 

In the case $d>1$ results are not as complete as in the case $d=1$ discussed above. We now present some results in that direction from \cite{VT47} (see Theorems 2.1, 2.2, and 3.2 there). 

 \begin{Theorem}[{\cite{VT47}}]\label{BiT6} Let $n=2d$, $d\in \bbN$, and let $\br=(r_1,\dots,r_1,r_2,\dots,r_2)$ be such that 
 $r_1 >1/2$ and $r_2 >1/2$. Then for any $\mathbf{2} \le \bq \le \infty$ and $\mathbf{2} \le \bp <\infty$
  we have 
 $$
   \sigma_m(\bW^\br_\bq,\Pi(d))_\bp \asymp m^{-r_1-r_2} (\log m)^{(r_1+r_2)(d-1)}. 
 $$
 \end{Theorem}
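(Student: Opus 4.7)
I would follow the strategy used for the $d=1$ case in Theorem~\ref{BiT4} (from \cite{VT32}), extending it to the multidimensional mixed-smoothness setting via Schmidt/Ky Fan--Horn singular value arguments combined with hyperbolic-cross counting. The upper and lower bounds go by very different routes.

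For the upper bound, fix $f\in\bW^\br_\bq$ and write $f=\varphi\ast F_\br$ with $F_\br(\bx,\by)=F_{\br_1}(\bx)F_{\br_2}(\by)$ and $\|\varphi\|_\bq\le 1$. The integral operator $J_f$ factors as $J_f=D_1\Phi D_2$, where $D_i$ is the Fourier multiplier with symbol $\hat F_{\br_i}$ (acting on the $\bx$- or $\by$-variable) and $\Phi=J_\varphi$. The eigenvalues of the hyperbolic-cross-type diagonal $D_i$ on $L_2(\bbT^d)$, sorted decreasingly, satisfy $s_j(D_i)\asymp j^{-r_i}(\log j)^{r_i(d-1)}$ by the standard counting of the level sets $\{\bk\in\bbZ^d:\prod_\ell\max(|k_\ell|,1)\le N\}$. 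Since $\bq\ge\mathbf{2}$ and $\bbT^{2d}$ carries a probability measure, $\|\varphi\|_\mathbf{2}\le\|\varphi\|_\bq\le 1$, so the Hilbert--Schmidt norm of $\Phi$ is bounded by $1$ and hence $s_j(\Phi)\lesssim j^{-1/2}$. The Ky Fan--Horn inequality $s_{i+j+k-2}(ABC)\le s_i(A)s_j(B)s_k(C)$, optimized over $i,j,k$ with $i+j+k\asymp m$, then yields
$$
s_m(J_f)\;\lesssim\;m^{-(r_1+r_2+1/2)}(\log m)^{(r_1+r_2)(d-1)}.
$$
Combined with the Schmidt identity $\sigma_m(f,\Pi(d))_\mathbf{2}^2=\sum_{j>m}s_j(J_f)^2$ and summation, this yields the $L_\mathbf{2}$ upper bound. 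To pass from $L_\mathbf{2}$ to $L_\bp$ for $\mathbf{2}\le\bp<\infty$, I would first truncate $f$ to a hyperbolic-cross trigonometric polynomial (an $L_\bp$-negligible correction, by the classical estimates for $\bW^\br_\bq$), apply the Schmidt decomposition to that polynomial, and transfer the error from $L_\mathbf{2}$ to $L_\bp$ by a Nikol'skii inequality on the resulting polynomial; the extra logarithmic factors incurred are absorbed into $(\log m)^{(r_1+r_2)(d-1)}$.

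For the lower bound, since $\bp\ge\mathbf{2}$ and $\bbT^{2d}$ has total mass one, $\|\cdot\|_\bp\ge\|\cdot\|_\mathbf{2}$, so it suffices to exhibit a single $f\in\bW^\br_\bq$ with $\sigma_m(f,\Pi(d))_\mathbf{2}\gtrsim m^{-r_1-r_2}(\log m)^{(r_1+r_2)(d-1)}$. I would construct $\varphi$ whose Fourier coefficients $\hat\varphi(\bk,\bl)$ are supported on a product of hyperbolic crosses of level $N$ with equal moduli and random $\pm 1$ signs; a Khintchine-type estimate shows $\|\varphi\|_\bq\lesssim 1$ holds with positive probability (the restriction $\bp<\infty$, together with $\bq\ge\mathbf{2}$ finite in each coordinate, is used here). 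For such $f$ the factorization $J_f=D_1\Phi D_2$ restricted to the hyperbolic cross makes $s_j(J_f)$ computable up to constants, and the Schmidt identity then forces the matching lower bound on $\sigma_m(f,\Pi(d))_\mathbf{2}$.

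\textbf{Main obstacle.} The hardest part is maintaining the precise logarithmic factor $(r_1+r_2)(d-1)$ on both sides. In the upper bound it emerges from the hyperbolic-cross counting $s_j(D_i)\asymp j^{-r_i}(\log j)^{r_i(d-1)}$ together with the three-factor Ky Fan--Horn optimization; the subsequent passage from $L_\mathbf{2}$ to $L_\bp$ without losing further logarithms---requiring careful Nikol'skii inequalities on truncated polynomials and exploiting $\bp<\infty$---is equally delicate. For the lower bound, the difficulty is producing a single test function that actually lies in $\bW^\br_\bq$ (not merely in $\bW^\br_\mathbf{2}$) and is simultaneously sharp in the $\bx$- and $\by$-directions.
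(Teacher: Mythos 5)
You outline an approach rather than a proof, so I will comment on the two places where the plan would not survive being filled in. Note also that this is a survey statement cited from \cite{VT47}; the paper gives no proof here, so I compare your plan against the known technique for such bounds rather than against an in-paper argument.

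The $L_\mathbf{2}$ upper bound via the factorization $J_f=D_1\Phi D_2$, the Ky Fan--Horn inequality, hyperbolic-cross counting for $s_j(D_i)$, and the Schmidt identity is correct as sketched, and it is in the spirit of how the $d=1$ case is treated. The step that does not work is the global passage from $L_\mathbf{2}$ to $L_\bp$. If you truncate $f$ to a single hyperbolic cross of level $n$ (with about $2^n n^{2(d-1)}$ frequencies), perform one Schmidt decomposition of the truncated kernel, and then apply one Nikol'skii inequality to the tail, the loss is of order $2^{n(1/2-1/p)}$ --- a power of $2^n$, not a logarithm. Since $n$ must scale like $\log m$ to make the truncation error commensurate with the target, this loss is polynomial in $m$ and cannot be ``absorbed'' into $(\log m)^{(r_1+r_2)(d-1)}$; the exponent of the log in the theorem is exact on both sides, so no absorption of any kind is available. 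What is actually needed is a multi-scale construction: split $f$ into dyadic blocks of the hyperbolic cross, do a separate SVD truncation within each block with a geometrically allocated budget of bilinear terms, apply Nikol'skii block-by-block where the loss is $2^{\ell(1/2-1/p)}$ at level $\ell$, and balance that against the extra smoothness decay $2^{-\ell(r_1+r_2)}$ (here $r_1,r_2>1/2$ is exactly what makes the balance close). Your plan skips the allocation step entirely.

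The lower bound has a separate gap. You construct $\varphi$ with random $\pm1$ Fourier coefficients on a product hyperbolic cross and verify $\|\varphi\|_\bq\lesssim 1$ by Khintchine, ``the restriction $\bq\ge\mathbf{2}$ finite in each coordinate is used here.'' But the theorem allows $\bq=\infty$, and for the lower bound this is precisely the hardest case, since $\bW^\br_\bq\subset\bW^\br_{\bq'}$ for $\bq\ge\bq'$; a lower bound for finite $\bq$ does not imply one for $\bq=\infty$. For $\bq=\infty$ the random-sign polynomial has sup-norm of order $\sqrt{M\log M}$ (Salem--Zygmund), not $\sqrt{M}$, and normalizing by that costs an extra $(\log m)^{1/2}$ that the claimed two-sided bound does not permit. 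Here one needs either a deterministic construction (hyperbolic-cross analogues of Rudin--Shapiro/de la Vall\'ee Poussin kernels giving equal-size Fourier coefficients with $L_\infty$ norm bounded independently of the random-sign log) or a different mechanism (a volume/entropy argument, or the $L_1$--$L_\infty$ duality machinery behind Theorem~\ref{MuT1} from \cite{VT46}). Also note, incidentally, that $\bp<\infty$ is not used at all in the lower bound: since $\|\cdot\|_\bp\ge\|\cdot\|_\mathbf{2}$, the lower bound would hold for $\bp=\infty$ as well, so invoking that restriction there is a red herring; it is the upper bound that needs $\bp<\infty$.
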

 
 \begin{Theorem}[{\cite{VT47}}]\label{BiT7} Let $n=2d$, $d\in \bbN$, and let $\br=(r_1,\dots,r_1,r_2,\dots,r_2)$ be such that 
 $r_1 >1/2$ and $r_2 >1/2$. Then for any $\mathbf{2} \le \bq \le \infty$ and $\mathbf{2} \le \bp <\infty$
  we have 
 $$
   \sigma_m( \bH^\br_\bq,\Pi(d))_\bp \asymp m^{-r_1-r_2} (\log m)^{(r_1+r_2+1)(d-1)}. 
 $$
 \end{Theorem}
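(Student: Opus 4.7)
\medskip
\noindent\emph{Proof proposal for Theorem \ref{BiT7}.}

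The plan is to mirror the strategy used for Theorem~\ref{BiT6} on the Sobolev-type classes $\bW^\br_\bq$, with careful tracking of the extra logarithmic factor produced by the H\"older-type definition. The natural vehicle is the dyadic (hyperbolic cross) decomposition: write $f = \sum_{\bs} \delta_\bs(f)$, where $\bs = (\bs^1,\bs^2) \in \bbZ_+^d \times \bbZ_+^d$ and $\delta_\bs(f)$ extracts the Fourier frequencies in the dyadic box indexed by $\bs$. By Definition~\ref{BiD2}, for $f \in \bH^\br_\bq$ one has the block bound $\|\delta_\bs(f)\|_\bq \ll 2^{-r_1\|\bs^1\|_1 - r_2\|\bs^2\|_1}$, uniformly in $\bs$ (this is the characteristic $\ell_\infty$-in-$\bs$ behaviour of $\bH$ classes, as opposed to the $\ell_2$-type behaviour of $\bW$ classes; this is exactly where the extra $(\log m)^{d-1}$ will enter).

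For the upper bound, I would treat each $\delta_\bs(f)$ as a trigonometric polynomial whose frequency support is contained in a product box of size $2^{\|\bs^1\|_1}$ (in $\bx$) by $2^{\|\bs^2\|_1}$ (in $\by$). Fixing the $2^{\|\bs^1\|_1}\times 2^{\|\bs^2\|_1}$ ``matrix'' of Fourier coefficients and applying the bilinear SVD bound together with standard $L_\bp$/$L_\bq$ discretization on polynomial blocks (valid for $\mathbf 2\le\bq,\bp$, where Nikol'skii inequalities and the Littlewood--Paley / de la Vall\'ee Poussin machinery from \cite{VTmon} have no loss), one gets a per-block estimate $\sigma_{k_\bs}(\delta_\bs(f),\Pi(d))_\bp \ll 2^{-r_1\|\bs^1\|_1-r_2\|\bs^2\|_1}\,k_\bs^{-1/2}\,(\text{block dimension})^{1/2}$ or the relevant variant with the right powers. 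I then run an optimization over ranks $\{k_\bs\}$ subject to $\sum_\bs k_\bs \le m$, splitting $\bs$ according to whether $\|\bs^1\|_1+\|\bs^2\|_1 \le N$ for a threshold $N\asymp \log m$ (on small scales one takes the whole block, on large scales one drops it). Because the H\"older control is uniform in $\bs$, the number of blocks at each level $\|\bs^1\|_1+\|\bs^2\|_1 = L$ is of order $L^{2d-1}$; summing yields the claimed rate $m^{-r_1-r_2}(\log m)^{(r_1+r_2+1)(d-1)}$, where the exponent $(r_1+r_2+1)(d-1)$ is exactly $(r_1+r_2)(d-1)$ from Theorem~\ref{BiT6} plus an extra $(d-1)$ coming from the loss between $\ell_\infty$-in-$\bs$ (H\"older) and $\ell_2$-in-$\bs$ (Sobolev) block summation.

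For the matching lower bound, I would build a fooling function of the form $f^*(\bx,\by) = C\sum_{\bs \in \cS_N} 2^{-r_1\|\bs^1\|_1-r_2\|\bs^2\|_1}\,\psi_\bs(\bx,\by)$, where $\cS_N$ is the slab $\{\|\bs^1\|_1+\|\bs^2\|_1 = N\}$ with $N\asymp \log m$, $\psi_\bs$ is a block bump supported on the corresponding dyadic frequency box and normalized in $L_\bq$, and $C$ is a small absolute constant chosen so that $f^*\in \bH^\br_\bq$ (this uses the characterization of $\bH^\br_\bq$ via the uniform block bound, and here the $\ell_\infty$-nature of $\bH$ allows summing all $|\cS_N|\asymp N^{2d-1}$ blocks without loss). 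Choosing $N$ so that $2^N\asymp m\,N^{-(2d-2)}$ makes any rank-$m$ bilinear approximant necessarily miss a positive proportion of the blocks; a Bernstein-type inequality for $\Pi(d)$-approximation on a single dyadic block (of the kind used in \cite{VT47} for the $\bW$ case) then converts this into the desired lower bound.

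The principal obstacle, in my view, is the lower bound: specifically, proving the right Bernstein/Kashin-type bound asserting that \emph{any} $m$-term bilinear approximant, in the $L_\bp$ metric with $\bp\ge\mathbf 2$, to a sum of many orthogonal block bumps must inherit a rank proportional to the number of blocks it touches. This is where the extra log power $(d-1)$ over $(r_1+r_2)(d-1)$ is won or lost, and it must be done uniformly across the full range $\mathbf 2\le\bq\le\infty$, $\mathbf 2\le\bp<\infty$; matching the upper and lower logarithmic exponents requires simultaneous optimal choice of both the slab $\cS_N$ and the per-block rank distribution.
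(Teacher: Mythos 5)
The paper does not prove Theorem~\ref{BiT7}; it is quoted from \cite{VT47}, so there is no in-paper proof to compare against. Judged as a free-standing sketch, your outline has the right architecture — dyadic hyperbolic-cross block decomposition, $\ell_\infty$-in-$\bs$ block control for the $\bH$ class (versus $\ell_2$-in-$\bs$ for $\bW$), per-block bilinear approximation with rank allocation, and a slab-supported fooling function for the lower bound — and that is indeed the strategy one finds in \cite{VT47} and \cite{VT32}. However, there are two concrete gaps that keep this from being a proof.

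First, the per-block estimate you write,
$\sigma_{k_\bs}(\delta_\bs(f),\Pi(d))_\bp \ll 2^{-r_1\|\bs^1\|_1-r_2\|\bs^2\|_1}\,k_\bs^{-1/2}(\text{block dimension})^{1/2}$, is not usable in the form stated. If the block rank is $R_\bs:=\min(2^{\|\bs^1\|_1},2^{\|\bs^2\|_1})$, then the SVD gives $\sigma_k\le A$ trivially and $\sigma_k=0$ for $k\ge R_\bs$; your formula $A(R_\bs/k)^{1/2}$ is weaker than the trivial bound on the entire range $k\le R_\bs$ and is a vacuous improvement on $0$ for $k>R_\bs$. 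What is actually needed per block is an estimate combining the trivial bound with exact rank annihilation (and Nikol'skii for the $L_\bp$/$L_\bq$ conversion), and then the rank allocation over $\bs$ must be solved as a genuine optimization subject to $\sum_\bs k_\bs\le m$. As it stands, the phrase ``summing yields the claimed rate'' is doing all the work; the exponent $(r_1+r_2+1)(d-1)$ — which is the entire content of the theorem — is asserted rather than derived. You justify the extra $(d-1)$ over the $\bW$-case only by the heuristic that $\ell_\infty$-vs-$\ell_2$ block summation costs one $(\log m)^{d-1}$, but the analogous loss in the linear-width theory for $H$ vs.\ $W$ classes is $(\log)^{(d-1)/2}$, so this heuristic needs to be replaced by an actual count (it does come out to $(d-1)$ here because of the double hyperbolic cross in $2d$ variables split as $d+d$, but you have not shown this).

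Second, the lower bound is not a proof but a to-do list: you need a Bernstein/Kashin-type statement that a rank-$m$ bilinear approximant in $L_\bp$, $\bp\ge\mathbf 2$, cannot resolve many orthogonal block bumps on a hyperbolic-cross slab, and you need the admissibility of the fooling function $f^*$ in $\bH^\br_\bq$ for the vector index $\bq$ (the latter is essentially trivial for $\bH$, as you say, but still must be checked against Definition~\ref{BiD2} with the mixed differences). Your threshold choice $2^N\asymp m N^{-(2d-2)}$ is also unexplained; the number of blocks on the slab is $\asymp N^{2d-1}$ and the rank of a single slab block is $2^{\min(\|\bs^1\|_1,\|\bs^2\|_1)}$, so a bookkeeping argument is required to justify which power of $N$ appears. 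Until these steps are supplied, the proposal is a correct outline of the attack rather than a proof, and the most delicate part — the exact logarithmic exponent — remains unverified.
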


\begin{Remark}\label{BiR1} In Theorems \ref{BiT6} and \ref{BiT7} it is sufficient to require $r_i >0$, $i=1,2$ in the case $\bq\ge \bp$.
\end{Remark}

Here is an analog of Theorem \ref{BiT5}, which holds for $d=1$, in the case $d>1$. 

\begin{Theorem}[{\cite{VT32}}]\label{BiT5a} Let $d\in\bbN$ and $\mathbf{2} \le \bq \le \infty$, $2\le a <\infty$, $1<b<\infty$. 
Assume that in the case $b\in (1,2]$ we have $r_i>0$, $i=1,2$, and in the case $b\in (2,\infty)$ we have $r_1>1/2$, $r_2>0$. Then
 $$
\sup_{f\in \bW^\br_\bq}  d_m(\bW^f_a)_b \asymp (m^{-1}(\log m)^{d-1})^{r_1+r_2}m^{-1/2} .
 $$
 \end{Theorem}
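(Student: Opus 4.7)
The plan is to prove the upper and lower bounds separately, following the pattern established earlier in the paper for the related results in Section~\ref{Bi} (in particular Theorem~\ref{BiT3} and Theorem~\ref{BiT5}).

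\textbf{Upper bound.} The starting point is the standard reduction from Kolmogorov widths to bilinear approximation. If $\tilde f(\bx,\by) = \sum_{i=1}^m u_i(\bx)v_i(\by)$ is any rank-$m$ bilinear approximation of $f$, then $J_{\tilde f}$ has rank at most $m$, so its image lies in the $m$-dimensional subspace $U := \sp(u_1,\dots,u_m) \subset L_b(\bbT^d)$. Applying Hölder's inequality pointwise in $\bx$ gives
$$
\|J_f\phi - J_{\tilde f}\phi\|_{L_b(d\bx)} \le \|\phi\|_a \, \bigl\| \|(f-\tilde f)(\bx,\cdot)\|_{L_{a'}(d\by)} \bigr\|_{L_b(d\bx)},
$$
where $a' = a/(a-1)$. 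Thus
$$
d_m(\bW^f_a)_b \le \sigma_m(f,\Pi(d))_{L_b(L_{a'})},
$$
reducing the problem to a mixed-norm bilinear approximation estimate for the class $\bW^\br_\bq$. To handle this, I would expand $f$ in a trigonometric (or wavelet-type) basis organized by dyadic blocks $\delta_\bs$ with $\bs = (\bs^1,\bs^2) \in \bbZ^{2d}_+$ matching the bivariate product structure. On each block one constructs a bilinear approximation using its SVD in the mixed $L_2$ norm, which gains an extra $m^{-1/2}$ factor over the trivial projection estimate. Nikol'skii-type inequalities then pass from the mixed $L_2$ norm on each block to the required $L_b(L_{a'})$ norm; this is where the conditions $r_1 > 1/2$ when $b > 2$ (and $r_i > 0$ otherwise) enter, since the mixed-smoothness Sobolev embedding along the first group of variables needs $r_1 > 1/2$ precisely when $b > 2$. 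Optimally allocating $m$ bilinear terms across the hyperbolic-cross blocks in the standard way yields the target rate $(m^{-1}(\log m)^{d-1})^{r_1+r_2}\,m^{-1/2}$.

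\textbf{Lower bound.} For the matching lower bound I would exhibit a specific $f \in \bW^\br_\bq$ such that $J_f$ has many nearly orthogonal singular directions of comparable magnitude. The natural choice is a "fooling" function with Fourier coefficients supported on a hyperbolic-cross set whose $(\bx,\by)$-separated structure creates a large collection of almost-orthogonal functions in $\bW^f_a$. One then invokes the finite-dimensional lower bound $d_m(B_a^N, L_b^N) \gtrsim m^{-1/2}$ (on a block of dimension $N \asymp m$), combined with the usual mixed-smoothness volumetric lower bound which produces the $(m^{-1}(\log m)^{d-1})^{r_1+r_2}$ factor; this is essentially the same scheme used to derive the lower bounds in Theorems~\ref{BiT6}, \ref{BiT7} and \ref{BiT5}, applied now to the width rather than the bilinear approximation.

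\textbf{Main obstacle.} The principal difficulty is the mixed-norm bilinear approximation bound in the range $b \in (1,2)$ (output norm "hard") together with $a' \in (1,2)$ (automatic from $a\ge 2$), where $L_2$-based SVD arguments are not directly sufficient. One must do the SVD on each dyadic block in the mixed $L_2$-norm and then carefully use Nikol'skii inequalities block by block to transfer the error into the $L_b(L_{a'})$ norm, making sure the losses sum up correctly over the hyperbolic cross. The dichotomy in the smoothness hypothesis ($r_1 > 1/2$ iff $b > 2$) is exactly the threshold needed for these Nikol'skii inequalities to yield the stated rate without extra logarithmic losses.
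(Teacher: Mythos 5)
Your scheme (dyadic blocks, block SVD, Nikol'skii inequalities, hyperbolic-cross allocation) is in the right spirit — the survey itself gives no proof for Theorem~\ref{BiT5a}, only a citation — but there is a genuine gap in the upper bound. The reduction
$$
d_m(\bW^f_a)_b \le \sigma_m(f,\Pi(d))_{L_b(L_{a'})}
$$
is valid, but it is already too lossy to yield the stated rate: by comparison with Theorem~\ref{BiT6}, bilinear approximation for the class $\bW^\br_\bq$ decays like $m^{-r_1-r_2}(\log m)^{(r_1+r_2)(d-1)}$ with no extra $m^{-1/2}$. So once you have passed to $\sigma_m$ you cannot subsequently ``gain'' $m^{-1/2}$ via SVD; that claim contradicts your own starting inequality. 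The distinction is exactly the one the paper records in (\ref{Bi4}) versus (\ref{Bi7}): the bilinear error is the $\ell_2$-tail $\bigl(\sum_{j>m}s_j(J_f)^2\bigr)^{1/2}$, while the width equals the single singular value $s_{m+1}(J_f)$, and the $m^{-1/2}$ you need comes precisely from this difference. A correct argument must avoid passing to $\sigma_m$ altogether: on each dyadic block take the approximating subspace $Y$ to be the span of the leading \emph{left} singular vectors of $J_f$ restricted to the block and bound the operator norm of the remainder (i.e.\ the next singular value), transferring from $L_2$ to $L_b(L_{a'})$ by Nikol'skii inequalities at that stage; only then does the allocation over the hyperbolic cross yield $(m^{-1}(\log m)^{d-1})^{r_1+r_2}m^{-1/2}$.

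The lower-bound sketch is also too loose to be checked. The claimed finite-dimensional estimate $d_m(B_a^N, L_b^N)\gtrsim m^{-1/2}$ is not a correct freestanding fact: for the unnormalized $\ell_p^N$ balls, $d_m(B_a^N,\ell_b^N)\ge 1$ whenever $m<N$, and for the normalized ones the bound depends on the ratio $m/N$ and on $a$ and $b$. The $m^{-1/2}$ in the final estimate emerges from the interaction between the chosen fooling kernel $f$ and the normalized finite-dimensional width of the image $J_f(B_a^N)$ on the extremal block, not from a universal $\ell_p^N$ bound. As written, both halves need to be reorganized around the singular-value (width) structure of $J_f$ rather than around its bilinear $m$-term approximation.
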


\section{Multilinear approximation}
\label{Mu}

In this section we study multilinear approximation (nonlinear tensor product approximation) of multivariate functions.
As above, we consider periodic functions defined on $\bbT^d$ equipped with the normalized Lebesgue measure. 
In other words, say in the case of $L_2(\bbT^d)$, we are interested in studying $m$-term approximations of functions with respect to the dictionary
$$
\Pi^d := \{g(x_1,\dots,x_d): g(x_1,\dots,x_d)=\prod_{i=1}^d u^i(x_i),\quad \|u^i\|_2 =1, \, i=1,\dots,d\},
$$
where $u^i(x_i)$, $i=1,\dots,d$, are arbitrary normalized univariate functions from $L_2(\bbT)$. In the case $d=2$ we have $\Pi^2 = \Pi = \Pi(1)$.
We can give the following interpretation to the dictionary $\Pi^d$ and its $L_p$ analogs. For a Banach space $X$ denote 
by $S(X)$ its unit sphere $\{f\in X\,:\, \|f\|_X=1\}$. Then we can write
$$
\Pi^d = S(L_2(\bbT)) \times \cdots \times S(L_2(\bbT)) \quad (d\,\text{times}).
$$
In words -- $\Pi^d$ is a tensor (direct) product of $d$ copies of the unit spheres of the univariate $L_2(\bbT)$ spaces. For $1\le p\le \infty$ 
we define
$$
\Pi^d_p := S(L_p(\bbT)) \times \cdots \times S(L_p(\bbT)) \quad (d\,\text{times}).
$$

\begin{Remark}\label{MuR1} In this section we discuss the best $m$-term approximations of functions from different classes. We use the notation 
$$
\sigma_m(\bF,\Pi^d)_p := \sup_{f\in \bF}\sigma_m(f,\Pi^d)_p.
$$
Note, that in a number of papers on this topic the following notation is used as well
$$
\Theta_m(\bF)_p := \sigma_m(\bF,\Pi^d)_p.
$$
\end{Remark}

The dictionary $\Pi^2$ is the bilinear dictionary $\Pi$, which was discussed in detail above. We already pointed out (see (\ref{Bi4})) that the dictionary $\Pi$ has a very interesting and important property: The PGA and the OGA realize the best 
$m$-term approximations. Unfortunately, the dictionary $\Pi^d$ with $d>2$ does not have this special property. This makes 
the study of its approximation properties much more difficult than the study of the bilinear dictionary $\Pi$. In this section we present some results on sparse approximation with respect to $\Pi^d$. Theorem \ref{BiT4} gives rather complete 
results on the bilinear dictionary $\Pi^2$. The following   upper estimate from \cite{VT35} (see Theorem 4.1 there) in the case $q=p=2$
\begin{equation}\label{Mu1}
\sigma_m(\bW^r_2,\Pi^d)_2 \ll m^{-rd/(d-1)} ,\quad r>0,  
\end{equation}
is an extension of Theorem \ref{BiT4} in the case $q=p=2$. Further progress was obtained in \cite{VT147} (see Theorem 1.1 there). Let $2\le p<\infty$ and $r> (d-1)/d$. Then
\begin{equation}\label{Mu2}
\sigma_m(\bW^r_2,\Pi^d)_p \ll  \left(\frac{m}{(\log m)^{d-1}}\right)^{-\frac{rd}{d-1}}. 
\end{equation}

{\bf Comment \ref{Mu}.1.} The proof of the bound \ref{Mu2} in \cite{VT147} is not constructive. It goes by induction and uses 
a nonconstructive bound in the case $d=2$, which is obtained in \cite{VT35}. The corresponding proof from \cite{VT35} uses the bounds for the Kolmogorov width $d_n(W^r_2, L_\infty)$ of the class $W^r_2$ of univariate functions, proved by Kashin \cite{Ka}. Kashin's proof is a probabilistic one, which provides existence of a good linear subspace for approximation, but there is no known explicit constructions of such subspaces. This problem is related to a problem from compressed sensing  on construction of good matrices with Restricted Isometry Property (see, for instance, \cite{VTbook}, Ch.5 and \cite{FR}). It is an outstanding difficult open problem.  
In \cite{VT147} we discuss constructive ways of building good multilinear approximations. The simplest way would be to use known results about $m$-term approximation with respect to special systems with tensor product structure. We illustrate this idea in \cite{VT147} on the example of Thresholding Greedy Algorithm with respect to a special basis. 

We now discuss some known lower bounds. In the case $d=2$ the lower bound 
\begin{equation}\label{Mu3}
\sigma_m(\bW^r_p)_p \gg m^{-2r},\qquad 1\le p\le \infty ,  
\end{equation}
follows from more general results in \cite{VT32} (see Theorem \ref{BiT4} and Remark \ref{BiLB} above). A stronger result 
\begin{equation}\label{Mu4}
\sigma_m(\bW^r_\infty)_1 \gg m^{-2r}  
\end{equation}
follows from Theorem 1.1 in \cite{VT46}. We formulate that important result here.  Let $\cT^d := \{e^{i(\bk,\bx)}\}_{\bk\in \bbZ^d}$ be the trigonometric system. Denote for $\bN = (N_1,\dots,N_d)$, $N_j\in \bbN_0$, $j=1,\dots,d$,
$$
\cT(\bN,d) := \left\{f=\sum_{\bk: |k_j|\le N_j, j=1,\dots,d} c_j e^{i(\bk,\bx)}\right\},\quad \vartheta(\bN) :=\prod_{j=1}^d (2N_j+1). 
$$
Let $\cT(\bN,d)_\infty$ denote the unit $L_\infty$-ball of the subspace $\cT(\bN,d)$.

\begin{Theorem}[{\cite{VT46}}]\label{MuT1} Let $a,d \in \bbN$ be such that $a\in [1,d)$. There exists a positive constant $C(d)$ with the following property.  For $\bN = (N_1,\dots,N_d)$
denote $\bN^1 := (N_1,\dots,N_a)$ and $\bN^2 := (N_{a+1},\dots,N_d)$. Then for any 
$$
m\le \frac{1}{5} \min (\vartheta(\bN^1),\vartheta(\bN^2))
$$
there is a function $t(\bx^1,\bx^2)$, $\bx^1:=(x_1,\dots,x_a)$,  $\bx^2:=(x_{a+1},\dots,x_d)$ in the $\cT(\bN,d)_\infty$ such that for any integrable functions $u_i(\bx^1)$, $v_i(\bx^2)$, $i=1,\dots,m$, we have 
$$
\left\|t(\bx^1,\bx^2) - \sum_{i=1}^m u_i(\bx^1)v_i(\bx^2)\right\|_1 \ge C(d) .
$$
\end{Theorem}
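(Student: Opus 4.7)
The plan is to combine three ingredients: (i) a smoothing reduction so that $u_i, v_i$ may be assumed to be trigonometric polynomials, (ii) a matrix-rank reinterpretation of bilinear approximation via Fourier coefficients, and (iii) the construction of a specific $t$ whose Fourier matrix is sufficiently incoherent/orthogonal, together with a duality argument for the $L_1$ lower bound.

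First I would reduce to the case when the $u_i$ and $v_i$ are trigonometric polynomials of controlled degree. Let $V^1, V^2$ denote tensor products of one-dimensional de la Vall\'ee Poussin kernels in the variables $\bx^1, \bx^2$, taken with degrees a bounded multiple of $\bN^1, \bN^2$. These operators act as the identity on $\cT(\bN,d)$, have $L_1\to L_1$ norms bounded by $C(d)$, and respect the tensor structure: $V^1V^2(u_i v_i) = (V^1u_i)(V^2 v_i)$. Applying them to $t - \sum_i u_i v_i$ shows that we may assume $u_i \in \cT(2\bN^1,a)$ and $v_i \in \cT(2\bN^2,d-a)$ after losing only an absolute constant factor in the $L_1$ norm.

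Next I would identify trigonometric polynomials in the enlarged spectrum with their Fourier coefficient matrices of size $\vartheta(2\bN^1) \times \vartheta(2\bN^2)$. Under this identification, a bilinear sum $\sum_{i=1}^m u_i(\bx^1)v_i(\bx^2)$ with polynomial factors corresponds to a matrix of rank at most $m$. The theorem reduces to producing $t \in \cT(\bN,d)$ with $\|t\|_\infty \le 1$ and Fourier matrix $M_t$ such that for every rank-$m$ matrix $R$, the trigonometric polynomial with Fourier matrix $M_t - R$ has $L_1$-norm at least $C(d)$.

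For the construction, I would take $M_t$ proportional to an orthogonal matrix (for instance a Walsh/Hadamard or DFT-type matrix embedded into the spectrum $\{|k_j|\le N_j\}$), normalized so that $\|t\|_\infty \le 1$. The lower bound proceeds by duality: pair $t - \sum u_i v_i$ against a test function $g\in L_\infty$ with $\|g\|_\infty\le 1$, built from the same orthogonal structure (and projected back onto a slightly enlarged polynomial spectrum by $V^1V^2$), so that $\langle t, g\rangle$ has order one while $\langle u_i v_i, g\rangle$ can be bounded in terms of the rank $m$. The restriction $m \le \tfrac{1}{5}\min(\vartheta(\bN^1),\vartheta(\bN^2))$ ensures that after subtracting the contribution of any rank-$m$ matrix a definite fraction of the squared Frobenius mass of $M_t$ survives, which propagates to the dual pairing.

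The main obstacle is ensuring that the $L_\infty$-normalization $\|t\|_\infty \le 1$ does not force the $L_1$ lower bound to decay with $\vartheta(\bN)$. A naive random-sign construction $M_t = c\,\varepsilon_{\bk^1,\bk^2}$ loses a logarithmic factor from Salem--Zygmund-type estimates on $\|t\|_\infty$. Avoiding this loss is the technical heart of the proof: it requires an explicit or probabilistic construction (Rudin--Shapiro--type, or algebraic character sums) for which both $t$ and the dual $g$ have $L_\infty$ norm of constant order while $M_t$ has a flat singular-value profile, so that after cutting off the top $m$ singular directions a constant-order residual remains.
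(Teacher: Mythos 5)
Your proposal correctly identifies two of the standard opening moves: the de la Vall\'ee Poussin reduction to polynomial factors, and the identification of a bilinear sum $\sum_{i=1}^m u_i v_i$ (with polynomial factors) with a rank-$\le m$ Fourier-coefficient matrix. These are fine. You also correctly diagnose the real obstacle: keeping $\|t\|_\infty\le 1$ without paying a $\log\vartheta(\bN)$ factor, so that the $L_1$ lower bound stays of constant order. But you do not actually overcome it. Naming Rudin--Shapiro or character-sum constructions is not a proof; and even granting such a construction, your argument is incomplete at the crucial passage, which is the only place the theorem has real content.

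The specific gap is the sentence ``after subtracting the contribution of any rank-$m$ matrix a definite fraction of the squared Frobenius mass of $M_t$ survives, which propagates to the dual pairing.'' A flat singular-value profile gives a constant-order \emph{Frobenius} (i.e.\ $L_2$) lower bound for the residual, via Eckart--Young. But Frobenius control does not ``propagate'' to an $L_1$ bound: $\|t-\sum u_i v_i\|_1 \le \|t-\sum u_i v_i\|_2$, so the inequality runs in the wrong direction. To reverse it one needs an $L_\infty$ bound on the residual, and after the de la Vall\'ee Poussin step and Nikol'skii one only gets $\|t-\sum u_i v_i\|_\infty\lesssim\vartheta(\bN)$, which produces $\|t-\sum u_i v_i\|_1\gtrsim\vartheta(\bN)^{-1}$ --- far weaker than the constant lower bound claimed. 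The naive dual test function $h=t$ also fails: since the rank-$m$ approximant $\sum u_i v_i$ is not norm-constrained, $\langle \sum u_i v_i,\,t\rangle$ can be made arbitrarily large, driving $\langle t-\sum u_i v_i,\,t\rangle$ to $-\infty$. So the duality step needs a genuinely different construction of $h$, adapted to the unknown approximant, and this is precisely what you have not supplied. Finally, note that the paper explicitly remarks (after Theorem \ref{MuT2}) that the spectral/rigidity approach used by Malykhin is ``based on ideas substantially different'' from the proof of Theorem \ref{MuT1} in \cite{VT46}; your heavy reliance on singular-value decay puts your sketch closer to the former, while the original argument proceeds along different lines. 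As written, the proposal is a plausible roadmap with the hardest stretch of road unbuilt.
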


Clearly, in the case $d=2$, $a=1$ Theorem \ref{MuT1} provides the lower bound for the bilinear approximations
\be\label{Mu4}
\sigma_m(\cT(\bN,2)_\infty,\Pi)_1 \ge C(2), \quad\text{for}\quad  m\le c\min(N_1,N_2). 
\ee

A very interesting development of Theorem \ref{MuT1} was obtained recently in the paper \cite{Ma} (see Statement 3.3 there). 

\begin{Theorem}[{\cite{Ma}}]\label{MuT2} There exist two positive constant $C(d)$ and $c(d)$ such that
$$
\sigma_m(\cT(\bN,d)_\infty,\Pi^d)_1 \ge C(d), \quad \text{for}\quad m\le c(d)\frac{N_1\cdots N_d}{\max_j N_j}. 
$$
\end{Theorem}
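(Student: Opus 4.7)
The plan is to split the argument by the relative size of $\max_j N_j$ versus the other dimensions. After permuting coordinates I may assume $N_1 = \max_j N_j$, so the inequality to establish becomes $m \le c(d) N_2 \cdots N_d$. The key structural observation is that every element of $\Pi^d$, being of the form $g(\bx) = \prod_{j=1}^d u^j(x_j)$ with $\|u^j\|_2 = 1$, factors as $g(\bx) = u^1(x_1)\, v(\bx^2)$ with $\bx^2 = (x_2,\dots,x_d)$ and $\|v\|_{L_2(\bbT^{d-1})} = 1$. Hence every $m$-term $\Pi^d$ sum is a fortiori an $m$-term bilinear approximation in the split $a=1$ of Theorem~\ref{MuT1}, so $\sigma_m(f,\Pi^d)_1 \ge \sigma_m(f,\Pi)_1$ for every $f$, where the right-hand side refers to the bilinear dictionary on $\bbT \times \bbT^{d-1}$.

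In the \emph{easy} regime $N_2 \cdots N_d \le c_0(d) N_1$ this reduction already suffices: Theorem~\ref{MuT1} applied with $a=1$, $\bN^1 = (N_1)$, $\bN^2 = (N_2,\dots,N_d)$ produces a $t\in\cT(\bN,d)_\infty$ whose bilinear $L_1$-error is at least $C(d)$ whenever $m \le \tfrac{1}{5}\min(2N_1+1,\,\prod_{j=2}^d(2N_j+1))$, and in this regime that minimum is comparable to $N_2\cdots N_d$. By the containment above, the same $t$ witnesses $\sigma_m(\cT(\bN,d)_\infty,\Pi^d)_1 \ge C(d)$.

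The substantial case is the opposite regime $N_2 \cdots N_d \gg N_1$, which is typical for $d\ge 3$ with roughly balanced $N_j$, since there the bilinear bound only yields $m \lesssim N_1$, far short of what is needed. Here one must exploit that the second factor $v = \prod_{j=2}^d u^j(x_j)$ of each term is itself a $(d{-}1)$-fold tensor product, not an arbitrary function of $\bx^2$. Following \cite{Ma}, I would construct a Rudin--Shapiro-type trigonometric polynomial $T(\bx) = \sum_{\bk:\,|k_j|\le N_j} \epsilon_\bk e^{i(\bk,\bx)}$ with signs $\epsilon_\bk\in\{\pm 1\}$ chosen so that $\|T\|_\infty \le C(d)\big(\prod_j(2N_j+1)\big)^{1/2}$; then $t := T/\|T\|_\infty \in \cT(\bN,d)_\infty$, while $\|t\|_2 \ge c(d)>0$. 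The claim is that no $m$-term $\Pi^d$ sum with $m \le c(d)N_2\cdots N_d$ can approximate this $t$ within error less than $C'(d)$ in $L_1$.

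The main obstacle is precisely this last claim. The parameter-counting heuristic that $g$ has $\sim m\sum_j N_j = O(m\max_j N_j)$ free parameters, which must cover the ambient dimension $\prod_j N_j$, correctly predicts the threshold $m\gtrsim \prod_j N_j/\max_j N_j$, but an $L_1$ lower bound does not follow from a naive dimension count. The way I expect to proceed is by pairing against the test function $\overline{t}$: since $\|\overline{t}\|_\infty\le 1$, $L_1$--$L_\infty$ duality gives $\|t-g\|_1 \ge |\langle t-g,\overline{t}\rangle|$, and $|\langle t,\overline{t}\rangle| = \|t\|_2^2 \ge c(d)^2$. Expanding $\overline{t}$ in its Fourier series, the pairing $\langle g,\overline{t}\rangle$ decomposes into $m$ products of the form $\prod_j\langle u^j_i,P_j\rangle$ for certain univariate Rudin--Shapiro-type polynomials $P_j$; the incoherence of the sign pattern forces these products to be individually small, and combining the $m$ of them with $m \le c(d)N_2\cdots N_d$ keeps $|\langle g,\overline{t}\rangle|$ strictly below $\|t\|_2^2$. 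This contradiction, which is the new ingredient of \cite{Ma} beyond the bilinear argument of \cite{VT46}, closes the hard case and completes the proof.
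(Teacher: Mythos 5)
The paper does not reproduce the proof of this theorem; it quotes the result from \cite{Ma} and explicitly remarks that the argument there rests on the algebraic Alon--Frankl--R{\"o}dl method from matrix and tensor rigidity, a counting-type argument that is quite different from the Rudin--Shapiro-plus-duality sketch you propose. So already at the level of strategy your proposal and \cite{Ma} diverge.

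More importantly, the hard case of your sketch --- which is where all the new content lies, since the regime $N_2\cdots N_d\lesssim N_1$ is already covered by Theorem~\ref{MuT1} with $a=1$ --- contains a concrete error. With $T(\bx)=\sum_{\bk}\epsilon_{\bk}\,e^{i(\bk,\bx)}$ and $g_i(\bx)=\prod_{j=1}^d u^j_i(x_j)$, the pairing is
\[
\langle g_i,\overline T\rangle=\sum_{\bk}\epsilon_{\bk}\prod_{j=1}^d\widehat{u^j_i}(k_j),
\]
the contraction of the sign tensor $\epsilon=(\epsilon_{\bk})$ against the rank-one tensor $\widehat{u^1_i}\otimes\cdots\otimes\widehat{u^d_i}$. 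This does \emph{not} split as $\prod_j\langle u^j_i,P_j\rangle$ for univariate polynomials $P_j$ unless the signs themselves factor, $\epsilon_{\bk}=\prod_j\delta^{(j)}_{k_j}$; but in that case $T$ is itself a rank-one tensor, $t$ lies (up to normalization) in $\Pi^d$, and the lower bound would be false. So the claimed decomposition simply does not exist, and the step ``the incoherence of the sign pattern forces these products to be individually small'' has nothing to act on.

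If one instead bounds the contraction directly, the relevant quantity is the injective (spectral) norm of the sign tensor $\epsilon$, and a term-by-term estimate $m\cdot\sup_i|\langle g_i,\overline t\rangle|<\|t\|_2^2$ forces $m$ to be far smaller than $N_2\cdots N_d$ once $d\ge 3$: for comparable $N_j=N$ the injective norm of a generic $\pm1$ tensor grows like a small power of $N$, so duality against a single test function yields only $m\lesssim N$ rather than the required $m\lesssim N^{d-1}$. Reaching the full threshold $m\le c(d)N_1\cdots N_d/\max_j N_j$ is exactly what the rigidity argument of \cite{Ma} accomplishes, and your sketch does not replace it. The easy-regime reduction via the split $x_1\mid(x_2,\dots,x_d)$ and Theorem~\ref{MuT1} is correct as far as it goes, but by itself it gives only $m\lesssim\min(N_1,\,N_2\cdots N_d)$, which for $d\ge 3$ and comparable $N_j$ is much weaker than the theorem.
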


Theorem \ref{MuT2} was proved by a method, which is based on the ideas substantially different from ideas, used in the proof of Theorem \ref{MuT1}. The algebraic method of Alon-Frankl-R{\"o}dl, developed in the theory of matrix and tensor rigidity, was used in \cite{Ma}. As a corollary of Theorem \ref{MuT2}, which provides the lower bounds, and relation (\ref{Mu1}), which provides the upper bounds we obtain for $r>0$ and $1\le p\le 2\le q \le \infty$ 
\be\label{Mu5}
\sigma_m(\bW^r_q,\Pi^d)_p \asymp m^{-rd/(d-1)}. 
\ee

\section{Brief comments on some other dictionaries}
\label{OD} 

In this section we only mention some of the well known redundant dictionaries and will not discuss any results on them. 

{\bf Ridge dictionary.}  The following problem, which  is well known in statistics under the name of projection pursuit regression problem,  is an example of nonlinear sparse approximation with respect to a redundant dictionary. The problem is to approximate in $L_2(\Omega,\mu)$ a given function $f\in L_2(\Omega,\mu)$ by a sum of ridge functions, i.e. by
\be\label{OD1}
\sum_{j=1}^m r_j(\omega_j\cdot \bx), \quad \bx\in \Omega\subset \bbR^d,\quad \omega_j \in \bbR^d,\, \|\omega_j\|_2=1, \quad j=1,\dots,m,
\ee
where $r_j$, $j=1,\dots,m$, are univariate functions. The following greedy-type  algorithm (projection pursuit) was proposed in \cite{FS} to solve this problem. 
Assume functions $r_1,\dots,r_{m-1}$ and vectors $\omega_1,\dots,\omega_{m-1}$
 have been determined after $m-1$ iterations of the algorithm. Choose at $m$th step a unit vector $\omega_m$ and a function $r_m$ to minimize the error
$$
\|f(\bx) -\sum_{j=1}^m r_j(\omega_j\cdot \bx)\|_{L_2(\Omega,\mu)}.
$$
This is an example of realization of the Pure Greedy Algorithm with respect to the ridge dictionary $\cR_2^d$ with
 $$
 \cR^d_p:= \{r(\omega\cdot \bx), \, \bx \in\Omega\subset \bbR^d, \, \omega \in \bbR^d, \, \|\omega\|_2=1, \,  \|r(\omega\cdot \bx)\|_{L_p(\Omega,\mu)}=1\},
$$
where $r$ runs over all univariate functions. 

{\bf Neural networks.}  We fix a univariate function  $\sigma(t)$,  $t\in \bbR$. Usually, this function 
takes values in  $(0,1)$ and increases. Then as a system (dictionary) we consider
$$
 \{g(\bx)\,: \, g(\bx)= \sigma(\omega\cdot \bx+b),\,  \, \bx \in\Omega\subset \bbR^d, \, \omega \in \bbR^d, \, \|\omega\|_2=1, \, b\in \bbR  \}.
$$
Constructions based on this dictionary are called {\it shallow neural networks} or {\it neural networks 
with one layer}. 

We build approximating manifolds inductively. Let  $\bx \in \Omega \subset \bbR^d$. Fix a univariate function  $h(t)$. A popular one is the  $ReLU$ function:  $ReLU(t)=0$ for $t<0$ and
 $ReLu(t)=t$ for  $t\ge 0$.  ReLU = Rectified Linear Unit.

Take  numbers  $s,n\in\bbN$ and build  $s$-term approximants of depth  $n$ (neural network with  $n$ layers). In the capacity of parameters take  $n$ matrices:  $A_1$ of 
size  $s\times d$,  $A_2,\dots,A_n$ of size  $s\times s$ and vectors  $\bb^1,\dots,\bb^n, \bc$ from  $\bbR^s$. 

At the first step define  $\by^1\in\bbR^s$
$$
 \by^1 := h(A_1\bx + \bb^1):= (h((A_1\bx)_1 + b^1_1),\dots,h((A_1\bx)_s + b^1_s))^T.
$$
Note that  $\by^1$ is a function on  $\bx$. 

At the  $k$th step ($k=2,\dots,n$) define
$$
 \by^k := h(A_k\by^{k-1} + \bb^k)
$$
$$
 := (h((A_k\by^{k-1})_1 + b^k_1),\dots,h((A_k\by^{k-1})_s + b^k_s))^T.
$$
Finally, after the  $n$th step we define
$$
 g_n(\bx) := \<\bc,\by^n(\bx)\> = \sum_{j=1}^s c_jy^n_j(\bx).
$$
Thus we build a manifold $\cM:= \cM(d,s,n)$, which is described by the following parameters:
 $n$ matrices:  $A_1$ of size  $s\times d$,  $A_2,\dots,A_n$ of size $s\times s$
and vectors:  $\bb^1,\dots,\bb^n, \bc$ from  $\bbR^s$. 
Then the problem of best approximation, say in the $L_2(\Omega,\mu)$ norm, of a given function $f \in L_2(\Omega,\mu)$ 
by elements of $\cM$ is formulated as follows
$$
\inf_{g_n \in \cM} \|f-g_n\|_{L_2(\Omega,\mu)}.
$$

\newpage
 \medskip
 {\bf \Large Chapter IV : Some typical proofs}
  \medskip

A typical way of studying convergence and rate of convergence of greedy algorithms consists of two steps. At the first step we obtain an inequality for the norm of residual $\|f_m\|$ in terms of the $\|f_{m-1}\|$. At the second step we analyze the corresponding recursive inequalities and derive bounds on the sequence $\{\|f_m\|\}$. We begin with results, which are useful at the second step.

\section{Some lemmas about numerical sequences}
\label{Le}

The following Lemma \ref{LeL1} is from \cite{VT196} (see, for instance, \cite{VTbook}, p.91, in case $C_1=C_2$). 

\begin{Lemma}\label{LeL1} Let $\{a_m\}_{m=0}^\infty$
be a sequence of non-negative
 numbers satisfying the inequalities
$$
a_0 \le C_1, \quad a_{m+1} \le a_m(1 - a_mC_2) , \quad m = 0,1,2, \dots,\quad C_1,C_2>0 .
$$
Then we have for each $m$
$$
a_m \le (C_1^{-1}+C_2m)^{-1} .
$$
\end{Lemma}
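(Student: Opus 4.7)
The plan is to study the reciprocal sequence $b_m := 1/a_m$ and show that it satisfies the arithmetic progression lower bound $b_m \ge C_1^{-1} + C_2 m$, which is exactly equivalent to the claimed inequality. This is the classical trick for recursions of the form $a_{m+1} \le a_m - c a_m^2$.

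First I would dispose of the trivial cases. If $a_m = 0$ for some $m$, then the recursion forces $a_{m+1} \le 0$, hence $a_{m+1} = 0$ by nonnegativity, so $a_k = 0$ for all $k \ge m$ and the stated bound holds trivially (the right-hand side is positive). Similarly, if $a_m C_2 \ge 1$ for some $m$, then $a_m(1 - a_m C_2) \le 0$, so $a_{m+1} = 0$ and we are reduced to the previous case. Hence I may assume that every $a_m$ is strictly positive and that $a_m C_2 < 1$ for all $m$.

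Under this assumption I would derive the one-step recursion for reciprocals. From $a_{m+1} \le a_m(1 - a_m C_2)$ and positivity,
\[
\frac{1}{a_{m+1}} \;\ge\; \frac{1}{a_m(1 - a_m C_2)} \;=\; \frac{1}{a_m}\cdot\frac{1}{1 - a_m C_2} \;\ge\; \frac{1}{a_m}(1 + a_m C_2) \;=\; \frac{1}{a_m} + C_2,
\]
where I used the elementary bound $1/(1-x) \ge 1+x$ valid for $x<1$. Iterating this from $m=0$ yields $1/a_m \ge 1/a_0 + C_2 m \ge C_1^{-1} + C_2 m$ (since $a_0 \le C_1$ gives $1/a_0 \ge 1/C_1$), which is precisely the desired bound after inverting.

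The only mildly subtle point is the case analysis at the start: one must observe that the hypothesis does not by itself exclude $a_m C_2 \ge 1$, and so the clean ``take reciprocals'' argument only applies after that degenerate scenario is handled separately. Once that is dispatched, everything reduces to the one-line estimate $1/(1-x) \ge 1+x$ and a telescoping induction, so no further obstacle is expected.
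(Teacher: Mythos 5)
Your proof is correct and follows essentially the same route as the paper: both pass to reciprocals, apply the elementary bound $1/(1-x)\ge 1+x$, obtain $1/a_{m+1}\ge 1/a_m+C_2$, and telescope. The paper packages this as an induction and handles the degenerate cases more compactly by observing that $a_{m+1}>0$ together with $a_{m+1}\le a_m(1-a_mC_2)$ already forces $a_m>0$ and $a_mC_2<1$, so no separate case split is needed.
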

\begin{proof} The proof is by induction on $m$. For $m = 0$ the statement
is true by assumption. We assume $a_m \le (C_1^{-1}+C_2m)^{-1}$ and prove that $a_{m+1} \le
(C_1^{-1}+C_2(m+1))^{-1}$. If $a_{m+1} = 0$ this statement is obvious. Assume therefore
that $a_{m+1} > 0$. Then we have 
$$
a_{m+1}^{-1} \ge a_m^{-1}(1 - a_mC_2)^{-1} \ge a_m^{-1}(1 + a_mC_2) =
a_m^{-1} + C_2 \ge C_1^{-1}+(m+1)C_2 ,
$$
which implies $a_{m+1} \le (C_1^{-1}+C_2(m+1))^{-1}$ . 
\end{proof}

We shall need the
following simple generalization of  Lemma \ref{LeL1}, different versions of which are well known (see, for example, \cite{VTbook}, p.91 and \cite{VT209}).

\begin{Lemma}\label{HL1} Let a number $C_1>0$ and a sequence $\{y_k\}_{k=1}^\infty$, $y_k \ge 0$, $k=1,2,\dots$, be given.
Assume that $\{a_m\}_{m=0}^\infty$
is a sequence of non-negative
 numbers satisfying the inequalities
$$
a_0 \le C_1, \quad a_{m+1} \le a_m(1 - a_m y_{m+1}) , \quad m = 0,1,2, \dots,\quad C_1>0 .
$$
Then we have for each $m$
$$
a_m \le \left(C_1^{-1}+\sum_{k=1}^{m} y_k\right)^{-1} .
$$
\end{Lemma}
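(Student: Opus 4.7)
The plan is to mimic exactly the induction argument used for Lemma~\ref{LeL1}, replacing the constant $C_2$ at step $m+1$ by the variable quantity $y_{m+1}$. The only structural difference between the two lemmas is that the increment in the target bound is no longer a fixed $C_2$ per step but instead $y_{m+1}$ per step, which will be accumulated via the telescoping sum $\sum_{k=1}^m y_k$.

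First I would verify the base case $m=0$: the hypothesis $a_0\le C_1$ is exactly the claim $a_0\le (C_1^{-1}+\sum_{k=1}^0 y_k)^{-1}$ with the convention that the empty sum equals zero. Next, assuming the bound holds at step $m$, I would prove it at step $m+1$. If $a_{m+1}=0$ the bound is immediate, so I may assume $a_{m+1}>0$; by the recursion this forces $a_m>0$ and $1-a_m y_{m+1}>0$, so in particular $a_m y_{m+1}\in[0,1)$. Taking reciprocals in the recursion and applying the elementary inequality $(1-x)^{-1}\ge 1+x$ valid for $x\in[0,1)$ gives
\[
a_{m+1}^{-1}\ \ge\ a_m^{-1}(1-a_m y_{m+1})^{-1}\ \ge\ a_m^{-1}(1+a_m y_{m+1})\ =\ a_m^{-1}+y_{m+1}.
\]
Combining this with the induction hypothesis in the form $a_m^{-1}\ge C_1^{-1}+\sum_{k=1}^m y_k$ yields
\[
a_{m+1}^{-1}\ \ge\ C_1^{-1}+\sum_{k=1}^{m+1} y_k,
\]
which is the desired bound after inverting.

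There is no real obstacle here: the argument is a direct transcription of the proof of Lemma~\ref{LeL1}, and the only point that requires a brief remark is handling the case $a_{m+1}=0$ separately so that the reciprocal $a_{m+1}^{-1}$ is well defined in the main calculation. One could, if desired, streamline by simply noting that $a_{m+1}\le a_m$ (since $a_m y_{m+1}\ge 0$ and the recursion forces $a_{m+1}\ge 0$ via the non-negativity assumption together with $1-a_m y_{m+1}\ge 0$, which follows because otherwise the right-hand side of the recursion would be negative, contradicting $a_{m+1}\ge 0$ unless $a_m=0$; in the latter case the bound is trivial). This takes care of the positivity issue cleanly, after which the reciprocal manipulation above completes the induction.
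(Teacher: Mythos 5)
Your proof is correct and is essentially identical to the paper's argument: the same induction, the same reduction to the case $a_{m+1}>0$, and the same reciprocal estimate $a_{m+1}^{-1}\ge a_m^{-1}(1-a_m y_{m+1})^{-1}\ge a_m^{-1}+y_{m+1}$. The additional remarks you make about positivity and monotonicity are fine but not needed beyond what the paper already does.
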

\begin{proof} The proof is by induction on $m$. For $m = 0$ the statement
is true by assumption. We prove that
 $$
 a_m \le \left(C_1^{-1}+\sum_{k=1}^{m} y_k\right)^{-1}\quad \text{implies} \quad   a_{m+1} \le
\left(C_1^{-1}+\sum_{k=1}^{m+1} y_k\right)^{-1}.
$$
 If $a_{m+1} = 0$ this statement is obvious. Assume therefore
that $a_{m+1} > 0$. Then we have 
$$
a_{m+1}^{-1} \ge a_m^{-1}(1 - a_m y_{m+1})^{-1} \ge a_m^{-1}(1 + a_m y_{m+1}) =
a_m^{-1} + y_{m+1} \ge C_1^{-1}+\sum_{k=1}^{m+1} y_k ,
$$
which proves the required inequality. 
\end{proof}

The following Lemma \ref{LeL2} is from \cite{DT1} (see also \cite{VTbook}, p.89). 

\begin{Lemma}\label{LeL2}  If $A>0$ and $\{a_n\}_{n=1}^\infty$ is a sequence of non-negative numbers
satisfying  $a_1\le A$ and   
\be\label{Le3.1}
a_m\le a_{m-1}-\frac{2 }{m} a_{m-1}+ \frac{A}{m^2},\quad m=2,3,\dots, 
\ee
then
\be\label{Le3.2}
a_m\le \frac{A}{m}. 
\ee
\end{Lemma}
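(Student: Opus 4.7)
The plan is to prove the bound $a_m \le A/m$ by induction on $m$. The base case $m=1$ is exactly the assumption $a_1 \le A$. For the inductive step, I would assume $a_{m-1} \le A/(m-1)$ for some $m \ge 2$ and plug this into the hypothesized recursion
$$a_m \le a_{m-1}\cdot\frac{m-2}{m} + \frac{A}{m^2}.$$

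Since $m \ge 2$, the coefficient $(m-2)/m$ is non-negative, so monotonicity lets me replace $a_{m-1}$ by the larger upper bound $A/(m-1)$ (for $m=2$ the coefficient is zero and the step is trivial). This yields
$$a_m \le \frac{A(m-2)}{m(m-1)} + \frac{A}{m^2}.$$

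It remains to verify $\frac{m-2}{m(m-1)} + \frac{1}{m^2} \le \frac{1}{m}$, which after multiplying by $m(m-1)/A$ and simplifying is equivalent to showing $\frac{m-2}{m-1}+\frac{1}{m}\le 1$, i.e.\ $\frac{1}{m}\le \frac{1}{m-1}$. This is obvious for $m\ge 2$. More cleanly, one computes
$$\frac{m-2}{m(m-1)}+\frac{1}{m^2}=\frac{m(m-2)+(m-1)}{m^2(m-1)}=\frac{m^2-m-1}{m^2(m-1)}=\frac{1}{m}-\frac{1}{m^2(m-1)}\le \frac{1}{m},$$
closing the induction.

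There is essentially no hard step here: the only mild subtlety is that one must track the sign of the coefficient $(m-2)/m$ to justify substituting the inductive hypothesis, and note that for $m=2$ the first term drops out so that $a_2\le A/4\le A/2$ comes purely from the $A/m^2$ term. The algebraic identity at the end is the heart of why the constant $2$ (rather than any smaller number) in the factor $2/m$ is exactly what makes the proof close; with any smaller coefficient the induction would fail, which is worth remarking on but not an obstacle to the proof itself.
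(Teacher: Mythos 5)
Your proof is correct and is essentially identical to the paper's: both proceed by induction, substitute the inductive hypothesis into the recursion (using that the coefficient $1-2/m$ is nonnegative for $m\ge 2$), and verify the resulting algebraic inequality $\frac{m-2}{m(m-1)}+\frac{1}{m^2}\le\frac{1}{m}$. Your write-up is just slightly more explicit about tracking the sign of the coefficient and about where equality would be lost.
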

\begin{proof} The proof is by induction.  Suppose we have
$$
a_{m-1}\le \frac{A}{m-1}
$$
for some $m\ge 2$.  Then, from our assumption (\ref{Le3.1}), we have
$$
a_m\le
\frac{A}{m-1}\left(1-\frac{2}{m}\right)+\frac{A}{m^2}=A\left(\frac{1}{m}-\frac{1}{(m-1)m}+
\frac{1}{m^2}\right)
 \le \frac{A}{m}.  
$$
\end{proof} 

The following Lemma \ref{LeL3} is from \cite{VT68} (see also \cite{VTbook}, p.102). 

\begin{Lemma}\label{LeL3} Let three positive numbers $\al < \ga \le 1$, $A > 1$ be given and let a 
sequence of positive numbers $1\ge a_1 \ge a_2 \ge \dots $ satisfy the condition:
If, for some $\nu \in \bbN$ we have
$$
a_\nu \ge A\nu^{-\al},
$$
then
\begin{equation}\label{Le24.3}
a_{\nu + 1} \le a_\nu (1- \ga/\nu). 
\end{equation}
Then there exists $B = B(A,\al , \ga )$ such that for all $n=1,2,\dots $ we have
$$
a_n \le Bn^{-\al} .
$$
\end{Lemma}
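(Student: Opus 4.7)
The plan is to show that $B := A\cdot 2^{\gamma}$ (or any larger constant) does the job. The key intuition is that whenever $a_\nu$ is ``large,'' meaning $a_\nu > A\nu^{-\alpha}$, the contraction factor $(1-\gamma/\nu)$ drives $a_\nu$ down like $\nu^{-\gamma}$, which decays \emph{faster} than $n^{-\alpha}$ because $\gamma>\alpha$; so the large regime cannot persist long enough for $a_n$ to grow past $Bn^{-\alpha}$.

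First I would note that since $A>1\ge a_1$, we have $a_1\le A\cdot 1^{-\alpha}$, so for every $n\ge 1$ the set $\{\nu:1\le\nu\le n,\ a_\nu\le A\nu^{-\alpha}\}$ is nonempty. Let $\nu_0=\nu_0(n)$ be its largest element. If $\nu_0=n$, then $a_n\le A n^{-\alpha}\le B n^{-\alpha}$ and we are finished. Otherwise $\nu_0<n$, and by maximality $a_\nu>A\nu^{-\alpha}$ for every $\nu$ with $\nu_0<\nu\le n$; hence the hypothesis (\ref{Le24.3}) applies at $\nu=\nu_0+1,\dots,n-1$, and combined with monotonicity $a_{\nu_0+1}\le a_{\nu_0}\le A\nu_0^{-\alpha}$ this gives
$$a_n\ \le\ A\,\nu_0^{-\alpha}\prod_{\nu=\nu_0+1}^{\,n-1}\!\Big(1-\frac{\gamma}{\nu}\Big).$$

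Next I would estimate the product via $1-x\le e^{-x}$ and the Riemann-sum bound
$$\sum_{\nu=\nu_0+1}^{n-1}\frac{1}{\nu}\ \ge\ \int_{\nu_0+1}^{n}\frac{dt}{t}\ =\ \ln\frac{n}{\nu_0+1},$$
yielding $\prod\le ((\nu_0+1)/n)^{\gamma}$. Since $\nu_0\ge 1$, we have $\nu_0+1\le 2\nu_0$, so
$$a_n\ \le\ A\cdot 2^{\gamma}\,\nu_0^{\gamma-\alpha}\,n^{-\gamma}\ \le\ A\cdot 2^{\gamma}\,n^{\gamma-\alpha}\,n^{-\gamma}\ =\ B\,n^{-\alpha},$$
where in the last step I use $\gamma-\alpha>0$ together with $\nu_0\le n$.

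The only technical wrinkle is the boundary case $\nu_0=n-1$, in which the product is empty; there the integral bound collapses to $\ln(n/n)=0$, giving the trivial factor $1$, consistent with the direct estimate $a_n\le a_{\nu_0+1}\le A(n-1)^{-\alpha}\le A\cdot 2^{\alpha}n^{-\alpha}\le Bn^{-\alpha}$ for $n\ge 2$. I do not foresee any deeper obstacle: the whole argument hinges on choosing the stopping index $\nu_0$ and exploiting the gap $\gamma>\alpha$ to absorb all accumulated factors of $\nu_0$ into a power of $n$.
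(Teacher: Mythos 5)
Your proof is correct, and it shares with the paper the basic framework of locating the last ``good'' index $\nu_0$ below $n$ and using the telescoping product $\prod(1-\gamma/\nu)$, but it closes the argument by a genuinely different mechanism. The paper's proof is two-stage: it first establishes the auxiliary bound $k\le C(\al,\ga)n$ on the length of any maximal bad segment $[n,n+k]\subset V$ (using the product there), and then in the final step it \emph{discards} the product and argues $a_\mu\le a_{n-1}\le A(n-1)^{-\al}$, which forces one to show that $\nu_0$ is not too much smaller than $\mu$, i.e.\ the lower bound $\nu_0\ge c\mu$. You instead carry the product all the way through to $a_n\le A\nu_0^{-\al}\bigl((\nu_0+1)/n\bigr)^\gamma$, and then the point is that because $\gamma>\al$ the resulting factor $\nu_0^{\gamma-\al}$ is \emph{increasing} in $\nu_0$, so the trivial upper bound $\nu_0\le n$ already suffices. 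This inverts the difficulty: you need $\nu_0\le n$ instead of $\nu_0\ge cn$, which costs nothing. The payoff is a one-step argument with an explicit constant $B=A\cdot 2^\gamma$ rather than the implicit $B=AC_1(\al,\ga)^\al$ of the paper. The only price is that the paper's segment-length lemma is the reusable technical core of the more general Lemma \ref{LeL12} (with an additive perturbation), so the paper's two-stage structure is more modular; your direct argument is cleaner for the statement at hand but does not obviously extend to that perturbed setting.
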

\begin{proof} We have $a_1 \le 1 < A$ which implies that the set
$$
V:= \{\nu : a_\nu \ge A\nu^{-\al} \}
$$
does not contain $\nu =1$. We now prove  that for any segment $[n,n+k] \subset V$ we have
$k \le C(\al,\ga)n$. Indeed, let $n \ge 2$ be such that $n-1 \notin V$, which means
\begin{equation}\label{Le24.4}
a_{n-1} < A(n-1)^{-\al} ,  
\end{equation}
and $[n,n+k] \subset V$, which in turn means 
\begin{equation}\label{Le24.5}
a_{n+j} \ge A(n+j)^{-\al}, \quad j = 0,1,\dots , k .  
\end{equation}
Then by the condition (\ref{Le24.3}) of the lemma we get
\begin{equation}\label{Le24.6}
a_{n+k} \le a_n \prod ^{n+k-1}_{\nu =n} (1-\ga/\nu) \le a_{n-1} \prod ^{n+k-1}_{\nu =n} (1-\ga/\nu) . 
\end{equation}
Combining (\ref{Le24.4}) -- (\ref{Le24.6}) we obtain
\begin{equation}\label{Le24.7}
(n+k)^{-\al} \le (n-1)^{-\al}  \prod ^{n+k-1}_{\nu =n} (1-\ga/\nu).  
\end{equation}
Taking logarithms and using the inequalities 
$$
\ln (1-x) \le -x, \quad x \in [0,1) ;
$$
$$
\sum^{m-1}_{\nu=n} \nu^{-1} \ge \int^m_n x^{-1} dx = \ln (m/n) ,
$$
we get from (\ref{Le24.7})
$$
-\al \ln \frac{n+k}{n-1} \le \sum^{n+k-1}_{\nu=n} \ln (1-\ga/\nu) \le - \sum^{n+k-1}_{\nu=n} \ga/\nu
\le -\ga \ln \frac {n+k}{n} .
$$
Hence
$$
(\ga - \al) \ln (n+k) \le (\ga - \al)\ln n + \al \ln \frac {n}{n-1} ,
$$
which implies
$$
n+k \le 2^{\frac{\al}{\ga-\al}} n
$$
and
$$
k \le C(\al,\ga) n.
$$
Let us take any $\mu \in \bbN$. If $\mu \notin V$ we have the desired inequality with $B=A$. Assume 
$\mu \in V$, and let $[n,n+k]$ be the maximal segment in $V$ containing $\mu$. Then
\begin{equation}\label{Le24.8}
a_\mu \le a_{n-1} \le A(n-1)^{-\al} = A\mu^{-\al} \biggl(\frac{n-1}{\mu}
\biggr)^{-\al} . 
\end{equation}
Using the inequality $k\le C(\al,\ga)n$ proved above we get
\begin{equation}\label{Le24.9}
\frac{\mu}{n-1} \le \frac{n+k}{n-1} \le C_1(\al,\ga) . 
\end{equation}
Substituting (\ref{Le24.9}) into (\ref{Le24.8}) we complete the proof of Lemma \ref{LeL3} with $B = AC_1(\al,\ga)^\al$.
\end{proof}

\begin{Lemma}\label{LeL4} Let $r>0$ be given. Assume that a sequence $\{a_n\}_{n=1}^\infty$ is such that $a_1\le A$ and 
\be\label{Le10} 
a_{n+1} \le a_n\left(1-\left(\frac{a_n}{A}\right)^{1/r}\right),\quad n=1,2,\dots.
\ee
Then in the case $r\in (0,1]$ we have
\be\label{Le11}
a_m \le Am^{-r}
\ee
and in the case $r\in [1,\infty)$ we have
\be\label{Le12}
a_m \le Ar^rm^{-r}.
\ee
\end{Lemma}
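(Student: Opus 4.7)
\medskip
\noindent \textit{Proof plan for Lemma \ref{LeL4}.} The plan is to reduce both cases to Lemma \ref{LeL1} via a nonlinear substitution. First I would normalize by setting $c_n := (a_n/A)^{1/r}$. Since $a_{n+1}\le a_n$, induction gives $a_n\le A$, so $c_n\in[0,1]$. Raising both sides of (\ref{Le10}) to the power $1/r$ and dividing by $A^{1/r}$ turns the recursion into
$$
c_{n+1}\le c_n(1-c_n)^{1/r},\quad c_1\le 1.
$$
The problem now reduces to choosing the right scalar majorant for $(1-x)^{1/r}$ on $[0,1]$ so that the recursion collapses to the form handled by Lemma \ref{LeL1}.

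For the first case $r\in(0,1]$, i.e.\ $1/r\ge 1$, I would use the elementary inequality $(1-x)^{1/r}\le 1-x$ on $[0,1]$ (which follows from $(1-x)^{1/r-1}\le 1$). This yields
$$
c_{n+1}\le c_n(1-c_n),
$$
so Lemma \ref{LeL1} with $C_1=C_2=1$ (after an index shift, using $c_1\le 1$) gives $c_m\le m^{-1}$, hence $a_m=Ac_m^r\le Am^{-r}$, as required.

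For the second case $r\in[1,\infty)$, i.e.\ $1/r\in(0,1]$, the function $(1-x)^{1/r}$ is concave on $[0,1]$, so it lies below its tangent line at $x=0$, giving $(1-x)^{1/r}\le 1-x/r$. Consequently
$$
c_{n+1}\le c_n(1-c_n/r).
$$
The natural second rescaling is $d_n:=c_n/r$, which leads to $d_{n+1}\le d_n(1-d_n)$ with $d_1\le 1/r\le 1$. Applying Lemma \ref{LeL1} with $C_1=1/r$ and $C_2=1$ (after reindexing) produces $d_m\le (r+m-1)^{-1}\le r/m$ for $m\ge 1$. Undoing the substitutions yields $c_m\le r/m$, whence $a_m=Ac_m^r\le Ar^r m^{-r}$.

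The only mildly delicate point I anticipate is choosing the correct one-sided approximation of $(1-x)^{1/r}$ in each regime, ensuring that the rescaled sequence still satisfies the hypotheses of Lemma \ref{LeL1} (initial value and nonnegativity), and then tracking how the scaling factor $r$ converts into the factor $r^r$ in the final bound for $r\ge 1$. Neither step presents a real obstacle: the structure of the proof is simply a normalization plus a convexity/Bernoulli-type inequality, with Lemma \ref{LeL1} doing the heavy lifting.
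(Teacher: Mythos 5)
Your proof is correct and follows essentially the same route as the paper: normalize to $c_n:=(a_n/A)^{1/r}$, use the elementary inequality $(1-x)^{1/r}\le 1-x$ for $r\in(0,1]$ (resp.\ the Bernoulli-type $(1-x)^{1/r}\le 1-x/r$ for $r\ge 1$) to reduce to the recursion of Lemma~\ref{LeL1}, and read off the bound; the extra rescaling $d_n:=c_n/r$ in the second case is only a cosmetic repackaging of the paper's direct application of Lemma~\ref{LeL1} with $C_1=1$, $C_2=1/r$. One small notational slip: in the chain ``$d_m\le (r+m-1)^{-1}\le r/m$'' the second bound is not the one you use --- to conclude $c_m=rd_m\le r/m$ you need $d_m\le 1/m$, which indeed follows from $(r+m-1)^{-1}\le 1/m$ when $r\ge 1$, so the final estimate $a_m\le Ar^r m^{-r}$ is still correct.
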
 
\begin{proof} We rewrite (\ref{Le10}) in the form
\be\label{Le13} 
\left(\frac{a_{n+1}}{A}\right)^{1/r} \le \left(\frac{a_{n}}{A}\right)^{1/r}\left(1-\left(\frac{a_n}{A}\right)^{1/r}\right)^{1/r},\quad n=1,2,\dots.
\ee
First, consider the case $r\in (0,1]$. Then for any $x\in [0,1]$ we have $x^{1/r}\le x$ and, therefore, (\ref{Le13}) implies
\be\label{Le14} 
\left(\frac{a_{n+1}}{A}\right)^{1/r} \le \left(\frac{a_{n}}{A}\right)^{1/r}\left(1-\left(\frac{a_n}{A}\right)^{1/r}\right),\quad n=1,2,\dots.
\ee
By Lemma \ref{LeL1} with $C_1=C_2=1$ we obtain
$$
\left(\frac{a_{m}}{A}\right)^{1/r} \le \frac{1}{m},\quad \text{and} \quad a_m \le Am^{-r}.
$$
Second, consider the case $r\in [1,\infty)$. Using the inequality
$$
(1-x)^\al \le 1-\al x,\qquad x\in [0,1],\quad \al\in (0,1],
$$
we get from (\ref{Le13})
\be\label{Le15} 
\left(\frac{a_{n+1}}{A}\right)^{1/r} \le \left(\frac{a_{n}}{A}\right)^{1/r}\left(1-\frac{1}{r}\left(\frac{a_n}{A}\right)^{1/r}\right),\quad n=1,2,\dots.
\ee
By Lemma \ref{LeL1} with $C_1=1$ and $C_2=1/r$ we obtain
$$
\left(\frac{a_{m}}{A}\right)^{1/r} \le \frac{r}{m},\quad \text{and} \quad a_m \le Ar^rm^{-r}.
$$
The proof is complete.
\end{proof}

\begin{Lemma}\label{LeL5} Let $\ff\,:\, [0,1]\to[0,1]$ be an increasing convex function with $\ff(0)=0$. Assume that a sequence $A\ge a_1\ge \cdots \ge 0$ satisfies the inequalities
\be\label{Le16}
a_{n+1} \le a_n(1-\ff(a_n/A)),\quad n=1,2,\dots .
\ee
Then 
$$
a_m \le A\ff^{-1}(1/m).
$$
\end{Lemma}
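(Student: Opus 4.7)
The plan is to reduce the recursion in $a_n$ to the standard recursion handled by Lemma \ref{LeL1}, by passing through $\varphi$. First I would normalize: set $b_n := a_n/A \in [0,1]$, so that the hypothesis becomes $b_{n+1} \le b_n(1-\varphi(b_n))$ and the goal becomes $b_m \le \varphi^{-1}(1/m)$.

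The key analytic fact I would use is that convexity of $\varphi$ on $[0,1]$ together with $\varphi(0)=0$ gives the scaling inequality $\varphi(tx) \le t\varphi(x)$ for all $t,x \in [0,1]$; this is immediate from $\varphi(tx) = \varphi(tx+(1-t)\cdot 0) \le t\varphi(x)+(1-t)\varphi(0)$. Introduce $c_n := \varphi(b_n) \in [0,1]$. Since $\varphi$ is monotone, $c_{n+1} = \varphi(b_{n+1}) \le \varphi(b_n(1-c_n))$, and by the scaling inequality applied with $t = 1-c_n$ and $x = b_n$ this is bounded by $(1-c_n)\varphi(b_n) = c_n(1-c_n)$. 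Thus
\[
c_{n+1} \le c_n(1-c_n), \qquad c_1 \le \varphi(1) \le 1.
\]

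Now I would invoke Lemma \ref{LeL1} with $C_1 = C_2 = 1$ to conclude $c_m \le 1/m$ for every $m$. Rewriting this in terms of $b_m$ gives $\varphi(b_m) \le 1/m$, and inverting $\varphi$ (which is legitimate on the range of $\varphi$; for those small $m$ where $1/m$ lies above $\varphi(1)$ the bound $b_m \le 1$ is trivial and we interpret $\varphi^{-1}(1/m) := 1$) yields $b_m \le \varphi^{-1}(1/m)$, i.e.\ $a_m \le A\varphi^{-1}(1/m)$, as claimed.

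The only subtlety, and the one place I would double-check, is the step $\varphi(b_n(1-c_n)) \le (1-c_n)\varphi(b_n)$: it relies on the scaling inequality $\varphi(tx) \le t\varphi(x)$, which in turn needs both $\varphi(0)=0$ and convexity, but not differentiability; all hypotheses of the lemma suffice. The rest is a direct appeal to Lemma \ref{LeL1}, so no further obstacle is expected.
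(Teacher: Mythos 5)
Your proof is correct and follows essentially the same route as the paper's: apply $\varphi$ to both sides using monotonicity, use the convexity-plus-$\varphi(0)=0$ scaling inequality $\varphi((1-\alpha)x)\le(1-\alpha)\varphi(x)$ to linearize, and then invoke Lemma~\ref{LeL1} for the sequence $\varphi(a_n/A)$. Your extra remarks on normalizing by $A$ and on interpreting $\varphi^{-1}$ when $1/m>\varphi(1)$ are harmless clarifications of the same argument.
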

\begin{proof} Using our assumption that $\ff$ is increasing, we obtain from (\ref{Le16})
\be\label{Le17}
\ff(a_{n+1}/A) \le \ff((a_n/A)(1-\ff(a_n/A))),\quad n=1,2,\dots .
\ee
The convexity assumption implies that for any $x\in [0,1]$ and $\al \in [0,1]$ we have
$$
\ff(x(1-\al))\le \al \ff(0) +(1-\al)\ff(x) = (1-\al)\ff(x).
$$
Thus, (\ref{Le17}) gives
$$
\ff(a_{n+1}/A) \le \ff((a_n/A))(1-\ff(a_n/A)),\quad n=1,2,\dots ,
$$
and by Lemma \ref{LeL1} we obtain
$$
\ff(a_m/A) \le 1/m \quad \text{and} \quad a_m \le A\ff^{-1}(1/m).
$$
\end{proof}

\begin{Lemma}\label{LeL6} Let $\ff $ be as in Lemma \ref{LeL5} with the following additional properties. It is differentiable and there is a $\bt>0$ such that for all $x\in (0,1]$ and $\theta \in (0,x]$ we have
\be\label{Le18}
x\ff'(\theta) \ge \bt\ff(x).
\ee
 Assume that a sequence $A\ge a_1\ge \cdots \ge 0$ satisfies the inequalities (\ref{Le16}).
Then 
$$
a_m \le A\ff^{-1}(1/(\bt m)).
$$
\end{Lemma}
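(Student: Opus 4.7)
The plan is to mimic the proof of Lemma \ref{LeL5}, but replace the convexity-based inequality $\ff(x(1-\al))\le (1-\al)\ff(x)$ by a sharper one that exploits the new hypothesis (\ref{Le18}) and yields an extra factor of $\bt$.

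First I would use the mean value theorem: for any $x\in(0,1]$ and $\al\in[0,1]$, we have $\ff(x)-\ff(x(1-\al)) = \ff'(\theta)\cdot \al x$ for some $\theta\in(x(1-\al),x)\subset(0,x]$. By (\ref{Le18}), $x\ff'(\theta)\ge \bt\ff(x)$, so
\[
\ff(x(1-\al)) \;\le\; \ff(x) - \al\bt\ff(x) \;=\; \ff(x)(1-\al\bt).
\]
This is the key replacement for the convexity inequality used in Lemma \ref{LeL5}.

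Next, applying $\ff$ to both sides of (\ref{Le16}) (using that $\ff$ is increasing) and then applying the above inequality with $x = a_n/A$ and $\al = \ff(a_n/A)\in[0,1]$, I obtain
\[
\ff(a_{n+1}/A) \;\le\; \ff\bigl((a_n/A)(1-\ff(a_n/A))\bigr) \;\le\; \ff(a_n/A)\bigl(1-\bt\ff(a_n/A)\bigr).
\]
Setting $b_n := \ff(a_n/A)$, this is the recursion $b_{n+1}\le b_n(1-\bt b_n)$ with $b_1\le \ff(1)\le 1$. Lemma \ref{LeL1} (with $C_1=1$, $C_2=\bt$) then gives $b_m \le (1+\bt m)^{-1}\le 1/(\bt m)$. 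Since $\ff$ is increasing, $a_m/A \le \ff^{-1}(1/(\bt m))$, which is the claimed bound.

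The only point that requires a moment of care is checking that $\theta$ lies in the admissible range $(0,x]$ so that (\ref{Le18}) applies, and that $\al = \ff(a_n/A)\in[0,1]$, which follows from $a_n\le A$ and $\ff([0,1])\subset[0,1]$. Everything else is routine, so I do not expect any real obstacle.
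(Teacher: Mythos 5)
Your proof is correct and follows essentially the same route as the paper's: apply the Mean Value Theorem to obtain $\ff(x(1-\al))\le\ff(x)(1-\al\bt)$ via (\ref{Le18}), set $x=a_n/A$ and $\al=\ff(a_n/A)$, and then feed the resulting recursion into Lemma~\ref{LeL1} with $C_1=1$, $C_2=\bt$. Your explicit check that $\theta\in(0,x]$ and $\al\in[0,1]$ is a reasonable bit of care; it does not change the argument.
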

\begin{proof} By the Mean Value Theorem (Lagrange Theorem) we have
$$
\ff(x) -\ff(x(1-\al)) = \ff'(\theta)\al x,\quad x(1-\al) <\theta < x.
$$
Thus, by (\ref{Le18}) 
$$
\ff(x(1-\al)) \le \ff(x) - \al\bt \ff(x) = \ff(x)(1-\al\bt).
$$
Setting $x:=a_n/A$ and $\al := \ff(a_n/A)$, we obtain from (\ref{Le17})
$$
\ff(a_{n+1}/A) \le \ff((a_n/A))(1-\bt\ff(a_n/A)),\quad n=1,2,\dots ,
$$
and by Lemma \ref{LeL1} we obtain
$$
\ff(a_m/A) \le 1/(\bt m) \quad \text{and} \quad a_m \le A\ff^{-1}(1/(\bt m)).
$$
\end{proof}

The following Lemma \ref{LeL8} is from \cite{DT}.
\begin{Lemma}\label{LeL8} If a nonnegative sequence $a_0,a_1,\dots,a_N$ satisfies  
\begin{equation}\label{Le3.5}
a_m\le a_{m-1} +\inf_{0\le \la\le 1}(-\la va_{m-1}+B\la^q)+\de, \quad B>0,\quad \de\in(0,1],\quad 0<v\le 1,
\end{equation}
for $m\le N:=[\de^{-1/q}]$, $q\in (1,2]$, then
\be\label{Le3.2c}
a_m\le C(q,v,B,a_0) m^{1-q},\quad m\le N, 
\ee
with $C(q,v,B,a_0) \le C'(q,B,a_0)v^{-q}.$
\end{Lemma}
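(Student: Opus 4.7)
The plan is to (i) evaluate the infimum explicitly to turn the hypothesis into a recursive decay inequality of the standard shape, (ii) dispatch an initial geometric--decrease phase in which the $\lambda=1$ endpoint is optimal in the hypothesis, and (iii) run an induction on the resulting polynomial--decay recurrence, absorbing $\delta$ using the constraint $m\le N=[\delta^{-1/q}]$.

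First I compute $\inf_{0\le \lambda\le 1}(-\lambda v a_{m-1}+B\lambda^q)$. Setting the derivative to zero gives the unconstrained minimizer $\lambda^\ast=(va_{m-1}/(Bq))^{1/(q-1)}$. On the regime where $\lambda^\ast\le 1$, i.e.\ $va_{m-1}\le Bq$, plugging in yields infimum $-\kappa v^p B^{1-p} a_{m-1}^{p}$ with $p=q/(q-1)$ and $\kappa=\kappa(q)=(q-1)q^{-p}$. In the complementary regime $va_{m-1}>Bq$, the infimum is attained at $\lambda=1$ and equals $-va_{m-1}+B$, producing the linear-type bound
$$a_m\le (1-v)a_{m-1}+B+\delta.$$
Iterating while $a_{m-1}>2Bq/v$ absorbs the term $B+\delta$ into half of the $va_{m-1}$ contribution and gives genuine geometric decrease, so after at most $m_0=m_0(q,B,a_0,v)$ iterations (roughly of order $v^{-1}\log(1+a_0 v/B)$) one has $a_{m_0}\le 2Bq/v$. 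For $m\ge m_0$ we stay in the first regime.

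Second, for $m\ge m_0$ the hypothesis reduces to
$$a_m\le a_{m-1}\bigl(1-\kappa v^p B^{1-p} a_{m-1}^{p-1}\bigr)+\delta,\qquad p-1=\tfrac{1}{q-1}.$$
Set $A:=(\kappa v^p B^{1-p})^{-1/(p-1)}=B\kappa^{-(q-1)}v^{-q}$. Without the $\delta$ term this is precisely the setting of Lemma \ref{LeL4} with $r=q-1\in(0,1]$, giving $a_m\le A m^{-(q-1)}=A m^{1-q}$. To absorb $\delta$ I would run an induction proving $a_m\le C m^{1-q}$ with $C$ chosen as a constant multiple of $\max\{A,\,a_{m_0}m_0^{q-1}\}$. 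The inductive step uses $\delta\le m^{-q}$ (from $m\le [\delta^{-1/q}]$) together with the Taylor-type expansion $(m+1)^{1-q}=m^{1-q}(1-(q-1)/m+O(m^{-2}))$; after simplification the inequality to verify becomes $\kappa v^p B^{1-p} C^{p-1}\ge q-1+O(1/C)$, which holds whenever $C$ is a sufficiently large constant multiple of $A$.

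The hard part will be the bookkeeping: one must track constants through both the initial geometric phase (which contributes a factor depending on $a_0$ and $v$) and the inductive polynomial phase, while preserving the claimed factorization $C\le C'(q,B,a_0)v^{-q}$. This is possible because the polynomial--phase constant $A$ already scales as $v^{-q}$, while the geometric phase only costs an extra $v^{-1}$-type factor, which is absorbed into $v^{-q}$ since $v\in(0,1]$ and $q>1$; any logarithmic-in-$a_0$ dependence from $m_0$ folds into the prefactor $C'(q,B,a_0)$.
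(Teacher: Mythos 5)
Your overall route differs from the paper's in two places, and one of them leaves a real gap. In the regime where the unconstrained minimizer $\lambda^\ast$ exceeds $1$ (i.e.\ $a_{m-1}$ large), the paper does not run a separate geometric-decrease prephase: it first records the trivial bound $a_m\le a_{m-1}+\delta$ (so $a_m\le a_0+1$ for all $m\le N$ because $N\delta\le 1$), and then in that regime writes $-\tfrac12 v a_{m-1}\le -\tfrac12 v\,a_{m-1}^{p}(a_0+1)^{1-p}$, which is legitimate since $a_{m-1}\le a_0+1$ and $1-p<0$. Both regimes thus collapse to a single inequality $a_m\le a_{m-1}-C_2 a_{m-1}^{p}+\delta$ with $C_2\gtrsim v^{p}$, with no bookkeeping of an $m_0$. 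Your prephase can be made rigorous, but it is extra machinery and, more importantly, it creates the constant-tracking problem you flag at the end (a factor $m_0^{q-1}$ enters the bound for small $m$), which the paper avoids entirely.

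The serious gap is in the second half. You propose to absorb $\delta$ by directly inducting on $a_m\le C m^{1-q}$ via $a_{m+1}\le a_m - c\,a_m^{p}+\delta$. But the map $h(a)=a-c\,a^{p}$ is \emph{not} monotone: it is concave, increases on $[0,x_c]$ with $x_c=(cp)^{-1/(p-1)}$, and decreases afterwards, with $\max h = x_c/q$. You therefore cannot simply substitute $a_m\le Cm^{1-q}$ into the right-hand side; when $a_m$ lies near $x_c$ (which is possible as long as $Cm^{1-q}\ge x_c$, i.e.\ for all $m$ up to $(C/x_c)^{1/(q-1)}$), the quantity $h(a_m)+\delta$ is maximized at $x_c$, not at the endpoint, and the ``after simplification'' inequality $\kappa v^{p}B^{1-p}C^{p-1}\ge q-1+O(1/C)$ does not arise. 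One can patch this with a case analysis around $x_c$, but it is genuinely delicate and at $q=2$ the naive constants degenerate. The paper sidesteps the issue cleanly: it introduces a stopping time $n$ — the first index where $C_2 a_{n-1}^{p}\le 2\delta$ — on the range $m\le n$ it has $a_m\le a_{m-1}-\tfrac{C_2}{2}a_{m-1}^{p}$ with no $\delta$ and invokes Lemma~\ref{LeL4} (whose proof, through Lemma~\ref{LeL1}, handles the nonmonotonicity by passing to reciprocals), while on $m\in[n,N]$ it uses only the trivial growth $a_m\le a_{n-1}+(m-n+1)\delta$ together with the defining property of $n$ and $N\delta^{1/p}\le N^{1-q}$. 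You should either adopt this cutoff device or spell out the full case analysis around the critical point of $h$; as written, the inductive step is not established.
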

\begin{proof}
By taking $\lambda=0$,  we derive from (\ref{Le3.5})  that 
\begin{equation}\label{Le3.6}
a_m\le a_{m-1}+\de,\quad m\le N.
\end{equation}
  Therefore,  for all $m\le N$ we have
$$
a_m\le a_0+N\delta \le a_0+1, \quad 0\le m\le N.
$$
Now fix any value of $m\in [1,N]$ and define $\la_1:= \left(\frac{va_{m-1}}{2B}\right)^{\frac{1}{q-1}}$, so that 
\begin{equation}\label{Le3.7}
 \la_1 v a_{m-1}=  2B\la_1^q .
\end{equation}
If $\la_1\le 1$ then 
$$
\inf_{0\le \la\le 1}(-\la va_{m-1}+ B\la^q)\le -\la_1 va_{m-1}+ B\la_1^q 
$$
$$
=-\frac{1}{2}\la_1va_{m-1} = -C_1(q,B)v^pa_{m-1}^p,\quad p:=\frac{q}{q-1}.
$$
If $\la_1> 1$ then for all $\la\le\la_1$ we have $\la v a_{m-1}>  2B\la^q$ and specifying $\la=1$ we get
$$
\inf_{0\le \la\le 1}(-\la va_{m-1}+B\la^q)\le-\frac{1}{2}va_{m-1} 
$$
$$
\le -\frac{1}{2}va_{m-1}^p(a_0+1)^{1-p} = -C_1(q,a_0)va_{m-1}^p.
$$
Thus, in any case, setting $C_2:=C_2(q,v,B,a_0):=\min(C_1(q,B)v^p,C_1(q,a_0)v)$ we obtain
from (\ref{Le3.5})
\begin{equation}\label{Le3.8}
a_m\le a_{m-1}- C_2a_{m-1}^p +\de,\quad C_2 \ge C_2'(q,B,a_0)v^p, 
\end{equation}
holds for all $1\le m\le N$.

Now to establish  (\ref{Le3.2c}), we  let $n\in [1,N]$ be the smallest integer such that 
\be\label{smallest}
C_2a_{n-1}^p \le 2\de.
\ee
If there is no such $n$,
we set $n=N$.  In view of (\ref{Le3.8}),
we have
\be
\label{have1}
a_m\le a_{m-1}-(C_2/2)a_{m-1}^p, \quad 1\le m\le n.
\ee
If we modify the sequence $a_m$ by defining it to be zero if $m>n$, then this modified sequence satisfies (\ref{have1})
for all $m$ and Lemma \ref{LeL4} with $r=q-1$ gives
\be\label{gives1}
a_m\le C_3 m^{1-q},\quad 1\le m\le n, \quad C_3\le C_3'(q,B,a_0)v^{-q}.
\ee

 If $n=N$, we have finished the proof.
If $n<N$,  then, by (\ref{Le3.6}), we obtain for $m\in[n,N]$
$$
a_m\le a_{n-1} +(m-n+1)\de \le a_{n-1}+N\delta\le a_{n-1}+ NN^{-q}\le \left(\frac{2\delta}{C_2}\right)^{1/p}+C_4N^{1-q},
$$
where we have used the definition of $n$.
Since $\delta^{1/p}\le N^{-q/p}=N^{-q+1}$,  we have 
$$
a_m \le C_5N^{1-q}\le C_5m^{1-q},\quad n\le m\le N,\quad C_5\le C_5' v^{-1},
$$
where $C_5'$ depends only on $q,B,a_0$.
This completes the proof of the lemma.  
\end{proof}

The following Lemma \ref{LeL9} is from \cite{VT148}. 
\begin{Lemma}\label{LeL9} Let $\rho(u)$ be a nonnegative convex on $[0,1]$ function with the property 
$\rho(u)/u\to0$ as $u\to 0$. Assume that a nonnegative sequence $\{\de_k\}$ is such that $\de_k\to0$ as $k\to\infty$. Suppose that a nonnegative sequence $\{a_k\}_{k=0}^\infty$ satisfies  the inequalities
\be\label{Le27}
a_m\le a_{m-1} +\inf_{0\le\la\le1}(-\la va_{m-1}+B\rho(\la)) +\de_{m-1},\quad m=1,2,\dots,
\ee
with positive numbers $v$ and $B$. Then
$$
\lim_{m\to\infty} a_m =0.
$$
\end{Lemma}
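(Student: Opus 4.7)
The plan is to exploit the hypothesis $\rho(u)/u \to 0$ to produce an arbitrarily strong contraction in the recursion \eqref{Le27}, and then to use $\de_k \to 0$ to absorb the resulting noise.

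Fix an arbitrary $\eta > 0$. Since $\rho(u)/u \to 0$ as $u \to 0^+$, I will choose $\la_0 \in (0,1]$ small enough that simultaneously $\la_0 v \le 1$ and $B\rho(\la_0) \le \la_0 v\eta/2$. Since $\de_m \to 0$, I can then pick $m_0$ such that $\de_{m-1} \le \la_0 v\eta/2$ for every $m > m_0$. Specializing \eqref{Le27} to the constant choice $\la = \la_0$ gives, for all $m > m_0$,
\be
a_m \le (1-\la_0 v)\,a_{m-1} + B\rho(\la_0) + \de_{m-1} \le (1-\la_0 v)\,a_{m-1} + \la_0 v\eta.
\ee

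Subtracting $\eta$ from both sides yields $a_m - \eta \le (1-\la_0 v)(a_{m-1}-\eta)$. Setting $b_m := \max(0, a_m - \eta)$ and considering separately the cases $a_{m-1} \ge \eta$ and $a_{m-1} < \eta$ (using $a_m \ge 0$ in the latter), one checks that $b_m \le (1-\la_0 v)\,b_{m-1}$ for all $m > m_0$. Since $1-\la_0 v \in [0,1)$, iteration gives $b_{m_0+k} \le (1-\la_0 v)^k b_{m_0} \to 0$ as $k\to\infty$, and therefore $\limsup_{m\to\infty} a_m \le \eta$. As $\eta > 0$ was arbitrary and $a_m \ge 0$, the conclusion $a_m \to 0$ follows.

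No single step here is really an obstacle; the argument is essentially a direct unpacking of the hypotheses. The only bookkeeping point is to coordinate the two smallness choices so that $\la_0 v \le 1$ (ensuring $1-\la_0 v$ is a genuine nonnegative contraction factor) while also $B\rho(\la_0)/\la_0 \le v\eta/2$ (which is where the hypothesis $\rho(u)/u \to 0$ enters). Convexity of $\rho$ is not strictly used in this argument, although it fits naturally with the lemma's intended use as a tool for analyzing moduli of smoothness.
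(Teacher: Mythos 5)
Your proof is correct, and it is genuinely different from --- and simpler than --- the paper's argument. The paper fixes the free parameter $\la$ at (essentially) the optimal value, introduces $\bt_{m-1}:=-\inf_{0\le\la\le1}(-\la v a_{m-1}+B\rho(\la))$, splits the index set into $\cM_1=\{m:\bt_{m-1}\le2\de_{m-1}\}$ and $\cM_2$, and then argues separately according to whether $\cM_1$ is finite or infinite (plus a preliminary split on whether $\rho$ vanishes on an initial interval), using monotonicity on segments of $\cM_2$ and smallness at points of $\cM_1$. You instead freeze $\la=\la_0$ for a single well-chosen $\la_0$ determined by the target accuracy $\eta$, which turns the recursion into a strict linear contraction with additive noise of size $\la_0 v\eta$; the auxiliary sequence $b_m=\max(0,a_m-\eta)$ then decays geometrically and $\limsup a_m\le\eta$. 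This eliminates all the case analysis, correctly uses only $\rho(u)/u\to0$ (convexity is indeed not needed in this argument, as you note), and covers the case $\rho\equiv 0$ on an interval without special treatment. The tradeoff is that the paper's more delicate bookkeeping with the actual infimum $\bt_{m-1}$ is what gets refined in Lemma~\ref{LeL10} into a rate of decay, whereas your fixed-$\la_0$ contraction gives only convergence (which is all that is claimed here).
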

\begin{proof} We carry out the proof under assumption that $\rho(u)>0$ for $u>0$. Otherwise, if $\rho(u)=0$ for $u\in (0,u_0]$ then
$$
a_m\le (a_0+\de_0)(1-u_0v)^{m-1}+\de_1(1-u_0v)^{m-2}+\cdots+\de_{m-1} \to 0\quad \text{as}\quad m\to\infty.
$$
Denote
$$
\bt_{m-1}:=-\inf_{0\le\la\le1}(-\la va_{m-1}+B\rho(\la)).
$$
It is clear that $\bt_{m-1}\ge 0$. We divide the set of natural numbers into two sets:
$$
\cM_1:=\{m:\bt_{m-1}\le 2\de_{m-1}\};\qquad \cM_2:=\{m:\bt_{m-1}>2\de_{m-1}\}.
$$
The set $\cM_1$ can be either finite or infinite. First, consider the case of infinite $\cM_1$. 
Let
$$
\cM_1=\{m_k\}_{k=1}^\infty,\quad m_1<m_2<\dots .
$$
For any $m\in\cM_2$ we have
$$
a_m\le a_{m-1} -\bt_{m-1}+\de_{m-1} < a_{m-1}-\de_{m-1} \le a_{m-1}. 
$$
Thus, the sequence $\{a_m\}$ is monotone decreasing on $(m_{k-1},m_k)$. Also, we have
\begin{equation}\label{Led3.1}
a_{m_{k-1}} \le a_{m_{k-1}-1}+\de_{m_{k-1}-1}.
\end{equation}
It is clear from (\ref{Led3.1}), monotonicity of $\{a_m\}$ on $(m_{k-1},m_k)$ and the property 
$\de_k\to0$ as $k\to\infty$ that it is sufficient to prove that 
$$
\lim_{k\to\infty} a_{m_{k}-1}=0.
$$
For $m\in\cM_1$ we have $\bt_{m-1}\le 2\de_{m-1}$. Let $\la_1(m)$ be a nonzero solution to the equation 
$$
\la v a_{m-1} = 2B\rho(\la) \quad \text{or}\quad \frac{\rho(\la)}{\la} = \frac{va_{m-1}}{2B}.
$$
If $\la_1(m)\le 1$ then
$$
-\bt_{m-1} \le -\la_1(m)va_{m-1}+B\rho(\la_1(m))=-B\rho(\la_1(m)).
$$
Thus, 
$$
B\rho(\la_1(m)) \le \bt_{m-1} \le 2\de_{m-1}.
$$
Therefore, using that $\rho(u)>0$ for $u>0$, we obtain $\la_1(m)\to 0$ as $m\to\infty$. Next,
$$
a_{m-1} = \frac{2B}{v}\frac{\rho(\la_1(m))}{\la_1(m)} \to 0 \quad \text{as}\quad m\to\infty.
$$
If $\la_1(m)>1$ then by monotonicity of $\rho(u)/u$ for all $\la\le \la_1(m)$ we have $\la va_{m-1}\ge 2B\rho(\la)$. Specifying $\la=1$ we get
$$
-\bt_{m-1} \le -\frac{1}{2}va_{m-1}.
$$
Therefore, $a_{m-1}\le (2/v)\bt_{m-1} \le (4/v)\de_{m-1} \to 0$ as $m\to\infty$.

Second, consider the case of finite $\cM_1$. Then there exists $m_0$ such that for all $m\ge m_0$ we have
\begin{equation}\label{Led3.2}
a_m\le a_{m-1} -\bt_{m-1} +\de_{m-1} \le a_{m-1}-\frac{1}{2} \bt_{m-1}.
\end{equation} 
The sequence $\{a_m\}_{m\ge m_0}$ is monotone decreasing and therefore it has a limit $\alpha\ge 0$. We prove that $\alpha=0$ by contradiction. Suppose $\alpha>0$. Then $a_{m-1}\ge \alpha$ for $m\ge m_0$. It is clear that for $m\ge m_0$ we have $\bt_{m-1} \ge c_0>0$. This together with (\ref{Led3.2}) contradict to our assumption that $a_m\ge\alpha$, $m\ge m_0$. 
\end{proof}

We now prove a lemma from \cite{VT148} that gives the rate of decay of a sequence satisfying (\ref{Le27}) with a special $\rho(u) = \ga u^q$.
\begin{Lemma}\label{LeL10} Suppose a nonnegative sequence $a_0,a_1,\dots$ satisfies the inequalities for $m=1,2,\dots$
\begin{equation}\label{Le3.5a}
a_m\le a_{m-1} +\inf_{0\le \la\le 1}(-\la va_{m-1}+B\la^q)+\de_{m-1},  \quad \de_{m-1}\le cm^{-q},
\end{equation}
where   $q\in (1,2]$, $v\in(0,1]$, and $B>0$. Then
$$
a_m\le C(q,v,B,a_0,c) m^{1-q},\qquad C(q,v,B,a_0,c) \le C'(q,B,a_0,c)v^{-q} . 
$$
\end{Lemma}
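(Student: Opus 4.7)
The plan is to reduce the variable-$\lambda$ inequality (\ref{Le3.5a}) to the clean one-step recursion
$$a_m\le a_{m-1}-C_2 a_{m-1}^{p}+\delta_{m-1},\qquad p:=\frac{q}{q-1},$$
exactly as in the proof of Lemma \ref{LeL8}, and then exploit the fact that $\delta_{m-1}\le cm^{-q}$ is summable (because $q>1$) to push the $m^{1-q}$ rate all the way out, rather than only up to the cutoff $N=[\delta^{-1/q}]$ which was needed when the perturbation was a fixed constant $\delta$.

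First, taking $\lambda=0$ in (\ref{Le3.5a}) gives $a_m\le a_{m-1}+\delta_{m-1}$. Summing, I obtain the uniform bound
$$a_m\le a_0+c\sum_{k\ge1}k^{-q}=:M(a_0,c,q),\qquad m=0,1,2,\dots.$$
Second, I optimize the infimum exactly as in the proof of Lemma \ref{LeL8}, with $\lambda_1:=(va_{m-1}/(2B))^{1/(q-1)}$. If $\lambda_1\le 1$ then choosing $\lambda=\lambda_1$ contributes $-C_1(q,B)v^{p}a_{m-1}^{p}$ to the right side; if $\lambda_1>1$ then choosing $\lambda=1$ contributes $-\tfrac12 va_{m-1}$, which by the uniform bound $a_{m-1}\le M$ is at most $-\tfrac12 vM^{1-p}a_{m-1}^{p}$. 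Combining these two cases yields
$$a_m\le a_{m-1}-C_2 a_{m-1}^{p}+\delta_{m-1},\qquad C_2\ge C_2'(q,B,a_0,c)\,v^{p}.$$

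The third and main step is to prove by induction on $m$ that $a_m\le D m^{1-q}$ for a constant $D$ chosen below. For the base case I pick $D$ so large that $a_0\le D$. For the inductive step, assuming $a_{m-1}\le D(m-1)^{1-q}$, I split into two cases.
\emph{Case 1 (small residual).} If $a_{m-1}\le \tfrac12 D m^{1-q}$ then, using $\lambda=0$ again,
$$a_m\le a_{m-1}+cm^{-q}\le \tfrac12 Dm^{1-q}+cm^{-q}\le D m^{1-q}$$
provided $D\ge 2c$.
\emph{Case 2 (large residual).} If $a_{m-1}>\tfrac12 Dm^{1-q}$ then
$a_{m-1}^{p-1}\ge (D/2)^{p-1}m^{(1-q)(p-1)}=(D/2)^{p-1}m^{-1}$, so that
$$a_m\le a_{m-1}\bigl(1-C_2(D/2)^{p-1}m^{-1}\bigr)+cm^{-q}\le D(m-1)^{1-q}\bigl(1-C_2(D/2)^{p-1}m^{-1}\bigr)+cm^{-q}.$$
Dividing through by $m^{1-q}$ and using $(m/(m-1))^{q-1}\le 1+\tfrac{q-1}{m-1}+O(m^{-2})$, this reduces to the requirement
$$C_2(D/2)^{p-1}\ge q-1+\frac{c}{D}+o(1),$$
which is satisfied uniformly in $m\ge m_0$ once $D$ is chosen so that $C_2(D/2)^{p-1}\ge q$, i.e.\ $D\ge C'' C_2^{-1/(p-1)}= C'' C_2^{-(q-1)}$. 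Finite initial terms $m<m_0$ are absorbed by enlarging $D$ further. Since $C_2\ge C_2'(q,B,a_0,c)v^{p}$, the resulting $D$ satisfies $D\le C'(q,B,a_0,c)v^{-p(q-1)}=C'(q,B,a_0,c)v^{-q}$, matching the stated dependence on $v$.

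\textbf{Main obstacle.} The delicate part is Case~2, where one must simultaneously choose a single $D$ that (i)~dominates $a_0$, (ii)~beats the constant $2c$ to close Case~1, and (iii)~gives $C_2(D/2)^{p-1}$ strictly greater than $q-1$ with room to spare so that the factor $(m/(m-1))^{q-1}$ and the additive tail $cm^{-q}$ can both be absorbed while keeping the bound uniform in $m$. Verifying that all three constraints can be met with a single $D$ of order $v^{-q}$, while using only the summability of $\delta_{m-1}\le cm^{-q}$ (and not any further structural assumption), is the technical heart of the argument.
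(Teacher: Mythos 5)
Your reduction steps---the uniform bound $a_m\le M:=a_0+c\sum_{k\ge1}k^{-q}$ from $\lambda=0$, and the optimization over $\lambda$ yielding $a_m\le a_{m-1}-C_2a_{m-1}^{p}+\delta_{m-1}$ with $C_2\ge C_2'(q,B,a_0,c)\,v^{p}$---coincide exactly with the first half of the paper's proof. Where you genuinely diverge is in the final step. The paper hands the recursion to Lemma~\ref{LeL11}, which in turn invokes the segment-length Lemma~\ref{LeL12} (the $\delta$-perturbed variant of Lemma~\ref{LeL3}, proved by estimating the lengths of runs on which the residual sits above the target curve). You instead run a direct one-step induction on $a_m\le Dm^{1-q}$, splitting each $m$ into a ``small residual'' branch (closed by $D\ge 2c$ and the crude step $a_m\le a_{m-1}+\delta_{m-1}$) and a ``large residual'' branch (where $a_{m-1}^{p-1}\ge (D/2)^{p-1}/m$ produces a contraction factor). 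Both routes are built on the same dichotomy between residuals above and below the target; yours avoids the bad-segment bookkeeping and is self-contained, while the paper's invokes a lemma that is reused elsewhere in the chapter.

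One substitution in Case~2 needs a short repair. Replacing $a_{m-1}$ by its induction-hypothesis bound $D(m-1)^{1-q}$ inside the product $a_{m-1}\bigl(1-C_2(D/2)^{p-1}m^{-1}\bigr)$ is only legitimate when the factor $1-C_2(D/2)^{p-1}m^{-1}$ is nonnegative; for $m<C_2(D/2)^{p-1}$ it can be negative, and then the substitution moves the estimate the wrong way. In that subcase, however, $a_{m-1}>0$ (forced by the Case~2 hypothesis $a_{m-1}>\tfrac12 Dm^{1-q}$), so the first term of $a_{m-1}\bigl(1-C_2(D/2)^{p-1}m^{-1}\bigr)+cm^{-q}$ is negative and $a_m<cm^{-q}\le cm^{1-q}\le Dm^{1-q}$ as soon as $D\ge c$, which $D\ge 2c$ already ensures. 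Writing out this subcase also dissolves the circularity lurking in the choice of $m_0$: with it in place the induction closes for all $m\ge 2$ (base case $m=1$ uses $a_1\le M\le D$), taking $D=\max\bigl(2c,\,M,\,c_0(q)\,C_2^{-(q-1)}\bigr)$. Since $C_2\ge C_2'v^{p}$ and $p(q-1)=q$, this gives $D\le C'(q,B,a_0,c)\,v^{-q}$, matching the claimed dependence on $v$.
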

\begin{proof}
In particular, (\ref{Le3.5a}) implies that 
\begin{equation}\label{Le3.6a}
a_m\le a_{m-1}+\de_{m-1}.
\end{equation}
  Then for all $m$ we have
$$
a_m\le a_0+C_1(q,c),\quad C_1(q,c):= c\sum_{k=0}^\infty (k+1)^{-q}.
$$
Denote $\la_1$ a nonzero solution of the equation
\begin{equation}\label{Le3.7a}
 \la v a_{m-1}=  2B\la^q, \quad  \la_1= \left(\frac{va_{m-1}}{2B}\right)^{\frac{1}{q-1}}.
\end{equation}
If $\la_1\le 1$ then 
$$
\inf_{0\le \la\le 1}(-\la va_{m-1}+ B\la^q)\le -\la_1 va_{m-1}+ B\la_1^q 
$$
$$
=-\frac{1}{2}\la_1va_{m-1} = -C_1'(q,B)v^pa_{m-1}^p,\quad p:=\frac{q}{q-1}.
$$
If $\la_1> 1$ then for all $\la\le\la_1$ we have $\la v a_{m-1}\ge  2B\la^q$ and specifying $\la=1$ we get
$$
\inf_{0\le \la\le 1}(-\la va_{m-1}+B\la^q)\le-\frac{1}{2}va_{m-1} 
$$
$$
\le -\frac{1}{2}va_{m-1}^p(a_0+C_1(q,c))^{1-p} = -C_1(q,a_0,c)va_{m-1}^p.
$$
Setting $C_2:=C_2(q,v,B,a_0,c):=\min(C_1'(q,B)v^p,C_1(q,a_0,c)v)$ we obtain
from (\ref{Le3.5a})
\begin{equation}\label{Le3.8a}
a_m\le a_{m-1}- C_2a_{m-1}^p +\de_{m-1},\quad C_2\ge C_2'(q,B,a_0,c)v^p.
\end{equation}

We now need one more technical lemma (see \cite{VT148}). 
\begin{Lemma}\label{LeL11} Let $q\in (1,2]$, $p:=\frac{q}{q-1}$. Assume that a sequence 
$\{\de_k\}_{k=0}^\infty$ is such that $\de_k\ge 0$ and $\de_k\le c(k+1)^{-q}$. Suppose a nonnegative sequence $\{a_k\}_{k=0}^\infty$ satisfies the inequalities
\begin{equation}\label{Led3.3}
a_m\le a_{m-1}-wa_{m-1}^p +\de_{m-1},\qquad m=1,2,\dots,
\end{equation}
with a positive number $w\in(0,1]$. Then
$$
a_m\le C(q,c,w,a_0)m^{1-q},\quad m=1,2,\dots,\quad C(q,c,w,a_0)\le C'(q,c,a_0)w^{-\frac{1}{p-1}}.
$$
\end{Lemma}
\begin{proof} Lemma \ref{LeL11} is a simple corollary of the following known lemma, which is a more general version of Lemma \ref{LeL3}.
\begin{Lemma}\label{LeL12}  Let three positive numbers $\al < \beta $, $A$   be given and let a sequence 
$\{a_n\}_{n=0}^\infty$ have the following properties:  $ a_0<A$ and  we have for all $n\ge 1$
 \begin{equation}\label{Le3.20}
 a_n\le a_{n-1}+An^{-\al};  
\end{equation}
 if for some $\nu $ we have
$$
a_\nu \ge A\nu^{-\al}
$$
then
\begin{equation}\label{Le3.21}
a_{\nu + 1} \le a_\nu (1- \beta/\nu). 
\end{equation}
Then there exists a constant $C=C(\al , \beta)$ such that for all $n=1,2,\dots $ we have
$$
a_n \le C A n^{-\al} .
$$
 \end{Lemma}
\begin{Remark}\label{LeR1} If conditions (\ref{Le3.20}) and (\ref{Le3.21}) are satisfied for $n\le N$ and $\nu\le N$ then the statement of Lemma \ref{LeL12} holds for $n\le N$. 
\end{Remark} 
 Suppose that
 $$
 a_\nu \ge A\nu^{1-q}.
 $$
 Then by (\ref{Led3.3})
 $$
 a_{\nu+1}\le a_\nu(1-wa_\nu^{p-1})+c\nu^{-q} \le a_\nu(1-wa_\nu^{p-1}+(c/A)/\nu).
 $$
 Making $A$ large enough $A=C(a_0,c)w^{-\frac{1}{p-1}}$ we get $a_0<A$ and
 $$
 -wA^{p-1}+c/A \le -2.
 $$
 We now apply Lemma \ref{LeL12}, which completes the proof of Lemma \ref{LeL11} and hence of Lemma \ref{LeL10} as well.
 \end{proof}
\end{proof}

 \section{Some results on greedy algorithms}
 
 In this section we present some typical proofs for convergence and rate of convergence of some greedy algorithms. 
 We present these proofs in the general situation, when we are in a complex Banach space. We follow the presentation from \cite{VT209}. This presentation goes along the lines of proofs in the case of real Banach spaces with some necessary modifications. The reader can find the real case discussion in the book \cite{VTbook} and detailed discussion of the complex case in \cite{DGHKT} and \cite{VT209}. 
   
 \subsection{Definitions and Lemmas}


We note that from the definition of modulus of smoothness we get the following inequality (in the real case see, for instance, \cite{VTbook}, p.336, and in the complex case see \cite{VT209}).
\begin{Lemma}\label{LL0} Let $x\neq0$. Then
\be\label{In1}
0\le \|x+uy\|-\|x\|-Re(uF_x(y))\le 2\|x\|\rho(u\|y\|/\|x\|). 
\ee
\end{Lemma}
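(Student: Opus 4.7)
The plan is to establish the two inequalities separately, with both steps relying on the defining property of the norming functional together with the definition of the modulus of smoothness.

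For the lower bound, I would use that $\|F_x\|=1$ and $F_x(x)=\|x\|$, so
\[
\|x+uy\| \ge |F_x(x+uy)| \ge Re(F_x(x+uy)) = \|x\| + Re(uF_x(y)),
\]
which rearranges to the desired non-negativity.

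For the upper bound, the key trick is to introduce the companion term $\|x-uy\|$ so that the definition of $\rho$ can be invoked. Applying the same norming-functional bound to $x-uy$ gives $\|x-uy\| \ge \|x\| - Re(uF_x(y))$, i.e.\ $-Re(uF_x(y)) \le \|x-uy\|-\|x\|$. Adding this to $\|x+uy\|-\|x\|$ yields
\[
\|x+uy\| - \|x\| - Re(uF_x(y)) \le \|x+uy\| + \|x-uy\| - 2\|x\|.
\]
Then I would handle the trivial case $y=0$ separately, and for $y\neq 0$ set $a:=x/\|x\|$, $b:=y/\|y\|$, $v:=u\|y\|/\|x\|$, so that $\|a\|=\|b\|=1$ and the definition of $\rho$ gives
\[
\tfrac{1}{2}(\|a+vb\|+\|a-vb\|) - 1 \le \rho(v).
\]
Multiplying through by $2\|x\|$ and using homogeneity of the norm converts the left-hand side into $\|x+uy\|+\|x-uy\|-2\|x\|$, which combined with the previous inequality produces the claimed upper bound.

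No step is really an obstacle here; the only subtlety is recognising the symmetrisation trick of pairing $\|x+uy\|$ with $\|x-uy\|$ via the functional inequality applied to both $x+uy$ and $x-uy$, which is precisely what lets the definition of $\rho(u)$ come into play. The argument works verbatim in the complex case because we only ever extract real parts and use $|z|\ge Re(z)$.
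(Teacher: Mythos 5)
Your proof is correct and uses essentially the same approach the paper intends: the paper only remarks that Lemma~\ref{LL0} can be proved like Lemma~\ref{LL1}, and the proof of Lemma~\ref{LL1} rests on exactly the same two ingredients you use, namely the norming-functional lower bound $\|z\|\ge Re(F_x(z))$ applied to $z=x\pm uy$ and the pairing of $\|x+uy\|$ with $\|x-uy\|$ so that the definition of $\rho$ becomes applicable after rescaling by $\|x\|$. Your rescaling step ($a=x/\|x\|$, $b=y/\|y\|$, $v=u\|y\|/\|x\|$) and your remark about the complex case (only real parts and $|z|\ge Re(z)$ are used) are exactly the modifications the paper has in mind.
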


 Lemma \ref{LL0} can be proved in a way similar to the proof of Lemma \ref{LL1} below. 
 
  The following  simple and well-known Lemma \ref{LL1} in the case of real Banach spaces is proved, for instance, in \cite{VTbook}, Ch.6, pp.342-343. In the case of complex Banach spaces see \cite{VT209}. 
\begin{Lemma}\label{LL1} Let $X$ be a uniformly smooth Banach space and $L$ be a finite-dimensional subspace of $X$. For any $f\in X\setminus L$   let $f_L$ denote the best approximant of $f$ from $L$. Then we have 
$$
F_{f-f_L}(\phi) =0
$$
for any $\phi \in L$.
\end{Lemma}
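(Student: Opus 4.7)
The plan is to exploit the minimality of $f_L$ together with the two-sided inequality in Lemma \ref{LL0}, then pass to the limit. Set $h := f - f_L$, which is nonzero since $f \notin L$, so the norming functional $F_h$ is well-defined (and in fact unique by uniform smoothness). For any $\phi \in L$ and scalar $\lambda$, the element $f_L + \lambda\phi$ lies in $L$, hence by the best approximation property
\[
\|h - \lambda\phi\| \;=\; \|f - (f_L + \lambda\phi)\| \;\ge\; \|f - f_L\| \;=\; \|h\|.
\]

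Next I would invoke Lemma \ref{LL0} with $x = h$ and $y = -\phi$: for $u > 0$ small,
\[
\|h - u\phi\| \;\le\; \|h\| + \mathrm{Re}(u\, F_h(-\phi)) + 2\|h\|\rho(u\|\phi\|/\|h\|).
\]
Combining with the lower bound $\|h - u\phi\| \ge \|h\|$ and canceling $\|h\|$ yields
\[
u\,\mathrm{Re}(F_h(\phi)) \;\le\; 2\|h\|\rho(u\|\phi\|/\|h\|).
\]
Dividing by $u > 0$ and letting $u \to 0^+$, the hypothesis $\rho(v)/v \to 0$ gives $\mathrm{Re}(F_h(\phi)) \le 0$. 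Since $\phi \in L$ was arbitrary, replacing $\phi$ by $-\phi$ yields the reverse inequality, so $\mathrm{Re}(F_h(\phi)) = 0$.

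In the real case this already finishes the argument. For a complex Banach space I would apply the same argument with $\phi$ replaced by $i\phi \in L$: this gives $\mathrm{Re}(F_h(i\phi)) = 0$, and since $F_h$ is complex-linear, $\mathrm{Re}(iF_h(\phi)) = -\mathrm{Im}(F_h(\phi)) = 0$. Together the real and imaginary parts vanish, so $F_h(\phi) = 0$.

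The argument is essentially a one-step calculation, so the main ``obstacle'' is more bookkeeping than substance: one must remember to test against both $\phi$ and $i\phi$ in the complex case (a point easy to overlook), and to justify that $F_h$ is well-defined, which follows from $f \notin L$ guaranteeing $h \ne 0$. No appeal to the finite-dimensionality of $L$ is needed for the identity itself — that hypothesis only enters to ensure existence of the best approximant $f_L$.
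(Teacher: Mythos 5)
Your proof is correct, and it relies on the same underlying ingredients as the paper's proof — the modulus of smoothness estimate together with the best approximation property — but the routing is slightly different. The paper argues by contradiction directly from the definition of $\rho(u)$ (writing the two-point inequality $\|h-\la\nu\phi\|+\|h+\la\nu\phi\|\le 2\|h\|(1+\rho(\la/\|h\|))$, bounding $\|h+\la\nu\phi\|$ from below via the norming functional, and producing $\|h-\la'\nu\phi\|<\|h\|$ for small $\la'$, contradicting minimality), and it handles the complex case by the single phase-rotation $\nu := \overline{\sign F_h(\phi)}$, which makes $F_h(\nu\phi)$ real and positive in one stroke. You instead give a direct argument, invoking Lemma~\ref{LL0} as a black box to get $u\,\mathrm{Re}\,F_h(\phi)\le 2\|h\|\rho(u\|\phi\|/\|h\|)$, dividing by $u$ and letting $u\to 0^+$, and then testing against $-\phi$ and $i\phi$ to kill the real and imaginary parts separately. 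Both are sound; yours avoids the contradiction framing but requires two (or three, in the complex case) test directions where the paper needs only one, and it borrows Lemma~\ref{LL0}, which the paper deliberately leaves unproved so that the proof of Lemma~\ref{LL1} can stand on its own as the template for it. Your closing observation — that finite-dimensionality of $L$ enters only to guarantee existence of $f_L$, not the orthogonality identity itself — is correct and worth noting.
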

\begin{proof} Let us assume the contrary: there is a $\phi \in L$ such that $\|\phi\|=1$ and
$$
|F_{f-f_L}(\phi)| =\bt >0.
$$
Denote by $\nu$ the complex conjugate of $\sign(F_{f-f_L}(\phi))$, where $\sign z := z/|z|$ for $z\neq 0$. Then 
$\nu F_{f-f_L}(\phi) = |F_{f-f_L}(\phi)|$.
For any $\la\ge 0$ we have from the definition of $\rho(u)$ that 
\be\label{2.3}
\|f-f_L-\la \nu\phi\| +\|f-f_L+\la \nu\phi\| \le 2\|f-f_L\|\left(1+\rho\left(\frac{\la}{\|f-f_L\|}\right)\right).
\ee
Next
\be\label{2.4}
 \|f-f_L+\la \nu\phi\| \ge |F_{f-f_L}(f-f_L+\la\nu\phi)| =\|f-f_L\|+\la\bt.
\ee
Combining (\ref{2.3}) and (\ref{2.4}) we get
\be\label{2.5}
\|f-f_L-\la\nu\phi\|\le \|f-f_L\|\left(1-\frac{\la\bt}{\|f-f_L\|} +2\rho\left(\frac{\la}{\|f-f_L\|}\right)\right).
\ee
Taking into account that $\rho(u) =o(u)$, we find $\la'>0$ such that
$$
\left(1-\frac{\la'\bt}{\|f-f_L\|} +2\rho\left(\frac{\la'}{\|f-f_L\|}\right)\right) <1.
$$
Then (\ref{2.5}) gives
$$ 
\|f-f_L-\la'\nu\phi\| < \|f-f_L\|,
$$
which contradicts the assumption that $f_L \in L$ is the best approximant of $f$.
\end{proof}

\begin{Remark}\label{SC1} The condition $F_{f-f_L}(\phi) =0$ for any $\phi \in L$ is also a sufficient condition for $f_L\in L$ 
to be a best approximant of $f$ from $L$.
\end{Remark}
\begin{proof} Indeed, for any $g \in L$ we have
$$
\|f-f_L\| = F_{f-f_L}(f-f_L) =  F_{f-f_L}(f-g) \le \|f-g\|.
$$
\end{proof}

The following  simple and well-known Lemma \ref{LL2} in the case of real Banach spaces is proved, for instance, in \cite{VTbook}, Ch.6, p.343. In the case of complex Banach spaces see \cite{VT209}. 
\begin{Lemma}\label{LL2} For any bounded linear functional $F$ and any dictionary $\cD$, we have
$$
\|F\|_\cD:=\sup_{g\in \cD}|F(g)| = \sup_{f\in  A_1(\cD)} |F(f)|.
$$
\end{Lemma}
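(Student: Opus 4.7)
The plan is to prove the two inequalities separately, and both are essentially trivial given the definition of $A_1(\cD)$ as the closure (in $X$) of $A_1^o(\cD)$.

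First, I would establish the direction $\|F\|_\cD \le \sup_{f\in A_1(\cD)} |F(f)|$. Each $g\in\cD$ belongs to $A_1^o(\cD) \subset A_1(\cD)$ (take $a_1=1$, $g_1 = g$, and all subsequent $a_j=0$), so the supremum on the left is taken over a subset of the supremum on the right.

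Next, for the reverse direction $\sup_{f\in A_1(\cD)} |F(f)| \le \|F\|_\cD$, I would first bound $|F(f)|$ for an arbitrary $f \in A_1^o(\cD)$. Writing $f = \sum_{j=1}^\infty a_j g_j$ with $\sum_j |a_j|\le 1$ and $g_j\in\cD$, the continuity and linearity of $F$ give
$$
|F(f)| = \left|\sum_{j=1}^\infty a_j F(g_j)\right| \le \sum_{j=1}^\infty |a_j|\,|F(g_j)| \le \|F\|_\cD \sum_{j=1}^\infty |a_j| \le \|F\|_\cD.
$$
Hence $\sup_{f\in A_1^o(\cD)} |F(f)| \le \|F\|_\cD$. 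Since $F$ is continuous and $A_1(\cD)$ is the closure in $X$ of $A_1^o(\cD)$, this bound extends to every $f \in A_1(\cD)$.

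There is essentially no obstacle here; the only small point to verify is that the series $\sum_j a_j F(g_j)$ is absolutely convergent (it is, because $|F(g_j)|\le \|F\|_\cD$ and $\sum_j|a_j|\le 1$), which justifies swapping $F$ with the infinite sum. Combining the two inequalities yields the claim.
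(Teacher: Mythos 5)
Your proof is correct and follows essentially the same approach the paper uses for the companion result Lemma~\ref{LL3} (the paper omits the proof of Lemma~\ref{LL2} and instead proves Lemma~\ref{LL3}, noting the two are similar): the easy inclusion $\cD\subset A_1(\cD)$ gives one direction, and the other follows by bounding $|F|$ on $\ell_1$-combinations of dictionary elements and then passing to the closure via continuity of $F$.
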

 The following Lemma \ref{LL3} is from \cite{VT209}. The proofs of Lemmas \ref{LL2} and \ref{LL3} are similar. We only present the proof of Lemma \ref{LL3} here. 
\begin{Lemma}\label{LL3} For any bounded linear functional $F$ and any dictionary $\cD$, we have
$$
 \sup_{g\in \cD}Re(F(g)) = \sup_{f\in  \conv(\cD)} Re(F(f)).
$$
\end{Lemma}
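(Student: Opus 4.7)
The plan is a standard two-sided inequality argument, completely analogous to the proof of Lemma~\ref{LL2}. The forward inequality is immediate: since $\cD \subset \conv(\cD)$ by definition, we have $\sup_{g\in\cD} Re(F(g)) \le \sup_{f\in\conv(\cD)} Re(F(f))$.

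For the reverse inequality, I would first handle finite convex combinations. Let $f = \sum_{j=1}^n \alpha_j g_j$ with $g_j \in \cD$, $\alpha_j \ge 0$, and $\sum_{j=1}^n \alpha_j = 1$. By linearity of $F$ and of $Re$,
$$
Re(F(f)) = \sum_{j=1}^n \alpha_j Re(F(g_j)) \le \left(\sum_{j=1}^n \alpha_j\right) \sup_{g\in\cD} Re(F(g)) = \sup_{g\in\cD} Re(F(g)).
$$
This shows the inequality on the (un-closed) convex hull of $\cD$.

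To pass to the closure, I would use continuity. For any $f\in\conv(\cD)$ there is a sequence $\{f_k\}$ of finite convex combinations of elements of $\cD$ with $f_k \to f$ in $X$. Since $F$ is bounded (hence continuous) we have $F(f_k) \to F(f)$, and since $Re:\mathbb C \to \mathbb R$ is continuous, $Re(F(f_k)) \to Re(F(f))$. Each $Re(F(f_k))$ is bounded above by $\sup_{g\in\cD} Re(F(g))$ by the previous step, so the limit is as well. Taking the supremum over $f\in\conv(\cD)$ yields the reverse inequality, completing the proof.

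There is no real obstacle here; the only point that requires (very minor) care is the continuity passage to the closure, which is why Lemma~\ref{LL2} uses $A_1(\cD)$ (closure of absolute convex hull) in the same manner. Since $F$ is a bounded functional on a Banach space and $Re$ is a continuous real-linear functional on $\mathbb C$, the argument is clean.
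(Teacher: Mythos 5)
Your proof is correct and takes essentially the same route as the paper: the trivial inclusion $\cD\subset\conv(\cD)$ for one direction, convexity of the real-linear functional $Re\circ F$ on finite convex combinations, and passage to the closure using boundedness (continuity) of $F$. The paper packages the last step as a single $\e$-approximation with the estimate $Re(F(f))\le \|F\|\e+\sup_{g\in\cD}Re(F(g))$ rather than a sequential limit, but the content is identical.
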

\begin{proof} The inequality 
$$
\sup_{g\in \cD}Re(F(g)) \le \sup_{f\in \conv(\cD)} Re(F(f))
$$
is obvious because $\cD \subset \conv(\cD)$. We prove the opposite inequality. Take any $f\in \conv(\cD)$. Then for any $\e >0$ there exist $g_1^\e,\dots,g_N^\e \in \cD$ and nonnegative numbers $a_1^\e,\dots,a_N^\e$ such that $a_1^\e+\cdots+a_N^\e = 1$ and 
$$
\left\|f-\sum_{j=i}^Na_i^\e g_i^\e\right\| \le \e.
$$
Thus
$$
Re(F(f)) \le \|F\|\e + Re(F\left(\sum_{i=1}^Na_i^\e g_i^\e\right)) \le \e \|F\| +\sup_{g\in \cD} Re(F(g)),
$$
which proves Lemma \ref{LL3}.
\end{proof}
 
 In the case of real Banach spaces the algorithms WCGA, WGAFR, GAWR, {\bf IA}($\e$) and the corresponding results on their convergence  and rate of convergence are known (see, for instance, \cite{VTbook}, Ch.6). We showed in \cite{VT209}  how to modify those algorithms to the case of complex Banach spaces.   Note, that results on the WCGA in the case of complex Banach spaces can be found in \cite{DGHKT}. For illustration we only present some results from \cite{VT209}. 
 Namely, we discuss the Incremental Algorithms (see Subsection 5.2 above). 
 
\subsection{Incremental Algorithm}
\label{MSub3}

Let $\e=\{\e_n\}_{n=1}^\infty $, $\e_n> 0$, $n=1,2,\dots$. The Incremental Algorithms {\bf IA}($\e$) (see Subsection 5.2 above) was introduced and 
studied in \cite{VT94} (see also, \cite{VTbook}, pp.361--363) in the case of real Banach spaces.  
  We now modify the definition of {\bf IA($\e$)} to make it suitable for the complex Banach spaces (see \cite{VT209}). 
 
 {\bf Incremental Algorithm (complex) with schedule $\e$ (IAc($\e$)).} 
Let $f\in A_1(\cD)$. Denote $f_0^{c,\e}:= f$ and $G_0^{c,\e} :=0$. Then, for each $m\ge 1$ we have the following inductive definition.

(1) $\ff_m^{c,\e} \in \cD^\circ$ is any element satisfying
$$
Re(F_{f_{m-1}^{c,\e}}(\ff_m^{c,\e}-f)) \ge -\e_m.
$$
Denote by $\nu_m$ the complex conjugate of $\sign F_{f_{m-1}^{c,\e}}(\ff_m^{c,\e})$, where $\sign z := z/|z|$ for $z\neq 0$.

(2) Define
$$
G_m^{c,\e}:= (1-1/m)G_{m-1}^{c,\e} +\nu_m\ff_m^{c,\e}/m.
$$

(3) Let
$$
f_m^{c,\e} := f- G_m^{c,\e}.
$$

It follows from the definition of the {\bf IAc($\e$)} that 
\be\label{M15}
G_m^{c,\e} = \frac{1}{m} \sum_{j=1}^m \nu_j\ff_j^{c,\e},\quad |\nu_j|=1, \quad j=1,2,\dots. 
\ee

Note that by the definition of $\nu_m$ we have $F_{f_{m-1}^{c,\e}}(\nu_m\ff_m^{c,\e}) = |F_{f_{m-1}^{c,\e}}(\ff_m^{c,\e})|$ 
and, therefore, by Lemma \ref{LL2}
$$
\sup_{\phi \in \cD^\circ}Re(F_{f_{m-1}^{c,\e}}(\phi))= \sup_{g \in \cD}|F_{f_{m-1}^{c,\e}}(g)| 
$$
$$
= \sup_{\phi \in A_1(\cD)}|F_{f_{m-1}^{c,\e}}(\phi)| \ge |F_{f_{m-1}^{c,\e}}(f)| \ge Re(F_{f_{m-1}^{c,\e}}(f)).
$$
This means that we can always run the IAc($\e$) for $f\in A_1(\cD)$.
 
\begin{Theorem}\label{MT5} Let $X$ be a uniformly smooth complex Banach space with  modulus of smoothness $\rho(u)\le \gamma u^q$, $1<q\le 2$. Define
$$
\e_n := K_1\gamma ^{1/q}n^{-1/p},\qquad p=\frac{q}{q-1},\quad n=1,2,\dots .
$$
Then, for any $f\in  A_1(\cD)$ we have
$$
\|f_m^{c,\e}\| \le C(K_1) \gamma^{1/q}m^{-1/p},\qquad m=1,2\dots.
$$
\end{Theorem}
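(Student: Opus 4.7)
The plan is to derive a one-step recursive inequality for $a_m := \|f_m^{c,\e}\|$ from Lemma \ref{LL0}, and then bound $a_m$ by induction on $m$.

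\smallskip
\noindent\textbf{Setting up the recursion.}
Write $h_m := G_{m-1}^{c,\e} - \nu_m\ff_m^{c,\e}$, so that $f_m^{c,\e} = f_{m-1}^{c,\e} + h_m/m$ with $\|h_m\|\le 2$ (each summand is in the unit ball). Apply Lemma \ref{LL0} with $x = f_{m-1}^{c,\e}$, $u = 1/m$, $y = h_m$. The definition of $\nu_m$ as the complex conjugate of $\sign F_{f_{m-1}^{c,\e}}(\ff_m^{c,\e})$ makes $F_{f_{m-1}^{c,\e}}(\nu_m\ff_m^{c,\e}) = |F_{f_{m-1}^{c,\e}}(\ff_m^{c,\e})|$ a nonnegative real; combined with $|z|\ge \mathrm{Re}(z)$, the greedy condition $\mathrm{Re}\,F_{f_{m-1}^{c,\e}}(\ff_m^{c,\e}-f)\ge -\e_m$, and the identity $F_{f_{m-1}^{c,\e}}(G_{m-1}^{c,\e}) = F_{f_{m-1}^{c,\e}}(f) - a_{m-1}$, one gets $\mathrm{Re}\,F_{f_{m-1}^{c,\e}}(h_m)\le \e_m - a_{m-1}$. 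Monotonicity of $\rho$ then yields
\[
a_m \le (1-1/m)\,a_{m-1} + \e_m/m + 2 a_{m-1}\,\rho\bigl(2/(m a_{m-1})\bigr).
\]

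\smallskip
\noindent\textbf{Induction.}
Set $A := C(K_1)\gamma^{1/q}$ and prove $a_m\le Am^{-1/p}$ by induction on $m\ge 1$. Comparing the hypothesis $\rho(u)\le\gamma u^q$ with $\rho(2)\ge 1$ gives $\gamma^{1/q}\ge 1/2$, so the base case $a_1\le\|f\|+1\le 2$ is handled by choosing $C(K_1)\ge 4$. For the inductive step split on whether $a_{m-1}\ge Am^{-1/p}/2$ (large-residual regime) or $a_{m-1}<Am^{-1/p}/2$ (small-residual regime). In the large-residual regime, use $\rho(u)\le\gamma u^q$ to bound the remainder by $2^{2q}\gamma^{1/q}C^{1-q}m^{-1-1/p}$ (the exponent comes out cleanly because $q-(q-1)/p = 1+1/p$), combine with the concavity estimate $(1-1/m)(m-1)^{-1/p}\le m^{-1/p}(1-1/(qm))$ (which follows from concavity of $(1-x)^{1/q}$ for $q>1$), and obtain
\[
a_m \le Am^{-1/p} + \gamma^{1/q}m^{-1-1/p}\bigl(-C/q + K_1 + 2^{2q}C^{1-q}\bigr);
\]
this closes the induction as soon as $C^q \ge qK_1 C^{q-1} + q\,2^{2q}$. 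In the small-residual regime, fall back on the universal bound $\rho(u)\le u$, which gives $2 a_{m-1}\rho(2/(m a_{m-1}))\le 4/m$; a direct estimate bounding $4/m\le 4/m^{1/q}\le 8\gamma^{1/q}$ (using $\gamma^{1/q}\ge 1/2$) closes the induction whenever $C\ge 2K_1+16$. The choice $C(K_1) := 2K_1+16$ satisfies both inequalities uniformly for $q\in(1,2]$.

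\smallskip
\noindent\textbf{Main obstacle.}
The smoothness remainder $2 a_{m-1}\rho(2/(m a_{m-1}))$ is singular as $a_{m-1}\to 0$: inserting only $\rho(u)\le \gamma u^q$ produces a factor $a_{m-1}^{1-q}$ which blows up and renders the recursion unusable once the residual is small. The two-case split circumvents this by invoking the weaker but unconditional estimate $\rho(u)\le u$ in the small-residual regime; this linear bound cannot produce the optimal $m^{-1/p}$ rate on its own, but it is sharp enough to preserve the inductive estimate once that rate has been attained.
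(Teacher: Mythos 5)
Your proof is correct. The recursive inequality you derive,
\[
a_m \le (1-1/m)\,a_{m-1} + \e_m/m + 2 a_{m-1}\,\rho\bigl(2/(m a_{m-1})\bigr),
\]
is essentially the same one the paper obtains from Lemma~\ref{LL0} (the paper reaches it by first factoring $f_m = (1-\tfrac1m)\bigl(f_{m-1}-\tfrac{\ff_m-f}{m-1}\bigr)$ and applying the lemma to the inner term, which gives $(m-1)$ rather than $m$ in the argument of $\rho$ --- immaterial). Where you genuinely depart from the paper is the conclusion. The paper pairs the conditional multiplicative-decay estimate $\|f_m\|\le(1-\tfrac{3}{4m})\|f_{m-1}\|$ (valid when $\|f_{m-1}\|\ge C(K_1)\gamma^{1/q}(m-1)^{-1/p}$) with the crude triangle-inequality bound $\|f_m\|\le\|f_{m-1}\|+2/m$, and then invokes the black-box Lemma~\ref{ML4}, whose proof controls the lengths of maximal segments of indices on which the target rate is violated. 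You instead run a direct single-step induction with an explicit dichotomy on whether $a_{m-1}\ge Am^{-1/p}/2$. The decisive observation --- that in the small-residual regime the unconditional bound $\rho(u)\le u$ tames the singular factor $a_{m-1}^{1-q}$ and preserves the inductive estimate, even though $\rho(u)\le u$ alone could never establish the $m^{-1/p}$ rate --- is exactly what makes the one-step induction close without the segment argument. I checked your algebra: the exponent identity $(q-1)/p-q=-1-1/p$, the concavity estimate $(1-1/m)^{1/q}\le 1-1/(qm)$, the reduction $\gamma A^{1-q}=\gamma^{1/q}C^{1-q}$, the constraint $C^q\ge qK_1C^{q-1}+q\,2^{2q}$, and the uniform sufficiency of $C=2K_1+16$ over $q\in(1,2]$ (using $\gamma^{1/q}\ge 1/2$ from $\rho(2)\ge 1$ and $C\ge 16$ to get $q\,2^{2q}C^{1-q}\le 4$) are all correct. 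Your route is more self-contained and yields an explicit constant; the paper's is more modular since Lemma~\ref{ML4} is reused for several algorithms.
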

\begin{proof} We will use the abbreviated notation $f_m:=f_m^{c,\e}$, $\ff_m:=\nu_m\ff_m^{c,\e}$, $G_m:=G_m^{c,\e}$. Writing
$$
f_m = f_{m-1}-(\ff_m-G_{m-1})/m,
$$
we  immediately obtain the trivial estimate
\be\label{M6.6}
\|f_m\|\le \|f_{m-1}\| +2/m.  
\ee
Since
\be\label{M6.7}
f_m=\left(1-\frac{1}{m}\right)f_{m-1}-\frac{\ff_m-f}{m} 
 = \left(1-\frac{1}{m}\right)\left(f_{m-1}-\frac{\ff_m-f}{m-1}\right) 
\ee
we obtain by Lemma \ref{LL0} with $y= \ff_m-f$ and $u= -1/(m-1)$
$$
\left\|f_{m-1} -\frac{\ff_m-f}{m-1}\right\| 
$$
\be\label{M6.8}
\le \|f_{m-1}\|\left(1+2\rho\left(\frac{2}{(m-1)\|f_{m-1}\|}\right)\right) +\frac{\e_m}{m-1}.  
\ee
 Using the definition of $\e_m$ and the assumption $\rho(u) \le \gamma u^q$, we make the following observation. There exists a constant $C(K_1)$ such that, if
\be\label{M6.9}
\|f_{m-1}\|\ge C(K_1) \gamma ^{1/q}(m-1)^{-1/p}  
\ee
then
\be\label{M6.10}
2\rho(2((m-1)\|f_{m-1}\|)^{-1}) + \e_m ((m-1)\|f_{m-1}\|)^{-1} \le 1/(4m),  
\ee
and therefore, by (\ref{M6.7}) and (\ref{M6.8})
\be\label{M6.11}
\|f_m\| \le \left(1-\frac{3}{4m}\right)\|f_{m-1}\|.  
\ee

We now need the following   technical lemma (see, for instance, \cite{VTbook}, p.357, and Lemma \ref{LeL12} above).
\begin{Lemma}\label{ML4}  Let a sequence $\{a_n\}_{n=1}^\infty$ have the following property. For given positive
 numbers $\alpha < \ga \le 1$, $A > a_1$,  we have for all $n\ge2$
 \be\label{M5.6}
 a_n\le a_{n-1}+A(n-1)^{-\alpha}.  
 \ee
 If for some $v \ge2$ we have
$$
a_v \ge Av^{-\alpha}
$$
then
 \be\label{M5.7}
a_{v + 1} \le a_v (1- \ga/v). 
\ee
Then there exists a constant $C(\alpha , \ga )$ such that for all $n=1,2,\dots $ we have
$$
a_n \le C(\alpha,\ga)An^{-\alpha} .
$$
 \end{Lemma}

Taking into account (\ref{M6.6}) we apply Lemma \ref{ML4}  to the sequence $a_n=\|f_n\|$, $n=1,2,\dots$ with $\alpha=1/p$, $\beta =3/4$ and complete the proof of Theorem \ref{MT5}.
\end{proof}

We now consider one more modification  of {\bf IA($\e$)} suitable for the complex Banach spaces and providing the convex combination of the dictionary elements. In the case of real Banach spaces it coincides with the {\bf IA($\e$)}.
 
 {\bf Incremental Algorithm (complex and convex) with schedule $\e$ (IAcc($\e$)).} 
Let $f\in \conv(\cD)$. Denote $f_0^{cc,\e}:= f$ and $G_0^{cc,\e} :=0$. Then, for each $m\ge 1$ we have the following inductive definition.

(1) $\ff_m^{cc,\e} \in \cD$ is any element satisfying
$$
Re(F_{f_{m-1}^{cc,\e}}(\ff_m^{cc,\e}-f)) \ge -\e_m.
$$
 
(2) Define
$$
G_m^{cc,\e}:= (1-1/m)G_{m-1}^{cc,\e} + \ff_m^{cc,\e}/m.
$$

(3) Let
$$
f_m^{cc,\e} := f- G_m^{cc,\e}.
$$

It follows from the definition of the {\bf IAcc($\e$)} that 
\be\label{M15}
G_m^{cc,\e} = \frac{1}{m} \sum_{j=1}^m \ff_j^{cc,\e} . 
\ee

Note that by   Lemma \ref{LL3}
$$
 \sup_{g \in \cD}Re(F_{f_{m-1}^{cc,\e}}(g))
= \sup_{\phi \in \conv(\cD)}Re(F_{f_{m-1}^{cc,\e}}(\phi))   \ge Re(F_{f_{m-1}^{cc,\e}}(f)).
$$
This means that we can always run the {\bf IAcc($\e$)} for $f\in \conv(\cD)$.
 
In the same way as Theorem \ref{MT5} was proved above one can prove the following Theorem \ref{MT6}.

\begin{Theorem}\label{MT6} Let $X$ be a uniformly smooth Banach space with  modulus of smoothness $\rho(u)\le \gamma u^q$, $1<q\le 2$. Define
$$
\e_n := K_1\gamma ^{1/q}n^{-1/p},\qquad p=\frac{q}{q-1},\quad n=1,2,\dots .
$$
Then, for any $f\in  \conv(\cD)$ we have
$$
\|f_m^{cc,\e}\| \le C(K_1) \gamma^{1/q}m^{-1/p},\qquad m=1,2\dots.
$$
\end{Theorem}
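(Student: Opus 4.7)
The plan is to mirror the proof of Theorem \ref{MT5} essentially verbatim, with the observation that the simpler structure of the IAcc algorithm (no complex unit multiplier $\nu_m$ is needed, because $f \in \conv(\cD)$ rather than $A_1(\cD)$) actually makes several bookkeeping steps slightly cleaner while leaving the core estimate unchanged. First I would verify that the algorithm is well defined for $f \in \conv(\cD)$: by Lemma \ref{LL3}, $\sup_{g\in\cD}\mathrm{Re}(F_{f_{m-1}}(g)) = \sup_{\phi\in\conv(\cD)}\mathrm{Re}(F_{f_{m-1}}(\phi)) \ge \mathrm{Re}(F_{f_{m-1}}(f))$, so an element $\ff_m^{cc,\e} \in \cD$ satisfying $\mathrm{Re}(F_{f_{m-1}}(\ff_m^{cc,\e}-f)) \ge -\e_m$ always exists.

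Next, abbreviating $f_m := f_m^{cc,\e}$, $\ff_m := \ff_m^{cc,\e}$, $G_m := G_m^{cc,\e}$, I would observe that because $\ff_m \in \cD$ and $G_{m-1} \in \conv(\cD)$ both lie in the unit ball, $\|\ff_m - G_{m-1}\| \le 2$, giving the trivial bound $\|f_m\| \le \|f_{m-1}\| + 2/m$ exactly as in (\ref{M6.6}). Then I would rewrite
\[
f_m = \Bigl(1-\tfrac{1}{m}\Bigr)\Bigl(f_{m-1} - \tfrac{\ff_m - f}{m-1}\Bigr)
\]
and apply Lemma \ref{LL0} with $y = \ff_m - f$ (noting $\|y\| \le 2$) and $u = -1/(m-1)$. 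Since $\mathrm{Re}(u F_{f_{m-1}}(y)) = -\tfrac{1}{m-1}\mathrm{Re}(F_{f_{m-1}}(\ff_m - f)) \le \e_m/(m-1)$, the lemma yields
\[
\Bigl\|f_{m-1} - \tfrac{\ff_m - f}{m-1}\Bigr\| \le \|f_{m-1}\|\Bigl(1 + 2\rho\bigl(\tfrac{2}{(m-1)\|f_{m-1}\|}\bigr)\Bigr) + \tfrac{\e_m}{m-1},
\]
which is precisely (\ref{M6.8}).

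From this point the argument is word-for-word the proof of Theorem \ref{MT5}: plugging in $\rho(u) \le \gamma u^q$ and $\e_m = K_1 \gamma^{1/q} m^{-1/p}$, one shows that there is a constant $C(K_1)$ such that whenever $\|f_{m-1}\| \ge C(K_1)\gamma^{1/q}(m-1)^{-1/p}$ the bracketed error term is at most $1/(4m)$, yielding the contraction $\|f_m\| \le (1 - 3/(4m))\|f_{m-1}\|$ as in (\ref{M6.11}). Combining the trivial increment bound with this conditional contraction and invoking Lemma \ref{ML4} with $\alpha = 1/p$, $\gamma = 3/4$ delivers the desired rate $\|f_m\| \le C(K_1)\gamma^{1/q} m^{-1/p}$.

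There is no real obstacle: the only point worth checking carefully is that $\ff_m - f$ has norm at most $2$ in the convex-hull setting (which it does, since both $\ff_m$ and $f$ lie in $\conv(\cD)$, whose elements have norm $\le 1$), and that the defining inequality for $\ff_m$ in IAcc gives directly $\mathrm{Re}(F_{f_{m-1}}(\ff_m - f)) \ge -\e_m$ without any need to compensate by a phase factor. Everything else in the proof of Theorem \ref{MT5} is structural and uses only $\rho(u) \le \gamma u^q$, the choice of $\e_m$, and Lemma \ref{ML4}, none of which are sensitive to the distinction between $A_1(\cD)$ and $\conv(\cD)$.
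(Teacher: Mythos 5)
Your proof is correct and takes essentially the same approach as the paper, which simply states that Theorem~\ref{MT6} is proved in the same way as Theorem~\ref{MT5}; you have correctly carried out the required minor adaptations (no phase factor $\nu_m$, $\|\ff_m-f\|\le 2$ via $\conv(\cD)$, and existence via Lemma~\ref{LL3}).
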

 
 \newpage
 \medskip
 {\bf \Large Chapter V }
  \medskip
  
  \section{Some open problems}
  \label{OP}

In this chapter we list some known open problems in the theory of greedy approximation. We begin with the most famous problem on the rate of convergence of the PGA.  The first upper bound, obtained in \cite{DT1} (see, for instance, Theorem 2.18 from \cite{VTbook}), states that for a general dictionary $\cD$ the Pure Greedy Algorithm provides the estimate: For any $f \in A_1(\cD)$
\be\label{OP1}
\|f-G_m(f,\cD)\| \le   m^{-1/6}. 
\ee
 The above estimate was improved a little in \cite{KT2} to
\be\label{OP2}
\|f-G_m(f,\cD)\| \le 4 m^{-11/62}. 
\ee
  We now discuss recent progress on the following open problem (see \cite{VT83}, p.65, Open Problem 3.1). This problem is a central theoretical problem in greedy approximation in Hilbert spaces.  
 
{\bf Open problem 1.} Find the order of decay of the sequence
$$
\gamma(m):=\sup_{f \in A_1(\cD),\cD,\{G_m\}} \|f-G_m(f,\cD)\|,
$$
where the supremum is taken over all dictionaries $\cD$, all elements
$f\in A_1(\cD) $ and all possible choices of realizations $\{G_m(f,\cD)\}$ of the PGA. In other words, we want to find a sequence $\{\ff_m\}_{m=1}^\infty$ such that there exist 
two absolute constants $0<C_1\le C_2<\infty$ with the property
\be\label{OPp}
C_1 \ff_m \le \gamma(m) \le C_2 \ff_m,\quad m=1,2,\dots .
\ee
We begin with the upper bounds. 
 The   upper bounds (\ref{OP1}) and (\ref{OP2})   were improved in \cite{Si}.   The author  proved the estimate
$$
\gamma(m) \le Cm^{-\frac{s}{2(2+s)}},
$$
where $s$ is a solution from $[1,1.5]$ of the equation
$$
(1+x)^{\frac{1}{2+x}}\left(\frac{2+x}{1+x}\right)-\frac{1+x}{x}=0.
$$
Numerical calculations of $s$ (see \cite{Si}) give
$$
  \frac{s}{2(2+s)}=0.182\dots >11/62.
  $$  
 The technique used in \cite{Si} is a further development of a method from \cite{KT2}.
 
 We now discuss some progress in the lower estimates.    The  estimate
 $$
 \gamma(m) \ge Cm^{-0.27},  
 $$
 with a positive constant $C$, was proved in  \cite{LTe2}. For previous lower estimates see \cite{VT83}, p.59.   The author of \cite{Li4}, using the technique from \cite{LTe2}, proved the following lower estimate
 \be\label{OP3}
\gamma(m) \ge Cm^{-0.1898}.  
\ee
Very recently, substantial progress has been achieved in \cite{KS}. The authors proved that for any $\de>0$ 
we have
 \be\label{OP4}
 \gamma(m) m^{\frac{s}{2(2+s)}+\de} \ge C(\de)>0,\quad m=1,2,\dots .
 \ee
The technique used in \cite{KS} is a further development of a method from \cite{LTe2} and \cite{Li4}.
 
In  Subsections \ref{OP1} and \ref{OP2} we discuss two fundamental properties of a greedy algorithm GA -- convergence and rate of convergence (see Sections \ref{con} and \ref{rc} above). We would like to formulate those properties in terms of 
some characteristics of a given Banach space and parameters of a greedy algorithm GA. We consider uniformly smooth Banach spaces. It turns out that a very natural characteristic of a Banach space $X$, which plays an important role in convergence and rate of convergence properties of a greedy algorithm GA, is its modulus of smoothness $\rho(u,X)$.
We pay special attention to the collection of Banach spaces $\cX(\ga,q)$, defined in Section \ref{rc}: For  fixed $1<q\le 2$ and $\ga >0$ define
$$
\cX(\ga,q) := \{X\,:\, \rho(u,X) \le \gamma u^q \}.
$$
In particular, this collection is of high interest because the $L_p$ spaces are from this collection (see (\ref{Lprho})).

\subsection{Convergence.} \label{OP1}
Certainly, ideally, we would like to have a criterion for convergence. For illustration we now formulate some  open problems from \cite{VT83}, p. 73.

{\bf Open problem 2.} (\cite{VT83}, 4.1). Characterize Banach spaces $X$ such that the $X$-Greedy Algorithm converges for all dictionaries $\cD$ and each element $f$.

{\bf Open problem 3.} (\cite{VT83}, 4.2). Characterize Banach spaces $X$ such that the Dual Greedy Algorithm converges for all dictionaries $\cD$ and each element $f$.

{\bf Open problem 4.} (\cite{VT83}, 4.3). (Conjecture). Prove that the Dual Greedy Algorithm converges for all dictionaries $\cD$ and each element $f\in X$ in uniformly smooth Banach spaces $X$ with modulus of smoothness of fixed power type $q$, $1<q\le 2$, ($\rho(u,X) \le \gamma u^q$).

{\bf Open problem 5.} (\cite{VT83}, 4.4). Find the necessary and sufficient conditions on a weakness sequence $\tau$ to guarantee convergence of the Weak Dual Greedy Algorithm in uniformly smooth Banach spaces $X$ with modulus of smoothness of fixed power type $q$, $1<q\le 2$, ($\rho(u,X) \le \gamma u^q$) for all dictionaries $\cD$ and each element $f\in X$.
 
 The above Open problems 2--4 are formulated for two greedy algorithms -- XGA and DGA. Certainly, similar problems are of interest for other greedy algorithms. Here are some open problems from \cite{VT118}.  
 
 {\bf Open problem 6.} (\cite{VT118}, Open Problem 1). Does the XGA converge for all dictionaries $\cD$ and each element $f\in X$ in uniformly smooth Banach spaces $X$ with modulus of smoothness of fixed power type $q$, $1<q\le 2$, ($\rho(u,X) \le \gamma u^q$)?

{\bf Open problem 7.} (\cite{VT118}, Open Problem 3). Characterize Banach spaces $X$ such that the  WDGA($t$), $t\in(0,1]$,  converges for all dictionaries $\cD$ and each element $f$.

{\bf Open problem 8.} (\cite{VT118}, Open Problem 4). (Conjecture). Prove that the WDGA($t$), $t\in(0,1]$,  converges for all dictionaries $\cD$ and each element $f\in X$ in uniformly smooth Banach spaces $X$ with modulus of smoothness of fixed power type $q$, $1<q\le 2$, ($\rho(u,X) \le \gamma u^q$).

{\bf Open problem 9.} (\cite{VT118}, Open Problem 6). Let $p\in(1,\infty)$. Find the necessary and sufficient conditions on a weakness sequence $\tau$ to guarantee convergence of the Weak Dual Greedy Algorithm in  the $L_p$ space for all dictionaries $\cD$ and each element $f\in L_p$.

{\bf Open problem 10.} (\cite{VT118}, Open Problem 7). Characterize Banach spaces $X$ such that the 
WCGA($t$), $t\in(0,1]$, converges for every dictionary $\cD$ and for every $f\in X$.

It would be interesting to understand if the answers to the Open problem 10 and the following Open problem 11 are the same. 

{\bf Open problem 11.} (\cite{VT118}, Open Problem 8). Characterize Banach spaces $X$ such that the 
 WGAFR with the weakness sequence $\tau=\{t\}$, $t\in(0,1]$, converges for every dictionary $\cD$ and for every $f\in X$.
 
{\bf Open problem 12.} (\cite{VT118}, Open Problem 9). Characterize Banach spaces $X$ such that the 
  XGAFR converges for every dictionary $\cD$ and for every $f\in X$.
  
In the above formulated open problems we discussed convergence of a greedy algorithm GA in the sense of Section \ref{con} -- convergence for any $\cD$ and each $f\in X$. Clearly, we can expect better convergence conditions for 
a given specific dictionary $\cD$. 

For illustration we formulate some open problems from \cite{VT83} (pages 78-79) in the case of bilinear dictionary
\be\label{OP5}
\Pi_p  := \{u(x)v(y)\,:\, \|u\|_{L_p}=\|v\|_{L_p}=1\},\quad 1\le p\le \infty.
\ee

{\bf Open problem 13.} (\cite{VT83}, 5.1). Find the necessary and sufficient conditions on a weakness sequence $\tau$ to guarantee convergence of the Weak Greedy Algorithm with regard to $\Pi_2$ for each $f\in L_2$.

{\bf Open problem 14.} (\cite{VT83}, 5.2). Does the $L_p$-Greedy Algorithm with respect to $\Pi_p$ converge for each $f\in L_p$, $1<p<\infty$?

{\bf Problem 15.} (\cite{VT83}, 5.3). Does the Dual Greedy Algorithm with respect to $\Pi_p$ converge for each $f\in L_p$, $1<p<\infty$?

{\bf Open problem 16.} (\cite{VT83}, 5.4).   Find the necessary and sufficient conditions on a weakness sequence $\tau$ to guarantee convergence of the Weak Dual Greedy Algorithm with respect to $\Pi_p$ for each $f\in L_p$.

{\bf Open problem 17.} (\cite{VT83}, 5.5). Find the necessary and sufficient conditions on a weakness sequence $\tau$ to guarantee convergence of the Weak Chebyshev Greedy Algorithm with respect to $\Pi_p$ for each $f\in L_p$.

We point out that the XGA and DGA are so difficult to study that only a little progress in studying the DGA was achieved and some convergence problems are still open even in the case of classical bases as dictionaries. 

{\bf Open problem 18.} (\cite{VT83}, 8.1). Does the $L_p$-Greedy Algorithm with regard to $\cT$ converge in $L_p$, $1<p<\infty$, for each $f\in L_p(\bbT)$?

{\bf Problem 19.} (\cite{VT83}, 8.2). Does the Dual Greedy Algorithm with regard to $\cT$ converge in $L_p$, $1<p<\infty$, for each $f\in L_p(\bbT)$?

{\bf Open problem 20.} (\cite{VT83}, 8.3). Does the $L_p$-Greedy Algorithm with regard to $\cH_p$ converge in $L_p$, $1<p<\infty$, for each $f\in L_p(0,1)$?

{\bf Problem 21.} (\cite{VT83}, 8.4). Does the Dual Greedy Algorithm with regard to $\cH_p$ converge in $L_p$, $1<p<\infty$, for each $f\in L_p(0,1)$?

{\bf Comment.} Problems 15, 19, and 21 are solved. The following theorem is proved in \cite{GK} (see also \cite{VTbook}, p.378, and Theorem \ref{conT3} and Proposition \ref{conP1} above). 

\begin{Theorem}\label{OPT1} Let $p\in(1,\infty)$. Then the WDGA($\tau$) with $\tau=\{t\}$, $t\in(0,1]$, converges for each dictionary and all $f\in L_p$.
\end{Theorem}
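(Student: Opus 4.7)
The strategy is to derive Theorem \ref{OPT1} as an immediate consequence of Theorem \ref{conT3} combined with Proposition \ref{conP1}. Theorem \ref{conT3} guarantees that WDGA($t$) with $t \in (0,1]$ converges in every uniformly smooth Banach space with property $\Gamma$, so it suffices to verify that $L_p$ satisfies both hypotheses for each $p \in (1,\infty)$. Uniform smoothness of $L_p$ is immediate from (\ref{Lprho}), which yields $\rho(u,L_p) \le \gamma_p u^{\min(p,2)}$ and in particular $\rho(u,L_p)/u \to 0$ as $u \to 0$. The substantive task is therefore to establish property $\Gamma$ for $L_p$, i.e.\ Proposition \ref{conP1}.

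To prove Proposition \ref{conP1}, I would work with the explicit form of the norming functional: for $h \neq 0$ in $L_p$ one has $F_h(g) = \|h\|_p^{1-p}\int |h|^{p-1}\sign(h)g\, d\mu$. Setting $u := x + y$ and using $F_u(u) = \|u\|_p$ gives $F_u(y) = \|u\|_p - F_u(x)$, so the required estimate $\|u\|_p \ge \|x\|_p + \beta F_u(y)$ is equivalent to $(1-\beta)\|u\|_p + \beta F_u(x) \ge \|x\|_p$. The case $p=2$ is transparent: $F_x(y)=0$ becomes $\<x,y\>=0$, whence $\|u\|^2 = \|x\|^2 + \|y\|^2$, $F_u(y) = \|y\|^2/\|u\|$, and $\|u\| - \|x\| = \|y\|^2/(\|u\|+\|x\|) \ge F_u(y)/2$, giving $\beta = 1/2$.

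For the general range $p \in (1,\infty)$, the plan is to exploit a sharp quantitative convexity estimate for the $L_p$ norm. When $p \ge 2$, the Clarkson--Hanner inequality supplies a second-order lower bound of the form
$$
\|x+y\|_p^p \ge \|x\|_p^p + p\|x\|_p^{p-1}F_x(y) + c_p\|y\|_p^p,
$$
so $F_x(y) = 0$ produces a gain proportional to $\|y\|_p^p$ beyond $\|x\|_p^p$; combined with the H\"older bound $|F_u(y)| \le \|y\|_p$, this converts into the required linear gain $\beta F_u(y)$ with an explicit $\beta = \beta(p) > 0$. The regime $1 < p < 2$ is the main obstacle, since $\|\cdot\|_p^p$ no longer admits such a clean second-order lower bound. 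Here I would instead invoke the dual Clarkson inequality with exponent $p' = p/(p-1)$ and split the underlying domain into $\{|x| \ge |y|\}$ and $\{|x| < |y|\}$, handling the pointwise expression $|x+y|^{p-1}\sign(x+y) - |x|^{p-1}\sign(x)$ separately on each piece. A uniform constant $\beta = \beta(p) > 0$ then emerges after combining the two regime estimates, completing Proposition \ref{conP1} and hence Theorem \ref{OPT1} via Theorem \ref{conT3}.
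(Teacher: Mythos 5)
Your reduction of Theorem~\ref{OPT1} to the combination of Theorem~\ref{conT3} and Proposition~\ref{conP1} is exactly what the paper does: the theorem is recorded as an immediate consequence of those two results (cited from Ganichev--Kalton), and no independent proof is offered. At that level your proposal is correct and strategically identical.

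The rest of your write-up is a sketch of Proposition~\ref{conP1} itself, which the paper treats as a black box, so strictly it is outside what needs checking; I comment because you offer it. The $p=2$ computation is fine. For $p\ge2$, however, passing from the $p$-uniform convexity estimate $\|u\|_p^p-\|x\|_p^p\ge c_p\|y\|_p^p$ to $\|u\|_p-\|x\|_p\ge\beta F_u(y)$ does \emph{not} follow from the H\"older bound $|F_u(y)|\le\|y\|_p$ alone. Take $\|x\|_p=1$ and $\|y\|_p=\e$ small: the convexity estimate gives only $\|u\|_p-1\gtrsim\e^p$, while H\"older gives $F_u(y)\lesssim\e$, and $\e^p\ll\e$ for $p>1$, so the desired inequality is not implied. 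The estimate is salvageable, but only because under the constraint $F_x(y)=0$ the quantity $F_u(y)$ is itself of higher order in $\|y\|$: writing $\phi_h:=|h|^{p-1}\sign(h)/\|h\|_p^{p-1}$ one has $F_u(y)=\int(\phi_u-\phi_x)\,y\,d\mu$, and it is this cancellation, not H\"older, that must carry the argument. As written your $p\ge2$ step has a real gap, and the $1<p<2$ paragraph is a plan rather than an argument. None of this affects your derivation of Theorem~\ref{OPT1}, since Proposition~\ref{conP1} is imported there as a known result, as in the paper.
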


\subsection{Rate of convergence.} \label{OP2}

The problem of estimating different asymptotic characteristics of function classes is a very classical problem of approximation theory. There are myriads of papers, which belong to this area. Here we discuss some more asymptotic characteristics, which are of interest in sparse approximation with respect to dictionaries. We formulate the settings of these problems at three different levels in the case of the class $A_1(\cD)$ and a specific algorithm GA. 

{\bf Level 1. Specific $X$ and specific $\cD$.} For a specific greedy algorithm GA define 
$$
er_m(X,\cD,GA) :=   \sup_{f\in A_1(\cD)} \sup_{realizations\, of\, GA} \|f_m\|_X,
$$
where $\{f_m\}$ is a sequence of residuals of the element $f$ after application of the greedy algorithm GA.

{\bf Level 2. Specific $X$ and arbitrary $\cD$.} For a specific greedy algorithm GA define 
$$
er_m(X,GA) :=  \sup_{\cD} \sup_{f\in A_1(\cD)} \sup_{realizations\, of\, GA} \|f_m\|_X,
$$
where $\{f_m\}$ is a sequence of residuals of the element $f$ after application of the greedy algorithm GA.

{\bf Level 3. A collection $\cX$ of Banach spaces and arbitrary $\cD$.}
  Let $\cX$ be a collection of Banach spaces, for instance, for  fixed $1<q\le 2$ and $\ga >0$ define
$$
\cX(\ga,q) := \{X\,:\, \rho(u,X) \le \gamma u^q \}.
$$
Then for a specific greedy algorithm GA define 
$$
er_m(\cX,GA) := \sup_{X\in \cX} \sup_{\cD} \sup_{f\in A_1(\cD)} \sup_{realizations\, of\, GA} \|f_m\|_X.
$$

First of all, we are interested in dependence of the above characteristics on $m$. The characteristic $er_m(\cX,GA)$ can be used for evaluation of a given greedy algorithm GA -- the faster the decay of the sequence $\{er_m(\cX,GA)\}$, the better the algorithm (from the point of view of accuracy). The characteristic $er_m(X,\cD,GA)$ can be used in the following situation. Suppose that we are working on the sparse approximation problem in a given Banach space $X$ with respect to a given dictionary $\cD$. Then, as the reader can see from this paper, there are many different greedy algorithms to choose from.  The characteristic $er_m(X,\cD,GA)$ will help to compare the accuracy properties of the algorithms. 

The following general problem is important and seems to be difficult. 

{\bf Open problem 22.} For a given greedy algorithm GA find the right order of the sequence $er_m(\cX(\ga,q),GA)$, 
$1<q\le 2$ and $\ga >0$. 

Clearly, the general Open problem 22 contains a number of subproblems, when we specify the greedy algorithm GA. 
For instance, it might be the WCGA($\tau$), WGAFR($\tau$), XGAFR, GAWR($\tau,\br$), XGAR($\br$), and other greedy 
algorithms discussed above. 

Certainly, the following characteristics, which provide the benchmarks for the above characteristics are also of importance and interest.

{\bf Best $m$-term approximations.} 

For a specific $X$, specific $\cD$, and a given function class $\bF$ define (as above)
\be\label{OP6}
\sigma_m(\bF,\cD)_X :=   \sup_{f\in \bF}   \sigma_m(f,\cD)_X .
\ee
For a specific Banach space $X$ define
\be\label{OP7}
\sigma_m(X) := \sigma_m(A_1(\cD),\cD)_X .
\ee
Finally, for   a collection of Banach spaces $\cX$
  define 
\be\label{OP8}
\sigma_m(\cX) := \sup_{X\in \cX} \sigma_m(X).
\ee

Clearly, for any greedy algorithm GA we have
\be\label{OP9}
er_m(\cX,GA) \ge \sigma_m(\cX)  .
\ee
and
\be\label{OP10}
er_m(X,GA) \ge \sigma_m(X)  .
\ee

We note that in the study of $\sigma_m(\cX(\ga,q))$     we know that the answer 
is determined by three parameters $\ga$, $q$, and $m$. In the case of characteristics $er_m(\cX(\ga,q),GA)$ the answer 
is determined by three parameters $\ga$, $q$, and $m$ and by parameters of the greedy algorithm GA. 
In the study of characteristics $er_m(X,GA)$ and  $\sigma_m(X)$
we need to find an appropriate property of the Banach space $X$, which controls the above characteristics. 

Let us give a simple lower bound of the characteristic $\sigma_m(\cX(\ga,q))$. Let $1<q\le 2$. Consider $X= \ell_q$ with $\cD$ being a canonical basis $\cE:=\{e_j\}_{j=1}^\infty$. For a given $m\in \bbN$ define the function
$$
f:=  \sum_{j=1}^{2m} e_j.
$$
Then we have
$$
\sigma_m(f,\cE)_{\ell_q} \ge m^{1/q},\qquad (2m)^{-1} f \in A_1(\cE). 
$$
Relations (\ref{Lprho}) show that $\ell_q\in \cX(\ga,q)$ for any $1<q\le 2$ and $\ga \ge 1/q$. Therefore, we have 
\be\label{OP11}
\sigma_m(\cX(\ga,q)) \ge \frac{1}{2} m^{1/q-1},\quad  1<q\le 2,\quad \ga \ge 1/q. 
\ee
The matching upper bounds are given by the WCGA($t$) and WGAFR($t$) (see Theorem \ref{rcT1}). 

{\bf Open problem 23.} For a given weakness sequence $\tau$ satisfying (\ref{2.2}) find good upper and lower bounds for the quantities
$$
er_m(\cX(\ga,q),WCGA(\tau))\quad \text{and} \quad er_m(\cX(\ga,q),WGAFR(\tau)). 
$$

\subsection{Lebesgue-type inequalities}

 We now formulate some open problems on the Lebesgue-type inequalities from \cite{VTbookMA} 
 (see p.448). 
 Here we concentrate on open problems related to the multivariate approximation. 
In the majority of cases we do not know the optimal $\phi$ such that a basis from a given collection of bases is $\phi$-greedy with respect to the WCGA. We formulate some of these open problems. Here we consider the version WCGA($t$), $t\in (0,1]$, of the Weak Chebyshev Greedy Algorithm with the weakness parameter $t$. Here we use the notation $\mathcal R\mathcal T^d_p$ for the real $d$-variate trigonometric system normalized in the $L_p$. 

{\bf Open problem 24.} For a Banach space $L_p$, $1<p<\infty$, characterize almost greedy bases with respect to the WCGA. 

{\bf Open problem 25.} Is $\mathcal R\mathcal T^d_p$ an almost greedy basis with respect to the WCGA, in $L_p(\bbT^d)$, $1<p<\infty$?

{\bf Open problem 26.} Is $\mathcal H_p$ an almost greedy basis with respect to the WCGA, in $L_p$, $2<p<\infty$?

{\bf Open problem 27.} Is $\mathcal H_p^d$, $d\ge 2$, an almost greedy basis with respect to the WCGA, in $L_p$, $1<p<\infty$?

{\bf Open problem 28.} For each $L_p$, $1<p<\infty$, find the best $\phi$ such that any Schauder basis is a $\phi$-greedy basis with respect to the WCGA. 

{\bf Open problem 29.} For each $L_p$, $1<p<\infty$, find the best $\phi$ such that any unconditional basis is a $\phi$-greedy basis with respect to the WCGA. 

{\bf Open problem 30.} Is there a greedy-type algorithm $\cA$ such that the multivariate Haar system $\cH^d_p$ is an almost greedy basis of $L_p$, $1<p<\infty$, with respect to $\cA$?

\newpage

 \Addresses
 
\end{document}